\newcommand{\ori}{orientation}
\newcommand{\tri}{\bigtriangleup^}
\newcommand{\sg}{\sigma}
\newcommand{\h}{\mathbb{H}}
\newcommand{\C}{\mathbb{C}}
\newcommand{\R}{\mathbb{R}}
\newcommand{\p}{\mathbb{P}^}
\newcommand{\D}{\mathbb{D}}
\newcommand{\la}{\lambda}
\newcommand{\N}{\mathbb{N}}
\newcommand{\Z}{\mathbb{Z}}
\newcommand{\I}{\mathbb{I}}
\newcommand{\s}{\mathbb{S}}
\newcommand{\al}{\alpha}
\numberwithin{equation}{section}
\numberwithin{section}{chapter}
\newtheorem{theorem}{Theorem}[section]
\newtheorem{lemma}[theorem]{Lemma}
\newtheorem{corollary}[theorem]{Corollary}
\newtheorem{proposition}[theorem]{Proposition}
\theoremstyle{remark}
\begin{document}
\frontmatter
\thispagestyle{empty}
\begin{center} \large
\textbf{TRIANGULATION AND CLASSIFICATION OF 2-MANIFOLDS}
\end{center}

\vfill

\begin{center}
Dissertation submitted to the University of Delhi\\
 in partial fulfillment of the requirements for the degree of
\end{center}

\vfill

\begin{center}\large
\textbf{MASTER OF PHILOSOPHY\\
 IN  \\
 MATHEMATICS }
\end{center}

\vfill

\vfill

\begin{center}
By\\ \large \textbf{KUSHAL LALWANI}
\end{center}
\vfill


\vfill

\begin{center}\bf
DEPARTMENT OF MATHEMATICS \\ UNIVERSITY OF DELHI\\ DELHI-110007\\
APRIL, 2014
\end{center}


\newpage

\bigskip

\bigskip
\centerline{\textsc{Declaration}}
\bigskip

\bigskip
This dissertation entitled ``\textbf{Triangulation and  Classification of  2-Manifolds}'' contains a comprehensive study of the topology of 2-manifolds and a complementary analysis of the work done by Edwin E. Moise ~\cite{moise}, L. V. Ahlfors ~\cite{ahlfors} and Ian Richards ~\cite{rich}. This study has been carried out by me under the supervision of \textbf{Dr.~Sanjay~Kumar}, Associate Professor, Department of Mathematics, Deen Dayal Upadhyaya College, University of Delhi, Delhi, for the award of the degree of Master of Philosophy in Mathematics.

I hereby also declare that, to the best of my knowledge, the work included in this dissertation has not been submitted earlier, either in part or in full, to this or any other University/institution for the award of any degree or diploma.
 \bigskip
  \bigskip

\bigskip

\bigskip
\begin{flushright}
\textbf{(Kushal Lalwani)}
\end{flushright}

\bigskip
 \bigskip

\bigskip \bigskip

\bigskip \bigskip

\bigskip
\bigskip
\bigskip
\bigskip

\begin{tabular}{l}
  (Supervisor)\\
 \textbf{Dr. Sanjay Kumar}\\
  Department of Mathematics\\
  Deen Dayal Upadhyaya College\\
  (University of Delhi)\\
  Karam Pura, New Delhi-110015\\
\end{tabular}
\hfill
\begin{tabular}{l}
(Head of the Department)\\
\textbf{Prof. Ajay Kumar}\\
Department of Mathematics\\
University of Delhi\\
Delhi-110007\\
\end{tabular}


\newpage

\centerline{\textbf{ Acknowledgment }}
\bigskip
\bigskip

I take this opportunity to express my deep sense of gratitude to my supervisor, \emph{Dr. Sanjay Kumar}, for his constant encouragement, cooperation and invaluable guidance in the successful accomplishment of this dissertation. I also express my gratitude to \emph{Prof. Ajay Kumar}, Head,  Department of Mathematics, University of Delhi for providing necessary facilities and constant encouragement during the course of this study.

I also wish to extend my thanks to all the faculty members of the Department of Mathematics, University of Delhi for their help, guidance and motivation for the work presented in the dissertation. They have always been there for me whenever I needed support from them, providing me critical research insights and answering my questions with their valuable time. Their academic excellence has also been a great value to my dissertation. I am also thankful to the organizers of ATM schools of geometry and topology, which I attended in CEMS Almora, NEHU Shillong and HRI Allahabad, which helped me to learn many facts related to this field.

I am also thankful to Prof. Ravi S. Kulkarni, who gave the idea of this work and discussed the problem, and Prof. Anant R. Shastri, for his guidance in better understanding of the subject.  I am also thankful to my friends and fellow research scholars (specially  Dinesh Kumar and Gopal Datt) for their help and discussion during the course of my study. 

I also wish to express my gratitude to the U.G.C. for granting me the fellowship which was a great financial assistance in the completion of my M. Phil program.

I am sincerely thankful to my parents for motivating me to do higher studies and encouraging me to achieve my long cherished goal.

Above all, I thank, The Almighty, for all his blessings bestowed upon me in completing this work successfully.

\tableofcontents

\mainmatter
\chapter{ What is a Manifold}\label{ch1}
\section{Introduction}

The starting point of the theory of 2-dimensional manifolds was the Euler-theorem. Euler's famous formula for a polyhedron is: $v-e+f=2$, where $v$ is the number of vertices, $e$ is the number of edges and $f$ is the number of faces of the polyhedron.

A manifold is a figure or a space as a geometric object and is an important notion in modern mathematics. Examples of manifolds are curves, surfaces etc, beside they might be of higher dimensions. The dimension of a manifold is the number of independent parameters needed to specify a point on it. Since a point on a curve can be described by a single parameter, it is called a 1-dimensional manifold. The simplest example of a 1-manifold is the real line, a point on the real line is a real number. Other examples include graph of a real valued continuous function taking real values, or a circle, we can specify a point on the circle by its angle.

Manifolds of dimension two are called surfaces. We see examples of surfaces in our daily life. Balloons, cylindrical cans, doughnut shaped surface, called a torus, soap films etc.  In surfaces two coordinates are needed to determine a point. For instance, if we cut  a cylinder lengthwise, then we can unroll it, to lie  flat on a plane. This  indicates that surfaces are inherently 2-dimensional objects and should be described by two coordinates. Although a parameter value  determines a point, different parameter value may correspond to the same point. In general, it is impossible to obtain the whole surface, however we round a region in the plane. But in every case, as long as we stay close to any point, there is a one to one correspondence between nearby points on the surface and the corresponding parameter. Then every surface can be parameterized locally by a small region in the plane.

There are different kind of manifolds, but the simplest are the topological manifolds, which we only require to be looks like $\R^n$ locally. As a preliminary definition, we can define an $n$-dimensional manifold as a topological space which is locally homeomorphic to a subset of $\R^n$. A connected 2-manifold is called a surface.

In geometry one develops a definition of when figures are geometrically same or congruent. Congruent figures will have same geometric properties such as lengths, angle measurement, areas, volumes etc. Topology is the branch of mathematics that is concerned with properties of spaces which are left unchanged by continuous deformations that we have called a homeomorphism,   defined below. The geometric properties such as length of curve, areas, surface areas or volumes are not preserved by homeomorphisms in general. A standard problem in topology is the classification of spaces upto homeomorphisms. Such a classification is known for curves, manifolds of dimension 1:

\begin{theorem}
(see ~\cite{lee}.) A connected 1-manifold is homeomorphic to a circle $\s^1$ if it is compact or to real line $\R^1$ if it is not.
\end{theorem}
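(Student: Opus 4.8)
The plan is to reconstruct $M$ from coordinate charts and to show that the ways in which the charts can overlap are so rigid that $M$ is forced to be either a single open arc or a circle. Throughout I use the standing hypotheses bundled into the word \emph{manifold}: $M$ is Hausdorff, second countable, and every point has an open neighbourhood homeomorphic to an open subset of $\R$, hence (after shrinking) homeomorphic to the interval $(0,1)$, and there is no boundary. Call any subset of $M$ homeomorphic to $(0,1)$ an \emph{arc}. Second countability makes $M$ Lindel\"{o}f, so it is covered by countably many arcs $U_1,U_2,\dots$.

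The technical core is a gluing lemma about two arcs $A,B\subseteq M$ with $A\cap B\neq\emptyset$ and neither contained in the other. Fix homeomorphisms $A\cong(0,1)$ and $B\cong(0,1)$; each component $C$ of the open set $A\cap B$ is then an open subinterval in the $A$-coordinate and also in the $B$-coordinate, and the identity on $C$ is a monotone homeomorphism between these subintervals. I claim that at each of its two ends $C$ must ``run off'' an endpoint of $A$ or of $B$: otherwise the points of $C$ approaching that end would converge, on the $A$ side, to a point $p\in A\setminus B$ and, on the $B$ side, to a point $q\in B\setminus A$, forcing $p=q$ by the Hausdorff property, a contradiction. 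Since each of the four endpoints $0_A,1_A,0_B,1_B$ can be claimed by at most one component, $A\cap B$ has at most two components. If it has exactly one, $A\cup B$ is again an arc; if it has two, the gluing pattern identifies both ends of $A$ with both ends of $B$ in a way that makes $A\cup B$ homeomorphic to $\s^1$. This lemma is where I expect the main difficulty to lie: ruling out infinitely many overlap components and the ``half-open'' pathologies is exactly the place where Hausdorffness is indispensable (the line with two origins violates the conclusion).

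Granting the lemma, I run a greedy exhaustion. Start with $V_1=U_1$. Given that $V_n$ is an arc, form $V_n\cup U_{n+1}$ and apply the lemma finitely many times to the (at most two) components of $U_{n+1}$ that meet $V_n$: either this enlarges $V_n$ to a strictly larger arc $V_{n+1}$, or at some stage a subset $S\cong\s^1$ appears inside $M$. In the latter case $S$ is open (a union of arcs) and compact, hence closed, so connectedness of $M$ gives $M=S\cong\s^1$. In the former case $V:=\bigcup_n V_n$ is an increasing union of open arcs; identifying each $V_n$ with an open interval compatibly, $V$ is itself an open interval, hence $V\cong\R$, and since the $U_i$ cover $M$ we get $M=V\cong\R$.

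Finally, $\s^1$ is compact while $\R$ is not, so the compactness hypothesis selects exactly one of the two models and the dichotomy is genuine; this also takes care of the (easy) converse, since $\s^1$ and $\R$ are themselves connected $1$-manifolds. Two secondary points must still be nailed down: the monotonicity and orientation bookkeeping in the gluing lemma (reversing the parametrization of $B$ when necessary), and the direct-limit identification $\bigcup_n V_n\cong\R$; both are routine once the lemma is established.
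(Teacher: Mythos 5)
The paper itself offers no proof of this theorem; it is merely stated with a citation to Lee's book, so there is no proof of record to compare against. Taken on its own merits, your argument is the standard two-chart gluing proof and its technical heart is sound. In particular, the Hausdorff argument that each end of a component $C$ of $A\cap B$ must run off an endpoint of $A$ or of $B$ is exactly right, and it is exactly the step that the line with two origins would violate. The count is airtight once one notes that the two ends of a single component always claim distinct endpoints and that distinct components claim disjoint sets of endpoints, so four available endpoints and at least two claims per component forces at most two components; the dichotomy (one component gives an arc, two give $\s^1$) then follows, with the orientation bookkeeping being the routine part you flagged.

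There is, however, a real gap in the exhaustion step. You set $V_1=U_1$ and $V_{n+1}=V_n\cup U_{n+1}$, but nothing in the setup guarantees that $U_{n+1}$ meets $V_n$; if it does not, the gluing lemma is vacuous and $V_n\cup U_{n+1}$ is a disjoint union of two arcs, which is not a $1$-manifold and certainly not an arc. This is not merely cosmetic, since the later claim ``$\bigcup_n V_n=M$'' depends on every chart being absorbed. The fix is to use connectedness of $M$ to reorder the (countably many, by Lindel\"{o}f) charts so that $U_{n+1}$ always meets $U_1\cup\cdots\cup U_n$: greedily choose the next chart meeting the current union, and observe that if no remaining chart meets it, the current union is nonempty, open, and closed, hence all of $M$. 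Once this reordering is explicit, the rest of your argument --- the direct-limit identification of a nested union of open arcs with $\R$, and the ``open, compact, hence closed, hence everything'' observation in the circle case --- goes through. One small clarification: a single application of the gluing lemma to the pair $(V_n,U_{n+1})$ already handles everything, since the lemma bounds the number of components of $V_n\cap U_{n+1}$ and decides both outcomes at once; there is no need to iterate it ``finitely many times'' within a single step.
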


Although the classification of surfaces was already completed in the beginning of twentieth century. A rigorous proof requires a precise definition of a surface, a precise notion of triangulation and a precise way of determining whether two surfaces are topologically same or not. This requires the notion of Euler-characteristic and orientability of surfaces and some notions of algebraic topology such as fundamental group and homology groups.

\textbf{Definition:} Let $X$ and $Y$ be two topological spaces. A function $f:X \to Y$ is continuous if the inverse image of each open set of $Y$ is open in $X$. Moreover, $f$ is a homeomorphism if it is one-one, onto and has continuous inverse. Such spaces are called topologically equivalent or same.

In 1860s A. F. M$\ddot{o}$bius and C. Jordan independently gave the classification of compact orientable surfaces. The classification of compact nonorientable  surfaces was published in the paper of W. van Dyck in 1888. The first rigorous proof of the classification theorem for compact surfaces was given in 1907 by Max Dehn, a student of Hilbert, and Poul Heegard. After Brahana's exact algebraic proof in 1921, (see ~\cite{brahana}), some additional proofs were made in the twentieth century (see  ~\cite{massey},  ~\cite{ahlfors},  ~\cite{moise}).

Every proof of the classification theorem for surfaces requires that every surface can be triangulated. All the papers written before 1930, assumed all surfaces were triangulated. The first rigorous proof that surfaces can be triangulated was published by Tibor Rad$\acute{o}$ in 1925 (see  ~\cite{rado}).  Intuitively, a surface can be triangulated if it is homeomorphic to a space obtained by pasting triangles (2-simplexes) together along edges. Any such homeomorphism is called a $triangulation$ of the surface, and any surface that admits such a homeomorphism is said to be triangulable. For this we have to define the notion of a simplicial complex. These are spaces constructed from simplexes which are points, line segments and triangles etc. They provide a highly useful way of constructing topological spaces and play a fundamental role in geometry and algebraic topology.

The main problem for the the classification of surfaces is to find invariants to decide whether two surfaces are homeomorphic or not. Here we study  various properties of surfaces and classify them accordingly. For this we search for topological invariants of surfaces, that is preserved under homeomorphisms, together  with technique of calculating them. The invariants describe the structure of surfaces in terms of numbers. We can assign a numerical invariant to every compact surface, its Euler characteristic. For a triangulated surface, if $|V|$ is the number of vertices, $|E|$ is the number of edges, and $|F|$ is the number of faces in any triangulation of the surface. Then its Euler characteristic is given by
$$\chi=|V|-|E|+|F| \ .$$
We can show that homeomorphic surfaces have the same Euler characteristic. For proving that the Euler characteristic is a topological invariant of surfaces, we  will have to define homology groups.

It is not easy to define precisely the notion of orientability of a surface and to prove that it is topologically invariant. Johann Listing, a student of Gauss, published his description of the M$\ddot{o}$bius band in 1861-1862, but A. F. M$\ddot{o}$bius independently described its properties 
\begin{center}
\includegraphics[width=1\columnwidth]{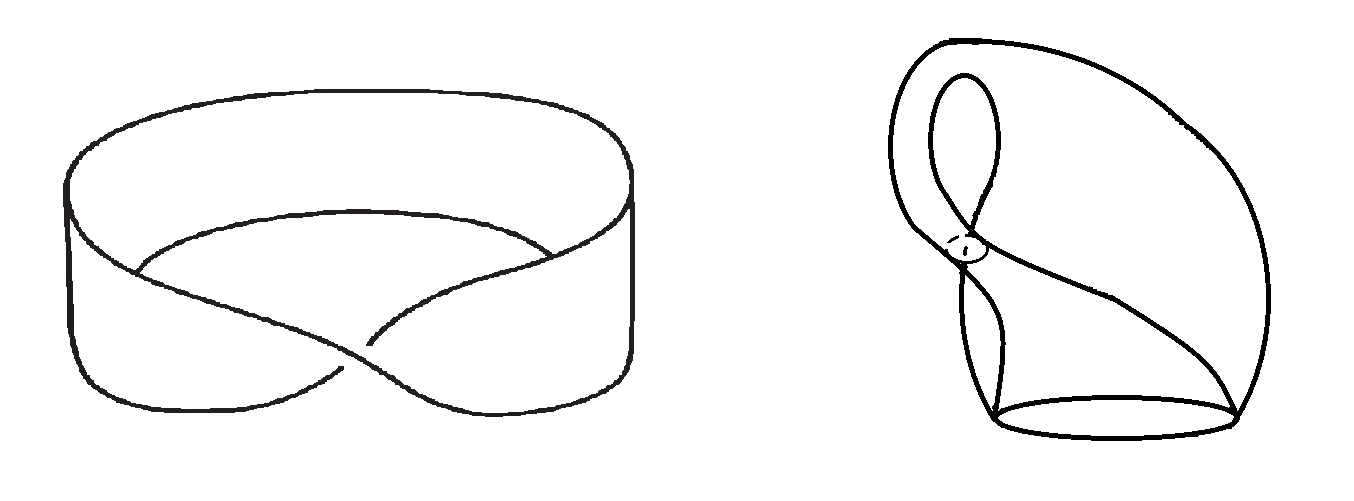}
\end{center}
\begin{center}
Figure 1.1
\end{center}
in terms of nonorientability  in 1865. In 1882 Felix Klein described the Klein bottle, one of the most famous nonorientable surface which cannot be embedded in $\R^3$ without self intersection, see Figure 1.1.

Consider a transparent surface and place a small circle on it, so that it is visible on both sides. Assign a direction on it, clockwise or anti-clockwise. On one side it is clockwise and on the other side it is anti-clockwise. Consider an ant that travels on this surface, started out at any point on this circle with a choice of direction on the circle and carried this direction with him. After traveling along any closed curve, when he comes back to initial point, either the direction of circle is opposite or same. If it is possible to travel along a closed path in such a way that when it returns to its original position, the direction is reversed, then the surface is nonorientable. 

The problem of noncompact surfaces did not occur in the nineteenth century. In 1923, B.  V. Ker$\acute{e}$kj$\acute{a}$rt$\acute{o}$  gave the classification theorem of noncompact surfaces (see ~\cite{kerekjarto}), which is known as Ker$\acute{e}$kj$\acute{a}$rt$\acute{o}$'s theorem. Ker$\acute{e}$kj$\acute{a}$rt$\acute{o}$'s main idea was that he defined the ideal boundary of a noncompact surface with this he compactify the noncompact surface to a compact surface. Later, in 1963 I. Richards and in 1971 M. E. Goldman proved the result more precisely.  There the case of surfaces without boundaries were considered. In 2007, K. I. Mischenko and A. O. Prishlyak gave a complete classification of noncompact surfaces with boundary.


\section{Definition and Examples of Manifolds}

 \textbf{Definition:}
Let $M$ be a topological space. The conditions for $M$ to be a manifold are as follows.

First, $M$ should satisfies the \emph{Hausdorff  axiom}. That is, for any two distinct points $p,q \in$ $M$, there exist disjoint neighborhoods $U$ and $V$ of  $p$ and $q$  respectively in $M$. Such a space is called a Hausdorff space.

The second condition is that for any arbitrary point $p$ of $M$, there exist an open neighborhood $U$ of $p$ homeomorphic to an open set $V$ of $ \R^n $.  Such a neighborhood is called a $Euclidean \ neighborhood$ of $p$. In this case `$n$' is called the local dimension of $M$ at $p$. If  $M$ is connected then the local dimension is constant and is called the dimension of $M$.

The third condition, is the $second \ countability$, claims that there exist a countable basis for $M$. That is there exist countably many open sets $ \{U_i\}_i$ of $M$ such that for each open set $U$ of $M$ and for each $p\in U $, there is some natural number $i$ such that $ p\in U_i \subset U $ . Whereas the Hausdorff axiom ensures that there are enough open sets, second countability ensures that they are not too many.

A topological space satisfying the above three conditions is called an \emph{n-dimensional  manifold} or simply an \emph{n-manifold}. In particular a connected 2-manifold is called a \emph{surface}. Also a compact connected 2-manifold is called a \emph{closed surface} whereas a noncompact connected 2-manifold is called an \emph{open surface}.

\textbf{Examples:}\\
(1) The most trivial example of an $n$-manifold is $\R^n$ itself. Infact any open subset of $\R^n$ is an $n$-manifold. More generally, any open subset of any $n$-manifold is an $n$-manifold.\\
(2) Another example of an $n$-manifold is surface of an $n$-sphere, $\s^n = \{ (x_1, x_2, \ldots ,x_{n+1})\in \R^{n+1} | \  x^2_1+x^2_2+\ldots + x^2_{n+1}=1 \}$. One way to see that it is locally Euclidean is by stereographic projection. Let $p_+$=(0,0,\ldots ,0,1) and $p_-$=(0,0,\ldots ,0,$-$1) be the north and south pole of $\s^n$. Then the stereographic projection from $U_+=\s^n \setminus p_+$ and  $U_-=\s^n \setminus p_-$ onto $ \R^n$ are homeomorphisms defined as:
$$f:U_+ \to \R^n$$
$$f(x_1, x_2, \ldots ,x_{n+1})= \frac{1}{1-x_{n+1}}(x_1, x_2, \ldots ,x_n) \ .$$
Similarly we can define a homeomorphism from $U_-$ to $\R^n$.  It is Hausdorff and second countable being a subset of $\R^{n+1}$, and hence an $n$-manifold.\\

\begin{center}
\includegraphics[width=2.2\columnwidth]{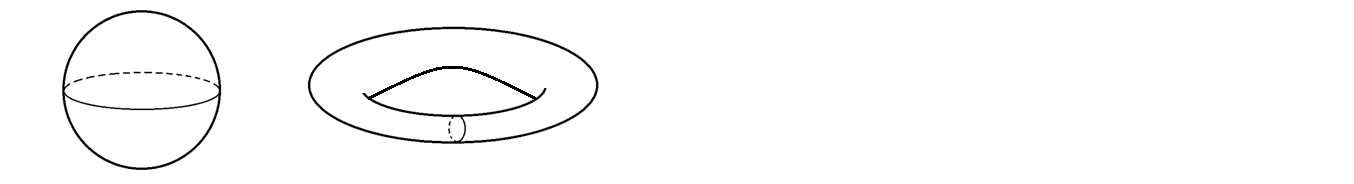}
\end{center}
\begin{center}
Figure 1.2
\end{center}
(3) If $ M_1$ and $M_2$ are manifolds of dimension $n_1$ and $n_2$, then the  product $M_1\times M_2$ is again a manifold and the dimension adds up. Given any $p=(p_1, p_2) \in M_1\times M_2$, there exists a neighborhood $U_i$ of $p_i$ and a homeomorphism $\Phi_i$ from $U_i$ to an open subet of $\R^{n_i}$, for each $i=1,2$. Then the product map $\Phi_1 \times \Phi_2$ is a homeomorphism from $U_1 \times U_2$ to a neighborhood of $\R^{n_1+n_2}$.  A particularly important example is of a torus $\mathbb{T}^2 = \s^1\times \s^1 $  which is a 2-manifold.\\
(4) Next consider the real projective plane $\p2$. It is the set of all lines through the origin in $\R^3$. $\p2$ can be interpreted as the quotient, $\p2 = \R^3$$\setminus$$\{0\} / \sim$, where `$\sim$' denotes the equivalence relation of points lying on the same line through origin: for $x,y \in \R^3$, we have $ x \sim y$  if and only if $x = \la y$, for some nonzero real number $\la$. The equivalence classes are regarded as the points of space $\p2$. The point $x = (x_1,x_2,x_3) \in \p2$, where atleast one of the coordinate must be non zero, is usually denoted by one of its representative as follows:
$$[x_1 : x_2 : x_3] = \{\la(x_1,x_2,x_3) | \ \la \ne 0 \}.$$
We topologize $\p2$ by giving it the quotient topology with respect to the map $q : \R^3 \setminus\{0\} \to \p2$. Then a subset $U$ of $\p2$ is open if and only if its inverse image $q^{-1}(U)$ is open in $\R^3 \setminus \{0\}$.

Consider the set $U_i = \{[x_1 : x_2 : x_3] \in \p2 |\  x_i  = 1 \}$. Then sets $U_i$, for $i=1,2,3$ covers $\p2$ and  each of the set $U_i$ is homeomorphic to $\R^2$. We can define a homeomorphism as  follows:  for $i=1$ the map $f_1 : U_1 \to \R^2$ is defined by 
$$f_1([x_1 : x_2 : x_3]) = (x_2 , x_3).$$

\begin{center}
\includegraphics[width=1\columnwidth]{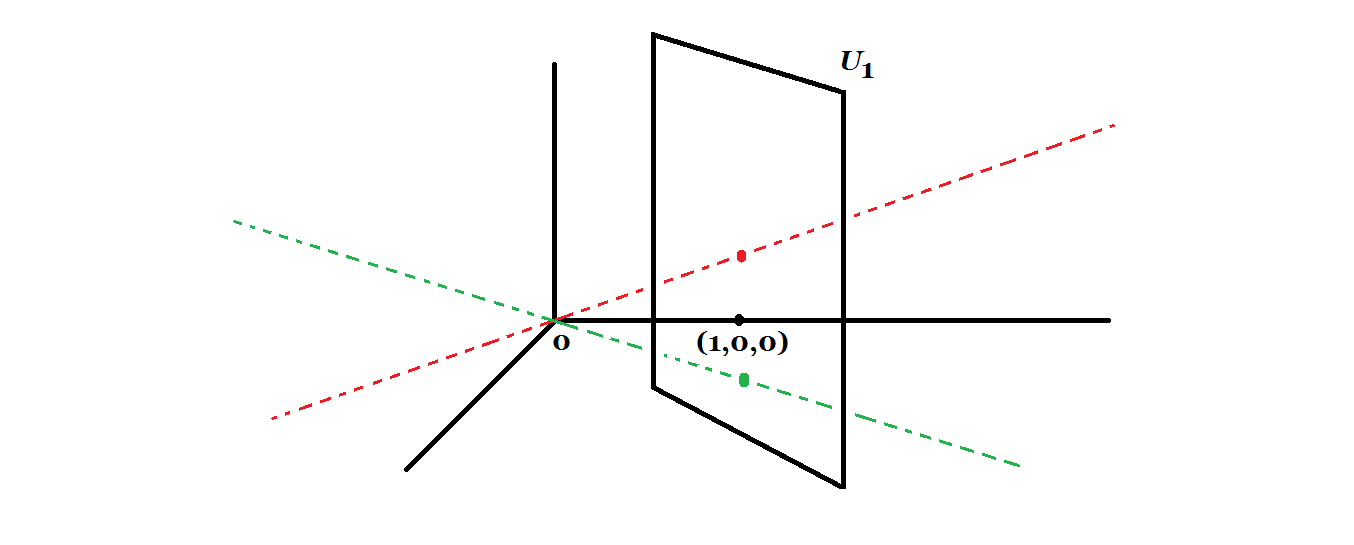}
\end{center}

\begin{center}
Figure 1.3
\end{center}

Similarly we can define for other $U_i ' s$. Since this quotient map `$q$' is both an open map as well as closed map, one can show that $\p2$ is second countable Hausdorff space and hence a 2-manifold. On the same line we can define an $n$-dimensional real projective space $\p n$.

At the same time one can define $\p2$ to be the quotient space of $\s^2$ obtained by identifying each point $x$ with its antipodal point $-x$ and let $p : \s^2 \to \s^2 /\sim$ denotes the quotient map. It makes exactly the same identification as $q|_{\s^2}$. Also one can go a step further to define $\p2$ as a quotient of a closed unit disk $\D^2 \subset \R^2$ by identifying the antipodal points on $\s^1$. For $\D^2$ is homeomorphic to the closed upper hemisphere $\s^2_+ = \{(x_1,x_2,x_3) \in \s^2 : x_3 \ge 0 \}$. And $p|_{\s^2_+}$ is the quotient map that identifies only the antipodal points of the equator $\s^1 \times \{0\}$.

\textbf{Manifolds with boundary:} An $n$-manifold with boundary is a second countable Hausdorff space in which every point $p\in M$ has a neighborhood homeomorphic to\\
(N1) either an open subset of $\R^n$; or\\
(N2) a neighborhood of origin in the upper half space $\h^n=\{(x_1,x_2,...,x_n)\in \R^n \colon x_n \ge 0\}$ with $p$ mapped to origin.

Any point $p \in M$ with neighborhood of type (N2) is called a boundary point. We denote  the set of all points of $M$ that are mapped to $ \partial \h^n $ under the homeomorphism by $\partial M$. If $\partial M \neq \phi $ we say that $M$ is a manifold with boundary. A surface with boundary is called a \emph{bordered  surface}. The interior of $M$ is denoted by $int(M)=M \setminus\partial M$.

\textbf{Examples:}
\\
(1) The upper half space $\h^n$ is obviously a manifold with boundary, as is any closed ball in $\R^n$ or a closed interval [$a,b$] for $a,b\in \R$. \\
(2) Disks and cylinders are all 2-manifolds with boundary homeomorphic to a space obtained by removing open disk(s) from surface of a sphere $\s^2$, one in case of a disk and two in case of a cylinder.\\
(3) The space obtained by removing an open disk from the projective plane $\p2$ is a bordered surface homeomorphic to the M$\ddot{o}$bius band, we shall explore this space in detail later.

\begin{center}
\includegraphics[width=2.2\columnwidth]{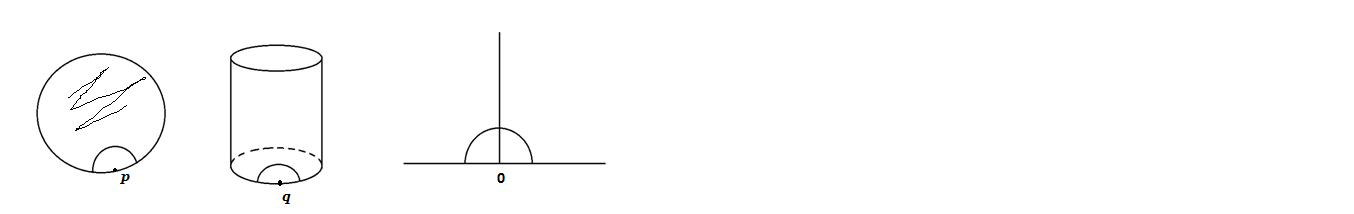}
\end{center}

\begin{center}
\includegraphics[width=1\columnwidth]{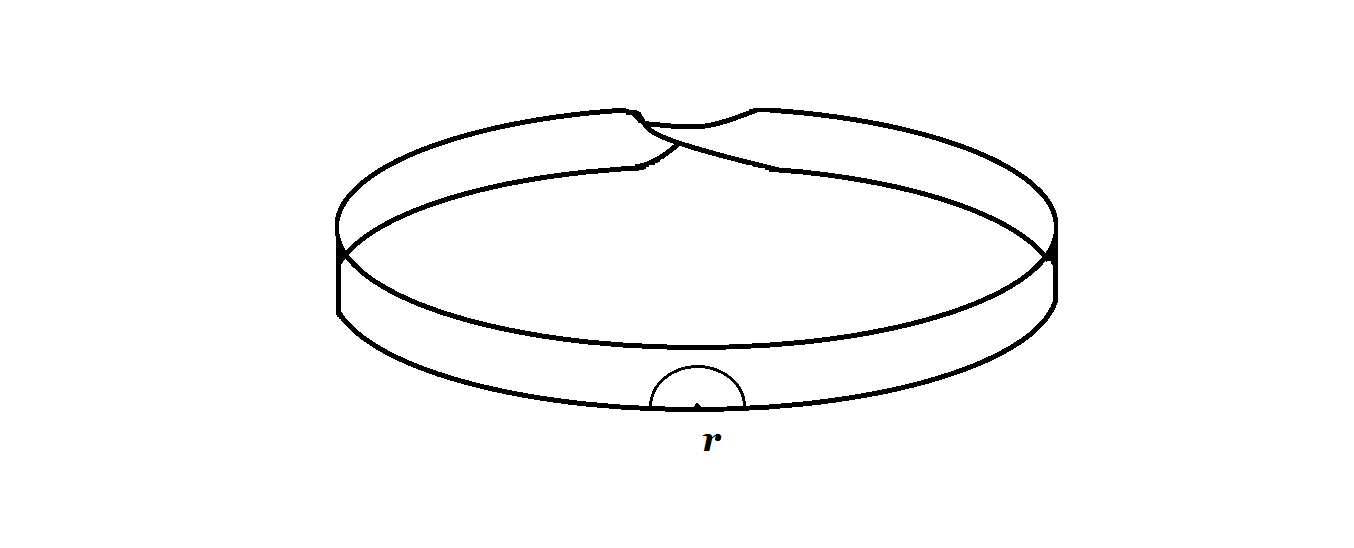}
\end{center}

\begin{center}
Figure 1.4
\end{center}

\begin{theorem}\label{ch1,sec2,th1}
Let $M$ be an $n$-manifold with boundary. Then either $ \partial M $ is empty or a manifold of dimension ($n$-1).
\end{theorem}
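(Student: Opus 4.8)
The plan is to verify three things: that $\partial M$ is a well-defined subset of $M$, independent of which charts are used to detect it; that when $\partial M \neq \emptyset$ every point of it has a Euclidean neighborhood of dimension $n-1$; and that $\partial M$ inherits the Hausdorff and second-countability axioms. The third point is immediate, since every subspace of a Hausdorff space is Hausdorff and every subspace of a second-countable space is second countable. So the real content lies in the first two, and essentially all of the difficulty is concentrated in the first.

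First I would prove the \emph{invariance of the boundary}: no point $p \in M$ can simultaneously possess a neighborhood of type (N1) and a neighborhood of type (N2) with $p$ sent to a point of $\partial\h^n$. Suppose it did, with $\phi_1 \colon U_1 \to \phi_1(U_1)$ a homeomorphism onto an open subset of $\R^n$ and $\phi_2 \colon U_2 \to \phi_2(U_2)$ a homeomorphism onto a neighborhood of the origin in $\h^n$ satisfying $\phi_2(p) = 0 \in \partial\h^n$. Set $W = U_1 \cap U_2$, an open neighborhood of $p$; then $\phi_1(W)$ is open in $\R^n$, and the composite $\phi_2 \circ \phi_1^{-1} \colon \phi_1(W) \to \R^n$ is a continuous injection defined on an open subset of $\R^n$. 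By the theorem on invariance of domain, its image $\phi_2(W)$ is open in $\R^n$. But $\phi_2(W) \subseteq \h^n$ and $0 \in \phi_2(W)$, while no neighborhood of $0$ in $\R^n$ is contained in $\h^n$ (points $(0,\dots,0,-\varepsilon)$ lie arbitrarily close to $0$ outside $\h^n$) --- a contradiction. Hence the two types are mutually exclusive, so the set $\partial M$ of points forced to be of type (N2) is well defined, as is $\operatorname{int}(M) = M \setminus \partial M$. As a by-product, any chart $\phi \colon U \to \phi(U) \subseteq \h^n$ must carry $U \cap \partial M$ into $\partial\h^n$ and $U \cap \operatorname{int}(M)$ into the open half-space; in fact $\phi(U \cap \partial M) = \phi(U) \cap \partial\h^n$.

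Next, assuming $\partial M \neq \emptyset$, I would build charts for it directly from boundary charts of $M$. Fix $p \in \partial M$ and a chart $\phi \colon U \to V$ onto a neighborhood $V$ of the origin in $\h^n$ with $\phi(p) = 0$. The set $U \cap \partial M$ is open in $\partial M$, and by the previous paragraph $\phi$ restricts to a homeomorphism of $U \cap \partial M$ onto $V \cap \partial\h^n$. Identifying $\partial\h^n = \{(x_1,\dots,x_{n-1},0)\}$ with $\R^{n-1}$, the set $V \cap \partial\h^n$ is open in $\R^{n-1}$ because $V$ is open in $\h^n$. Thus every point of $\partial M$ has a neighborhood homeomorphic to an open subset of $\R^{n-1}$, and together with the inherited separation and countability axioms this makes $\partial M$ an $(n-1)$-manifold (with empty boundary). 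The main obstacle is plainly the invariance-of-the-boundary step, which rests on the topological invariance of domain --- a genuinely nontrivial theorem, typically proved by algebraic-topological means; everything downstream of it is routine point-set topology. One should also record the degenerate case $n = 1$: there $\partial\h^1 = \{0\}$, so $\partial M$ is a discrete (hence, by second countability, countable) $0$-manifold, consistent with the convention $\R^0 = \{0\}$.
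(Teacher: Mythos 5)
Your proposal is correct and follows the same basic route as the paper: restrict a boundary chart $f\colon U \to V \subseteq \h^n$ to $U\cap \partial M$ and observe that its image $V\cap \partial\h^n$ is open in $\R^{n-1}$. The one genuine difference is that you supply a justification the paper silently omits. The paper's proof writes $U' = f^{-1}(V\cap(\R^{n-1}\times\{0\})) = \partial M \cap U$ without comment, but that identity is exactly the invariance of the boundary: a priori, a point mapped into $\partial\h^n$ by one chart might be mapped into the open half-space by another, so the equality needs proof. Your first paragraph supplies it via invariance of domain, and as a by-product establishes that $\partial M$ is well defined in the first place --- a point the paper's definition (``the set of all points of $M$ that are mapped to $\partial\h^n$ under the homeomorphism'') also takes for granted. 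So you have not taken a different route so much as filled a real gap: the nontrivial content of the theorem is precisely the step you isolate, and it does rest on a hard tool (invariance of domain). Everything else --- openness of $U\cap\partial M$ in $\partial M$, the $\R^{n-1}$ chart, inheritance of the Hausdorff and second-countability axioms --- is routine and matches the paper.
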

\begin{proof}
If $\partial M$ is empty then by definition of $\partial M$ each point has a neighborhood of type (N1). In that case $M$ will be a manifold .

Now let $\partial M \neq \phi$. Then for each $x \in \partial M$, there is a neighborhood $U$ of $x$ of type (N2), that is, there is a homeomorphism $ f\colon U \to V$, where  $V$ is a neighborhood of origin in $ \h^n$. Put $U'=f^{-1}(V\cap (\R^{n-1}\times \{0\})) $. Then  $ U'=\partial M \cap U $ is open in $\partial M$  and $ g=f\vert_{U'} $ is a homeomorphism from a neighborhood of $x$ to an open subset of $\R^{n-1}$. Thus $\partial M$ is a ($n$-1)-manifold.
\end{proof}

\textbf{Remark:} A manifold is a manifold with boundary such that $\partial M = \phi$, but a manifold with boundary such that $\partial M \ne \phi$ is not a manifold.

\begin{theorem}
The boundary of a compact manifold has a finite number of components.
\end{theorem}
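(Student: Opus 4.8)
The plan is to show that $\partial M$ is a compact, locally connected space, and then to invoke the standard fact that such a space can have only finitely many connected components.

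First I would check that $\partial M$ is closed in $M$. Its complement, the interior $int(M)$, is open: if $p\in int(M)$ then $p$ has a neighborhood $U$ of type (N1), i.e. homeomorphic to an open subset of $\R^n$, and then $U$ itself serves as such a neighborhood for each of its points, so $U\subseteq int(M)$. Hence $\partial M=M\setminus int(M)$ is closed in $M$. Since $M$ is compact and a closed subset of a compact space is compact, $\partial M$ is compact. If $\partial M=\phi$ there is nothing to prove, so assume $\partial M\ne\phi$. By Theorem \ref{ch1,sec2,th1}, $\partial M$ is then an $(n-1)$-manifold; in particular it is locally Euclidean and therefore locally connected, so each connected component of $\partial M$ is open in $\partial M$. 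The components thus form a family of pairwise disjoint nonempty open sets whose union is $\partial M$, that is, an open cover of $\partial M$ that admits no proper subcover. Compactness gives a finite subcover, which forces the family of components itself to be finite.

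The only step that calls for a little care is the claim that $\partial M$ is closed, equivalently that no point of $M$ is simultaneously an interior point and a boundary point; this is exactly the point at which the topological invariance of the boundary (a consequence of invariance of domain, or of the fact that $\h^n$ is not locally homeomorphic to $\R^n$ at points of $\partial\h^n$) is used. Granting that, everything else is the routine observation that a compact, locally connected space has finitely many components, applied here to the boundary manifold $\partial M$.
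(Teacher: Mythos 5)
Your proof is correct and follows essentially the same route as the paper: $\partial M$ is closed in the compact space $M$, hence compact; it is an $(n-1)$-manifold by Theorem \ref{ch1,sec2,th1}, hence locally connected, so components are open; and compactness forces finitely many. You are more careful than the paper in justifying that $\partial M$ is closed (the paper takes this for granted) and in flagging the reliance on invariance of domain, but the underlying argument is the same.
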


\begin{proof}
Let $M$ be a compact $n$-manifold with boundary. Since boundary is a closed subset of the compact space $M$ and so is a compact set. It follows from Theorem \ref{ch1,sec2,th1} that $N = \partial M$ is a compact ($n$-1) manifolds. For each point $x \in C$ a component of $N$, there is a connected open set $U \subset C \subset N$ and $x \in U$, so every component of $N$ is an open set of $N$. Since $N$ is compact the components are finite.
\end{proof}


\chapter{Triangulation of 2-Manifolds}\label{ch2}
\section{Simplicial Complex}\label{ch2,sec1}

A set $X=\{x_0,x_1,\ldots,x_k \}$ of points of $\R^n$ is said to be $linearly \ independent$  if for any arbitrary reals $\alpha_0,\alpha_1,\ldots , \alpha_k$,
$$\alpha_0x_0+\alpha_1x_1+\ldots+\alpha_kx_k=0$$
$$\qquad \qquad \qquad \Rightarrow \alpha_i=0, \qquad {\rm{for}}\  i=0,1,\ldots,k.$$

We say that $k+1$ points $\{x_0,x_1,\ldots,x_k \}$ are in $general \ position$ (or {\it{affinely  independent}}) if the set $\{ x_1- x_0,x_2- x_0,\ldots , x_k- x_0 \}$ is linearly independent.

\textbf{Definition:}
Given a set $A=\{ a_0,a_1,\ldots , a_k\}$ of points in general position in $\R^n, n \ge k$, the {\it{k-dimensional  simplex}} or $k$-$simplex$ $\sigma ^k =\ <a_0,a_1,\ldots ,a_k>$ is the $convex \ hull$ of $A$ given by
$$\qquad \qquad \sigma ^k = \{ \sum_{i=0}^{k} \beta _i a_ i  \	\vert \sum_{i=0}^{k} \beta_i =1 \	{\rm{and}}  \	0 \le \beta_i \le 1, \ {\rm{for \ each}} \  i=0,1,\ldots ,k\}$$

The points $a_i \in \sigma ^k$ are called vertices of the  $k$-simplex  $\sigma ^k$.

\textbf{Examples:}
\\
(1) The 0-simplex consist of one point only.\\
(2) The 1-simplex $\sigma^1 = <a_0,a_1>$ spanned by two distinct points of $\R^n$ is a line segment with end points $a_0$ and $a_1$.\\
(3) The 2-simplex $\sigma^2=<a_0,a_1,a_2>$ spanned by 3 non-collinear points $a_0,a_1$ and $a_2$ is the triangle with the vertices $a_0,a_1,a_2$.\\
(4) The 3-simplex $\sigma^3=<a_0,a_1,a_2,a_3>$ spanned by 4 non-planar points is a tetrahedron with vertices $a_0,a_1,a_2,a_3$.

\begin{center}
\includegraphics[width=1\columnwidth]{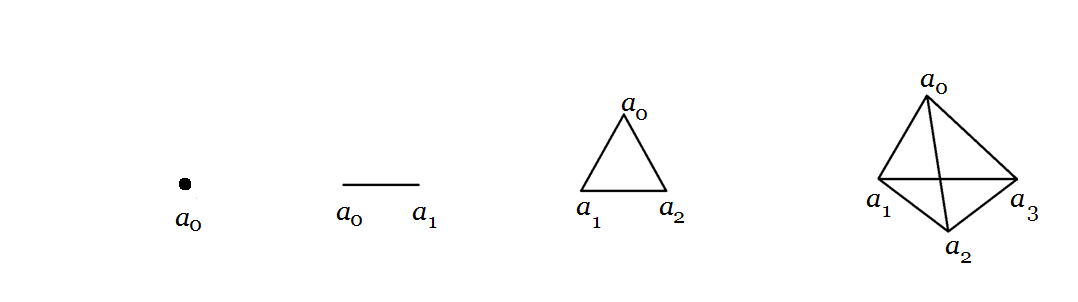}
\end{center}

\begin{center}
Figure 2.1 
\end{center}

Let $B$ be a subset of $A$. For $p \le q$, the convex hull $\sigma^p$ of $B$  is called a {\it{p-face}}  of $ \sigma^q$, we denote it by $\sigma^p \le  \sigma^q$. A 0-face is simply a vertex whereas a 1-face is an edge. If $\sigma^p \ne \sigma^q$ we say that $\sigma^p$ is a proper face of $\sigma^q$. 

One can see that every $k$-simplex is homeomorphic to $\D^k$, thence a $k$-manifold with boundary. Thus the boundary of a $k$-simplex is the union of its ($k-1$) faces and its interior (also called {\it{open k-simplex}}) is the simplex minus its boundary. 

\textbf{Remark:} It is convenient to regard the empty set as the unique ($-$1)-dimensional simplex.

\textbf{Definition:} A $Euclidean \ complex$ or $Simplicial \ complex \ K$  is a collection of simplexes in $\R^n$, which satisfies the following conditions: \\
(1) If $\sigma \in K$, then all faces of $\sigma$ are also in $K$. \\
(2) If $\sigma,\tau \in K$ then either $\sigma \cap \tau = \phi \ or \  \sigma \cap \tau$ is a common face of both $\sigma$  and  $\tau$.\\
(3) Any $x \in \sigma$ in $K$ lies in an open set $U$ that intersects only finite number of simplexes in $K$.

\textbf{Example:} Let $\sigma ^2 =<a_0,a_1,a_2>$ be a 2-simplex. The set $$K=\{<a_0>,<a_1>,<a_2>,<a_0,a_1>,<a_0,a_2>,<a_1,a_2>,<a_0,a_1,a_2>\}$$
 of all faces of $\sigma ^2$ is a simplicial complex.

 The dimension of a simplicial complex $K$ is defined to be the maximum dimension of any simplex in $K$. A subset $L$ of $K$ is said  to be a subcomplex of $K$ if $L$ contains all the faces of every simplex in $L$, that is, $L$ is itself a simplicial complex. The vertices of every 
\begin{center}
\includegraphics[width=1\columnwidth]{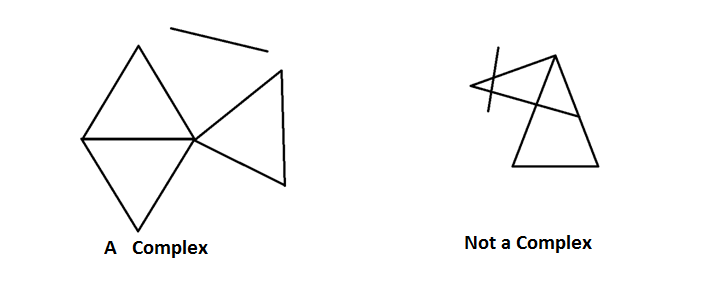}
\end{center}

\begin{center}
Figure 2.2
\end{center}
simplexes of simplicial complex $K$ are briefly referred as the vertices of $K$. Also for any natural number $i,\   K^i $, the $i$-$skeleton$ of $K$, is the subcomplex consisting of all simplexes of $K$ having dimension less than or equal to $i$.

Given a simplicial complex $K$, let $|K|$ denotes the union of all the simplexes of $K$. Then $|K|$ with the subspace topology induced from $\R^n$ is a topological space. A topological space $M$ for which there exist a simplicial complex $K$ such that $|K|$ is homeomorphic to $M$ is called a $polyhedron$, in this case $M$ is said to be triangulable and $K$ is called a triangulation of $M$. Evidently a polyhedron may have several triangulations.

\textbf{Examples:}\\
(1) Consider the 2-simplex $\sg^2 = <a_0,a_1,a_2>$. Then the set of all faces of $\sg^2$
$$K=\{ <a_0>,<a_1>,<a_2>,<a_0,a_1>,<a_0,a_2>,<a_1,a_2>,<a_0,a_1,a_2>\}$$
is a simplicial complex. Then $|K|$ is a triangle homeomorphic to a unit disk $\D^2$ in $\R^2$. In general the complex $K$ consisting of all faces of some $n$-simplex $\sg^n$ is a triangulation of unit disk $\D^n \subset \R^n$.\\
(2) Let $\sg^{k+1}$ be a $(k+1)$-simplex and $K$ be the complex consisting of all faces of $\sg^{k+1}$. Then the subcomplex $K^k$ consists of all proper faces of $\sg^{k+1}$ is a triangulation of $\s^k$.

\begin{center}
\includegraphics[width=1\columnwidth]{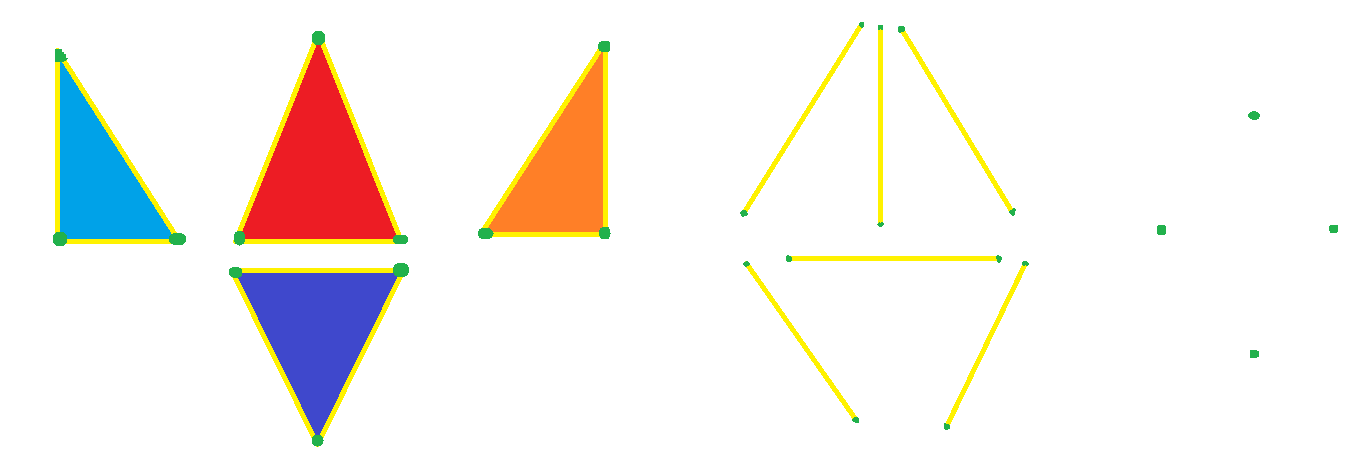}
\end{center}
\begin{center}
\includegraphics[width=1.2\columnwidth]{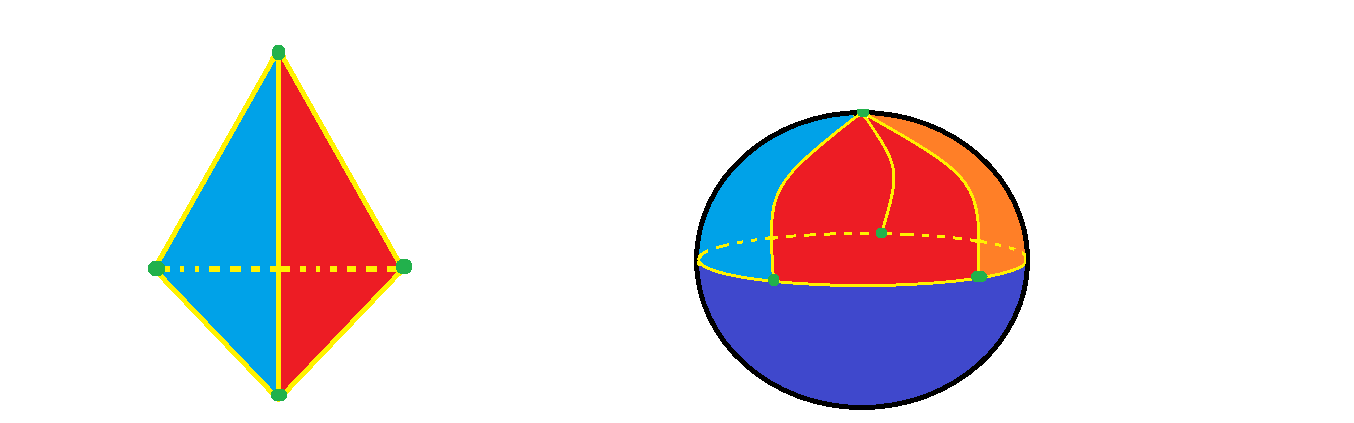}
\end{center}

\begin{center}
Figure 2.3
\end{center}

\section{Subdivisions of a Complex}
 
We begin by defining the notion of subdivision or refinement of a complex. It gives very simple idea to transform a simplicial complex to obtain another one.

\textbf{Definition:} A complex $K$ is called a subdivision of the complex $L$ if \\
(1) $\vert K \vert =\vert L \vert$; and \\
(2) for every simplex $\sigma' \in K$ there is a simplex $\sigma \in L$ such that $\sigma' \subset \sigma$.

Suppose that $K$ is a subdivision of $L$. We say that it is an $elementary \ subdivision$ if $K$ contains exactly one more vertex than $L$ see Figure 2.4 below. If $L$ is a finite complex 
\begin{center}
\includegraphics[width=2.2\columnwidth]{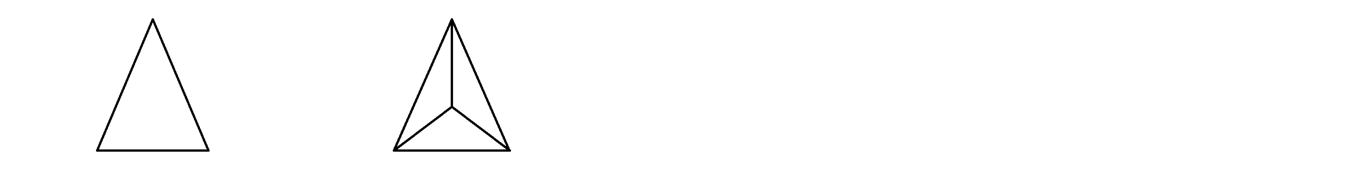}
\end{center}
\begin{center}
Figure 2.4
\end{center}
and $K$ is any subdivision of $L$, there is a finite sequence $L$=$L_0,L_1,\ldots ,L_k$=$K$ of complexes such that $L_{i+1}$ is an elementary subdivision of $L_i$, for $i=0,1,2,\ldots,k-1$.

\textbf{Barycentric Subdivision:}
Given any $n$-simplex $\sigma^n=<v_0,v_1,\ldots ,v_n> \subset \R^m$. The \emph{barycentric coordinates} of any point $p \in \sigma^n$ is $(\alpha_0,\alpha_1,\ldots ,\alpha_n)$  where 
$$p=\sum_{i=0}^n \alpha_iv_i \ ,\ \sum_{i=o}^n \alpha_i=1, \  \alpha_i \ge0 \ \ {\rm{for \ each}} \  i.$$
The $barycenter \ b_{\sigma^n}$ of $\sigma^n$ is the point of $\sigma^n$, all of whose barycentric coordinates are equal, and is  given by
$$b_{\sigma^n}=\sum_{i=0}^n \frac{1}{n+1} v_i$$

The barycenter of  a vertex $v$ is $v$ itself. The barycenter of an edge is simply its midpoint.

\textbf{Definition:} Let $K$ be a Euclidean complex. We define a complex $b$K called the $barycentric \ subdivision$ of $K$ by induction on the dimension of $K$ as follows \\
(1) Set $bK^0=K^0$. \\
(2) Assuming that $bK^i$ is defined, $bK^{i+1}$ is the union of $bK^i$ and the set of all simplexes of the form $v\sigma^i=<v,v_0,\ldots ,v_i>$, where $v$ is the barycenter of a simplex $\sigma^{i+1}$ of $K$ and $<v_0,\ldots ,v_i>=\sigma^i \in bK^i, \ \sigma^i \subset \sigma^{i+1}$.\\
Since the dimension of the simplexes in a Euclidean complex is bounded, so this process terminates resulting $b$K.

\begin{center}
\includegraphics[width=1\columnwidth]{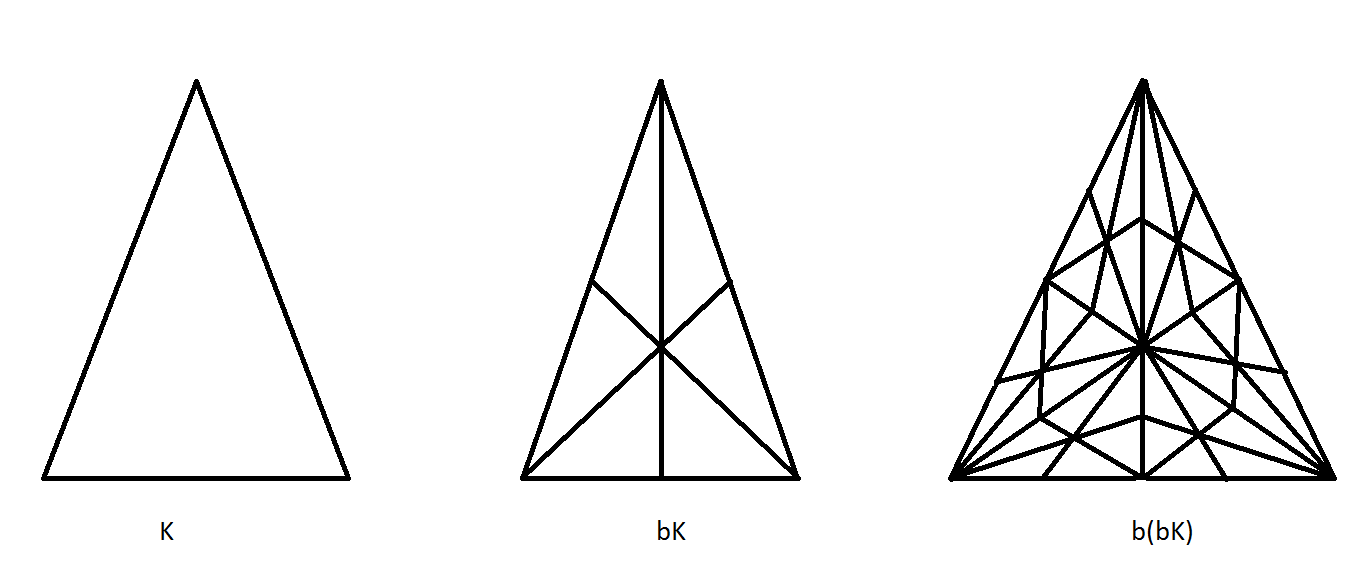}
\end{center}

\begin{center}
Figure 2.5
\end{center}

\textbf{Definition:} In a complex $K$, for each vertex $v$, $St(v)$, the star of $v$ in $K$, is the complex consisting of all simplexes of $K$ that contain $v$ together with all their faces. Also for $v\in K$ the barycentric star of $v$ denoted by $b(St(v))$ is the complex consisting of all simplexes of $b$K that contain $v$, together with all their faces.

\textbf{Definition:} Let $K$ be a triangulation of $M$ and $A$ be a subcomplex of $K$. Then the union of all simplexes of $b(bK)=b^2K$ that intersect $A$ is called the \emph{second  barycentric  neighborhood} (or  SB-\emph{neighborhood}) of $A$ in $K$, and is  denoted by $N(A)$.

Consider a 2-simplex $\sigma=<v_0,v_1,v_2>$ and let $e_0=\ <v_0,v_1>,\ e_1= \ <v_0,v_2>,\ e_2= \ <v_1,v_2>$ be the edges. Then $\sigma$ can be expressed as a union 

$$\sigma=N(v_0)\cup N(v_1)\cup N(v_2)\cup N(b_{e_0})\cup N(b_{e_1})\cup N(b_{e_2})\cup N(b_{\sigma}) \ .$$
Each of them is homeomorphic to a closed disk with disjoint interior.

\begin{theorem}\label{ch5,th2}
If a simplicial complex K is a triangulation of a compact surface M (with or without boundary) then \\
\emph{(1)} every simplex of K is a face of a \emph{2}-simplex.\\
\emph{(2)} every edge of K is the common face of at most two \emph{2}-simplexes. \\
\emph{(3)} given any two \emph{2}-simplexes $\sigma '$ and $\sigma ''\in K$ there is a sequence of \emph{2}-simplexes $\sigma'=\sigma_1, \sigma_2, \ldots ,\sigma_n = \sigma''$ such that $\sigma_i$ and $\sigma_{i+1}$ have a common edge $e_i$ for i=1,\emph{2},\ldots ,n$-$\emph{1}. \\
\emph{(4)} the edges of K which are the faces of precisely one \emph{2}-simplex of K, together with all of their vertices form a triangulation of the boundary $\partial M$ of M.
\end{theorem}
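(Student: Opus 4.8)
The plan is to reduce all four claims to a local analysis of stars and links of the simplices of $K$, using that $M$ is locally homeomorphic to an open subset of $\h^2$ together with invariance of domain (so that the manifold boundary $\partial M$ is well defined). Since $M$ is compact, $K$ is finite. First I would check that $\dim K=2$: a simplex $\sg$ with $\dim\sg=k\ge 3$ would have its open simplex an open subset of $|K|=M$ homeomorphic to $\R^k$, impossible in a $2$-manifold; and $K$ must contain a $2$-simplex, else $|K|$ is a $1$-complex and no point has a neighbourhood homeomorphic to an open set of $\h^2$. For \emph{(1)}: if a simplex $\sg$ (then of dimension $\le 1$) were a face of no $2$-simplex, a point in its relative interior would have a neighbourhood in $|K|$ lying inside the union of the finitely many simplices through it, all of dimension $\le 1$ — impossible by invariance of dimension. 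So every simplex is a face of a $2$-simplex.

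Next I would set up the local tool behind \emph{(2)}, \emph{(3)}, \emph{(4)}. Let $e$ be an edge and $x$ an interior point of $e$; the carrier of $x$ is $e$, so a small neighbourhood of $x$ in $|K|$ is the union of $e$ and the $2$-simplices having $e$ as a face, and is homeomorphic to $\R\times Y$ with $Y$ a cone on $n(e)\ge 1$ points, where $n(e)$ is the number of such $2$-simplices. Comparing this with the only two local models of a surface point — an open disk, or a half-disk with the point on the straight part of the boundary — and using that the boundary of a surface is well defined, forces $n(e)\le 2$, which is \emph{(2)}; moreover $n(e)=1$ exactly when $x\in\partial M$. For a vertex $v$: by local finiteness $St(v)$ is finite, so the link $lk(v)$ (the subcomplex of $St(v)$ of simplices missing $v$) is a finite $1$-complex, and by \emph{(2)} applied to the edges at $v$ every vertex of $lk(v)$ has degree $1$ or $2$; hence $lk(v)$ is a finite disjoint union of arcs and circles. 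I claim it is connected. The open star $N$ of $v$ is connected (each set $int(\tau)\cup\{v\}$ with $\tau\ni v$ is path-connected, and all contain $v$), and $N\setminus\{v\}$ deformation retracts onto $|lk(v)|$ by pushing away from $v$; meanwhile $N$ contains a small model neighbourhood $W$ of $v$ (an open disk or a half-disk) with $W\setminus\{v\}$ connected, so a nonempty proper clopen piece of $|lk(v)|$ would produce a nonempty proper clopen subset of the connected set $N$, a contradiction. Thus $lk(v)$ is a single arc or a single circle, and these two cases correspond exactly to $v\in\partial M$ and $v\in int(M)$. This vertex-link step is the main obstacle: it is the only place the $2$-manifold hypothesis enters essentially, and it drives the rest.

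For \emph{(3)}, call two $2$-simplices equivalent when joined by a chain of $2$-simplices in which consecutive ones share an edge; this is an equivalence relation, and \emph{(3)} says there is one class. Given a class $C$, let $K_C$ be the subcomplex of all faces of members of $C$ and $M_C=|K_C|$. Then $M_C$ is closed, being the polyhedron of a subcomplex. It is also open: at an interior point of a $2$-simplex of $C$ the open simplex is a neighbourhood in $M_C$; at an interior point of an edge $e$ of $K_C$, by \emph{(2)} the at most two $2$-simplices on $e$ all share $e$, hence lie in $C$, and their union is a neighbourhood; at a vertex $v$ of $K_C$, connectedness of $lk(v)$ shows all $2$-simplices at $v$ are chain-equivalent, so lie in $C$, and the open star of $v$ lies in $M_C$. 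Since $M=|K|$ is connected, $M_C=M$ and there is only one class, which is \emph{(3)}.

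Finally \emph{(4)}: let $K_\partial$ be the subcomplex of the edges that are faces of exactly one $2$-simplex, together with their vertices. By the local models above, an interior point of such a free edge has a half-disk neighbourhood and a vertex of a free edge has $lk(v)$ an arc, so both kinds of point lie in $\partial M$; thus $|K_\partial|\subseteq\partial M$. Conversely, a point of $\partial M$ lies neither in the interior of a $2$-simplex nor in the interior of an edge on two $2$-simplices, since those neighbourhoods are open disks; so if it is an interior point of an edge, that edge is free, and if it is a vertex $v$ then $lk(v)$ must be an arc (a circle would make $v$ interior), whence $v$ lies on exactly two free edges. Either way the point is in $|K_\partial|$, so $|K_\partial|=\partial M$ and $K_\partial$ is a triangulation of $\partial M$ — in fact a disjoint union of polygonal circles, consistently with Theorem~\ref{ch1,sec2,th1} and with $\partial M$ being a compact $1$-manifold.
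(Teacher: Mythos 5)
Your proof is correct, and it is organized quite differently from the paper's. The paper handles each of the four parts with a separate short argument driven by invariance of domain: for (1) the open star of a maximal simplex is an open set, for (2) the union $int(\sigma_1)\cup int(e)\cup int(\sigma_2)$ is an open disk, for (3) it simply asserts without proof that ``$|K'|$ contains all or none of each set $int(|St(v)|)$'', and for (4) it argues that $\partial M\setminus\bigcup e_i'$ is an open finite subset of a $1$-manifold and hence empty. You instead build one local tool --- the link $lk(v)$ as a finite graph all of whose vertices have degree $1$ or $2$ by your parts (1) and (2), together with a connectedness argument via deformation retraction of the punctured open star --- and read all four claims off from the fact that every link is a single arc or circle, with the arc/circle dichotomy exactly detecting $\partial M$. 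This is genuinely more than a stylistic difference: the clopen argument you give for (3) is precisely the content the paper elides when it asserts the all-or-none property of $int(|St(v)|)$, and your version of (4) identifies $\partial M$ pointwise by local models rather than by the paper's dimension-count. What the paper's route buys is brevity, at the cost of leaving the vertex-link connectedness unjustified; what yours buys is a single reusable lemma that closes that gap and makes the proofs of (3) and (4) fully local and self-contained.
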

The result is practically evident and we have the following 
\begin{proof}
(1) Let $\sigma\in K$ be a $k$-simplex which is not a proper face of any simplex of $K$. Let $b_{\sigma}$ be the barycenter of $\sigma$, then $b_\sigma$ lies in the interior of $\sigma$. Also there exist an open set $U$, such that $b_{\sigma} \in U$, homeomorphic to $\mathbb{B}^2$, open disk in $\R^2$. It follows that $k=2$.\\
(2) Suppose that the 2-simplexes $\sigma_1$ and $\sigma_2$ have a common edge $e$. It is easy to see that
$$C^2 = int( \sigma_1) \cup int(e)\cup int(\sigma)$$ 
is homeomorphic to the interior of a 2-simplex and is an open subset of $M$. It follows that $e$ is not a face of any 2-simplex other than $\sigma_1$ and $\sigma_2$. \\
(3) Suppose $\sigma' \in K$ is a 2-simplex and $K'$ denotes the subcomplex of $K$ consisting of all those 2-simplex $\sigma''$ together with all the faces such that there is a sequence of 2-simplexes $\sigma'=\sigma_1, \sigma_2, \ldots ,\sigma_n = \sigma'' \in K$ and $\sigma_{i+1}$ has an edge $e_i$ common with $\sigma_i$ for $i=1,2,\ldots ,n-1$. Let if possible there exist a 2-simplex $\sigma \notin K'$ then $\sigma$ has no edge in common with $K'$. It follows that $|K'|$ contains all or none of each  set $int(|St(v)|)$, interior of $|St(v)|$ for $v\in K^0$. Since M is connected we have $|K'|=|K|$. \\
(4) Since the boundary $\partial M$ of $M$ is a 1-manifold and so is contained in $|K^1|$ union of 1-skeleton of $K$. Let $e'_1,e'_2,\ldots ,e'_p$ denote the edges of the complex $K$ which are the faces of precisely one 2-simplex of $K$ and $e''_1,e''_2,\ldots ,e''_q$ denote the edges of the complex $K$ which are the faces of exactly two 2-simplexes of $K$.

If $x\in int(e'_i) , 1 \le i \le p$, then evidently $x\in \partial M$ so that $\bigcup_{i=1}^p  int( e'_i) \subset \partial M$. Since $\partial M$ is compact we have $\bigcup_{i=1}^p e'_i \subset \partial M$. On the other hand, if $x \in  \partial M \setminus \bigcup_{i=1}^p e'_i $,then $x \in e''_k$ for some $1\le k \le q$. And if $x\in int(e''_k)$, then there are two 2-simplexes $\sigma_1$ and $\sigma_2$ whose common face is $e''_k$, thus we obtain  the open set $int(\sigma_1) \cup int( e''_k) \cup int(\sigma_2)$ which is homeomorphic to $\mathbb{B}^2$ and contains the point $x$ so that $x \in int( M)$ a contradiction. Hence
$$\partial M \setminus \bigcup_{i=1}^p e'_i  \subset \bigcup_{i=1}^q \partial e''_i.$$
Since $\partial M$ is a 1-manifold whereas $\bigcup_{i=1}^q \partial e''_i $ is of dimension 0, we have $\partial M= \bigcup_{i=1}^p e'_i$.
\end{proof}

\textbf{Definition:} Let $K$ and $L$ be two simplicial complexes. A map $f:K\to L$ is called a simplicial map if $f$ maps $K^0$ to $L^0$ such that whenever $<a_0,a_1,\ldots ,a_n>$ is a simplex of $K$, then $\{f(a_0),f(a_1),\ldots ,f(a_n)\}$ determine a simplex in $L$, that is, cancelling repetitions, $<f(a_0),f(a_1),\ldots ,f(a_n)>$ is a simplex in $L$. Moreover if $f : K^0 \to L^0$ is a bijection and $<a_0,a_1,\ldots ,a_n>$ is a simplex of $K$ if and only if $<f(a_0),f(a_1),\ldots ,f(a_n)>$ is a simplex in $L$, then $f$ is said to be an $isomorphism$.

\textbf{Definition:} If $K$ and $L$ are two complexes, and have subdivisions $K'$ and $L'$ which are isomorphic, then $K$ and $L$ are combinatorially equivalent or simply equivalent.

Intuitively, $K$ and $L$ are equivalent if it is possible to pass from one to the other by a sequence $K$=$K_1, K_2, \ldots ,K_n$=$L$ of complexes, where either $K_{i+1}$ is a subdivision of $K_i$, or $K_i$ is a subdivision of $K_{i+1}$.

A simplicial map  $f : K \to L$ induces a continuous map  $|f| : |K| \to |L|$ defined as follows: for the restriction $f: \ <a_0,a_1,\ldots ,a_n> \  \to \ <f(a_0),f(a_1),\ldots ,f(a_n)>$ the coordinates of a point $|f|(x)$ are linear functions of those of  $x$. Moreover,  if  $f : K \to L$ is an isomorphism then $|f| : |K| \to |L|$ is a homeomorphism.


\section{Existence of Triangulation}\label{ch1,sec2}

 The problem of classification of surfaces requires that all surfaces are triangulable. In this section we shall discuss the existence of triangulation of a surface or a bordered surface. The proof of the existence of triangulation on a surface is due to Rad$\acute{o}$ (see~\cite{rado}). Also E. Hartman in ~\cite{hartman} introduces a procedure to build a mesh of triangles successively by starting with a point or a prescribed polygon.   For the proof of existence of a triangulation we shall need the Jordan-Sch$\ddot{o}$nflies Theorem:
\begin{theorem}
Let J be a 1-sphere in $\R^2$. Then every homeomorphism of J into $\R^2$ can be extended to give a homeomorphism of   $\R^2$ onto $\R^2$.
\end{theorem}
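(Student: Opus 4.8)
The plan is to reduce the assertion to the \emph{Schönflies extension lemma}: if $C\subset\R^2$ is the homeomorphic image of $\s^1$ and $\mathrm{Int}(C)$ denotes the bounded complementary region provided by the Jordan Curve Theorem (which I take as known, or prove alongside), then every homeomorphism $\s^1\to C$ extends to a homeomorphism $\D^2\to\overline{\mathrm{Int}(C)}$. Granting this, the theorem follows by a gluing argument on the sphere. Put $J'=h(J)$ and work inside $\s^2=\R^2\cup\{\ity\}$; then $\s^2\setminus J$ has two components, the closure of one of which --- call it $E$ --- contains $\ity$, while the other has closure $D$, and similarly $\s^2\setminus J'$ yields $E'\ni\ity$ and $D'$, with each of $D,E,D',E'$ a closed disk bounded by $J$, respectively $J'$. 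Fixing a parametrization $\s^1\to J$ and applying the lemma to $D\to D'$ and to $E\to E'$ produces two homeomorphisms extending $h|_J$; on the exterior piece we may post-compose with a self-homeomorphism of $E'$ fixing $J'$ pointwise to arrange $\ity\mapsto\ity$. The two extensions agree on $J$, so by the gluing lemma they combine into a homeomorphism $\widehat H\colon\s^2\to\s^2$ fixing $\ity$ with $\widehat H|_J=h$, and $H=\widehat H|_{\R^2}$ is the required extension.

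For the lemma I would proceed in two stages. \textbf{Polygonal case.} Let $P$ be a polygonal Jordan curve. By induction on the number of edges one shows $P$ has an \emph{ear} --- a vertex whose two neighbours are joined by a straight segment meeting $P$ only in those neighbours and otherwise lying in $\mathrm{Int}(P)$ --- and clipping that ear lowers the edge count; this exhibits $\overline{\mathrm{Int}(P)}$ as a finite union of triangles glued along edges, hence gives a homeomorphism $g_0\colon\D^2\to\overline{\mathrm{Int}(P)}$. Given any homeomorphism $\phi\colon\s^1\to P$, the composite $(g_0|_{\s^1})^{-1}\circ\phi$ is a self-homeomorphism of $\s^1$, which extends to a self-homeomorphism $\Phi$ of $\D^2$ by Alexander's coning trick $\Phi(tx)=t\,(g_0|_{\s^1})^{-1}(\phi(x))$ for $t\in[0,1]$; then $g_0\circ\Phi$ extends $\phi$ across the disk.

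\textbf{General case by approximation.} Given an arbitrary Jordan curve $C$, construct polygonal Jordan curves $P_1,P_2,\dots$ with $\overline{\mathrm{Int}(P_n)}\subset\mathrm{Int}(P_{n+1})$, with $\bigcup_n\mathrm{Int}(P_n)=\mathrm{Int}(C)$, and with the collar between $P_n$ and $C$ shrinking down onto $C$. By the polygonal case, each closed region lying between $P_n$ and $P_{n+1}$ is a homeomorphic closed annulus; choosing compatible homeomorphisms of these annuli, together with a homeomorphism of the innermost polygonal disk, and composing them, one builds a homeomorphism $\D^2\to\overline{\mathrm{Int}(C)}$ as a controlled infinite composition, arranging at each stage that the partial maps converge uniformly and that the limiting boundary circle is carried onto $C$ by the prescribed parametrization.

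I expect the crux to be this limiting step: one must organize the polygonal approximations and the collar homeomorphisms so that the infinite composition converges uniformly to a \emph{bijection that is continuous with continuous inverse} and matches the assigned homeomorphism $\s^1\to C$ on the boundary. This is exactly where two-dimensionality is indispensable --- the analogous statement fails in higher dimensions, witness Alexander's horned sphere --- since a Jordan curve, unlike a wildly embedded $\s^2$, can always be trapped between nested polygons whose collars have vanishing size. A cleaner route available in the plane, for those willing to use complex analysis, is to invoke the Riemann Mapping Theorem together with Carathéodory's theorem that a conformal map of $\D$ onto $\mathrm{Int}(C)$ extends to a homeomorphism $\overline{\D}\to\overline{\mathrm{Int}(C)}$ whenever $C$ is a Jordan curve, and then compose the two boundary parametrizations.
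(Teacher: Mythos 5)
The paper does not prove this theorem; it is quoted as a known prerequisite for the triangulation argument, with the proof deferred to the standard references. There is therefore no author's argument to compare against --- you are filling a gap the paper left to the literature.

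Your outline is the classical one and its architecture is sound. The reduction from the $\R^2$ statement to a disk-extension lemma by passing to $\s^2=\R^2\cup\{\ity\}$ and gluing the two Schönflies extensions along $J$ is correct, including the post-composition that returns $\ity$ to $\ity$; the polygonal case via the two-ears theorem and Alexander's coning trick is complete. One simplification worth noting: once you possess \emph{any} homeomorphism $g\colon\overline{\D}^2\to\overline{\mathrm{Int}(C)}$, the prescribed boundary parametrization $\phi\colon\s^1\to C$ is handled automatically by coning $(g|_{\s^1})^{-1}\circ\phi$ exactly as you did for polygons, so the approximation argument need only establish that $\overline{\mathrm{Int}(C)}$ is a closed disk and does not also have to be steered to realize a given boundary identification.

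The genuine gap --- which you flag yourself --- is the passage from polygons to an arbitrary Jordan curve. As written, that paragraph records an intention rather than an argument: nesting polygons so that $\bigcup_n\mathrm{Int}(P_n)=\mathrm{Int}(C)$ is easy, but choosing the annulus homeomorphisms so that the infinite composition converges to a map that is injective and continuous \emph{up to and including} the boundary is precisely the substance of the Schönflies theorem. That step is where accessibility of boundary points, uniform shrinking of collars, and uniform continuity on compacta must actually be brought to bear, and where the analogous argument fails in dimension three. Until it is carried out, the proof is not closed. Either work it through in the manner of the references the paper cites, or adopt the Riemann Mapping Theorem together with Carathéodory's boundary-extension theorem, which you already mention and which produces the disk homeomorphism in one step; the latter is the shortest fully rigorous route.
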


\textbf{Definition:} A Jordan arc  $f$ is a homeomorphism of the closed unit interval [0,1] into a topological space $S$. It is a Jordan curve when the end points of [0,1] are identified. By the interior of an arc we shall mean the open arc obtained by restricting $f$ to (0,1).

An open set $G$ on a surface $S$ is called a $Jordan \ region$ if its closure can be mapped topologically onto a closed disk, in such a way that $G$ corresponds to the open disk. The boundary of a Jordan region is thus a Jordan curve. A $cross$-$cut$ of a Jordan region $G$ is the interior of an arc $\gamma$ in $G$ such that $\gamma \cap \partial G = \partial \gamma$. As a consequence of Jordan-Sch$\ddot{o}$nflies Theorem we have

\begin{lemma}
A cross-cut of a Jordan region divides it into two Jordan regions.
\end{lemma}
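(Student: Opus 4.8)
The plan is to reduce the statement to the corresponding fact about the standard closed disk $\overline{\D}^2$ and a diameter, and then transport the conclusion back using the Jordan--Sch\"onflies Theorem. Let $G$ be a Jordan region on a surface $S$, so by definition there is a homeomorphism $\varphi$ from $\overline{G}$ onto $\overline{\D}^2$ carrying $G$ onto the open disk $\D^2$ and $\partial G$ onto the circle $\s^1$. Let $\gamma$ be a cross-cut of $G$: the interior of an arc in $\overline{G}$ whose endpoints lie on $\partial G$ and whose interior lies in $G$. Then $\varphi(\overline{\gamma})$ is a Jordan arc in $\overline{\D}^2$ with its two endpoints on $\s^1$ and its interior inside $\D^2$. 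So it suffices to prove: such an arc in the disk cuts the open disk into exactly two Jordan regions, since $\varphi^{-1}$ will then carry those two Jordan regions (and the decomposition of $G$) back to the surface, Jordan regions being a topological notion.

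The key step is therefore a planar one, and here the Jordan--Sch\"onflies Theorem does the work. First I would extend the Jordan arc $\varphi(\overline{\gamma})$ to a full Jordan curve $J$ in $\R^2$: attach to it an exterior arc joining its two endpoints and lying entirely outside $\overline{\D}^2$ except at those endpoints (such an arc exists; one can take, e.g., a circular arc bulging outward). By the Jordan curve theorem $J$ separates $\R^2$ into a bounded component $U$ and an unbounded one, and by Jordan--Sch\"onflies there is a homeomorphism $\Psi$ of $\R^2$ onto itself carrying $J$ onto the unit circle. Under $\Psi$, the original disk $\overline{\D}^2$ maps to a region whose boundary crosses the unit circle exactly at the two image endpoints; tracking which side of $\Psi(\varphi(\overline{\gamma}))$ the interior of $\D^2$ lies on, one sees that $\D^2 \setminus \varphi(\overline{\gamma})$ has exactly two components, each of which is an open topological disk with the arc forming part of its boundary. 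Pulling back through $\Psi^{-1}$ and then $\varphi^{-1}$, the closures of these two pieces are each homeomorphic to a closed disk, with $\gamma$ corresponding to a boundary arc, i.e. each is a Jordan region, and their union is $\overline{G}$ while their intersection is $\overline{\gamma}$.

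The main obstacle is the bookkeeping in the planar step: making precise that the two complementary components of the arc inside the disk are genuinely open disks with the arc on their boundary, rather than merely open connected sets. The cleanest route is to apply Jordan--Sch\"onflies \emph{twice} or in a combined form: use it once to straighten the Jordan arc together with its outer completion to the unit circle, and then observe that in the straightened picture the arc has become a chord-like curve of the disk (after a further application one may even take it to be an honest diameter), for which the splitting into two half-disks is immediate. The only care needed is that the homeomorphism of $\R^2$ produced by Jordan--Sch\"onflies need not respect $\overline{\D}^2$, so one argues about the images $\Psi(\D^2)$ and $\Psi(\varphi(\overline{\gamma}))$ directly, using connectedness of $\D^2 \setminus \varphi(\overline{\gamma})$ and a counting argument for its components via the separation properties of the straightened curve. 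Everything else --- that Jordan regions pull back to Jordan regions under homeomorphisms, and that the decomposition of $\overline{\D}^2$ induces one of $\overline{G}$ --- is routine.
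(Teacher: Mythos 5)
The paper offers no proof of this lemma --- it is stated as an immediate consequence of the Jordan--Sch\"onflies Theorem and the text moves on --- so your proposal can only be judged on its own terms.

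Your opening reduction to the closed disk via the Jordan region's defining homeomorphism is correct and is the natural first step, and reaching for Jordan--Sch\"onflies is the right instinct. But the specific device you then spell out --- completing the cross-cut $\alpha$ to a Jordan curve $J = \alpha \cup \delta$ by an exterior arc $\delta$ and straightening $J$ to the unit circle by one application of Sch\"onflies --- does not simplify the picture. After the ambient homeomorphism $\Psi$, the arc $\Psi(\alpha)$ does lie on $\s^1$, but the disk has been distorted along with it: $\Psi(\D^2)$ is now the interior of the Jordan curve $\Psi(\s^1)$, which is a general Jordan curve, and $\Psi(\alpha)$ is a cross-cut of the general Jordan region $\Psi(\D^2)$. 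You face essentially the original problem in a new guise, and the ``tracking which side'' / ``counting argument'' you defer is the whole content. Nor does ``a further application'' help in the way you suggest: Sch\"onflies extends homeomorphisms of a Jordan \emph{curve}, not of an arc sitting inside a region, so there is no second application that would carry $\Psi(\alpha)$ to a diameter while keeping $\Psi(\D^2)$ under control.

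The route that works uses the two arcs of the \emph{original} boundary circle rather than an external completion. Let $p,q$ be the endpoints of $\alpha$ on $\s^1$ and let $\beta_1,\beta_2$ be the two closed subarcs of $\s^1$ from $p$ to $q$. Then $J_1 = \alpha \cup \beta_1$ and $J_2 = \alpha \cup \beta_2$ are Jordan curves; by Jordan--Sch\"onflies the closure of the bounded complementary component $U_i$ of $J_i$ is a closed disk, so each $U_i$ is a Jordan region. What remains --- that $U_1$ and $U_2$ are disjoint, lie in $\D^2$, and together with the interior of $\alpha$ exhaust $\D^2$ --- is exactly the $\theta$-curve separation theorem: the union of three arcs sharing two common endpoints and having pairwise disjoint interiors separates the plane into precisely three regions, bounded by the three Jordan curves formed from pairs of the arcs. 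That is the genuine and nontrivial step here; it is precisely what you flag as the ``main obstacle'' and then set aside as routine, and it needs to be proved or cited rather than waved through. Your proposal identifies the right tools but leaves this central gap unfilled.
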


\textbf{Definition:} An open covering of a surface $S$ by Jordan regions $\{J_n\}$ is said to be of finite character if \\
(1) each  $J_n$ intersects at most finitely many others. \\
(2) the intersection of any two boundaries consists of at most a finite number of points or arcs.

\begin{lemma}\label{ch2,sec3, lm3}
On a surface S there are sequences $\{V_n\}$ and $\{W_n\}$ of Jordan regions such that  $\overline{V}_n \subset W_n$ and $\{V_n\}$ covers S with the condition that no point belongs to infinitely many $\overline{W}_n$.
\end{lemma}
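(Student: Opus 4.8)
The plan is to produce $\{V_n\}$ and $\{W_n\}$ as a locally finite countable refinement of the trivial cover, built from a compact exhaustion of $S$ together with the observation that a small round coordinate disk pulls back through a chart to a Jordan region. First I would use that $S$, being a second countable, locally compact Hausdorff space, is $\sigma$-compact and hence admits an exhaustion by compacta $K_1 \subset \operatorname{int} K_2 \subset K_2 \subset \operatorname{int} K_3 \subset \cdots$ with $\bigcup_j K_j = S$ (set $K_j = \phi$ for $j \le 0$). Writing $A_0 = K_1$ and $A_j = K_{j+1} \setminus \operatorname{int} K_j$ for $j \ge 1$, each $A_j$ is compact, the $A_j$ cover $S$, and since $K_{j-1} \subset \operatorname{int} K_j$ every point of $A_j$ lies in the \emph{open} set $\operatorname{int} K_{j+2} \setminus K_{j-1}$.

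Next, for each $j$ and each $p \in A_j$, I would pick a Euclidean chart $\varphi \colon U \to \R^2$ around $p$ and a radius $r>0$ small enough that the closed disk $\overline{D}(\varphi(p),2r)$ lies in $\varphi(U)$ and $\varphi^{-1}\!\big(\overline{D}(\varphi(p),2r)\big) \subset \operatorname{int} K_{j+2} \setminus K_{j-1}$; this is possible because the latter set is an open neighborhood of $p$ and $\varphi$ is a homeomorphism. Put $W_p = \varphi^{-1}\!\big(D(\varphi(p),2r)\big)$ and $V_p = \varphi^{-1}\!\big(D(\varphi(p),r)\big)$. Since $\varphi$ carries $\overline{W_p}$ homeomorphically onto the closed disk $\overline{D}(\varphi(p),2r)$ and $W_p$ onto the corresponding open disk, both $W_p$ and $V_p$ are Jordan regions, with $\overline{V_p} \subset W_p$ and $\overline{W_p} \subset \operatorname{int} K_{j+2} \setminus K_{j-1}$. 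Then, using compactness of $A_j$, I extract for each $j$ finitely many $V_p$ whose union contains $A_j$; assembling these over all $j \ge 0$ gives countable families $\{V_n\}_{n\in\N}$ and $\{W_n\}_{n\in\N}$ with $\overline{V_n} \subset W_n$ and $\bigcup_n V_n \supset \bigcup_j A_j = S$.

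Finally I would verify the required finiteness: given $q \in S$, choose $L$ with $q \in \operatorname{int} K_L$. If $W_n$ arose from a point of $A_j$, then $\overline{W_n} \subset \operatorname{int} K_{j+2} \setminus K_{j-1}$ is disjoint from $K_{j-1}$; and whenever $j \ge L+1$ we have $\operatorname{int} K_L \subset K_L \subset K_{j-1}$, so $\operatorname{int} K_L$ misses $\overline{W_n}$. Hence the neighborhood $\operatorname{int} K_L$ of $q$ can only meet those $\overline{W_n}$ coming from $A_j$ with $j \le L$, of which there are finitely many. In particular $q$ lies in only finitely many $\overline{W_n}$, which is the stated condition (indeed it gives the stronger statement that $\{\overline{W_n}\}$ is locally finite).

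The one point that needs care is the simultaneous shrinking in the middle step: each coordinate disk must be chosen small enough both so that its compact closure sits inside the chart domain — so that the "Jordan region'' property is inherited by the closures — and so that it lies inside the shell $\operatorname{int} K_{j+2}\setminus K_{j-1}$, which is exactly the constraint that forces $\{\overline{W_n}\}$ to be locally finite; beyond that the argument is bookkeeping. If one wants the version for a bordered surface, the same construction works, using half-disk charts at boundary points and taking a Jordan region near such a point to be the preimage of a half-disk.
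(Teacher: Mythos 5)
Your proof is correct and follows essentially the same route as the paper's: construct a compact exhaustion, cover each compact shell by finitely many nested Jordan regions whose closures are confined to a bounded band of shells, and deduce local finiteness from that confinement. The paper builds its exhaustion $G_k = V_1 \cup \ldots \cup V_{n_k}$ inline from a triply refined countable basis of Jordan regions (and treats the case where $S$ turns out to be compact separately), whereas you invoke $\sigma$-compactness up front and then place coordinate disks inside the shells, but the underlying geometry of the argument is identical.
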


\begin{proof}
Since $S$ is locally Euclidean and second countable it has a countable basis of Jordan regions $\{U_i\}$. Also, each $U_i$ is a countable union of Jordan regions $\{U_{ij}\}$ such that $\overline{U}_{ij}\subset U_i$. Similarly each $U_{ij}$ is a countable union of Jordan regions $\{U_{ijk}\}$ such that $\overline{U}_{ijk} \subset U_{ij}$. Rearrange the $\{U_{ijk}\}$ in a sequence $\{V_{j_k}\}$ and if $V_{j_k}=U_{ijk}$ set $W_j=U_{ij}$.  Then every open set $F$ is a union of sets $V_{j_k}$  with $\overline{V}_{jk} \subset W_j \subset \overline{W}_j \subset U_i \subset F$.

Take $n_1=1$ and $n_k,(k>1)$ is the least integer such that 
$$\overline{V}_1 \cup \ldots \cup\overline{V}_{n_{k-1}} \subset V_1 \cup \ldots \cup V_{n_k}.$$
Since $\overline{V}_1 \cup...\cup\overline{V}_{n_{k-1}}$ is compact, such an $n_k$ will always exist. Also $n_k \le n_{k-1}$ implies
$$\overline{V}_1 \cup\ldots\cup\overline{V}_{n_{k-1}} \subset V_1 \cup \ldots \cup V_{n_{k-1}}\subset \overline{V}_1 \cup \ldots\cup\overline{V}_{n_{k-1}}.$$
Thus $V_1 \cup \ldots  \cup V_{n_{k-1}}$ is both open and closed and therefore  equal to $S$. This implies $S$ is compact and $V_1 ,\ldots  ,V_{n_{k-1}}$ and $W_1 ,\ldots  ,W_{n_{k-1}}$ are finite sequences with the desired properties. If $n_k > n_{k-1}$ for each $k$ then set $G_k = V_1 \cup \ldots \cup V_{n_k}$. Thus $\overline{G}_{k-1} \subset G_k$. Each $\overline{G}_k$ is compact and $\{G_k\}$ covers $S$. The set $G_{k+2} \setminus \overline{G}_{k-1}$, being open, can be represented as union of sets $V_{j_k} \subset \overline{W}_j \subset G_{k+2} \setminus \overline{G}_{k-1}$. Since $\overline{G}_{k+1} \setminus G_k \subset G_{k+2} \setminus \overline{G}_{k-1}$ is compact it is covered by a finite number of these sets. We denote  the sets in this finite covering by $V_{kl}$, and the corresponding $W_j$ by $W_{kl}$.

Since $\overline{W}_{kl}\cap G_{k-1}=\phi$, for $i \le k-$3 the  set $\overline{W}_{ij}$ does not intersect with $\overline{W}_{kl}$, for $\overline{W}_{ij} \subset G_{i+2} \subset G_{k-1}$. Hence every $\overline{W}_{ij}$ intersects finitely many $\overline{W}_{kl}$. Finally a suitable rearrangement of $\{V_{kl}\}$ and $\{W_{kl}\}$ gives the desired sequences.
\end{proof}

\textbf{Definition:} A set $\Gamma$ of Jordan arcs on a surface $S$ is said to be discrete if every point on $S$ has a neighborhood which meets finitely many arcs in $\Gamma$.

\begin{proposition}
The intersection of the arcs in $\Gamma$ with a region $G \subset S$ will also form a discrete set on G
\end{proposition}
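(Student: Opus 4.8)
The plan is to recognize that, in the terminology just introduced, the statement ``$\Gamma$ is discrete'' means exactly that the family $\Gamma$ is locally finite on $S$, and that local finiteness of a family of subsets is automatically inherited by any subspace. So the proof reduces to a short point-set observation, together with the remark that $G$, being a region, is an open subset of $S$, so that relative neighborhoods in $G$ are obtained simply by intersecting neighborhoods in $S$ with $G$.

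In detail, I would argue as follows. Fix an arbitrary point $p \in G$. Since $\Gamma$ is discrete on $S$, there is an open neighborhood $V$ of $p$ in $S$ that meets only finitely many arcs of $\Gamma$, say $\gamma_1, \dots, \gamma_k$. As $G$ is open in $S$, the set $W = V \cap G$ is an open neighborhood of $p$ in $G$. If $\gamma \in \Gamma$ is any arc whose restriction $\gamma \cap G$ meets $W$, then $\gamma$ meets $V$, hence $\gamma$ is one of $\gamma_1, \dots, \gamma_k$; consequently $W$ meets at most $k$ of the sets $\gamma \cap G$, $\gamma \in \Gamma$. Since $p \in G$ was arbitrary, every point of $G$ has a neighborhood meeting only finitely many of the restricted arcs, which is precisely the assertion that $\{\gamma \cap G : \gamma \in \Gamma\}$ is a discrete set of arcs on $G$.

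I do not expect any genuine obstacle here; the one place where a little care is warranted is the intended meaning of ``the intersection of the arcs in $\Gamma$ with a region $G$''. If each $\gamma \cap G$ is regarded as a single (possibly disconnected) subset, the argument above is already complete. If one prefers to treat every connected component of $\gamma \cap G$ as a separate Jordan arc, one should in addition shrink $W$ so that each $\gamma_i \cap W$ has only finitely many components --- for instance by taking $W$ to lie inside a Jordan region whose boundary meets each of $\gamma_1, \dots, \gamma_k$ in at most finitely many points or arcs, which is available in the finite-character setting of this section. Then the same neighborhood $W$ of $p$ meets only finitely many component arcs, and the conclusion is unchanged; in either reading the essential content is that local finiteness passes to open subspaces.
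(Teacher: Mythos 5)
The intended reading is your second one: the ``arcs in $\Gamma$ intersected with $G$'' are the connected components of the sets $\gamma \cap G$, $\gamma \in \Gamma$, each treated as a separate Jordan arc on $G$. (This is what Lemma 2.3.5 needs, and the paper's own proof explicitly speaks of ``infinitely many components of $\gamma \cap G$.'') Your first reading, while correct, only establishes the trivial fact that a locally finite family restricts to a locally finite family, which is not what is being asserted.

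For the intended reading your proposed fix has a real gap. You suggest shrinking $W$ to lie inside a Jordan region whose boundary meets each $\gamma_i$ in finitely many points, citing ``the finite-character setting of this section,'' but nothing proved at this stage supplies such a Jordan region: a Jordan arc can a priori meet a given circle in a Cantor set, and the covering of finite character is constructed only later, in Theorem 2.3.6, \emph{using} Lemma 2.3.5, which in turn rests on this proposition. Invoking it here would be circular, and even apart from that it is not a mere ``shrinking'' of $W$.

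The paper avoids this by exploiting compactness of the arc. Choose $U \ni p$ with $\overline{U} \subset G$ (not just $U \subset G$) meeting only the arcs $\gamma_1, \ldots, \gamma_k$. If infinitely many components of the $\gamma_i \cap G$ met $U$, some single $\gamma = \gamma_i$ would contribute infinitely many such components. Picking a point $x_j \in U$ from each of infinitely many distinct components, and a point $y_j \in \gamma \setminus G$ between consecutive ones (such a $y_j$ exists since distinct components of $\gamma \cap G$ are separated along $\gamma$ by points outside $G$), one may take the corresponding parameters monotone, so the $x_j$ and $y_j$ converge to a common point $z \in \gamma$. Then $z \in \overline{U} \subset G$ as a limit of the $x_j$, yet $z \notin G$ as a limit of the $y_j$ with $G$ open --- a contradiction. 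Equivalently: the components of $\gamma \cap G$ are open in $\gamma$ and pairwise disjoint, and they cover the compact set $\gamma \cap \overline{U}$, so only finitely many can meet $\overline{U}$. This is the step your write-up is missing, and it is exactly where the hypothesis that $\gamma$ is a (compact) Jordan arc, rather than an arbitrary subset, is used.
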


\begin{proof}
For $x \in G$, let $U$ be a neighborhood of $x$ such that $\overline{U}\subset G$ and $U$ meets only finitely many arcs in $\Gamma$. If  infinitely many arcs  in $\Gamma \cap G$ meet $U$ then it follows that there is atleast one arc say $\gamma \in \Gamma$ such that infinitely many components of $\gamma \cap G$ meet $U$. Starting at some point on $\gamma$ and moving along $\gamma$, we can obtain a  sequence of distinct points on $\gamma$ say $x_1,y_1,x_2,y_2, \ldots $ such that $x_i \in G$ and $y_i \notin G$. Since $\gamma$  is compact, the sequence has an accumulation point $z$. But this point is the limit of $\{x_i\}$ it is in $\overline{U}$. Also it being the limit of $\{y_i\}$ it cannot be in open set $G$. This contradict that $\overline{U} \subset G$. Hence $U$ meets only finitely many arcs of $\Gamma \cap G$.
\end{proof}

\begin{lemma}\label{ch2,sec3, lm5}
Let $\Gamma$ be a discrete set of Jordan arcs $\gamma$ in a Jordan region $G$ and suppose that $p_1,p_2\in \overline{G}$ are not on any $\gamma \in \Gamma$. Then $p_1$ and $p_2$ can be joined by a Jordan curve, whose interior lying in $G$, with only a finite number of points on $\bigcup_{\gamma \in \Gamma} \{\gamma\}$.
\end{lemma}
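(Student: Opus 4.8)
The plan is to carry the problem over to a disc, join $p_1$ and $p_2$ by a crude arc, and then repair that arc by finitely many local surgeries, each of which straightens one $\gamma$ into a line segment. Since $G$ is a Jordan region, fix a homeomorphism of $\overline{G}$ onto $\overline{\D}$ carrying $G$ onto $\D$; by the preceding Proposition the traces $\{\gamma\cap G\}$ form a discrete family, and discreteness is preserved by the homeomorphism, so we may assume $G=\D\subset\R^2$. Join $p_1$ and $p_2$ by the straight chord $\beta\subset\overline{\D}$, whose relative interior lies in $\D$ by convexity and which contains neither endpoint on any $\gamma$ by hypothesis. As $\beta$ is compact and $\Gamma$ is discrete, only finitely many arcs $\gamma_1,\dots,\gamma_n\in\Gamma$ meet $\beta$, so it suffices to produce a simple arc from $p_1$ to $p_2$, with interior in $\D$, meeting $\gamma_1\cup\cdots\cup\gamma_n$ in finitely many points.

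\textbf{Induction on $n$.} Suppose inductively that we have a simple arc $\mu$ from $p_1$ to $p_2$, interior in $\D$, with $F:=\mu\cap(\gamma_1\cup\cdots\cup\gamma_{k-1})$ finite and disjoint from $\gamma_k\cup\cdots\cup\gamma_n$ (for $k=1$ take $\mu=\beta$). Then the compact set $\mu\cap\gamma_k$ is disjoint from $\gamma_1\cup\cdots\cup\gamma_{k-1}$, since a common point would lie in $F$ and on $\gamma_k$, against the inductive clause; hence we may pick an open set $R$ with $\mu\cap\gamma_k\subseteq R$, $\overline{R}\subseteq\D$, $p_1,p_2\notin\overline{R}$, and $\overline{R}\cap(\gamma_1\cup\cdots\cup\gamma_{k-1})=\varnothing$. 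The arc $\mu$ will be altered only inside $R$: cover $\mu\cap\gamma_k$ by finitely many subarcs of $\mu$ lying in $R$ with endpoints off $\gamma_k$, and replace each by a new simple arc (the surgery below) running inside a thin disc-neighbourhood contained in $R$. The resulting simple arc $\mu'$ agrees with $\mu$ outside $R$, so it still meets $\gamma_1\cup\cdots\cup\gamma_{k-1}$ only in $F$; it meets $\gamma_k$ in a finite set, whose points we place off $\gamma_{k+1}\cup\cdots\cup\gamma_n$, restoring the inductive hypothesis at stage $k$. After $n$ steps $\mu'$ is the required arc, which we transport back to $G$ by the inverse homeomorphism.

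\textbf{The local surgery.} Given a subarc $\lambda$ of $\mu$ contained in $R$ with endpoints $P,Q\notin\gamma_k$, one must reconnect $P$ and $Q$ by a simple arc inside a prescribed disc-neighbourhood $T$ of $\lambda$ meeting $\gamma_k$ in finitely many points. Completing $\gamma_k$ to a Jordan curve and invoking the Jordan--Sch$\ddot{o}$nflies Theorem, there is a homeomorphism $h$ of $\R^2$ taking $\gamma_k$ onto a straight segment $S$; after choosing the image disc $W=h(T)$ so that $\partial W$ meets the line containing $S$ in finitely many points, $S\cap W$ becomes a finite union of arcs, each of which is either a cross-cut of $W$ or has a free end inside $W$. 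By the cross-cut lemma the cross-cuts divide $W$ into finitely many Jordan subregions; one then routes a simple arc from $h(P)$ to $h(Q)$ through these subregions, going around the free pieces and crossing each cross-cut transversally at a single point, chosen off the images $h(\gamma_{k+1}),\dots,h(\gamma_n)$ whenever possible. Pulling back by $h^{-1}$ gives the required reconnection.

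\textbf{Where the difficulty lies.} The heart of the argument is the local surgery: $\mu\cap\gamma_k$ may be totally disconnected (a Cantor set), so there are no isolated crossings to remove one at a time, and one must bridge whole tangled subarcs simultaneously. The delicate points are keeping the rebuilt arc \emph{simple} with the prescribed \emph{endpoints}; confining the change to a small neighbourhood so the interior stays in $\D$ and the arcs handled earlier are not spoiled, which is exactly what makes the induction close; and arranging $T$ so that after straightening $\gamma_k$ its image boundary meets the segment in only finitely many points. When arcs of $\Gamma$ overlap one another the new crossing points cannot always be chosen off the remaining arcs; the finitely many such exceptional points are then disposed of by a further application of the same surgery in a small disc around each. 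The reduction to the disc, the selection of $\gamma_1,\dots,\gamma_n$, and the routing through the Jordan subregions are routine.
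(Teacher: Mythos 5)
Your argument is a genuinely different proof strategy from the one in the text, so let me first describe the divergence and then point to the place where your version currently breaks.

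\textbf{Where you differ from the text.} The text does not transfer $G$ to the disc, nor does it straighten arcs. It begins with \emph{any} arc $\sigma$ from $p_1$ to $p_2$, uses discreteness to discard all but finitely many $\gamma$, and then assumes these are partitioned into families $\Gamma=\Gamma_1\cup\cdots\cup\Gamma_n$ where each $\Gamma_i$ consists of \emph{pairwise disjoint} arcs. It runs a double induction, on the number $n$ of families and on the size $m$ of $\Gamma_1$. The inductive step either discards a non-separating $\gamma'\in\Gamma_1$ (decreasing $m$) or picks a separating $\gamma_1\in\Gamma_1$, chooses a point $q\in\gamma_1$ where local discreteness holds, sends a short arc from $q$ into each side, and connects to $p_1$, $p_2$ by the inductive hypothesis inside the two subregions. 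Crucially, the single crossing $q$ is \emph{allowed} to lie on other arcs of $\Gamma$; the induction is tracked by $(n,m)$, not by a list of "already cleared" arcs, so overlapping arcs cause no bookkeeping problem. Your proof instead performs a Sch\"onflies straightening of one arc at a time and reconnects through the resulting cross-cut decomposition — geometrically attractive, and it gives a concrete picture of what the repaired arc looks like, at the cost of a more fragile induction.

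\textbf{The gap.} Your inductive clause requires that $F=\mu\cap(\gamma_1\cup\cdots\cup\gamma_{k-1})$ be disjoint from $\gamma_k\cup\cdots\cup\gamma_n$, and you claim to re-establish this at the end of each stage by "placing" the new crossings of $\gamma_k$ off the later arcs. This cannot always be done, as you yourself half-acknowledge: if $\gamma_k$ and $\gamma_j$ ($j>k$) coincide on a subarc $\eta$, and the cross-cut of $W$ that you must traverse lies entirely inside $h(\eta)$, then \emph{every} crossing point lies on $\gamma_j$. The "further application of the same surgery in a small disc around each" does not repair this: inside any such small disc the arc $\gamma_k$ is still separating, so a crossing is still forced, and if $\gamma_k=\gamma_j$ throughout that disc the forced crossing is again on $\gamma_j$. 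The recursion does not terminate and the inductive hypothesis is simply lost. The fix is not another round of surgery but a weaker invariant — either drop the disjointness requirement (and then redo the compactness argument for choosing $R$, since $\mu\cap\gamma_k$ may then accumulate at points of $F$), or adopt the text's decomposition of $\Gamma$ into pairwise-disjoint families so that the arc you are crossing is never required to be avoided near the crossing point. A secondary but real delicacy you wave at but do not settle is keeping the rebuilt arc \emph{simple}: the "thin disc-neighbourhood" $T$ of a subarc $\lambda$ of a topological arc $\mu$ can meet $\mu\setminus\lambda$ arbitrarily often, so the new piece you splice in must be routed to avoid $\mu\setminus\lambda$ inside $T$ except at the endpoints, and this needs an argument, not an adjective.
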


\begin{proof}
Let $\sigma$ be any arc in $G$ that joins $p_1$ to $p_2$. Because of discreteness only finitely many $\gamma_i \in \Gamma$ meet $\sigma$. We denote the set of remaining arcs by $\Gamma '$ and replace $G$ by the component $G \setminus \Gamma '$ that contain $\sigma$. Then it is sufficient to prove the lemma when  $\Gamma$ is a finite set. We shall assume  that $\Gamma= \Gamma _1 \cup \ldots  \cup \Gamma_n$ where the arcs in $\Gamma_i$ are mutually disjoint. Let $P^n$ be the assertion of the lemma for all $\Gamma$ of this form, and let $P^n_m$ be the same assertion with the assumption that $\Gamma_1$ contains at most $m$ arcs.

Then $P^0$ is trivial, and $P_0^n$ is same as $P^{n-1}$. We shall show that $P^{n-1}$ and $P_{m-1}^n$ imply $P_m^n$. Thus $P^{n-1}$ imply $P_m^n$ for all $m$, and $P^n$ follows by the same reduction to the finite  case as above.

Let us assume that $P^{n-1}$ and $P_{m-1}^n$ holds. If some $\gamma ' \in \Gamma_1$ does not separate $p_1$ and $p_2$, then by $P_{m-1}^n$ we get an arc in $G \setminus \gamma '$ that joins $p_1$ to $p_2$. Otherwise consider an arc $\gamma_1 \in \Gamma_1$ that divides $G$ into $G_1$ and $G_2$ with $p_1 \in G_1$ and $p_2 \in G_2$. Since $\Gamma$ is discrete in G, we can choose a point $q \in \gamma_1$ and a neighborhood $U$ in $G$ that intersects only those arcs of $\Gamma$ to which $q$ belongs. By the choice of $q$ there exists a sub arc $\sigma_1$ at $q$ in $U$ that meets no $\gamma$ in $G_1$. By $P^n_{m-1}$ its endpoint in $G_1$ can be joined to $p_1$ by a Jordan arc $\sigma_2$ in the above manner. Then $\sigma_1 \cup \sigma_2$ is  a Jordan arc from $p_1$ to $q$  in $G_1$. By similar construction for $G_2$, we can obtain an arc from $p_1$ to $p_2$. 
\end{proof}

\begin{theorem}\label{ch2,sec3, thm6}
For any surface S, there exists an open covering by Jordan regions of finite character.
\end{theorem}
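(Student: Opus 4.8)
The plan is to take the two sequences $\{V_n\}$ and $\{W_n\}$ of Jordan regions produced by Lemma~\ref{ch2,sec3, lm3} and to replace each $V_n$, one at a time, by a slightly larger Jordan region whose (Jordan) boundary meets only finitely many of the boundaries chosen before it. I first record that the family $\{\overline{W}_n\}$ is locally finite and consists of compact sets: this is visible from the construction in Lemma~\ref{ch2,sec3, lm3}, where $\overline{W}_{kl}\subset G_{k+2}\setminus\overline{G}_{k-1}$ for an exhaustion $\{G_k\}$ of $S$ by relatively compact open sets; consequently each $\overline{W}_n$ meets only finitely many $\overline{W}_m$. It therefore suffices to construct Jordan regions $J_1,J_2,\dots$ with
\[\overline{V}_n\subset J_n\subset\overline{J}_n\subset W_n,\qquad \#\bigl(\partial J_n\cap\partial J_m\bigr)<\infty\ \ \text{for } m<n;\]
indeed $\{J_n\}$ then covers $S$ (since $V_n\subset J_n$), each $J_n$ meets only finitely many $J_m$ (since $J_n\subset W_n$, $J_m\subset W_m$, and $\{\overline{W}_n\}$ is locally finite with compact members), and the intersection of any two of the boundaries is finite, so $\{J_n\}$ has finite character.

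The construction is by induction on $n$. For $n=1$ take any Jordan region with $\overline{V}_1\subset J_1\subset\overline{J}_1\subset W_1$ (a small round disc in a Euclidean chart). Assume $J_1,\dots,J_{n-1}$ are chosen. Only finitely many of $\partial J_1,\dots,\partial J_{n-1}$ meet $W_n$, for if $\partial J_m\cap W_n\ne\emptyset$ then $\overline{W}_m\cap\overline{W}_n\ne\emptyset$; call these the relevant indices (for the rest, $\partial J_m\cap\partial J_n=\emptyset$ automatically, as $\partial J_n\subset W_n$). For a relevant $m$ the compact curve $\partial J_m$ meets the open set $W_n$ in a disjoint union of open arcs, and the totality $\Gamma$ of all these arcs is a discrete set of Jordan arcs in $W_n$ (a point of $W_n$ lying off $\partial J_m$ has a neighbourhood missing it, a point on it has a neighbourhood meeting only the arc through it; one may also invoke the proposition preceding Lemma~\ref{ch2,sec3, lm5}). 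By the Jordan--Sch\"onflies theorem I fix a homeomorphism carrying $W_n$ onto the open unit disc with $\overline{V}_n$ going to a concentric closed disc $\{|z|\le\rho\}$, and pick $\rho<\rho_1<\rho_2<1$. On the compact annulus $\{\rho_1\le|z|\le\rho_2\}$ the set $\bigcup_{\gamma\in\Gamma}\gamma$ is a \emph{finite} union of arcs, hence contains no open set; so I choose two distinct angles $\theta_0,\theta_1$ for which the radial segments $L_i=\{re^{i\theta_i}:\rho_1\le r\le\rho_2\}$ are not wholly contained in it, and then points $a\in L_0$, $b\in L_1$ with $\rho_1<|a|,|b|<\rho_2$ and $a,b\notin\bigcup_{\gamma\in\Gamma}\gamma$. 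The segments $L_0,L_1$ cut the open annulus $\{\rho_1<|z|<\rho_2\}$ into two sectors $G_1,G_2$, each a Jordan region. Applying Lemma~\ref{ch2,sec3, lm5} inside $G_1$ with the finite family $\Gamma$ restricted to $\overline{G}_1$ yields a Jordan arc $\mu_1$ joining $a$ to $b$ whose interior lies in $G_1$ and which meets $\bigcup_{\gamma\in\Gamma}\gamma$ in finitely many points; applying it again in $G_2$ gives such a $\mu_2$ joining $a$ to $b$ with interior in $G_2$. Since $G_1\cap G_2=\emptyset$, the arcs $\mu_1,\mu_2$ meet only in $a$ and $b$, so $C:=\mu_1\cup\mu_2$ is a Jordan curve, lying in $\{\rho_1<|z|<\rho_2\}$, meeting $\bigcup_{\gamma\in\Gamma}\gamma$ finitely. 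Following $\arg z$ once around $C$ produces a net increment of $2\pi$, so $C$ is homotopically nontrivial in the punctured disc and hence encloses $0$, and therefore the whole disc $\{|z|\le\rho\}$, while staying inside $\{|z|<\rho_2\}$. Pulling back through the homeomorphism, the bounded Jordan domain cut off by $C$ is the desired $J_n$: it satisfies $\overline{V}_n\subset J_n\subset\overline{J}_n\subset W_n$, and for every relevant $m$, $\partial J_n\cap\partial J_m\subset\partial J_n\cap\bigcup_{\gamma\in\Gamma}\gamma$ is finite. This completes the induction.

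The local-finiteness bookkeeping (inherited from Lemma~\ref{ch2,sec3, lm3}) and the final verification of the two finite-character axioms are routine. The real work --- and the step I expect to be the main obstacle --- is the inductive geometric step: producing a fattened boundary $\partial J_n$ that is simultaneously (i) a genuine simple closed curve, (ii) in the isotopy class in $W_n\setminus\overline{V}_n$ that actually surrounds $\overline{V}_n$, and (iii) meeting the previously built boundaries --- which may oscillate arbitrarily --- only finitely often. The device reconciling (ii) and (iii) is to cut the surrounding annulus into two Jordan sectors and build $\partial J_n$ from two ``cross-sector'' arcs supplied by Lemma~\ref{ch2,sec3, lm5}: confining each arc to its sector forces the assembled curve to wind exactly once, which gives (ii), and keeps it simple, which gives (i), while Lemma~\ref{ch2,sec3, lm5} is precisely what delivers (iii).
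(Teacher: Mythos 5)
Your proof is correct and follows essentially the same strategy as the paper: inductively build $J_n$ with $V_n\subset J_n\subset W_n$, pass to a disk model of $W_n$, split the annular region around $V_n$ into two Jordan sectors by a pair of radial segments, obtain one arc in each sector via Lemma~\ref{ch2,sec3, lm5} meeting the previous boundaries finitely, and assemble them into the Jordan curve $\partial J_n$. The only differences are cosmetic: you work in a concentric sub-annulus $\{\rho_1<|z|<\rho_2\}$ and use a winding-number argument to see that $J_n$ surrounds $V_n$, whereas the paper cuts all of $W_n\setminus\overline{V}_n$ and argues $V_n\subset J_n$ by a connectedness/complementary-region observation, and the paper also allows the shortcut $J_n=V_n$ when $\partial V_n$ already meets the earlier boundaries finitely.
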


\begin{proof}
Consider the sequences $\{V_n\}$ and $\{W_n\}$ as in the Lemma \ref{ch2,sec3, lm3}. We shall find a covering of $S$ of finite character by Jordan regions $J_n$ such that $V_n \subset J_n \subset W_n$, for all $n$.

Take $J_1 = V_1$ and assume that $J_1,\ldots,J_{n-1}$ have already constructed. We shall find $J_n$ such that $\gamma_n = \partial J_n$ intersects $\gamma_1 \cup \gamma_2\cup \ldots  \cup \gamma_{n-1}$ at a finite number of points. If $\partial V_n$ has this property then we can choose $J_n = V_n$. Otherwise we represent $\overline{W}_n$ homeomorphically as a closed disk whose center corresponds to a point of $V_n$. Let $\Gamma = \gamma_1 \cup \gamma_2\cup \ldots  \cup \gamma_{n-1}$. Since $\Gamma$ has empty interior, we can find two points $p_1$ and $p_2$ in $W_n \setminus \overline{V_n}$ on distinct radii, which do not lie on $\Gamma$. Let $s_1$ and $s_2$ be the radial segments through $p_1$ and $p_2$ to the boundary of $W_n \setminus V_n$.  Then $s_1,s_2$ and the boundary arcs of $V_n$ between $p_1$ and $p_2$ cut $W_n \setminus \overline{V}_n$ into two Jordan regions $G_1$ and $G_2$. Then by Lemma \ref{ch2,sec3, lm5} join $p_1$ and $p_2$   by two Jordan arcs $\sigma_i$ in $G_i$ which meet $\Gamma$ in a finite number of points. Then $\gamma_n = \sigma_1 \cup \sigma_2$ is a Jordan curve which bounds a Jordan region $J_n \subset W_n$.

\begin{center}
\includegraphics[width=1 \columnwidth]{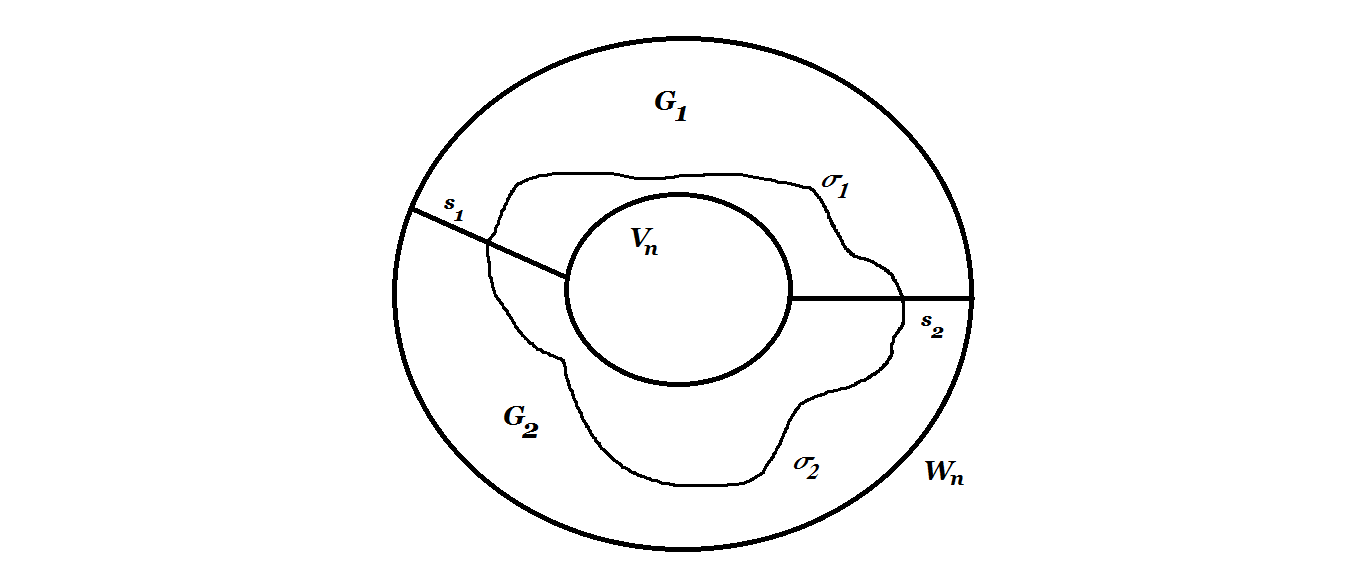}
\end{center}

\begin{center}
Figure 2.6
\end{center}

Since $J_n$ has no boundary point in $V_n$, either $V_n \subset J_n$ or $V_n$ lies in the complement of $J_n$. In the latter case whole $G_1$ would belong to the outside region determined by $\gamma_n$, except $p_1$ and $p_2$ that lie on its boundary. Thus $G_1$ has no boundary point in $J_n$, so that $G_1 \cap \overline{J}_n= \phi$. This is contradiction to the fact that $\sigma_1 \in G_1 \cap \overline{J}_n$. Thus we have $V_n \subset J_n \subset W_n$.
\end{proof}

\begin{theorem}\label{ch2,sec3, thm7}
Every  surface S is triangulable.
\end{theorem}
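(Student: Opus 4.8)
The plan is to produce a triangulation of $S$ from a suitable locally finite net of Jordan curves and then refine it. By Theorem~\ref{ch2,sec3, thm6} fix a (countable) open covering $\{J_n\}$ of $S$ by Jordan regions of finite character and set $\gamma_n=\partial J_n$, a Jordan curve. By condition (1) of finite character each $J_n$ meets only finitely many $J_m$, and since $\overline{J}_m\cap J_n=\phi$ whenever $J_m\cap J_n=\phi$, the set $J_n$ is a neighborhood meeting only finitely many of the $\gamma_m$; thus $\{\gamma_n\}$ is locally finite. The first step is a clean-up of this net: I would modify the curves inductively so that any two of them meet in only finitely many points and share no subarc. At stage $n$ only finitely many of $\gamma_1,\dots,\gamma_{n-1}$ meet $\overline{J}_n$, and, representing $\overline{W}_n$ as a closed disk (legitimate since $W_n$ is a Jordan region containing $\overline{J}_n$), one can push $\gamma_n$ slightly---using the Jordan-Sch\"onflies theorem to keep it a Jordan curve bounding a Jordan region---so as to replace each shared subarc by a finite set of transverse crossings while retaining $V_n\subset J_n\subset W_n$. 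After this the union $N=\bigcup_n\gamma_n$ is a locally finite family of Jordan curves, any two of which cross in a finite point set.

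The second step is to read off from $N$ a cell decomposition of $S$. Declare the \emph{vertices} of a graph on $S$ to be the pairwise intersection points of the $\gamma_n$, together with finitely many auxiliary points on the curves chosen so that no edge below is a loop; the \emph{edges} to be the closures of the arcs into which each $\gamma_n$ is cut by its vertices; and the \emph{faces} to be the connected components of $S\setminus N$. Since $N\cap J_n$ is a finite union of arcs for each $n$, every vertex lies on finitely many edges and every $\gamma_n$ carries finitely many, so this graph is locally finite in $S$. If $F$ is a face and some $x\in F$ lies in $J_n$, then $F$ is connected and disjoint from $\gamma_n\subset N$, whence $F\subset J_n$ and $\overline{F}\subset\overline{W}_n$ is compact. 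Deleting the finitely many arcs of $N\cap J_n$ from $J_n$ one at a time and applying the cross-cut lemma (a cross-cut of a Jordan region divides it into two Jordan regions)---with Lemma~\ref{ch2,sec3, lm5} and the proposition that a discrete family of arcs stays discrete on a subregion used to keep the successive arcs in admissible position---one concludes that each component $F$ is itself a Jordan region and that $\overline{F}\setminus F$ consists of finitely many edges and vertices of $N$; hence $\partial F$ is a Jordan curve running over at least two edges. Thus $N$ equips $S$ with a locally finite cell decomposition whose closed cells are embedded disks, closed arcs, and points.

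The third step is the subdivision into triangles. Choose an interior point $m_e$ of every edge $e$ and an interior point $c_F$ of every face $F$. Subdivide each $\overline{F}$ by coning from $c_F$: if $\partial F$ runs over edges $e_1,\dots,e_k$ with consecutive endpoints $v_1,\dots,v_k$, one obtains the $2k$ triangles $\langle c_F,v_i,m_{e_i}\rangle$ and $\langle c_F,m_{e_i},v_{i+1}\rangle$. Triangles coming from the two faces on either side of an edge $e$ agree along the half-edges $\langle v,m_e\rangle$, and because the points $c_F$ and $m_e$ are pairwise distinct (and no edge is a loop) no two of the triangles coincide; so all of them, together with their faces, form a collection $K$ of simplices, and one checks that $K$ satisfies conditions (1)--(3) of the definition of a simplicial complex, local finiteness of $\{J_n\}$ giving the last. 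The tautological map $|K|\to S$ is a continuous bijection, and---since only finitely many simplices of $K$ meet any point---it carries a small disk neighborhood of each point homeomorphically onto a neighborhood in $S$; hence it is a homeomorphism and $K$ is a triangulation of $S$. (When $S$ is compact only finitely many $J_n$ occur, $K$ is finite, and it has the properties listed in Theorem~\ref{ch5,th2}; a bordered surface is treated the same way.)

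I expect the \textbf{main obstacle} to be the second step: proving that $N$ genuinely cuts $S$ into Jordan regions each bounded by only finitely many edges---equivalently, that $N$ is a locally finite embedded graph whose complementary faces are cells. This is precisely where the finite-character hypothesis, the cross-cut lemma, and an induction on how each new curve $\gamma_n$ subdivides the regions already present have to be combined with care; the preliminary perturbation that makes the curves cross in finite point sets is itself somewhat delicate. By contrast, the coning/barycentric subdivision of the resulting polygonal complex and the verification that $|K|\cong S$ are routine.
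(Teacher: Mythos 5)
Your overall plan — fix a covering of finite character, cut $S$ into Jordan subregions along the boundary curves, then cone each closed face from an interior point to produce triangles — is exactly the decomposition the paper uses, and the coning subdivision in step three is the same as the paper's choice of vertices (endpoints of the $\gamma_{mi}$ together with an interior point of each $\overline{J}_{nj}$ and of each $\gamma_{mi}$). Two remarks on where the proposal diverges from, and falls slightly short of, the paper's argument.

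First, the perturbation step you place at the start is not needed for the cover supplied by Theorem~\ref{ch2,sec3, thm6}: in that theorem each $\gamma_n$ is assembled from two Jordan arcs produced by Lemma~\ref{ch2,sec3, lm5}, and those arcs meet $\gamma_1\cup\cdots\cup\gamma_{n-1}$ in \emph{finitely many points}, not arcs. So the cover you start from already has the transversality you spend your first paragraph manufacturing. (The allowance for arcs in the definition of finite character is used later, in the uniqueness-of-triangulation theorem, where the $\gamma_n$ come from pre-existing triangulations and can genuinely share edges.) Building in the perturbation is not wrong, but you flag it yourself as ``somewhat delicate,'' and it is delicate precisely because it has to be carried out inductively while respecting all later $\gamma_m$; the paper's route sidesteps it entirely.

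Second — and this is the genuine gap — your argument that each face $F$ is a Jordan region relies on deleting ``the finitely many arcs of $N\cap J_n$'' from $J_n$ one cross-cut at a time. This silently assumes that every $\gamma_i$ which enters $J_n$ also leaves it, so that $\gamma_i\cap J_n$ really is a union of cross-cuts. But nothing in your setup excludes the possibility that some $\gamma_i$ lies entirely inside $J_n$: then $\gamma_i$ is a closed curve in the disk $J_n$ and the component of $J_n\setminus\gamma_i$ lying between $\gamma_i$ and $\gamma_n$ is an annulus, not a Jordan region, so the cross-cut lemma does not apply and coning from $c_F$ fails. The paper closes this hole with a short preliminary reduction: discard every $J_n$ contained in some other $J_m$, and observe that after this, $\gamma_i\subset J_n$ (with $J_i\not\subset J_n$) forces $J_i\cup J_n=S$, so $S$ is a sphere and is handled directly. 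Your proposal needs the same reduction (or an equivalent device) before the face-by-face cross-cut induction can go through.
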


\begin{proof}
Let $\{J_n\}$ be an open covering of $S$ of finite character. From the covering we discard all those $J_n$, which are contained in $J_m$ for some $m\ne n$. The remaining collection will still form a covering.

If  $\gamma_n \subset J_m$ for some $m \ne n$, it means the Jordan region which $\gamma_n$ encloses in $J_m$ must be complementary to $J_n$, as $J_n$ is not contained in  $J_m$. In that case $J_n \cup J_m$ is both open and closed in $S$. Since $S$ is connected $S= J_n \cup J_m$. Hence  the surface $S$ is a sphere and we know sphere is triangulable.

Now each $\gamma_i$ intersects $\overline{J}_m$ along a finite number of cross-cuts. We begin by considering the cross-cuts on $\gamma_1$, if any. The first cross-cut divides $\overline{J}_m$ into two Jordan regions. One of these subregions is divided in the same way by the second cross-cut, and so on. The $\gamma_1$ divides $\overline{J}_m$ into finite number of Jordan regions. Next, if $\gamma_2$ intersects $\overline{J}_m$ then either these regions does not meet $\gamma_2$ or $\gamma_2$ divides them in finite number of Jordan regions. Since $\overline{J}_m$ meets only finitely many $\gamma_n$, this  process terminates in finite number of steps.

Let $\overline{J}_{mi}$ are the closed subregions of $\overline{J}_m$ obtained above. Then any two $\overline{J}_{mi}$ and $\overline{J}_{nj}$ are either identical or have disjoint interiors.

At the same time the arc $\gamma_m$ is divided by the points on some $\gamma_i$ or the end points of arcs that are common to $\gamma_m$ and $\gamma_i$. Let $\gamma_{mi}$ denote the sub arcs of $\gamma_m$. Any two $\gamma_{mi}$ and $\gamma_{nj}$ are either identical or have at most end points in common. Evidently the boundary of each $\overline{J}_{mi}$ is a union of arcs $\gamma_{nj}$.

\begin{center}
\includegraphics[width=1 \columnwidth]{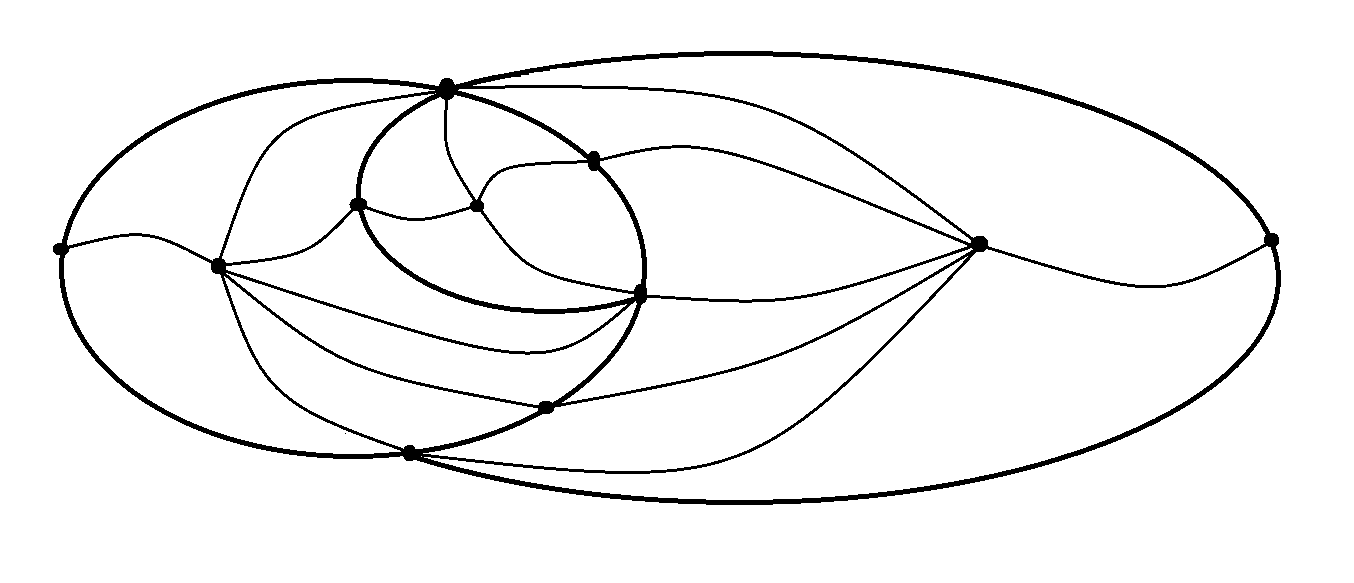}
\end{center}

\begin{center}
Figure 2.7
\end{center}

For the triangulation of the surface $S$, we consider a complex $K$ whose vertices are the end points of the arcs $\gamma_{mi}$ and an interior point of each $\overline{J}_{nj}$ and each $\gamma_{mi}$. The interior point of $\overline{J}_{nj}$ can be joined to the vertices on the boundary of $\overline{J}_{nj}$ by the Jordan arcs. They divide $\overline{J}_{nj}$ into closed triangular regions, see Figure 2.7. Thus we can define the corresponding 1-simplexes and 2-simplexes of $K$ associated with the triangular regions. Thus $S$ is triangulable and $K$ is a triangulation of $S$.
\end{proof}

\begin{theorem}
Let $K_1$ and $K_2$ be triangulations of a surface S. Then $K_1$ is equivalent to $K_2$.
\end{theorem}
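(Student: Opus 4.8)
The plan is to reduce the statement to the construction of a common subdivision and then to build that subdivision by overlaying the two triangulations. First I record the reduction: by the definition of combinatorial equivalence it suffices to produce a subdivision $K_1'$ of $K_1$, a subdivision $K_2'$ of $K_2$, and a simplicial isomorphism $K_1'\to K_2'$, since then $K_1\sim K_1'\cong K_2'\sim K_2$, whence $K_1\sim K_2$. Because replacing either $K_i$ by a subdivision does not change its equivalence class, I may freely pass to subdivisions of $K_1$ or $K_2$ along the way. Fix a homeomorphism $h\colon|K_1|\to|K_2|$ and set $S=|K_1|$; the idea is to transport the combinatorial data of $K_2$ to $S$ by $h$, reconcile it with $K_1$, and then realise the outcome as honest Euclidean subdivisions on both sides.

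The core step is to put $G:=h^{-1}(|K_2^{1}|)$, a locally finite topological graph in $S$, into \emph{general position} with respect to $K_1$: after replacing $K_2$ by a sufficiently fine subdivision I want a homeomorphism adjusting $h$ so that $G$ meets each edge of $K_1$ in a finite set of points, meets each closed $2$-simplex $\tau$ of $K_1$ in a finite graph whose free ends lie on $\partial\tau$, and has no closed component inside $\mathrm{int}\,\tau$. This is carried out skeleton-first: one first reroutes the arcs of $G$ near the vertices and edges of $K_1$, and then, inside each $2$-simplex $\tau$ (a Jordan region in which $G\cap\tau$ now enters through finitely many boundary points), one applies the Jordan--Schönflies theorem and Lemmas~\ref{ch2,sec3, lm3} and~\ref{ch2,sec3, lm5} to replace the pieces of $G$ in $\tau$ by arcs meeting $\partial\tau$ only at their endpoints; the local finiteness built into the definition of a simplicial complex, together with Theorem~\ref{ch2,sec3, thm6}, lets these local adjustments be performed and glued coherently. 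A pleasant point is that general position is then automatic on the other side: since only finitely many edges of $G\cup|K_1^{1}|$ meet the compact set $h^{-1}(\overline{\sigma})$ for a $2$-simplex $\sigma$ of $K_2$, the graph $h(|K_1^{1}|)$ meets each $2$-simplex of $K_2$ in finitely many arcs.

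With general position achieved, the overlay $\Lambda:=G\cup|K_1^{1}|$ is a locally finite graph on $S$ whose complementary regions are open disks, each contained in a single $2$-simplex of $K_1$ and in a single set $h^{-1}(\mathrm{int}\,\sigma)$; here the cross-cut lemma (a cross-cut of a Jordan region splits it into two Jordan regions) is applied repeatedly inside each $2$-simplex. Triangulating this cell decomposition by coning each complementary region from an interior point, exactly as in the proof of Theorem~\ref{ch2,sec3, thm7}, yields a curvilinear triangulation $T$ of $S$ every simplex of which lies both in a simplex of $K_1$ and, under $h$, in a simplex of $K_2$. Now straighten: inside each $2$-simplex $\tau$ of $K_1$ there is a homeomorphism of $\tau$ fixing $\partial\tau$ that carries the curvilinear triangulation $T\cap\tau$ onto a rectilinear one (again Jordan--Schönflies), and these glue to a homeomorphism $\psi$ of $S$ with $\psi(|K_1^{1}|)=|K_1^{1}|$; then $K_1':=\psi(T)$ is a Euclidean complex that subdivides $K_1$ and is abstractly isomorphic to $T$. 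Performing the analogous straightening inside the $2$-simplices of $K_2$ on the curvilinear triangulation $h(T)$ produces a Euclidean subdivision $K_2'$ of $K_2$ abstractly isomorphic to $h(T)\cong T$. Hence $K_1'\cong T\cong K_2'$, and by the reduction $K_1$ and $K_2$ are combinatorially equivalent.

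The main obstacle is the general-position step. A priori a Jordan arc — let alone the whole graph $h^{-1}(|K_2^{1}|)$ — can meet a $2$-simplex of $K_1$ in a wild compact set, so passing to a graph that meets $K_1$ in finitely many arcs is genuinely the crux; it rests on the Jordan--Schönflies theorem together with a compactness/finite-character reduction to the case of finitely many arcs, and it must be organised so that the adjustments in $2$-simplices sharing an edge remain compatible. Everything afterwards — overlaying via the cross-cut lemma, coning to triangulate, and straightening curvilinear triangulations of disks rel boundary — is the same bookkeeping already used to prove that surfaces are triangulable, only applied twice.
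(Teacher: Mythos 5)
Your argument is correct but organised differently from the paper's: you compare $K_1$ and $K_2$ directly via the homeomorphism $h$, whereas the paper routes through a third, auxiliary triangulation. Concretely, the paper re-runs the constructions of Theorem~\ref{ch2,sec3, thm6} and Theorem~\ref{ch2,sec3, thm7} treating the $1$-skeletons of \emph{both} $K_1$ and $K_2$ as the given discrete family of arcs, so that the edges of a fresh triangulation $K$ can be \emph{chosen from scratch} (via Lemma~\ref{ch2,sec3, lm5}) to meet both $1$-skeletons in finitely many points; it then builds a common subdivision $K_1'$ of $K$ and $K_1$, and separately a common subdivision $K_2'$ of $K$ and $K_2$, concluding $K_1\sim K\sim K_2$. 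You instead transport $K_2^1$ to $G=h^{-1}(|K_2^1|)$ on $|K_1|$, perturb this \emph{already given} and possibly wild graph into general position with $K_1$, and obtain a single overlay $T$ refining both triangulations at once. The trade-off is genuine: your general-position step (including the demand that no closed component of $G$ sit inside a $2$-simplex of $K_1$) is harder than what the paper has to do, because the paper never modifies a pre-existing graph --- it only \emph{chooses} new arcs, which is exactly what Lemma~\ref{ch2,sec3, lm5} is engineered to permit; that is precisely what the detour through $K$ buys, at the price of doing the overlay twice. On the other side, your explicit straightening step is cleaner than the paper's, which passes from curvilinear triangular regions to simplexes without remark; both ultimately lean on the Jordan--Sch\"onflies theorem to treat topological triangles as rectilinear ones. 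Either route is sound, and it is worth noticing that the paper's auxiliary $K$ is a device for converting the general-position \emph{perturbation} you face into a general-position \emph{construction}, which is the easier of the two.
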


\begin{proof}
Since $K_1$ and $K_2$ are two simplicial complexes, their 1-simplexes will form a discrete set. Therefore, as in Theorem \ref{ch2,sec3, thm6}, we can construct a covering of finite character by Jordan regions $\{J_n\}$ with the condition that $\partial J_n$ intersects 1-simplexes of $K_1$ and $K_2$ in a finite number of points.

Now we shall construct a new triangulation of $S$ whose 1-simplexes intersect those of $K_1$ and $K_2$ in a finite number of points. Following the proof of the Theorem \ref{ch2,sec3, thm7}, we consider a complex $K$ where the interior points of $\gamma_{mi}$ and $J_{nj}$ are taken such that they are not on any 1-simplexes of $K_1$ and $K_2$. Then by Lemma \ref{ch2,sec3, lm5}, the interior point of $J_{nj}$ can be joined to the vertices on the boundary of $J_{nj}$ by Jordan arcs intersecting 1-simplexes of $K_1$ and $K_2$ in finite number of points. If some end point $q$ of $\gamma_{mi}$ ,which is a vertex of $K$, lies on 1-simplex of $K_1$ or $K_2$ then  we consider a sub arc near $q$ in the corresponding $J_{nj}$ that intersects no other 1-simplexes of $K_1$ and $K_2$ so that its end point can be joined to the interior point of $J_{nj}$ in the above manner.

To show that $K_1$ and $K_2$ are equivalent it is sufficient  to show that $K$ is equivalent to both $K_1$ and $K_2$.  For this we construct a common subdivision $K_1'$ of $K$ and $K_1$. And in similar manner a common subdivision $K_2'$ of $K$ and $K_2$ can be constructed.

Consider  a covering of $S$ by closed Jordan regions $\{\overline{J}_m\}$  and $\{\overline{J}_n'\}$ which are 2-simplexes of $K$ and $K_1$ respectively. As in proof of Theorem \ref{ch2,sec3, thm7}, we denote by $\overline{J}_{mi}$ (and $\overline{ J}_{nj}')$ the closed subregions of  $\overline{J}_m$(and $\overline{ J}_n')$ obtained by subdivisions of $\overline{J}_m$(and $\overline{ J}_n')$ by the cross-cuts formed by boundaries of $\overline{J}_n'$(and $\overline{ J}_m)$. Any two regions $\overline{J}_{mi}$ and $ \overline{ J}_{nj}'$ are either identical or have disjoint interior.

Also we consider the arcs $\gamma_{mi}$ (and $\gamma_{nj}')$ into which $\gamma_m$ which are 1-simplexes of $K$ (and $\gamma_n'$ which are  1-simplexes of $K_1$) is divided by the points on $\gamma_n' \ (\gamma_n)$ or the end points of arcs that are common to both $\gamma _m$ and $\gamma_n'$.  Any two $\gamma_{mi}$ and $\gamma_{nj}'$ are either identical or have at most end points in common.

We introduce the complex $K_1'$ whose vertices are the end points of the arcs $\gamma_{mi}$ and/or  $\gamma_{nj}'$(it will include all the vertices of $K$ and $K_1$) and an interior point of each $\overline{J}_{mi}$ and/or  $\overline{J}_{nj}'$ and each $\gamma_{mi}$ and/or $\gamma_{nj}'$. The interior point of $\overline{J}_{nj}$ (and/or $\overline{J}_{nj}'$) can be joined to the vertices on the boundary of $\overline{J}_{nj}$ (and/or $\overline{J}_{nj}'$) by the Jordan arcs. Again the subdivision will provide closed triangular regions and we can define the corresponding 1-simplexes and 2-simplexes of $K'_1$ associated with the closed triangular regions.
\end{proof}

\textbf{Remark:} If S is a bordered surface we construct a surface $\widehat{S}$. For $\widehat{S}$ we construct a covering of finite character with the additional property that each $\gamma_n$ intersects the boundary $\partial S$ in a finite number of points. This can be done by including the boundary curves of $S$ in $\Gamma$. Further in the proof of Theorem \ref{ch2,sec3, thm7}, we need to subdivide each $J_m$ by the cross cuts on $\partial S$ also. For the construction of  $\widehat{S}$, we consider $S_1$ a copy of $S$ and a homeomorphism $\phi : S \to S_1$. Then $\widehat{S}$ is the space obtained by identifying each $p \in \partial S$ with its image $\phi(p) \in \partial S_1$. The space $\widehat{S}$ is called $double$ of $S$.





\chapter{A Brief Incursion in Algebraic Topology}\label{ch3}

\section{Homotopy}\label{ch3,sec1}
This chapter provides a brief introduction to algebraic topology. We begin with the notion of homotopy and fundamental group of a topological space $X$. The closed unit interval [0,1] will be denoted by $\I$.

\textbf{Definition:} Let $f$ and $g$ be two continuous maps of a topological space $X$ to a topological space $Y$, and let $H : X \times \I \to Y$ be a continuous map such that
$$H(x,0)=f(x)$$
and
$$H(x,1)=g(x)$$
for each $x \in X$.  Such a map $H$ is called a $homotopy$ between $f$ and $g$ also we say $f$ is $homotopic \ to \ g$.

For each $t \in \I$, the map $h_t : X \to Y$ given by $h_t(x)= H(x,t)$ is continuous. Now $H$ becomes a family of continuous maps $h_t$, also $h_0=f$ and $h_1=g$. Thus a homotopy is simply a family of continuous maps from $X$ to $Y$ which starts from $f$ changes continuously with respect to $t$ and terminates into the map $g$.

\begin{proposition}\label{ch3,sec1,pro1}
Homotopy of maps is an equivalence relation on the space of continuous maps from X to Y.
\end{proposition}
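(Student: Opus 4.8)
The plan is to verify directly that the relation ``$f$ is homotopic to $g$'' on continuous maps $X \to Y$ satisfies the three defining properties of an equivalence relation: reflexivity, symmetry, and transitivity. For reflexivity, given a continuous $f : X \to Y$ I would take the homotopy $H : X \times \I \to Y$ defined by $H(x,t) = f(x)$; this is the composite of the projection $X \times \I \to X$ with $f$, hence continuous, and it satisfies $H(\cdot,0) = H(\cdot,1) = f$, so $f$ is homotopic to itself.

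For symmetry, suppose $H : X \times \I \to Y$ is a homotopy from $f$ to $g$. I would set $\overline{H}(x,t) = H(x,1-t)$. Since $(x,t) \mapsto (x,1-t)$ is a homeomorphism of $X \times \I$, the map $\overline{H}$ is continuous, and $\overline{H}(\cdot,0) = H(\cdot,1) = g$ while $\overline{H}(\cdot,1) = H(\cdot,0) = f$, so $g$ is homotopic to $f$. Both of these steps are immediate once one observes that reparametrizing the $\I$-coordinate by a continuous map of $\I$ into itself preserves continuity.

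The only step requiring genuine care is transitivity. Suppose $H$ is a homotopy from $f$ to $g$ and $G$ a homotopy from $g$ to $h$. I would ``concatenate'' them by setting
$$
F(x,t) =
\begin{cases}
H(x,2t), & 0 \le t \le \tfrac12,\\
G(x,2t-1), & \tfrac12 \le t \le 1.
\end{cases}
$$
The two formulas agree on the overlap $t = \tfrac12$, where both equal $g(x)$ since $H(\cdot,1) = g = G(\cdot,0)$, so $F$ is well defined. The sets $X \times [0,\tfrac12]$ and $X \times [\tfrac12,1]$ are closed in $X \times \I$, they cover it, and the restriction of $F$ to each is continuous (being a composite of a continuous reparametrization with $H$ or with $G$); by the pasting lemma for closed sets, $F$ is continuous on all of $X \times \I$. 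Finally $F(\cdot,0) = H(\cdot,0) = f$ and $F(\cdot,1) = G(\cdot,1) = h$, so $f$ is homotopic to $h$. The main (indeed the only) obstacle is the invocation of the pasting lemma together with checking the matching condition at $t = \tfrac12$; the rest is routine.
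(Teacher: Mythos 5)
Your proof is correct and follows essentially the same route as the paper: the constant homotopy for reflexivity, the $t \mapsto 1-t$ reparametrization for symmetry, and concatenation in the $\I$-factor together with the pasting lemma for transitivity. Your write-up is slightly more explicit about why the first two homotopies are continuous, but otherwise the two arguments coincide.
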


\begin{proof}
Let $f: X \to Y$ be a continuous map. Clearly $f$ is homotopic to itself because the map $H : X \times \I \to Y$ defined by $H(x,t)= f(x)$ is a homotopy.

If $f$ and $g$ are continuous maps from $X$ to $Y$ and $H$ is a homotopy between $f$ and $g$ then the map $G: X \times \I \to Y$ defined by $G(x,t) = H(x,1- t)$ is a homotopy between $g$ and $f$.

Also if $f$, $g$ and $h$ are continuous maps from $X$ to $Y$ such that $H'$ is a homotopy between $f$ and $g$. And $H''$ is a homotopy between $g$ and $h$ then the map $G:X \times \I \to Y$ given by
\begin{equation}\notag
G(x,t)=
\begin{cases}
H'(x,2t), & 0 \le t \le1/2\\
H''(x,2t - 1), & 1/2 \le t \le 1
\end{cases}
\end{equation}
is the homotopy between $f$ and $h$. As the two definitions agree for $t=\frac{1}{2}$ and $G$ is continuous on two closed subsets $X \times [0, \frac{1}{2}]$ and $X \times [ \frac{1}{2} , 1]$, it is continuous on all  of $X \times \I$, by the pasting lemma.

Thus the relation of being $ homotopic \ to$ is an equivalence relation.
\end{proof}

Now we consider the special case in which $f$ is a path in $X$. By a path in $X$ we mean a continuous map $f:\I \to X$. The idea of continuously deforming a path, keeping  its end points fixed, is made precise by the following

\textbf{Definition:} Two paths $f$ and $g$ from $\I$ to $X$ are said to be $path \ homotopic$ if they have the same initial point (that is $f(0)=g(0)$) and the same terminal point (that is $f(1)=g(1)$) and if there is a continuous map $H: \I \times \I \to X$ such that 
$$H(s,0)=f(s)$$
$$H(s,1)=g(s)$$
$$H(0,t)=f(0)=g(0)$$
and
$$H(1,t)=f(1)=g(1)$$
for each $s \in \I$ and each $t \in \I$. The map $H$ is called a $path \ homotopy$ between paths $f$ and $g$.

As in the Proposition \ref{ch3,sec1,pro1}, we can show that being $path \ homotopic \ to$ is an equivalence relation. If $f$ is a path, we shall denote its path homotopy equivalence class by $ [f]$.

Given two paths $f,g : \I \to X$ such that $f(1)=g(0)$ we define the product $f\ast g$ of $f$ and $g$ to be the path by
\begin{equation}\notag
f* g(t)=
\begin{cases}
f(2t), & 0 \le t \le 1/2\\
g(2t- 1), &  1/2 \le t \le 1
\end{cases}
\end{equation}
The product path $f *g$ traverses first $f$ and then $g$, it is a path in $X$ from $f(0)$ to $g(1)$.

\begin{center}
\includegraphics[width=1\columnwidth]{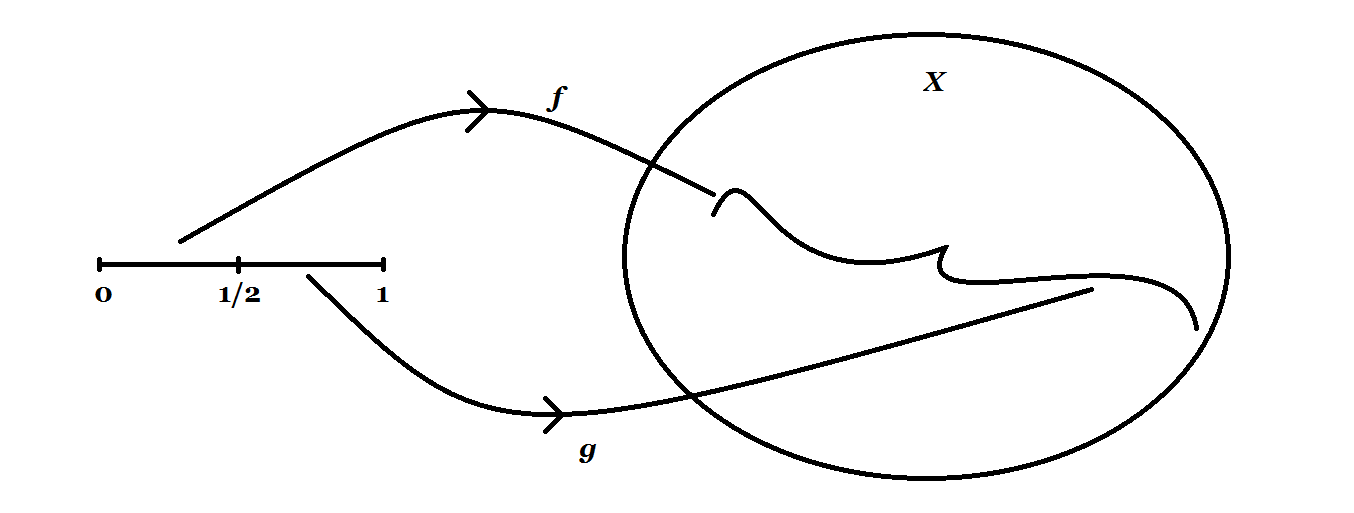}
\end{center}

\begin{center}
Figure 3.1 
\end{center}

It is well defined and continuous by the pasting lemma, which states that if a space $X$ can be written as union of two closed  subsets $A_1$ and $A_2$ of $X$ and $F:X \to Y$ is a map such that $F|_{A_1}$ and $F|_{A_2}$ are continuous then the map $F$ itself is continuous. 

\begin{proposition}
If a path $f$ is path homotopic to $f'$ and a path $g$ is path homotopic to $g'$ in X, and the product path $f*g$ exists then $f'*g'$ exists and is homotopic to $f*g$.
\end{proposition}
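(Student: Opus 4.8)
The plan is to glue the two given path homotopies side by side, exactly mirroring the definition of the product path. First I would record why $f'*g'$ is even defined: let $F:\I\times\I\to X$ be a path homotopy from $f$ to $f'$ and $G:\I\times\I\to X$ a path homotopy from $g$ to $g'$. Since a path homotopy fixes endpoints, $f'(1)=f(1)$ and $g'(0)=g(0)$; combined with the hypothesis $f(1)=g(0)$ this yields $f'(1)=g'(0)$, so the product $f'*g'$ exists.

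Next I would define the candidate homotopy $H:\I\times\I\to X$ by $H(s,t)=F(2s,t)$ for $0\le s\le 1/2$ and $H(s,t)=G(2s-1,t)$ for $1/2\le s\le 1$. On the overlap $s=1/2$ the two formulas agree, because $F(1,t)=f(1)=g(0)=G(0,t)$ for every $t\in\I$; hence, applying the pasting lemma to the two closed subsets $[0,1/2]\times\I$ and $[1/2,1]\times\I$ of $\I\times\I$, the map $H$ is well defined and continuous.

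Finally I would verify the four defining conditions of a path homotopy. Putting $t=0$ gives $H(s,0)=F(2s,0)=f(2s)$ for $s\le 1/2$ and $H(s,0)=G(2s-1,0)=g(2s-1)$ for $s\ge 1/2$, i.e. $H(\cdot,0)=f*g$; in the same way $H(\cdot,1)=f'*g'$. For the endpoints, $H(0,t)=F(0,t)=f(0)$ and $H(1,t)=G(1,t)=g(1)$, both independent of $t$, so $H$ keeps the common initial point $f(0)$ and terminal point $g(1)$ fixed throughout. Thus $H$ is a path homotopy from $f*g$ to $f'*g'$, which is the assertion.

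There is no genuine obstacle in this argument; the single point that needs attention is the continuity of $H$, and that is supplied precisely by the pasting lemma recalled just before the statement, once one checks the compatibility identity $F(1,t)=G(0,t)$ on the seam.
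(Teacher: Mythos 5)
Your proof is correct and takes essentially the same approach as the paper: both split $\I\times\I$ at $s=1/2$, run the homotopy of $f$ on the left half and the homotopy of $g$ on the right half, and invoke the pasting lemma. You are more careful than the paper in two small respects — you explicitly verify the endpoint-fixing conditions of a path homotopy, and you use the correct variable $s$ in the case distinctions (the paper's displayed formula mistakenly writes $t$ in the conditions) — but the argument is the same.
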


\begin{proof}
Since the product $f*g$ is defined, we have $f(1)=g(0)$. As $f$ is path homotopic to $f'$ we get $f(1)=f'(1)$ similarly we have $g(0)=g'(0)$. Therefore the product $f'*g'$ is defined.

Let $H'$ be a path homotopy between $f$ and $f'$ and let $H''$ be a path homotopy between $g$ and $g'$. Then the homotopy between $f*g$ and $f'*g'$ is the map $H: \I \times \I \to X $ defined as 
\begin{equation}\notag
H(s,t)=
\begin{cases}
H'(2s,t), & 0 \le t \le1/2\\
H''(2s -1,t), & 1/2 \le t \le 1 .
\end{cases}
\end{equation}

Clearly $H$  is continuous by the pasting lemma.
\end{proof}

Now we can define the product of homotopy classes of paths $f$ and $g$ as the homotopy class of $f*g$, that is $[f]*[g]=[f*g]$ provided that $f*g$ is defined.

\section{The Fundamental Group}

In this section we shall consider the path $f : \I \to X$ with the same initial and terminal point, that is, $f(0)=x_0=f(1)$. Such paths are called $loops$ and the common initial and terminal point $x_0$ is referred to as the $base\  point$. The set of all homotopy classes $[f]$ of loops $f:\I \to X$ at the base point $x_0$ is denoted by $\pi_1(X,x_0)$.

\begin{theorem}
The set $\pi_1(X,x_0)$ is a group with respect to the binary operation product of paths $[f]*[g]=[f*g]$.
\end{theorem}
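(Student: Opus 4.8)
The plan is to verify the group axioms for $\pi_1(X,x_0)$ one by one, using the path-homotopy constructions already introduced. Since it has been established that $[f]*[g]=[f*g]$ is well-defined on path-homotopy classes whenever $f*g$ is defined, and since for loops based at $x_0$ the product $f*g$ is always defined (because $f(1)=x_0=g(0)$), the binary operation is total on $\pi_1(X,x_0)$. So what remains is associativity, existence of an identity, and existence of inverses; each of these is established by exhibiting an explicit path homotopy, with continuity justified by the pasting lemma as in the earlier propositions.

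First I would prove associativity: given loops $f,g,h$ at $x_0$, both $(f*g)*h$ and $f*(g*h)$ traverse the three loops in order but with different parametrizations of the subintervals. I would write down the standard reparametrization homotopy $H(s,t)$ that interpolates between the two bracketings — piecewise-linear in $s$ on three subintervals whose breakpoints move linearly with $t$ (for instance sending $f$ to $[0,\frac{t+1}{4}]$, $g$ to $[\frac{t+1}{4},\frac{t+2}{4}]$, $h$ to $[\frac{t+2}{4},1]$) — and check it is a path homotopy: it fixes endpoints at $x_0$, agrees with $(f*g)*h$ at $t=0$ and with $f*(g*h)$ at $t=1$, and is continuous on the relevant closed pieces by the pasting lemma. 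Next, for the identity, let $e_{x_0}$ be the constant loop at $x_0$; I would produce homotopies showing $[e_{x_0}]*[f]=[f]=[f]*[e_{x_0}]$, again by the standard ``push the constant part to a point'' reparametrization homotopy. Finally, for inverses, given a loop $f$, set $\bar f(s)=f(1-s)$, the reverse loop, and exhibit the homotopy $H(s,t)$ that contracts $f*\bar f$ to the constant loop — going out along $f$ a fraction $t$ of the way, pausing at $f(1-t)$, and coming back — so that $[f]*[\bar f]=[e_{x_0}]$, and symmetrically $[\bar f]*[f]=[e_{x_0}]$.

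I do not expect any genuine obstacle here — this is the classical construction of the fundamental group, and every piece of machinery needed (path homotopies, the product operation, its well-definedness on classes, the pasting lemma) has already been set up in the excerpt. The only mildly delicate point, and the one I would be most careful about, is writing the piecewise formulas for the associativity and identity homotopies so that the definitions genuinely agree on the overlapping boundaries of the closed sets on which they are defined, so that the pasting lemma applies; this is bookkeeping rather than mathematics, so in the write-up I would display the homotopy formulas and simply remark that continuity follows from the pasting lemma exactly as in the earlier propositions, and that the boundary conditions $H(0,t)=H(1,t)=x_0$ are immediate from the formulas.
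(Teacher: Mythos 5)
Your proposal is correct and follows essentially the same route as the paper: verify associativity, identity, and inverses by exhibiting explicit piecewise-linear reparametrization homotopies (with the same breakpoints for associativity, the same constant loop for the identity, and the same ``go out along $f$, pause at $f(1-t)$, return'' homotopy for inverses), invoking the pasting lemma for continuity. There is no substantive difference from the paper's argument.
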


\begin{proof}

By considering loops with a fixed base point $x_0 \in X$ we guarantee that the product $f*g$ of any two such loops is defined. Now we shall verify the three axioms for a group.

\textbf{Associative:} Let $[f],[g],[h] \in \pi_1(X,x_0)$. Since 
$$([f]*[g])*[h] = [(f*g)*h]$$
and
$$[f]*([g]*[h]) = [f*(g*h)],$$
we shall show that $(f*g)*h$ is path homotopic to $f*(g*h)$. By definition

\begin{equation}\notag
((f*g)*h)(s)=
\begin{cases}
f(4s), & 0 \le s \le1/4\\
g(4s-1), & 1/4 \le s \le 1/2 \\
h(2s-1), & 1/2 \le s \le 1
\end{cases}
\end{equation}
and
\begin{equation}\notag
(f*(g*h))(s)=
\begin{cases}
f(2s), & 0 \le s \le1/2\\
g(4s-2), & 1/2 \le s \le 3/4 \\
h(4s-3), & 3/4 \le s \le 1
\end{cases}
\end{equation}

Now the map $H:\I \times \I \to X$ given by

\begin{equation}\notag
H(s,t)=
\begin{cases}
f(\frac{4s}{1+t}), & 0 \le s \le(t+1)/4\\
g(4s-1-t), & (t+1)/4 \le s \le (t+2)/4 \\
h(\frac{4s-2-t}{2-t}), & (t+2)/4 \le s \le 1
\end{cases}
\end{equation}
 is the required homotopy.

\begin{center}
\includegraphics[width=1\columnwidth]{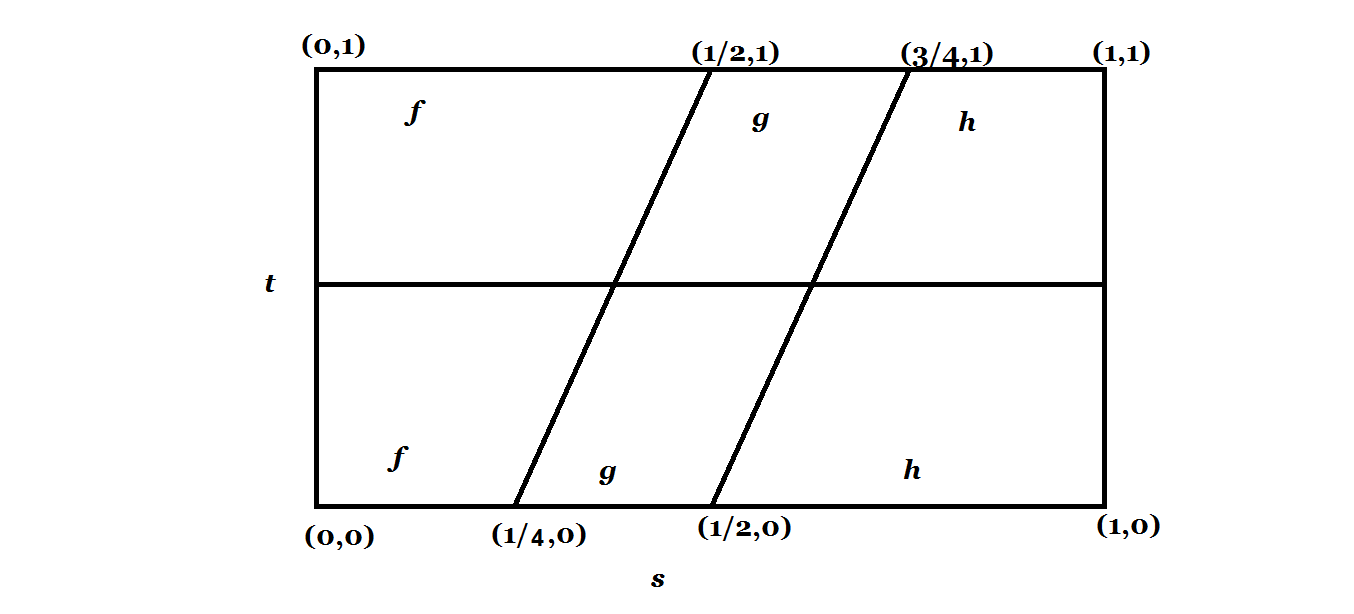}
\end{center}

\begin{center}
$\I \times \I$ 

Figure 3.2
\end{center}

\textbf{Existence of identity:} Consider the constant loop $e_{x_0} : \I \to X$. Then $[e_{x_0}] \in \pi_1(X,x_0)$ is the identity element. For if $[f] \in \pi_1(X,x_0)$ we shall show that\\
(1) $[e_{x_0}]*[f] =[f]$\\
(2) $[f]*[e_{x_0}]= [f]$

Define a map $H:\I \times \I \to X$ by 
\begin{center}
\includegraphics[width=1\columnwidth]{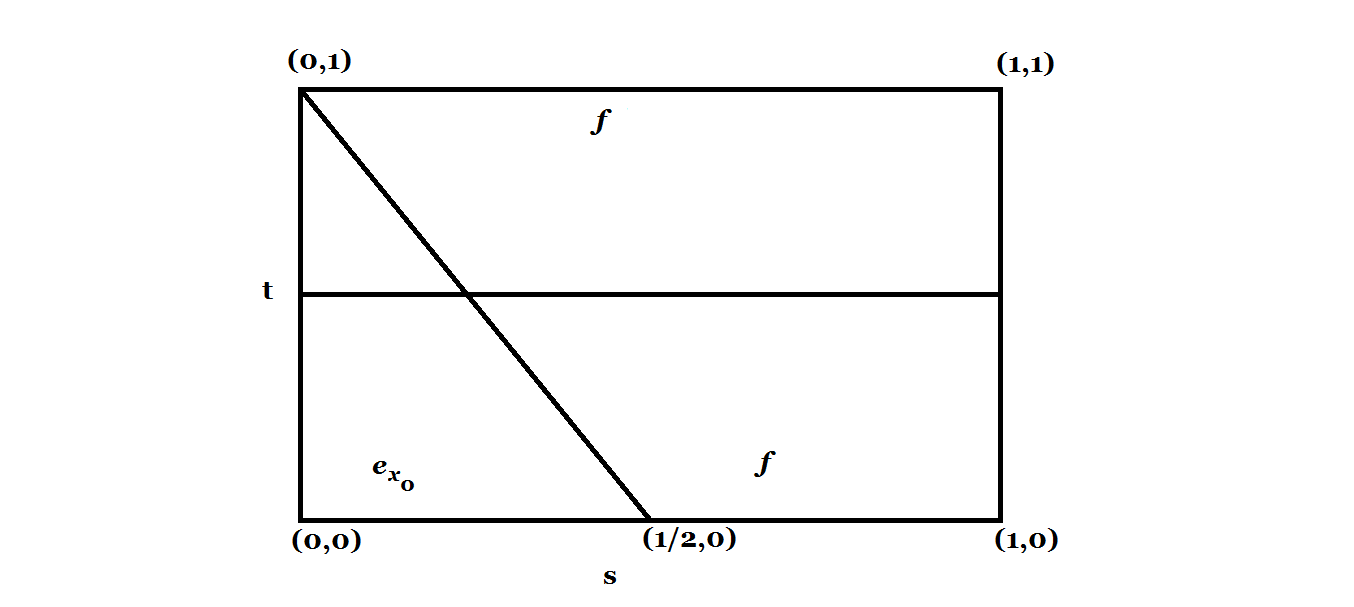}
\end{center}

\begin{center}
$\I \times \I$ 

Figure 3.3
\end{center}

\begin{equation}\notag
H(s,t)=
\begin{cases}
x_0, & 0 \le s \le (1-t)/2\\
f(\frac{2s-1+t}{1+t}), & (1-t)/2 \le s \le 1
\end{cases}
\end{equation}

Then $H$ is a path homotopy between $e_{x_0}*f$ and $f$. Similarly the map $G: \I \times \I \to X$ defined by
\begin{equation}\notag
G(s,t)=
\begin{cases}
f(\frac{2s}{1+t}), & 0 \le s \le (1+t)/2\\
x_0, & (1+t)/2 \le s \le 1
\end{cases}
\end{equation}
is the path homotopy between $f*e_{x_0}$ and $f$.
\begin{center}
\includegraphics[width=1\columnwidth]{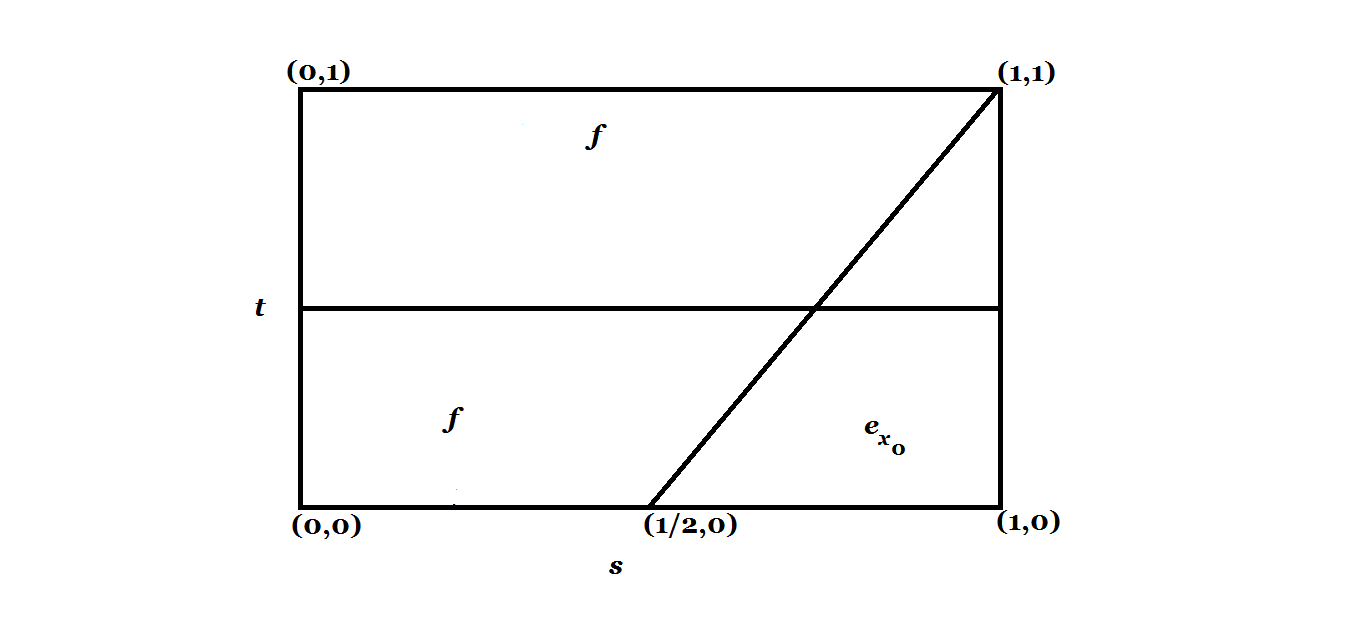}
\end{center}
\begin{center}
$\I \times \I$ 

Figure 3.4
\end{center}

\textbf{Existence of inverse:} Let $[f] \in \pi_1(X,x_0)$. The inverse of $f$ is  a loop $\bar{f}:\I \to X$ defined  by $\bar{f}(t)=f(1-t)$. We shall show that\\
(1) $[f]*[\bar{f}]=[e_{x_0}]$\\
(2) $[\bar{f}]*[f] = [e_{x_0}]$

The map $H: \I \times \I \to X$ given by

\begin{equation}\notag
H(s,t)=
\begin{cases}
f(2s), & 0 \le s \le (1-t)/2 \\
f(1-t), & (1-t)/2 \le s \le (1+t)/2\\
\bar{f}(2s-1), & (1+t)/2 \le s \le 1
\end{cases}
\end{equation}

\begin{center}
\includegraphics[width=1\columnwidth]{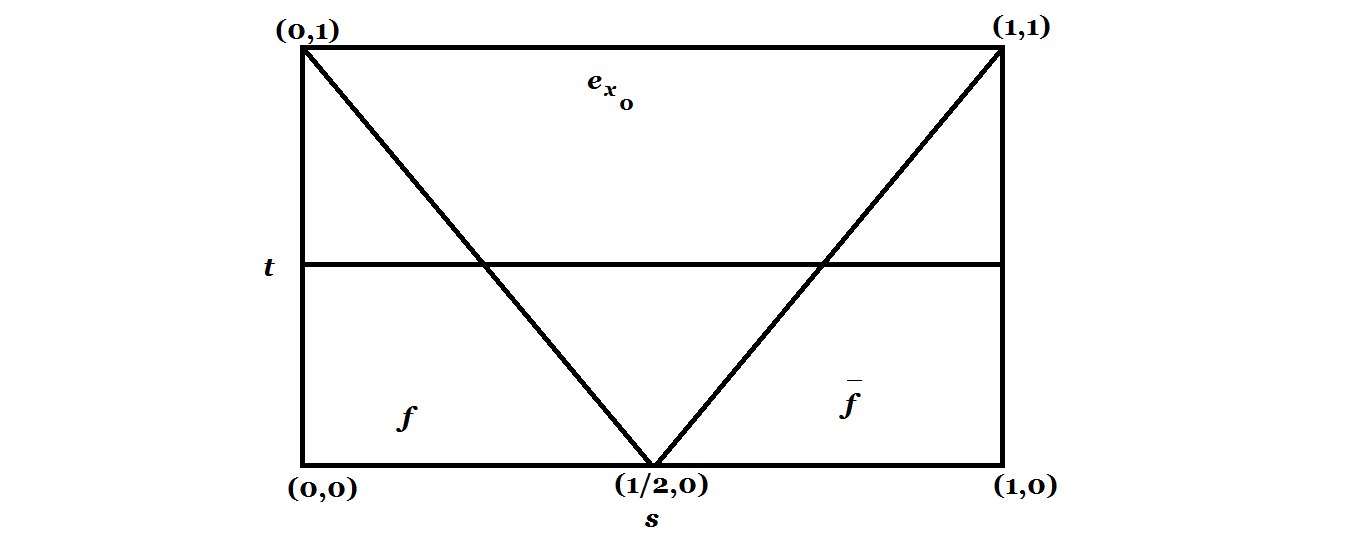}
\end{center}
\begin{center}
$\I \times \I$ 

Figure 3.5
\end{center}
 is the required path homotopy between $f*\bar{f}$ and $e_{x_0}$.

Since $\bar{\bar{f}}=f$, replacing $f$ by $\bar{f}$ gives $\bar{f}*f$ is path homotopic to $e_{x_0}$.
\end{proof}

The group $\pi_1(X,x_0)$ is called the fundamental group of space $X$ based at $x_0$. In general, the fundamental group of a space $X$ depends on the choice of the base point. However, if $X$ is path connected then for any $x_0$ and $x_1$ in $X$, the fundamental groups $\pi_1(X,x_0)$ and $\pi_1(X,x_1)$ are isomorphic. For if $X$ is path connected, there is a path $\alpha : \I \to X$ from $x_0$ to $x_1$. Then the map $\hat{\alpha} : \pi_1(X,x_0) \to \pi_1(X,x_1)$ defined by 
$$\hat{\alpha}([f])=[\bar{\alpha}]*[f]*[\alpha]$$
is a group isomorphism, where $\bar{\al}$ is the inverse path defined by $\bar{\al}(t) = \al (1-t)$.

\textbf{Definition:} A space $X$ is said to be $simply \ connected$ if it is path connected and $\pi_1(X,x_0)$ is the trivial group for some $x_0 \in X$ and hence for every $x_0 \in X$.

\textbf{Example:} The fundamental group of a convex set $X$ in $\R^n$ is trivial, that is $\pi_1(X,x_0)=0$ for $x_0\in X$. Given any two loops $f,g : \I \to X$ based at $x_0$, we can define a homotopy $H : \I \times \I \to X$ by
$$H(s,t)= (1-t)f(s)+tg(s).$$

Such a homotopy is called the $linear \ homotopy$.

Suppose $\Phi : (X,x_0) \to (Y,y_0)$ is a continuous map taking the base point $x_0 \in X$ to the base point $y_0 \in Y$. Then $\Phi$ induces a homomorphism $\Phi_{\#} : \pi_1(X,x_0) \to \pi_1(Y,y_0)$. Let $f : \I \to X$ be a loop in $X$ based at $x_0$. Then the induced homomorphism $\Phi_{\#} : \pi_1(X,x_0) \to \pi_1(Y,y_0)$ is defined by composing loop `$f$ ' based at $x_0$ with $\Phi$, that is
$$\Phi_{\#}([f])=[\Phi \circ f].$$

The map $\Phi_{\#}$ is well defined. For if $f$ is any loop in $X$ based at $x_0$, then $\Phi\circ f$ is a loop in $Y$ based at $y_0$. Also if $f$ is homotopic to $g$ by a homotopy $H$ then $(\Phi \circ f)$ is homotopic to $(\Phi \circ g)$ by the homotopy $\Phi \circ H$. 

\begin{theorem}
If $\Phi : (x,x_0) \to (Y,y_0)$ and $\Psi : (Y,y_0) \to (Z,z_0)$ are continuous then $(\Psi \circ \Phi)_{\#}=\Psi_{\#} \circ \Phi_{\#}$. If $i : (X,x_0) \to (X,x_0)$ is the identity map then $i_{\#}$ is the identity map.
\end{theorem}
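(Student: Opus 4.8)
The plan is to verify both assertions directly from the definition of the induced homomorphism, reducing everything to the associativity of composition of functions. First I would recall that for a loop $f : \I \to X$ based at $x_0$, the class $\Phi_{\#}([f])$ is by definition $[\Phi \circ f]$, which is a well-defined element of $\pi_1(Y,y_0)$: indeed $\Phi \circ f$ is a loop at $y_0$ since $\Phi(x_0)=y_0$, and $\Phi$ carries a path homotopy between loops to a path homotopy between their images (this was observed just before the statement). In particular each of $\Phi_{\#}$, $\Psi_{\#}$ and $(\Psi\circ\Phi)_{\#}$ is a genuine map on homotopy classes, so it suffices to compare the two sides on an arbitrary class $[f] \in \pi_1(X,x_0)$.

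For the first identity, take any loop $f : \I \to X$ based at $x_0$. Then
$$(\Psi \circ \Phi)_{\#}([f]) = [(\Psi \circ \Phi) \circ f] = [\Psi \circ (\Phi \circ f)],$$
where the last equality is just associativity of composition of the three maps $f$, $\Phi$, $\Psi$. On the other hand $\Phi \circ f$ is a loop in $Y$ based at $y_0$, so applying the definition of $\Psi_{\#}$ to it gives
$$\Psi_{\#}\big(\Phi_{\#}([f])\big) = \Psi_{\#}([\Phi \circ f]) = [\Psi \circ (\Phi \circ f)].$$
Comparing the two displays yields $(\Psi \circ \Phi)_{\#}([f]) = (\Psi_{\#} \circ \Phi_{\#})([f])$ for every $[f]$, hence $(\Psi \circ \Phi)_{\#} = \Psi_{\#} \circ \Phi_{\#}$. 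For the second assertion, if $i : (X,x_0) \to (X,x_0)$ is the identity map, then for any loop $f$ based at $x_0$ we have $i \circ f = f$, so $i_{\#}([f]) = [i \circ f] = [f]$; thus $i_{\#}$ is the identity homomorphism of $\pi_1(X,x_0)$.

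Honestly there is no obstacle of substance here: the content is purely formal and follows from associativity of function composition together with the well-definedness of $\Phi_{\#}$ on homotopy classes, which has already been checked. The only point requiring a moment's care is the bookkeeping of base points, so that each composite loop lands at the correct base point; this is exactly what the hypotheses $\Phi(x_0)=y_0$ and $\Psi(y_0)=z_0$ guarantee, and it is also worth noting in passing that $\Phi_{\#}$ is a homomorphism because $\Phi\circ(f\ast g) = (\Phi\circ f)\ast(\Phi\circ g)$, so that the composite $\Psi_{\#}\circ\Phi_{\#}$ is again a homomorphism, consistent with $(\Psi\circ\Phi)_{\#}$ being one.
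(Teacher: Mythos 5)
Your proof is correct and takes essentially the same approach as the paper: unwind the definition of the induced map on an arbitrary class $[f]$, use associativity of composition to pass from $(\Psi\circ\Phi)\circ f$ to $\Psi\circ(\Phi\circ f)$, and reassemble into $\Psi_{\#}(\Phi_{\#}([f]))$, with the identity case handled by $i\circ f = f$. The only difference is that you pause to reconfirm well-definedness and base-point bookkeeping, which the paper leaves implicit.
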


\begin{proof}

Let $f: \I \to X$ be a loop in $X$ based at $x_0$. Then by definition we have
$$(\Psi \circ \Phi)_{\#}([f])=[(\Psi \circ \Phi)\circ f] $$
$$\qquad \qquad \qquad \  =[\Psi \circ (\Phi \circ f)] \ $$
$$\qquad \qquad \quad =\Psi_{\#}[\Phi \circ f]$$
$$\qquad \qquad \qquad =\Psi_{\#}[\Phi_{\#}([f])]$$
$$\qquad \quad \qquad \qquad=\Psi_{\#}\circ \Phi _{\#}([f]) \ .$$

Hence $(\Psi \circ \Phi)_{\#}= \Psi_{\#}\circ \Phi_{\#}$.

Similarly $i_{\#}([f]) = [i\circ f]=[f]$.
\end{proof}

\begin{corollary}
If $\Phi : (X,x_0) \to (Y,y_0)$ is a homeomorphism then $\Phi_{\#} : \pi_1(X,x_0) \to \pi_1(Y,y_0)$ is an isomorphism.
\end{corollary}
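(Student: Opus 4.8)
The statement to prove is the corollary: if $\Phi : (X,x_0) \to (Y,y_0)$ is a homeomorphism then $\Phi_{\#} : \pi_1(X,x_0) \to \pi_1(Y,y_0)$ is an isomorphism.

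The plan is to exploit the functoriality just established in the preceding theorem: the induced-map construction is covariant (composition goes to composition) and sends identities to identities. Since $\Phi$ is a homeomorphism, it has a continuous inverse $\Phi^{-1} : (Y,y_0) \to (X,x_0)$, which is itself a based continuous map since $\Phi^{-1}(y_0) = x_0$. So both $\Phi_{\#}$ and $(\Phi^{-1})_{\#}$ are well-defined group homomorphisms by the discussion preceding the theorem.

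First I would apply the composition law $(\Psi \circ \Phi)_{\#} = \Psi_{\#} \circ \Phi_{\#}$ twice, once with $\Psi = \Phi^{-1}$ and once with the roles reversed. This gives $(\Phi^{-1} \circ \Phi)_{\#} = (\Phi^{-1})_{\#} \circ \Phi_{\#}$ and $(\Phi \circ \Phi^{-1})_{\#} = \Phi_{\#} \circ (\Phi^{-1})_{\#}$. Next I would observe that $\Phi^{-1} \circ \Phi$ is the identity map of $(X,x_0)$ and $\Phi \circ \Phi^{-1}$ is the identity map of $(Y,y_0)$, so by the second part of the preceding theorem both of these induced maps are the respective identity homomorphisms on $\pi_1(X,x_0)$ and $\pi_1(Y,y_0)$. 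Combining, $(\Phi^{-1})_{\#}$ is a two-sided inverse for $\Phi_{\#}$, so $\Phi_{\#}$ is a bijective homomorphism, hence a group isomorphism.

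There is essentially no obstacle here — the corollary is a formal consequence of functoriality, and the only thing to be careful about is noting that $\Phi^{-1}$ is genuinely a based map (it carries $y_0$ back to $x_0$) so that $(\Phi^{-1})_{\#}$ is defined. The entire argument is two lines once the preceding theorem is invoked.
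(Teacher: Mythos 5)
Your proof is correct and follows exactly the same route as the paper: apply the functoriality identities $(\Psi\circ\Phi)_{\#}=\Psi_{\#}\circ\Phi_{\#}$ and $i_{\#}=\mathrm{id}$ with $\Psi=\Phi^{-1}$ to exhibit $(\Phi^{-1})_{\#}$ as a two-sided inverse of $\Phi_{\#}$. The only (harmless) addition you make is the explicit remark that $\Phi^{-1}$ is a based map, which the paper leaves implicit.
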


\begin{proof}
Let $\Psi : (Y,y_0) \to (X,x_0)$ be the inverse of $\Phi$. Then $\Psi_{\#}\circ \Phi_{\#}=(\Psi \circ \Phi)_{\#}=(i_X)_{\#}$ ($i_X$ denotes the identity map of $X$) and $\Phi_{\#}\circ \Psi_{\#}= (\Phi\circ \Psi)_{\#}=(i_Y)_{\#}$ ($i_Y$ denotes the identity map of $Y$). Since $(i_X)_{\#}$ and $(i_Y)_{\#}$ are the identity homomorphisms of  the groups $\pi_1(X,x_0)$ and $\pi_1(Y,y_0)$ respectively, we have $\Psi_{\#}$ is the inverse of $\Phi_{\#}$.
\end{proof}

Thus the fundamental group of a topological space is a topological invariant.

\textbf{Definition:} Let $A\subset X$, a $retraction$ of $X$ onto $A$ is a continuous map $r: X\to A$ such that $r|_A$ is the identity map of $A$. If such a map exists, we say that $A$ is a $retract$ of  $X$. Moreover, $A$ is a \emph{deformation  retract} of $X$ if $A$ is a retract of $X$ and there is a homotopy between $i_X$ and a continuous map that carries $X$ into $A$ such that points of $A$  remains fixed during the homotopy. That is , there is a continuous map $ H: X \times \I \to X $ such that
$$H(x,0)=x$$
$$H(x,1)\in A$$
for each $x\in X$ and
$$H(a,t)=a$$
for each $a\in A$.

\begin{theorem}
Let A be a deformation retract of X and $x_0 \in A$. Then the inclusion map $j:(A,x_0) \hookrightarrow (X,x_0)$ induces an isomorphism of fundamental groups.
\end{theorem}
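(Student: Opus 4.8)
The plan is to exhibit the induced map $r_{\#}$ of the retraction as a two‑sided inverse of $j_{\#}$. First I would fix the homotopy $H\colon X \times \I \to X$ furnished by the definition of deformation retract, so that $H(x,0)=x$, $H(x,1)\in A$, and $H(a,t)=a$ for all $a\in A$ and $t\in\I$. Define $r\colon X\to A$ by $r(x)=H(x,1)$; it is continuous (a restriction of $H$) and satisfies $r(a)=H(a,1)=a$ for $a\in A$, hence $r\circ j=\mathrm{id}_A$ and, regarded as a self‑map of $X$, $j\circ r = H(\cdot,1)$. Since $x_0\in A$ we have $H(x_0,t)=x_0$ for every $t$, so both $r$ and $j\circ r$ are base‑point preserving, and the homomorphisms $r_{\#}\colon \pi_1(X,x_0)\to\pi_1(A,x_0)$ and $j_{\#}\colon \pi_1(A,x_0)\to\pi_1(X,x_0)$ are defined.

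Next I would use the functoriality of the induced homomorphism established above: from $r\circ j=\mathrm{id}_A$ we get $r_{\#}\circ j_{\#} = (r\circ j)_{\#} = (\mathrm{id}_A)_{\#}$, the identity of $\pi_1(A,x_0)$. In particular $j_{\#}$ is injective. For surjectivity I would show that $j_{\#}\circ r_{\#}$ is the identity of $\pi_1(X,x_0)$. Given a loop $f\colon \I\to X$ at $x_0$, put $G(s,t)=H(f(s),t)$. Then $G$ is continuous, $G(s,0)=f(s)$, $G(s,1)=H(f(s),1)=(j\circ r)\circ f(s)$, and $G(0,t)=H(x_0,t)=x_0=G(1,t)$; thus $G$ is a path homotopy from $f$ to $(j\circ r)\circ f$. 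Hence $[f]=[(j\circ r)\circ f]=(j\circ r)_{\#}[f]=j_{\#}\big(r_{\#}[f]\big)$, so $j_{\#}$ is onto. Combining the two facts, $j_{\#}$ is a bijective homomorphism, i.e. an isomorphism, with inverse $r_{\#}$.

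The only point needing genuine care — the ``main obstacle'', modest as it is — is that composing the homotopy $H$ with a loop must yield a \emph{path} homotopy, not merely a homotopy: what is required is exactly $G(0,t)=G(1,t)=x_0$, and this holds precisely because the deformation retraction keeps every point of $A$, in particular the base point $x_0$, fixed throughout. Without that fixing condition one would obtain only a free homotopy $j\circ r\simeq \mathrm{id}_X$, which does not suffice to conclude $j_{\#}\circ r_{\#}=\mathrm{id}$ on the fundamental group, and the surjectivity argument would collapse.
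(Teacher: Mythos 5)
Your argument is correct and follows essentially the same line as the paper's: define $r$ from the deformation retraction, use $r\circ j=\mathrm{id}_A$ for one composite, and use $G(s,t)=H(f(s),t)$ to show the other composite induces the identity. Your added remark that $G$ is a genuine \emph{path} homotopy because $H$ fixes $x_0$ throughout is a point the paper's proof leaves implicit, and is worth spelling out exactly as you do.
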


\begin{proof}
Let $r: X \to A$ be a retraction such that $i_X$ is homotopic to $j \circ r$ via the homotopy $H$. If $f$ is a loop in $X$ based at $x_0$ then the map $G: \I \times \I\to X $ defined by
$$G(x,t)=H(f(x),t)$$
is a homotopy between $f$ and $(j \circ r)\circ f$. Hence $(j \circ r)_{\#}=(i_x)_{\#}$ that is $j_{\#}\circ r_{\#}=(i_X)_{\#}$. Also $r\circ j = i_A$ gives $r_{\#} \circ j_{\#}=(i_A)_{\#}$. Therefore $j_{\#}: \pi_1(A,x_0) \to \pi_1(X,x_0)$ is an isomorphism with $r_{\#}$ as its inverse.
\end{proof}

\section{The Fundamental Group of Spheres}

Consider the unit circle $\s^1=\{(x,y)\ |\ x^2+y^2=1\}$ in $\R^2$ or $\s^1=\{z\ |\ |z|=1\}$ in $\C$. Let $p: \R \to \s^1$ denotes the $exponential \ map$  defined by $p(t)= e^{i2\pi_1 t}$. It is continuous and surjective map which simply wraps the real line onto $\s^1$ infinite number of times.

\textbf{Definition:} Given a continuous map $f:X\to \s^1$, a \emph{lifting}  of $f$ is a continuous map $\tilde{f}:X \to \R$ such that $p\circ \tilde{f}=f$, that is the following diagram commute
$$\qquad \ \R$$
$$\ \ \tilde{f} \  \nearrow  \ \big\downarrow    \   p$$
$$X \longrightarrow \s^1$$
$$f$$

To carry out lifting process we need to examine the properties of the exponential map $p: \R \to \s^1$. Let $U$ be the open set in $\s^1$ given by $U= \s^1\setminus \{-1\}$ and consider the inverse image $p^{-1}(U)$ of $U$ in $\R$. This is precisely the union of all open intervals of the form $(n-1/2,n+1/2), \ n\in \Z$. These intervals are pairwise disjoint and the restriction of $p$ to any one of  them is a homeomorphism of the interval with $U$. Similarly if $V=\s^1\setminus \{1\}$, the inverse image of $V$ breaks up a disjoint union of open intervals $(-n,n),n\in \Z$ in such a way that the restriction of $p$ to any one of the open interval is a homeomorphism. Also $U\cup V=\s^1$.

\begin{theorem}
Let X be a convex compact subspace of $\R^n$, and 0 $\in$ X. If $f:X\to \s^1 $ is a continuous map such that $f(0)=1$, then for each $m \in \Z$, there exist a unique continuous map $\tilde{f}:X \to \R$ such that $p\tilde{f} =f$ and $\tilde{f}(0)=m$.
\end{theorem}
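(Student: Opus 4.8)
The plan is to prove uniqueness by a connectedness argument and then to construct the lift explicitly along the rays from $0$, using that $X$ is convex. For uniqueness, suppose $\tilde f_1,\tilde f_2\colon X\to\R$ both lift $f$ and satisfy $\tilde f_1(0)=\tilde f_2(0)=m$. For every $x$ we have $p(\tilde f_1(x))=p(\tilde f_2(x))=f(x)$, and $p(a)=p(b)$ forces $a-b\in\Z$; hence $\tilde f_1-\tilde f_2$ is a continuous map of $X$ into the discrete set $\Z$. Since $X$ is convex, hence connected, this map is constant, and its value at $0$ is $0$, so $\tilde f_1=\tilde f_2$. (In particular, a lift is determined by its value at a single point; I will use this one-dimensional form below.)

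For existence, for $x\in X$ let $\gamma_x(s)=sx$ for $s\in\I$; this segment lies in $X$ because $0\in X$ and $X$ is convex, so $G\colon X\times\I\to\s^1$, $G(x,s)=f(sx)$, is a well-defined continuous map. Recall from above that $U=\s^1\setminus\{-1\}$ and $V=\s^1\setminus\{1\}$ cover $\s^1$ and that $p^{-1}(U)$ and $p^{-1}(V)$ are disjoint unions of open intervals, on each of which $p$ restricts to a homeomorphism onto $U$, resp.\ $V$. Applying the Lebesgue number lemma to the open cover $\{G^{-1}(U),G^{-1}(V)\}$ of the compact metric space $X\times\I$, I fix $N$ so large that, writing $s_j=j/N$, each slice $\{x\}\times[s_j,s_{j+1}]$ is carried by $G$ into $U$ or into $V$. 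Then I define $\tilde f$ along $\gamma_x$ by recursion on $j$: set $\tilde f(0)=m$; having chosen $\tilde f(s_jx)$, if $G$ maps $\{x\}\times[s_j,s_{j+1}]$ into $U$ let $C$ be the component of $p^{-1}(U)$ containing $\tilde f(s_jx)$, and otherwise (when it is mapped into $V$) the component of $p^{-1}(V)$ containing $\tilde f(s_jx)$; in either case put $\tilde f(sx)=(p|_C)^{-1}\bigl(f(sx)\bigr)$ for $s_j\le s\le s_{j+1}$. Every step is forced, so by the one-dimensional uniqueness above $\tilde f(x):=\tilde f(\gamma_x(1))$ does not depend on the bookkeeping; by construction $p\tilde f=f$ and $\tilde f(0)=m$.

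The one point needing real care is continuity of $\tilde f$, and this is where I expect the work to lie. Put $h_j(x)=\tilde f(s_jx)$; I will show by induction on $j$ that $h_j$ is continuous on $X$, whence $\tilde f=h_N$ is continuous. The base case $h_0\equiv m$ is trivial. For the step, fix $x_1\in X$; the compact slice $\{x_1\}\times[s_j,s_{j+1}]$ lies in the open set $G^{-1}(U)$, say, so by the tube lemma some neighborhood $W$ of $x_1$ satisfies $W\times[s_j,s_{j+1}]\subset G^{-1}(U)$, and, shrinking $W$ using continuity of $h_j$ and the fact that the components of $p^{-1}(U)$ are open, I may assume $h_j(W)$ lies in a single such component $C$. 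Then for all $x\in W$ the recursion at level $j$ uses exactly this fixed $C$, so $\tilde f(sx)=(p|_C)^{-1}\bigl(G(x,s)\bigr)$ on $W\times[s_j,s_{j+1}]$, a composite of continuous maps; in particular $h_{j+1}=(p|_C)^{-1}\circ G(\,\cdot\,,s_{j+1})$ is continuous on $W$, hence at $x_1$. The main obstacle is precisely this matching of local branches of $p^{-1}$ along neighboring rays; granting it, $p\tilde f=f$, $\tilde f(0)=m$, and the uniqueness part complete the proof. (The special cases $X=\I$ and $X=\I\times\I$ are the usual path- and homotopy-lifting lemmas.)
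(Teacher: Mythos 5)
Your proof is correct, and the uniqueness half is exactly the paper's (connectedness of $X$ plus discreteness of $\Z$); but the existence half takes a genuinely different route. Both arguments exploit convexity and $0\in X$ in the same way, by pulling $f$ back along the radial segments $s\mapsto sx$ and subdividing each segment into $N$ (resp.\ $n$) equal pieces small enough for local lifting. The paper, however, gets the subdivision from uniform continuity of $f$ directly — choosing $n$ so that consecutive samples $f(\tfrac{i}{n}x)$, $f(\tfrac{i+1}{n}x)$ are never antipodal — and then factors $f$ as the telescoping product $g_0g_1\cdots g_{n-1}$ with each $g_i(x)=f(\tfrac{i+1}{n}x)/f(\tfrac{i}{n}x)$ landing in $\s^1\setminus\{-1\}$. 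This yields the closed-form lift $\tilde f(x)=m+\sum_i q^{-1}(g_i(x))$, where $q=p|_{(-1/2,1/2)}$, whose continuity is manifest and for which $p\tilde f=f$ follows at once from the homomorphism property of $p$ and telescoping. You instead run the generic covering-space path-lifting argument: Lebesgue number for $\{G^{-1}(U),G^{-1}(V)\}$, a stepwise recursion choosing components of $p^{-1}(U)$ or $p^{-1}(V)$, and a tube-lemma plus branch-matching argument for continuity. Your version is correct and has the virtue of working verbatim for an arbitrary covering map, but it needs the extra machinery (tube lemma, local constancy of the chosen sheet) precisely because it never uses the group structure of $\s^1$; the paper's proof is shorter and entirely explicit because it does.
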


\begin{proof}
Since $X$ is compact, $f$ is uniformly continuous. So there is a $\delta > 0$ such that $|f(x)-f(x')|<2$ whenever $||x-x'||< \delta$. Since $X$ is bounded we can find a positive integer $n$ such that $||x||<n\delta$ for all $x \in X$ . Then for every $x\in X$  and for each $i = 0,1, \ldots , n-1$, we have
$$\big\Vert\frac{i+1}{n}x-\frac{i}{n}x\big \Vert < \delta.$$

So,
$$\big |f(\frac{i+1}{n}x)-f(\frac{i}{n}x) \big | <2$$
implies they are not the end points of a diameter and we have $f(\frac{i+1}{n}x)/f(\frac{i}{n}x) \ne  - 1$  for all $x\in X$ and for each $i = 0,1, \ldots , n-1$. For 0 $\le i< n $ define $g_i: X \to \s^1\setminus \{-1\}$ by $g_i(x)=f(\frac{i+1}{n}x)/f(\frac{i}{n}x)$. Then each $g_i$ is continuous, $g_i(0)=1$ and $f = g_0g_1\ldots g_{n-1}$. Since the mapping $q=p|_{(-1/2,1/2)}$ is a homeomorphism between $(-1/2,1/2)$ and $\s^1\setminus\{-1\}$,  $q^{-1}: \s^1\setminus \{-1\} \to (-1/2,1/2)$  is continuous. Therefore the mapping $\tilde{f}:X \to \R$ given by $\tilde{f}(x)=m+q^{-1}(g_0(x))+\ldots +q^{-1}(g_{n-1}(x))$ is continuous. Also $p\tilde{f}(x)=f(x)$ for every $x\in$ X and $\tilde{f}(0) = m$.

For the uniqueness of $\tilde{f}$, suppose that there is a continuous map $\tilde{g}: X \to \R$ such that $p\tilde{g}=f$ and $\tilde{g}(0)=m$. Then the mapping $\tilde{h}: X \to \R$ defined by $\tilde{h}=\tilde{g} - \tilde{f}$ is continuous and satisfies $p\tilde{h}(x)=p(\tilde{g}(x) -\tilde{f}(x))=1$ for all $x\in$ X and $\tilde{h}(0)=0$. This means that $\tilde{h}(x) \in p^{-1}(1)= \Z$ for every $x\in X$ . Since $X$ is connected, and $\tilde{h}$ is  continuous this implies that $\tilde{h}(x)=0$ for all $x\in X$. Hence $\tilde{f}=\tilde{g}$.
\end{proof}

\begin{corollary}\label{ch3,sec3,cor2}
 If $f: \I \to \s^1$ is a path with $f(0) =1$ , then there exists a unique path $\tilde{f}: \I \to \R$ such that $p\tilde{f} = f$ and $\tilde{f}(0)=0$.
\end{corollary}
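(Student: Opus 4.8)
The plan is to obtain this as an immediate specialization of the preceding theorem. The closed unit interval $\I=[0,1]$ is a convex compact subspace of $\R^{1}$ and contains the point $0$, so with $X=\I$ and $n=1$ the hypotheses of that theorem are satisfied; the given $f\colon\I\to\s^{1}$ is continuous and satisfies $f(0)=1$ by assumption. Taking $m=0$ in the theorem produces a unique continuous map $\tilde f\colon\I\to\R$ with $p\circ\tilde f=f$ and $\tilde f(0)=0$. Since a path in $\R$ is by definition precisely a continuous map $\I\to\R$, this $\tilde f$ is the required path, and the uniqueness assertion for paths is exactly the uniqueness clause of the theorem applied to $X=\I$.

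The only points to check are therefore that $\I$ is convex (clear, being an interval) and compact (Heine--Borel), so no further argument is needed. There is no genuine obstacle here: the content lies entirely in recognizing $\I$ as an admissible domain $X$ and fixing the value of the free integer parameter $m$ to be $0$. If one preferred a self-contained deduction one could instead repeat the proof of the theorem verbatim with $X=\I$, subdividing $[0,1]$ into $n$ subintervals on each of which $f$ oscillates by less than a diameter, factoring $f$ as a product of maps into $\s^{1}\setminus\{-1\}$, and lifting each factor through the local homeomorphism $q=p|_{(-1/2,1/2)}$; but invoking the theorem directly is shorter and is the route I would take.
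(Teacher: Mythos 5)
Your proposal is correct and is precisely what the paper intends: the corollary is an immediate specialization of the preceding lifting theorem to $X=\I$ (convex, compact, containing $0$) with $m=0$. Nothing further is needed.
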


\begin{corollary}\label{ch3,sec3,cor3}
 If $F: \I\times \I \to \s^1$ is a homotopy with $F(0,0)=1$, then there is a unique homotopy $\tilde{F}: \I \times \I \to \R$ such that $p\tilde{F}=F$ and $\tilde{F}(0,0)=0$.
\end{corollary}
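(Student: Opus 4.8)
The plan is simply to invoke the preceding lifting theorem with $X = \I \times \I$. First I would observe that the unit square $\I \times \I$ is a convex, compact subspace of $\R^2$ that contains the origin $(0,0)$, and that by hypothesis $F(0,0) = 1$. Hence $F : \I \times \I \to \s^1$ satisfies, verbatim, every hypothesis of that theorem, with the distinguished point taken to be $0 = (0,0)$. Applying the theorem with $m = 0$ therefore produces a continuous map $\tilde F : \I \times \I \to \R$ with $p \tilde F = F$ and $\tilde F(0,0) = 0$, and the same theorem gives its uniqueness among such maps. This is exactly the $X = \I \times \I$, $m = 0$ instance of the theorem, just as Corollary \ref{ch3,sec3,cor2} is the instance $X = \I$, $m = 0$.

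It then remains only to check that the lift $\tilde F$ so obtained qualifies as a \emph{homotopy}: a homotopy $\I \times \I \to \R$ means nothing more than a continuous map on the product square, and $\tilde F$ is continuous on all of $\I \times \I$ by construction. So nothing further is required, and both existence and uniqueness are inherited directly from the theorem.

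There is no genuine obstacle here; the only point that deserves a word is that the hypotheses of the theorem were arranged precisely so that the square would qualify. Convexity and compactness of $\I \times \I$ are what its proof used (compactness to get uniform continuity of $F$ and boundedness, convexity to form the radial contractions $\tfrac{i}{n}x$), while the basepoint normalization $F(0,0) = 1$ is exactly what is assumed. Thus the corollary follows immediately, with no new argument needed beyond recognizing $\I \times \I$ as an admissible domain.
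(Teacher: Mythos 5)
Your proof is correct and is exactly what the paper intends: the corollary is stated without proof because it is the direct specialization $X = \I\times\I$, $m=0$ of the preceding lifting theorem, just as Corollary \ref{ch3,sec3,cor2} is the specialization $X = \I$. Your verification that $\I\times\I$ is convex, compact, contains the origin, and that the lifted map automatically counts as a homotopy, is precisely what is needed.
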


\begin{theorem}
Let $f,g: \I \to \s^1$ are two loops based at 1 such that $f$ is path homotopic to $g$ and  let $\tilde{f}$ and $\tilde{g}$ be their respective lifting in $\R$ beginning at 0. Then $\tilde{f}$ and $\tilde{g}$ end at the same point.
\end{theorem}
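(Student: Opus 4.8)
The plan is to lift the path homotopy itself and then extract the conclusion from the uniqueness of path lifts. Let $F : \I \times \I \to \s^1$ be a path homotopy from $f$ to $g$, so that $F(s,0)=f(s)$, $F(s,1)=g(s)$, and $F(0,t)=F(1,t)=1$ for all $t\in\I$ (the last two equalities because $f$ and $g$ are loops based at $1$). In particular $F(0,0)=1$, so by Corollary~\ref{ch3,sec3,cor3} there is a unique continuous $\tilde{F} : \I \times \I \to \R$ with $p\tilde{F}=F$ and $\tilde{F}(0,0)=0$.

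First I would pin down the behaviour of $\tilde{F}$ on the two vertical edges of the square. The map $t \mapsto \tilde{F}(0,t)$ is continuous and, since $p(\tilde{F}(0,t))=F(0,t)=1$, it takes values in $p^{-1}(1)=\Z$, which is discrete; as $\I$ is connected this map is constant, hence identically $\tilde{F}(0,0)=0$. By the same argument $t \mapsto \tilde{F}(1,t)$ is a continuous map of the connected space $\I$ into $\Z$, hence constant; call its common value $n\in\Z$.

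Next I would identify the two horizontal restrictions of $\tilde F$ with $\tilde f$ and $\tilde g$ by invoking the uniqueness part of Corollary~\ref{ch3,sec3,cor2}. The path $s \mapsto \tilde{F}(s,0)$ is continuous, satisfies $p(\tilde{F}(s,0))=F(s,0)=f(s)$, and starts at $\tilde{F}(0,0)=0$; by uniqueness of the lift of a path it must equal $\tilde f$. Hence $\tilde f(1)=\tilde{F}(1,0)=n$. Likewise $s \mapsto \tilde{F}(s,1)$ is a lift of $g$ beginning at $\tilde{F}(0,1)=0$, so it equals $\tilde g$, giving $\tilde g(1)=\tilde{F}(1,1)$, and this equals $n$ because $t\mapsto\tilde{F}(1,t)$ is constant. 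Therefore $\tilde f(1)=\tilde g(1)=n$, which is the assertion.

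The only step that genuinely requires care — and the one I regard as the crux — is the passage from ``$\tilde{F}$ restricted to a vertical edge takes values in $\Z$'' to ``it is constant there'': this rests on the connectedness of $\I$ together with the discreteness of the fibres of $p$, and it is what forces the two endpoints to agree. Everything else is a bookkeeping application of the already-established existence and uniqueness of liftings of paths and of homotopies.
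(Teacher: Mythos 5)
Your proof is correct and is essentially identical to the paper's: both lift the path homotopy via Corollary~\ref{ch3,sec3,cor3}, use connectedness of the vertical edges of the square and discreteness of $p^{-1}(1)=\Z$ to conclude $\tilde F$ is constant there, and then identify the two horizontal restrictions of $\tilde F$ with $\tilde f$ and $\tilde g$ by uniqueness of path lifts.
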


\begin{proof}
Let $F$ be a path homotopy between $f$ and $g$. By the Corollary \ref{ch3,sec3,cor3}, there is a homotopy $\tilde{F}: \I \times \I \to \R$ such that $\tilde{F}(0,0)=0$ and $p\tilde{F}=F$. We have $p\tilde{F}(0,t)=1$ for every  $t\in \I $, this means that $\tilde{F}(0,t)$ is an integer for each $t$. Since $\{0\} \times \I$ is connected and $\tilde{F}(0,0)=0$, we must have $\tilde{F}(0,t)=0$ for every $t\in \I $. Similarly $\tilde{F}(1,t)$ is also a fixed integer $m$ (say).

The restriction $\tilde{F}|_{\I \times \{0\}}$ of $\tilde{F}$ is a path beginning at 0, that is a lifting of $F|_{\I\times \{0\}}$. By uniqueness of path liftings we must have $\tilde{F}(s,0)=\tilde{f}(s)$. Similarly $\tilde{F}|_{\I \times \{1\}}$ of $\tilde{F}$ is a path beginning at 0, that is a lifting of $F|_{\I\times \{1\}}$. By uniqueness of path liftings we get $\tilde{F}(s,1)=\tilde{g}(s)$. Therefore both $\tilde{f}$ and $\tilde{g}$ end at  $m$.
\end{proof}

\textbf{Definition:} Let $f:\I \to \s^1$ be a loop in $\s^1$ at 1 and $\tilde{f} :\I \to R$ be the lifting of $f$ such that $\tilde{f}(0)=0$. The \emph{degree of  f} denoted by \emph{deg(f)} is the integer $\tilde{f}(1)$.

\begin{theorem}
The fundamental group of  $\s^1$ is isomorphic to the additive group of integers.
\end{theorem}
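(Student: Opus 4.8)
The plan is to exhibit an explicit isomorphism, namely the degree map $\deg \colon \pi_1(\s^1,1) \to \Z$ sending a class $[f]$ to $\deg(f) = \tilde f(1)$, where $\tilde f \colon \I \to \R$ is the unique lift of $f$ with $\tilde f(0) = 0$ guaranteed by Corollary \ref{ch3,sec3,cor2}. That $\deg$ is well defined on homotopy classes is exactly the content of the theorem immediately preceding this one: if $f$ is path homotopic to $g$, their lifts beginning at $0$ terminate at the same point, so $\deg(f) = \deg(g)$.

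Next I would check that $\deg$ is a group homomorphism. Given loops $f,g$ at $1$ with lifts $\tilde f,\tilde g$ beginning at $0$, put $n = \tilde f(1) = \deg(f) \in \Z$ and define $\tilde h \colon \I \to \R$ by $\tilde h(s) = \tilde f(2s)$ for $0 \le s \le 1/2$ and $\tilde h(s) = n + \tilde g(2s-1)$ for $1/2 \le s \le 1$. The two formulas agree at $s = 1/2$ (both equal $n$), so $\tilde h$ is continuous by the pasting lemma; since $n$ is an integer we have $p(x+n) = p(x)$ for all $x$, whence $p\tilde h = f\ast g$, and $\tilde h(0) = 0$. By the uniqueness clause of path lifting, $\tilde h$ is the canonical lift of $f\ast g$, so $\deg(f\ast g) = \tilde h(1) = n + \tilde g(1) = \deg(f) + \deg(g)$.

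For surjectivity, given $n \in \Z$, the loop $\omega_n(t) = e^{i2\pi n t}$ has lift $t \mapsto nt$ beginning at $0$, so $\deg(\omega_n) = n$. For injectivity, suppose $\deg(f) = 0$; then $\tilde f$ is a loop in $\R$ based at $0$, and since $\R$ is convex the linear homotopy $H(s,t) = (1-t)\tilde f(s)$ is a path homotopy from $\tilde f$ to the constant path at $0$. Composing with $p$ yields a path homotopy from $f = p\tilde f$ to the constant loop at $1$, so $[f]$ is the identity of $\pi_1(\s^1,1)$. Hence $\deg$ is a bijective homomorphism, i.e. an isomorphism onto $(\Z,+)$.

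The only step requiring real attention is the homomorphism property: one must verify that the piecewise $\tilde h$ is genuinely a lift of $f\ast g$ — which is where the invisibility of integer translations under $p$ is used — and then invoke uniqueness of lifts to identify it with the canonical lift. Everything else follows immediately from the lifting lemmas already established and the convexity of $\R$.
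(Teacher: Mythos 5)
Your proof is correct and follows essentially the same route as the paper: the degree map defined via the unique lift starting at $0$, the explicit concatenated lift $\tilde h$ to establish the homomorphism property, and a linear homotopy in $\R$ pushed down by $p$ to handle injectivity, with $\tilde f(t)=nt$ for surjectivity. The only cosmetic difference is that you verify injectivity by showing $\ker(\deg)$ is trivial (homotoping $\tilde f$ to the constant path when $\deg(f)=0$), whereas the paper directly homotopes $\tilde f$ to $\tilde g$ when $\deg(f)=\deg(g)$; given the homomorphism property these are interchangeable.
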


\begin{proof}
We show that the degree map $deg : \pi_1 (\s^1,1)\to \Z$ defined by $deg([f])=$ \emph{deg(f)}  is an isomorphism. Given $[f]$ and $[g]$ in $\pi_1 (\s^1,1)$, let $\tilde{f}$ and $\tilde{g}$ be their respective lifting  in $\R$ beginning at origin. Define a path $\tilde{h} : \I \to \R $  by
\begin{equation}\notag
\tilde{h}(t)=
\begin{cases}
\tilde{f}(2t), & 0 \le t \le 1/2\\
\tilde{f}(1) + \tilde{g}(2t - 1), &  1/2 \le t \le 1 .
\end{cases}
\end{equation}

Then $\tilde{h}$ is a lifting of $f*g$ and $\tilde{h}(0)=0$. And by definition $deg([f]*[g]) = deg([f*g]) = \tilde{h}(1) =\tilde{f}(1) + \tilde{g}(1) = deg([f]) + deg([g])$.

Now to show that $deg$ is one to one, suppose that $deg([f])=deg([g])$ for $[f]$ and $[g]$ in $\pi_1(\s^1,1)$. If $\tilde{f}$ and $\tilde{g}$ be respective liftings of $f$ and $g$ beginning at origin then $\tilde{f}(1)=\tilde{g}(1)$. Consider the linear homotopy $H : \I\times \I \to \R$ defined by
$$H(s,t)=(1-t)\tilde{f}(s) +t\tilde{g}(s).$$

Then $\tilde{f}$ is homotopic to $\tilde{g}$ . It follows that $p\circ H$ is the homotopy between $f$ and $g$, that is $[f]=[g]$.

To show that $deg$ is onto, let $n$ be any integer and consider the path $\tilde{f}:\I \to \R$ defined by $\tilde{f}(t)=nt$. Then $f=p\tilde{f}$ is a loop in $\s^1$ based at 1 with $deg([f])= \tilde{f}(1)=n$. Thus $deg$ is onto.
\end{proof}

Now we state the famous theorem of algebraic topology called the Seifert-Van Kampen Theorem. It expresses the fundamental group of the space $X$, which is decomposed as the union of a collection of path connected open subsets $A_{\alpha}$ (each of which contains the base point $x_0\in X$), in terms of the free product of the fundamental groups of $A_{\alpha}$, $*_{\alpha}\pi_1(A_{\alpha},x_0)$. Consider homomorphisms $j_{\alpha}: \pi_1(A_{\alpha},x_0)\to \pi_1(X,x_0)$ induced by the inclusions from $A_{\alpha}$ to $X$ and the homomorphisms $i_{\alpha \beta} : \pi_1(A_{\alpha} \cap A_{\beta},x_0) \to \pi_1 (A_{\alpha},x_0)$ induced by the inclusions from $A_{\alpha}\cap A_{\beta}$ to $A_{\alpha}$.

\begin{theorem}
$($see ~\cite{hatcher}.$)$  If X is the union of path connected open sets $A_{\alpha}$ each containing the base point $x_0\in X$ and if each intersection $A_{\alpha} \cap A_{\beta}$ is path connected, then the homomorphism $\Phi : *_{\alpha}\pi_1(A_{\alpha},x_0) \to \pi_1(X,x_0)$ is surjective. If in addition each intersection $A_{\alpha}\cap A_{\beta} \cap A_{\gamma}$ is path connected then the kernel of $\Phi$ is the normal subgroup N generated by all elements of the form $i_{\alpha \beta}([f])i_{\beta \alpha}([f])^{-1}$, and so $\Phi$ induces an isomorphism between $\pi_1(X,x_0)$ and $*_{\alpha}\pi_1(A_{\alpha},x_0)/$N.
\end{theorem}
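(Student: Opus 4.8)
The plan is to establish the two assertions in turn by the standard square-subdivision argument, assuming throughout (as we may, replacing a free homotopy to a constant by a based one) that all loops and homotopies are based at $x_0$.

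\emph{Surjectivity of $\Phi$.} Let $\gamma\colon\I\to X$ be a loop at $x_0$. The sets $\gamma^{-1}(A_{\alpha})$ form an open cover of the compact metric space $\I$, so by the Lebesgue number lemma there is a partition $0=s_0<s_1<\cdots<s_m=1$ with $\gamma([s_{i-1},s_i])\subset A_{\alpha_i}$ for suitable indices $\alpha_i$. Let $\gamma_i$ be the restriction of $\gamma$ to $[s_{i-1},s_i]$, reparametrized on $\I$. Since $\gamma(s_i)\in A_{\alpha_i}\cap A_{\alpha_{i+1}}$ and this intersection is path connected, choose a path $h_i$ in it from $x_0$ to $\gamma(s_i)$, with $h_0$ and $h_m$ constant. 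Then $\gamma$ is path homotopic to $(h_0\ast\gamma_1\ast\bar{h}_1)\ast(h_1\ast\gamma_2\ast\bar{h}_2)\ast\cdots\ast(h_{m-1}\ast\gamma_m\ast\bar{h}_m)$, and each factor $h_{i-1}\ast\gamma_i\ast\bar{h}_i$ is a loop at $x_0$ lying entirely in $A_{\alpha_i}$, hence represents an element of the image of $j_{\alpha_i}$. Thus $[\gamma]$ is a product of images of the maps $j_{\alpha}$, so it lies in the image of $\Phi$.

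\emph{$N\subseteq\ker\Phi$.} For $[f]\in\pi_1(A_{\alpha}\cap A_{\beta},x_0)$ both composites $j_{\alpha}\circ i_{\alpha\beta}$ and $j_{\beta}\circ i_{\beta\alpha}$ are induced by the single inclusion $A_{\alpha}\cap A_{\beta}\hookrightarrow X$, hence coincide; therefore $\Phi\bigl(i_{\alpha\beta}([f])\,i_{\beta\alpha}([f])^{-1}\bigr)=1$. So $N$ is contained in $\ker\Phi$ and $\Phi$ factors through a surjection $\bar{\Phi}\colon\bigl(\ast_{\alpha}\pi_1(A_{\alpha},x_0)\bigr)/N\to\pi_1(X,x_0)$. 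It remains to show $\ker\Phi\subseteq N$. Take a word $w=[f_1]\cdots[f_k]$ in $\ast_{\alpha}\pi_1(A_{\alpha},x_0)$, each $f_r$ a loop at $x_0$ in some $A_{\alpha_r}$, with $\Phi(w)=1$; fix a homotopy $F\colon\I\times\I\to X$ from $f_1\ast\cdots\ast f_k$ to the constant loop. Apply the Lebesgue number lemma to the cover $\{F^{-1}(A_{\alpha})\}$ of $\I\times\I$ and subdivide the square into a grid of subsquares fine enough that each is carried by $F$ into a single $A_{\alpha}$, while arranging the bottom subdivision to refine the partition coming from $f_1\ast\cdots\ast f_k$. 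At each grid vertex $v$ pick a path $g_v$ from $x_0$ to $F(v)$ lying in the intersection of all the sets $A_{\alpha}$ that receive a subsquare meeting $v$ --- here the path connectedness of the double and, at vertices where three or four subsquares meet, triple intersections is exactly what is needed --- with $g_v$ constant whenever $F(v)=x_0$, in particular along the left and right edges and at the four corners. Each grid edge, pre- and post-composed with the $g_v$ at its endpoints, is a loop at $x_0$ lying in every $A_{\alpha}$ adjacent to it, hence has a well-defined class in each such $\pi_1(A_{\alpha},x_0)$. Now push the bottom edge-path of the grid up to the top, one subsquare at a time: crossing a subsquare replaces one side of its boundary by the other, and since the subsquare supplies a homotopy between these two sides inside the single $A_{\alpha}$ containing it, the corresponding word changes only by a relation valid in $\pi_1(A_{\alpha},x_0)$, so its class modulo $N$ is unchanged; reinterpreting a shared grid edge from $\pi_1(A_{\alpha},x_0)$ to $\pi_1(A_{\beta},x_0)$ costs precisely a factor $i_{\alpha\beta}([e])\,i_{\beta\alpha}([e])^{-1}\in N$. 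Reading the bottom row gives $w$ modulo $N$ and reading the top row gives the empty word, so $w\in N$. Hence $\ker\Phi=N$ and $\bar{\Phi}$ is an isomorphism.

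\emph{Main obstacle.} The delicate part is the kernel computation: choosing the auxiliary paths $g_v$ coherently, keeping the bottom of the grid compatible with the given factorization, and checking that every elementary move across the grid alters the current word only by an element of $N$. The triple-intersection hypothesis enters only at this stage --- to select $g_v$ at a vertex surrounded by three or four differently labelled subsquares --- and pinning down why it is indispensable (and why double connectedness alone does not suffice for the relation bookkeeping) is the subtle point of the argument.
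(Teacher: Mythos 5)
The paper does not prove this theorem --- it is stated with the citation to Hatcher and used as a black box in the examples that follow --- so there is no authorial proof to compare against; your argument is a reconstruction of the standard one in the cited reference. Your surjectivity step and the inclusion $N\subseteq\ker\Phi$ are correct. The gap is exactly at the point you flag as delicate. In a rectangular grid on $\I\times\I$, every interior vertex is met by exactly four subsquares; if these carry four distinct labels $\alpha,\beta,\gamma,\delta$, then choosing $g_v$ requires $A_{\alpha}\cap A_{\beta}\cap A_{\gamma}\cap A_{\delta}$ to be path connected, which the hypotheses do not supply. You write as though the triple-intersection hypothesis handles ``vertices where three or four subsquares meet,'' but it only handles three. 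Hatcher disposes of this by first perturbing the vertical sides of the subsquares (staggering them row by row) so that at most three subsquares meet at any one vertex, after which the triple-intersection hypothesis is sufficient. Without that preliminary adjustment, or some equivalent device, the vertex-path selection can fail; and since you single out this selection as the crux of the argument, the omission is the one substantive hole in an otherwise faithful reconstruction.
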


\textbf{Examples:}
\\
(1) The fundamental group of $\s^n$, for $n\ge$ 2 is trivial. Let $p=(0,0,\ldots ,0,1) \in \R^{n+1}$ and $q=(0,0,\ldots ,0,-1) \in \R^{n+1}$ be the north and south pole of $\s^n$ respectively. Take $A_1=\s^n \setminus \{p\}$ and 
$A_2=\s^n \setminus \{q\}$. Each of $A_1$ and $A_2$ are homeomorphic to $\R^n$ and so is simply connected. Hence the fundamental groups of $A_1$ and $A_2$ are trivial. Also $A_1 \cap A_2$ is path connected. Hence it follows from the above theorem that the fundamental group of $\s^n$ is trivial for $n\ge$ 2.\\
(2) For $x\in \R^{n+1}$, the space  $\R^{n+1}\setminus \{x\}$ is  homeomorphic to $X=\R^{n+1}\setminus \{0\}$. Infact $X$ is a deformation retract of $\s^n$.  The homotopy $H:X\times \I \to X$ defined by
$$H(x,t) = (1-\ t)x +t\frac{x}{\Vert x \Vert}$$
is a deformation retraction. Hence the fundamental group of $\R^{n+1}\setminus \{x\}$ is $\Z$ for $n=1$ and is trivial for $n > 1$.\\
(3) Consider  the wedge sum $\bigvee_{\alpha}X_{\alpha}$ of a collection of path connected spaces $X_{\alpha}$ with base points $x_{\alpha}\in X_{\alpha}$ to be the quotient space of the disjoint union $\bigsqcup_{\alpha}X_{\alpha}$ in which all the base points $x_{\alpha}$ are identified to a single point. If each $x_{\alpha}$ is a deformation retract of an open neighborhood $U_{\alpha}$ in $X_{\alpha}$ containing $x_{\alpha}$, then $X_{\alpha}$ is a deformation retract of its open neighborhood $A_{\alpha}=X_{\alpha}\bigvee_{\beta \ne \alpha}U_{\beta}$. The intersection of two or more distinct $A_{\alpha}'$s is $\bigvee_{\alpha}U_{\alpha}$, which deformation retracts to a point. Then by above theorem the fundamental group of $\bigvee_{\alpha}X_{\alpha}$ is isomorphic to free product of the fundamental groups of $X_{\alpha}$.


\section{Simplicial Homology}

For a finite simplicial complex $K$, its $simplicial \ homology \ group\ H_*(K)$ is defined as follows. We consider an ordering of the vertices $v_0,v_1,\ldots ,v_n$ of each $n$-simplex $\sigma^n = \ <v_0,v_1,\ldots ,v_n>$. Two orderings are said to be equivalent if they can be transformed into each other by even permutations. An equivalence class of ordering of vertices is called an $orientation$ of the simplex. If $n \ge$ 1, there are exactly two orientation in each $n$-simplex. When we consider the simplex $\sigma^n$ together with the equivalence class of all even permutations of its vertices, we say that $\sigma^n$ is $positively \ oriented$ and write the pair as $+\sigma^n$. On the other hand, when $\sigma^n$ is $negatively \ oriented$  we write this as $-\sigma^n$.

\begin{center}
\includegraphics[width=1\columnwidth]{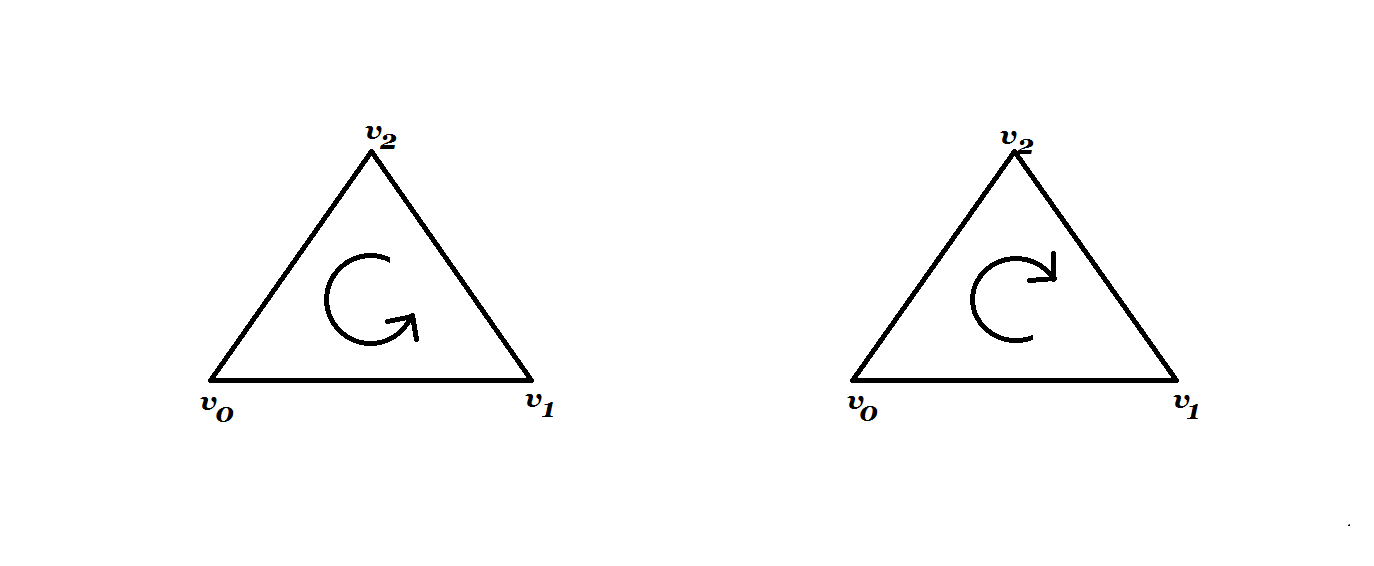}
\end{center}

\begin{center}
Figure 3.6 
\end{center}

Consider the 2-simplex $\sigma^2= <v_0,v_1,v_2>$. If we order the vertices as $v_0,v_1,v_2$, then $+\sigma^2=<v_0,v_1,v_2>=<v_1,v_2,v_0>=<v_2,v_0,v_1>$ and $-\sigma^2=<v_1,v_0,v_2>= <v_0,v_2,v_1>\ = <v_2,v_1,v_0>$. In fact, orienting a simplex means fixing the positive direction of its vertices, and then the negative direction is automatically fixed.

\textbf{Definition:} A simplex with a specified orientation is called an oriented simplex. And a simplicial complex $K$ is said to be oriented if each of its simplexes is assigned an orientation.

Note that if an $n$-simplex $\sigma^n=\ <v_0,v_1,\ldots ,v_n>$ is oriented by the ordering $v_0,v_1,\ldots ,$ $v_n$ then all of its faces are automatically oriented by the ordering induced by the above ordering. A 0-simplex $<v_0>$ is always taken to be positively oriented.

We now assign an orientation to each $n$-simplex $\sigma^n_{\alpha}$ of a simplicial complex $K$. We denote the free abelian group generated by each positively oriented $n$-simplex $\sigma^n_{\alpha}$ by $C_n(K)$. Elements of $C_n(K)$, called \emph{n-chains}, can be written as finite sums  $c=\sum_{\alpha}n_{\alpha}\sigma^n_{\alpha}$ with coefficients $n_{\alpha} \in \Z$. A homomorphism $\partial_n:C_n(K)\to C_{n-1}(K)$ called $the \ boundary \ operator$ is defined as follows: on generator $\sigma^n=\ <v_0,v_1,\ldots ,v_n>$ of $C_n(K)$, we define
$$\partial_n(\sigma^n)=\sum_{i=1}^{n}(-1)^i<v_0,\ldots ,\hat{v_i},\ldots ,v_n>.$$
\begin{center}
\includegraphics[width=1\columnwidth]{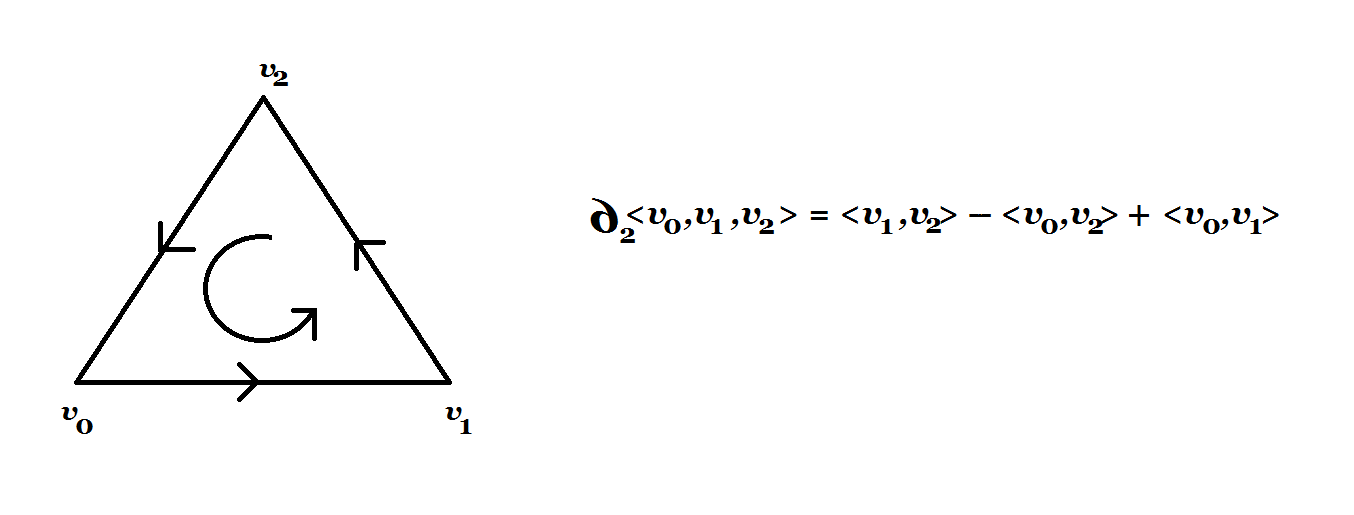}
\end{center}

\begin{center}
Figure 3.7
\end{center}

Here, $\hat{v_i}$ means that $v_i$ is omitted. Then we linearly extend it over $C_n(K)$ , that is,
$$\partial_n(\sum_{\alpha}n_{\alpha}\sigma^n_{\alpha})=\sum_{\alpha}n_{\alpha}\partial_n(\sigma^n_{\alpha}).$$

The important thing here is that if we apply the boundary operator twice, the value is always a 0, that is, $\partial_{n-1} \circ \partial_n =0$. By virtue of this fact, if we let $Z_n(K)=\{ c \in C_n(K) : \partial_n ( c) =0\}$ and $B_n(K)=\{ \partial_{n+1} (c) : c \in C_{n+1}(K)\}$ then $B_n(K)\subset Z_n(K)$. We denote the quotient group $Z_n(K)/B_n(K)$ by $H_n(K)$ and call it the \emph{n-dimensional  homology  group} of $K$. The homology group $H_*(K)$ refers to the integral homology group.  Any element of $Z_n(K)$ is called an \emph{n-dimensional  cycle}, and any element of $B_n(K)$ is called  a $boundary \ cycle$ of $K$. The homology class represented by a cycle $z\in Z_n(K)$ is usually denoted by $[z] \in H_n(K)$. Also any two cycles $z,z' \in Z_n(K)$ are called $homologous$ if they represent the same homology class, that is, there exists a chain $c\in C_{n+1}(K)$ such that $z'- z=\partial(c)$.

Observe that for  $n<$ 0 or  $n>$ dim$K$, all $C_n(K)$ are evidently zero groups and therefore $H_n(K)=0$ for all such $n$. Possibly the  non trivial homology group $H_n(K)$ of $K$ can occur only when 0 $\le n \le$ dim $K$. Moreover, for $m=$ dim$K$,  $B_m(K)=0$ and $Z_0(K)=C_0(K)$.

Note that an $n$-simplex $\sigma^n$ has exactly two orientations  and   a simplicial complex $K$ is oriented by assigning an arbitrary orientation to each of the simplexes of $K$. Therefore, simplicial complex can be oriented in several different ways. Suppose $K_1$ and $K_2$ denote the same simplicial complex $K$ equipped with different orientations. Then $H_n(K_1)=H_n(K_2)$ for each  $n \ge$ 0 (see~\cite{deo}).

\section{Singular Homology}

For any integer $n \ge 0$, let $\tri n$ denote the Euclidean simplex $<e_0,e_1,\ldots ,e_n>$, where $e_0=0$ and $e_i=(0,\ldots ,1,\ldots ,0)$, for $1\le i \le n$, is the vector with a `1' at the $ith$ place and zero elsewhere. This is called the standard $n$-simplex. Let $X$ be a topological space. A continuous map $\sg : \tri n \to X$ is called a \emph{singular n-simplex} in $X$. The word `singular' is used here to express the idea that $\sg$ need not be a nice embedding, so its image does not look at all like a simplex.

\begin{center}
\includegraphics[width=1\columnwidth]{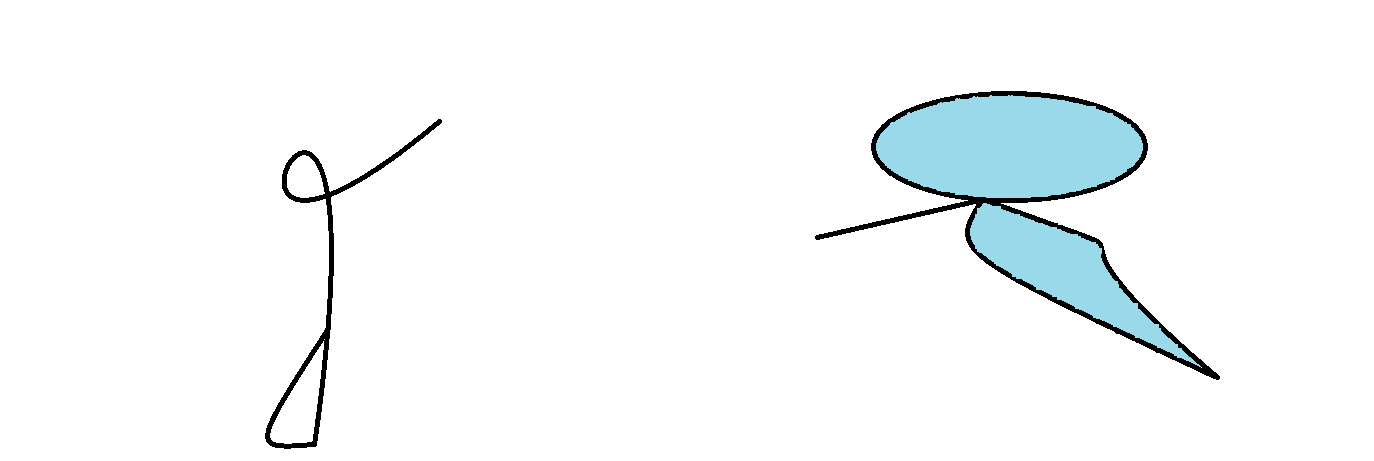}
\end{center}

\begin{center}
Figure 3.8
\end{center}

Note that since $\tri n$ is compact connected space, the image set $\sg(\tri n)$ in $X$ must be compact and connected. Thus a singular 0-simplex in $X$ is just a point in $X$ whereas a singular 1-simplex in $X$ is simply a path in $X$. A singular 2-simplex in $X$ will be a curved triangle or region with its interior.

Let $C_n(X)$ be the free abelian group generated by the set of all singular $n$-simplexes in $X$. Elements of $C_n(X)$, called the \emph{singular n-chains} can be written as finite sums $c=\sum_{\alpha}n_{\alpha}\sigma^n_{\alpha}$ with coefficients $n_{\alpha} \in \Z$ and $\sg_{\alpha}^n: \tri n \to X$.

Now we define the  boundary homomorphism from $C_n(X)$ to $C_{n-1}(X)$ for all $n\ge 1$. For each $0\le i \le n$, let $F_n ^i : \tri{n-1} \to \tri n$ be the map such that 
\begin{equation}\notag
F_n ^i(e_j)=
\begin{cases}
e_j, & j<i\\
e_{j+1}, & j\ge i
\end{cases}
\end{equation}
that is the map $F_n ^i$ map the vertices $(e_0,\ldots ,e_{n-1})$ to the set of points $(e_0,\ldots ,\hat{e}_i,\ldots ,e_n)$, where $\hat{e_i}$ means the point $e_i$ is omitted. The map $F_n ^i$ is called \emph{ith face map} of $n$-simplex $\tri n$ and it map $\tri {n-1}$ to the boundary face of $\tri n$ opposite to the vertex $e_i$. First we note that the face maps $F_n ^i ,\ F_{n-1 } ^j , \ 0 \le j< i \le n$ satisfy the commutative relation
\begin{equation}\label{ch3,sec5,eq1}
F_n ^i \circ F_{n-1} ^j = F_n ^j \circ F_{n-1} ^{i-1}.
\end{equation}

For any singular $n$-simplex $\sg : \tri n \to X$, the boundary $\partial_n(\sg)$ of $\sg$ is given by 
$$\partial_n(\sg)=\sum_{i=0}^{n} (-1)^i \sg \circ F_n ^i.$$
Since $C_n(X)$ is free abelian group generated by the set of all singular $n$-simplexes in $X$, we can extend $\partial_n$ linearly to a homomorphism $\partial_n : C_n(X) \to C_{n-1}(X)$,  that is for any singular chain $c=\sum_{\alpha}n_{\alpha}\sigma^n_{\alpha}$ we define 
$$\partial_n(c) = \sum_{\alpha}n_{\alpha}\partial_n(\sg^n_{\alpha}).$$
This is called $the \ boundary \ operator$. The boundary of any 0-chain is defined to be zero. By using the commutative relation \ref{ch3,sec5,eq1} of face maps here also we can see that $\partial_{n-1}\circ \partial_n=0$ for all $n \ge 1$.

As in previous section, we let $Z_n(X)=\{ c \in C_n(X) : \partial_n ( c) =0\}$ and $B_n(X)=\{ \partial_{n+1} (c) : c \in C_{n+1}(X)\}$, then $B_n(X)\subset Z_n(X)$. Again element of $Z_n(X)$ is called an $n$ cycle and an element of $B_n(X)$ is called a boundary. The $n$-dimensional singular homology group of $X$ is defined to be the quotient group $H_n(X)=Z_n(X)/B_n(X)$. Also if two  $n$-cycles determine the same homology class, they are said to be homologous.

Next we compute the zero dimensional homology group of a space $X$.
\begin{theorem}
If X is path connected, then $H_0(X)$ is isomorphic to $\Z$.
\end{theorem}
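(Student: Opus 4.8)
The plan is to produce an explicit isomorphism $\varepsilon_* \colon H_0(X) \to \Z$ induced by the augmentation map. First I would define the \emph{augmentation homomorphism} $\varepsilon \colon C_0(X) \to \Z$ on generators by $\varepsilon(\sigma) = 1$ for every singular $0$-simplex $\sigma \colon \tri 0 \to X$ (i.e., for every point of $X$), extended linearly so that $\varepsilon\bigl(\sum_\alpha n_\alpha \sigma_\alpha\bigr) = \sum_\alpha n_\alpha$. Since $X$ is nonempty, $\varepsilon$ is surjective: a single point $x_0 \in X$ gives a $0$-simplex with $\varepsilon(x_0) = 1$, hence $\varepsilon(n \cdot x_0) = n$ for every $n \in \Z$.

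The next step is to check that $\varepsilon$ kills all boundaries, so that it descends to a map on $H_0(X) = Z_0(X)/B_0(X)$. For any singular $1$-simplex $\sigma \colon \tri 1 \to X$ we have $\partial_1(\sigma) = \sigma \circ F_1^0 - \sigma \circ F_1^1$, and each of $\sigma \circ F_1^0$, $\sigma \circ F_1^1$ is a singular $0$-simplex, so $\varepsilon(\partial_1 \sigma) = 1 - 1 = 0$; extending linearly, $\varepsilon(B_0(X)) = 0$. Since $\partial_0 = 0$, we have $Z_0(X) = C_0(X)$, so $\varepsilon$ induces a surjective homomorphism $\varepsilon_* \colon H_0(X) \to \Z$, $\varepsilon_*[c] = \varepsilon(c)$.

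It remains to show $\ker \varepsilon_*$ is trivial, equivalently that every $0$-chain $c$ with $\varepsilon(c) = 0$ is a boundary. This is the step where path-connectedness is essential, and it is the main obstacle. Fix a basepoint $x_0 \in X$. Given a singular $0$-simplex $x \in X$, path-connectedness provides a path $\gamma_x \colon \I \to X$ from $x_0$ to $x$; regarding $\I \cong \tri 1$ suitably (so that $F_1^0$ picks out the endpoint $x$ and $F_1^1$ picks out $x_0$, matching the sign convention in $\partial_1$), we get $\partial_1(\gamma_x) = x - x_0$ as a $1$-chain, so $x$ is homologous to $x_0$. Now write an arbitrary $0$-chain as $c = \sum_{i} n_i x_i$; then $c - \bigl(\sum_i n_i\bigr) x_0 = \sum_i n_i (x_i - x_0) = \partial_1\bigl(\sum_i n_i \gamma_{x_i}\bigr)$ is a boundary, so $[c] = \bigl(\sum_i n_i\bigr)[x_0]$ in $H_0(X)$. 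If moreover $\varepsilon(c) = \sum_i n_i = 0$, then $[c] = 0$; this proves injectivity. Combined with surjectivity, $\varepsilon_*$ is an isomorphism, and in fact the argument shows $H_0(X)$ is the infinite cyclic group generated by the class of any single point.
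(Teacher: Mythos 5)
Your proof is correct and follows essentially the same route as the paper: both define the augmentation map on $C_0(X)$ sending a $0$-chain to the sum of its coefficients, show it vanishes on boundaries, and then use path-connectedness to choose paths $\gamma_x$ from a basepoint $x_0$ to each $x$, so that any $0$-chain with total coefficient zero is exhibited explicitly as a boundary. The only cosmetic differences are notational (you write $\varepsilon$ and package the conclusion as $\varepsilon_*$ being an isomorphism, while the paper shows directly that $\ker f = B_0(X)$ and quotients).
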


\begin{proof}

Since the boundary operator is the zero map in dimension 0, every 0-chain is a cycle and we have $Z_0(X)=C_0(X)$.  Also a singular 0-chain is a linear combination of points in $X$ with integer coefficients : $c=\sum_{\alpha}n_{\alpha}x_{\alpha}$. Define a map $f: C_0(X) \to \Z$ by
$$f(\sum_{\alpha}n_{\alpha}x_{\alpha})=\sum_{\alpha}n_{\alpha}.$$
Clearly $f$ is a surjective homomorphism.

Next we show that $Ker f=B_0(X)$, that gives $H_0(X)=C_0(X)/B_0(X)$ is isomorphic to $\Z$. Now if $\sg$ is a singular 1-simplex we have, by definition, $\partial(\sg) = \sg(1)-\sg(0)$, so that $f(\partial(\sg))=1-1=0$. Therefore $B_0(X) \subset Ker f$.

Also for $c=\sum_{\alpha}n_{\alpha}x_{\alpha}$ in $Ker f$, that is $f(\sum_{\alpha}n_{\alpha}x_{\alpha})=\sum_{\alpha}n_{\alpha}=0$, take any point $x_0 \in X$ and for each $x\in X$, let $\alpha_x : \I \to X$ be a path from $x_0$ to $x$. This is a singular 1-simplex and $\partial(\alpha_x)=\alpha_x(1)-\alpha_x(0) = x- x_0$. Thus
$$\partial(\sum_{\alpha}n_{\alpha}\alpha_{x_{\alpha}})= \sum n_{\alpha}x_{\alpha}- \sum n_{\alpha} x_0$$
$$\qquad \quad=c-\ x_0 \sum n_{\alpha}$$
$$=c\qquad$$
then $c\in B_0(X)$, which shows that $Ker f\subset B_0(X)$.
\end{proof}

Infact one can show in general, that for any topological space $X$, $H_0(X)$ is the direct sum of $\Z'$s  one for each path component of $X$.

The significance of the homology groups derives from the fact that they are topological invariant. The proof is very easy consequence of the fact that continuous maps induces homology homomorphisms.

Let $f: X\to Y$ be a continuous map. If $\sg : \tri n \to X$ is a singular $n$-simplex in $X$ then clearly $f \circ \sg : \tri n \to Y$ is a singular $n$-simplex in $Y$. Thus, for every $n\ge 0$, $f$ induces a homomorphism $f_{\#}: C_n(X) \to C_n(Y)$ defined by composing each singular $n$-simplex $\sg$ in $X$ with $f$ to get a singular $n$-simplex in $Y$, then extending $f_{\#}$ linearly to $C_n(X)$, that is
$$f_{\#}(\sum_{\alpha}n_{\alpha}\sg_{\alpha})=\sum_{\alpha}n_{\alpha}f\circ \sg_{\alpha}$$

The key fact is that $f_{\#}$ commutes with boundary operator $\partial$, since
$$f_{\#}(\partial(\sg))=f_{\#}(\sum(-1)^i \sg \circ F_n^i)$$
$$\qquad \qquad \ \ =\sum(-1)^if\circ \sg \circ F_n^i \ \ $$
$$=\partial(f\circ \sg)\ \ $$
$$=\partial(f_{\#}(\sg)) .$$

This implies that $f_{\#}$ maps $Z_n(X)$ to $Z_n(Y)$ as $\partial c=0$ implies $\partial(f_{\#}(c))=f_{\#}(\partial c)=0$. Also $f_{\#}$ maps $B_n(X)$ to $B_n(Y)$ since $f_{\#}(\partial c)=\partial (f_{\#} c)$. Hence $f_{\#}$ induces a homomorphism $f_{*} : H_n(X) \to H_n(Y)$, for every $n\ge 0$, defined by 
$$f_{*}[z]=[f_{\#}z].$$
where $z\in C_n(X)$ is a cycle. Two important properties of induced homomorphism are: \\
(1) If $i_X:X\to X$ is the identity map, then the induced homomorphism $(i_X)_* :  H_n(X)\to H_n(X)$  is also identity map, for  every  $n \ge 0$.\\
(2) If $f : X\to Y$and  $ g: Y \to Z$ are continuous maps then, for every $n\ge 0$, the induced homomorphisms satisfy
$$(g\circ f)_* = g_*\circ f_* .$$

\begin{theorem}
If $f: X \to Y$ is a homeomorphism then for every $ n \ge 0$,  $f_* : H_n(X) \to H_n(Y)$ is an isomorphism.
\end{theorem}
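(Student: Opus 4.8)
The plan is to deduce this immediately from the two functoriality properties of the induced homomorphism recorded just above the statement, namely that the identity map induces the identity homomorphism and that $(g \circ f)_* = g_* \circ f_*$ for composable continuous maps. In other words, we only need the fact that homology is a functor, together with the elementary observation that a functor carries isomorphisms to isomorphisms.

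First I would observe that, since $f : X \to Y$ is a homeomorphism, it admits a continuous inverse $f^{-1} : Y \to X$, so that both $f^{-1} \circ f = i_X$ and $f \circ f^{-1} = i_Y$ hold. Applying the induced-homomorphism construction and the composition property to the first equation gives, for every $n \ge 0$,
$$(f^{-1})_* \circ f_* = (f^{-1} \circ f)_* = (i_X)_* = \mathrm{id}_{H_n(X)},$$
and applying it to the second equation gives
$$f_* \circ (f^{-1})_* = (f \circ f^{-1})_* = (i_Y)_* = \mathrm{id}_{H_n(Y)}.$$

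Thus $f_* : H_n(X) \to H_n(Y)$ has the homomorphism $(f^{-1})_*$ as a two-sided inverse, so it is a bijective homomorphism, i.e. an isomorphism, for every $n \ge 0$. There is no genuine obstacle here: the argument is exactly parallel to the earlier corollary that a homeomorphism induces an isomorphism on fundamental groups, and its whole content has already been packaged into the two properties of $f_*$, which in turn rest on the fact that $f_{\#}$ commutes with the boundary operator $\partial$. The only point worth stating carefully in the write-up is that $(f^{-1})_*$ is indeed a homomorphism $H_n(Y) \to H_n(X)$ (so that ``two-sided inverse'' makes sense), which is immediate since $f^{-1}$ is continuous.
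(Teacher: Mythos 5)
Your proof is correct and is essentially identical to the paper's: both invoke the continuous inverse $f^{-1}$ and use functoriality ($(g\circ f)_* = g_*\circ f_*$ and $(i_X)_* = \mathrm{id}$) to show $(f^{-1})_*$ is a two-sided inverse of $f_*$. No discrepancies.
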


\begin{proof}
Let $f^{-1}$ denote the inverse of the homeomorphism $f$. Then the induced homomorphism $f_*^{-1}: H_n(Y) \to H_n(X)$ satisfies $f_* \circ f_* ^{-1}=(f \circ f^{-1})_*= (i_Y)_*$ and $f_*^{-1}\circ f_*=(f^{-1}\circ f)_*=(i_X)_*$. This implies $f_*^{-1}$ is the inverse of $f_*$ and thus $f_*$ is  an isomorphism.
\end{proof}

\section{Homology and Fundamental Group}

Now we show that there is a nice relationship between the first homology group of a path connected space and its fundamental group. The former is just the abelianization of the latter.  This will enable us to compute the first homology group of all the path connected spaces whose fundamental groups are known. 

Given a group $G$, the $commutator \ subgroup$ of $G$, denoted by $G'$, is the subgroup generated by elements of the form $\alpha \beta \alpha ^{\emph{-}1}\beta ^{\emph{-}1}$ for $\alpha ,\beta \in G$. The quotient group $G/G'$ is always abelian, and the commutator subgroup is trivial if and only if $G$ itself is abelian. This quotient group is denoted by $Ab(G)$ and called the $abelianization$ of $G$. It is clear that isomorphic groups have isomorphic abelianization.

Let $X$ be a topological space and $x_0\in X$. A map $f: \I \to X$ can be viewed as either a path or a singular 1-simplex. If $f$ is a loop, then this singular 1-simplex is a cycle since $\partial f= f(1)-f(0)=0$. Also if $f$ is a constant path at $x_0$ then $f$ is not only a cycle being a loop but it is the boundary of a constant singular 2-simplex. Consider a singular 2-simplex $\sg : \tri 2 \to X$ which maps  whole of $\tri 2$ to $x_0$. Then by definition $\partial (\sg)=f -f+f=f.$

\begin{lemma}\label{ch3,sec6,lm1}
Suppose f , $g: \I \to X$ are paths in X from a to b such that f is path homotopic to g. Then the singular 1-chain $f - g$  is a boundary.
\end{lemma}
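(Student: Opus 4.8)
The plan is to build an explicit singular 2-chain whose boundary is $f - g$, using the path homotopy $H$ between $f$ and $g$. The square $\I \times \I$ on which $H$ is defined is the natural candidate: I would subdivide it into two triangles and push each one into $X$ via $H$ to obtain two singular 2-simplexes, then compute the boundary of their (signed) sum.

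First I would fix the standard 2-simplex $\tri 2 = \ <e_0,e_1,e_2>$ and choose affine homeomorphisms from $\tri 2$ onto the lower triangle $\{(s,t) : t \le s\}$ and the upper triangle $\{(s,t) : t \ge s\}$ of $\I \times \I$, sending vertices to the appropriate corners among $(0,0), (1,0), (1,1), (0,1)$. Composing these affine maps with $H$ gives two singular 2-simplexes $\sigma_1, \sigma_2 : \tri 2 \to X$. Each face of $\sigma_i$ is $H$ restricted to an edge of the square (or the diagonal), reparametrized affinely; using the boundary conditions $H(s,0) = f(s)$, $H(s,1) = g(s)$, $H(0,t) = a$, $H(1,t) = b$, each such face is either $f$, $g$, the constant path $c_a$ at $a$, the constant path $c_b$ at $b$, or the diagonal path $d(s) = H(s,s)$.

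Next I would compute $\partial_2(\sigma_1 - \sigma_2)$ (with signs chosen so the two copies of the diagonal $d$ cancel). After cancellation the surviving terms are $f$, $-g$, and constant paths $\pm c_a$, $\pm c_b$. To finish I would invoke the observation already made in the text just before the lemma: a constant path at a point is the boundary of a constant singular 2-simplex, so $c_a$ and $c_b$ are themselves boundaries. Subtracting the appropriate boundary of constant 2-simplexes from $\sigma_1 - \sigma_2$ yields a 2-chain $c$ with $\partial_2 c = f - g$, which is exactly the claim.

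The main obstacle — really the only delicate point — is bookkeeping: getting the orientations of the two triangles and the ordering of their vertices consistent so that the diagonal contributions cancel rather than add, and so that $f$ appears with coefficient $+1$ and $g$ with coefficient $-1$ rather than some other combination. This is a finite check on six face maps against the formula $\partial_2 \sigma = \sigma \circ F_2^0 - \sigma \circ F_2^1 + \sigma \circ F_2^2$, together with the elementary fact that an affine reparametrization of a path does not change its class as a singular 1-chain up to a boundary (indeed, for the constant-path faces one does not even need this, and for $f$, $g$, $d$ one can choose the affine identifications so that no reparametrization is needed at all). No deeper input is required.
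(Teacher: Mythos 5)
Your proposal is correct, but it takes a genuinely different route from the paper's. The paper does not subdivide the square; it \emph{collapses} it. Since $H$ is constant on the edge $\{0\}\times\I$, the paper introduces the quotient map $q:\I\times\I\to\tri 2$, $q(x,y)=(x-xy,xy)$, which crushes that degenerate edge to the vertex $e_0$ and sends horizontal segments linearly onto radial segments. Because $H$ is constant on the fibers of $q$, it factors uniquely as $H=\sigma\circ q$ through a \emph{single} singular $2$-simplex $\sigma:\tri 2\to X$, and one reads off directly $\partial\sigma=e_b-g+f$, leaving only the one constant path $e_b$ to dispose of. Your subdivision approach instead produces two singular $2$-simplexes $\sigma_1,\sigma_2$ with $\partial(\sigma_1-\sigma_2)=f-g+c_b-c_a$ (with a suitable assignment of vertices so the diagonal faces cancel and $f,g$ appear unreparametrized), and you then kill both constant paths $c_a,c_b$. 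The trade-off: the paper's argument is more economical (one $2$-simplex, one constant $1$-boundary, no orientation bookkeeping) but leans on a small quotient-map/factorization step; yours avoids that factorization, works purely by composing $H$ with affine maps, and is the standard template one reuses to prove that homotopic maps of spaces induce equal maps on all singular homology groups. One small point to tighten: your remark that affine reparametrization of a path ``does not change its class as a singular $1$-chain up to a boundary'' is true but unnecessary if, as you note at the end, you choose the affine identifications of $\tri 2$ with the two triangles so that the restrictions to the bottom, top, and diagonal edges are literally $f$, $g$, and $d$ --- better to just make that choice outright and drop the reparametrization clause, since otherwise you would be quietly invoking a fact (homotopic $1$-simplexes are homologous) that is essentially this lemma again.
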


\begin{proof}
Let $H: \I \times \I \to X$ be a path homotopy between $f$ and $g$. Since $H$ is constant on $\{0\} \times \I$,  let $q: \I \times \I \to \tri 2$ be  the continuous  map  defined as
$$q(x,y)=(x-x y , x y)$$
which maps $\{0\} \times \I$ to the vertex $e_0$ and each horizontal line segment linearly to a radial line segment. Then $q$ is a quotient map and there is a unique continuous map $\sg : \tri 2 \to X$ such that $H=\sg \circ q$ :
$$\ \I \times \I\qquad \quad \ \ $$
$$\ \ q\  \big\downarrow \quad \searrow H\qquad $$
$$\tri 2 \ \longrightarrow\  X \ $$
$$\sg$$

By the definition of boundary operator we have 
$$\partial (\sg)= e_b - g +f$$
 where $e_b$ is the constant path at $b$. Since constant path $e_b$ is already a boundary, it follows that $f -  g $ is a boundary.
\end{proof}

Since a loop in $X$ based at any point $x_0$ corresponds to a cycle and Lemma \ref{ch3,sec6,lm1}  shows that the homology class depends only on the path homotopy class of  $f$, let us define a map 
$$\Phi : \pi_1 (X,x_0) \to H_1(X)$$
$$\Phi([f]_{\pi_1})=[f]_H$$
where $[f]_{\pi_1}$ denotes the homotopy class of a loop and $[f]_H$ denotes the homology class of 1-cycle $f$. Infact the next  lemma shows that $\Phi$ is a homomorphism.

\begin{lemma}
If $f,g : \I \to X$ are paths in X such that $f(1)=g(0)$, then the 1-chain $f*g$ is homologous to f + g, that is $f + g - f *$ g  is a boundary in X.
\end{lemma}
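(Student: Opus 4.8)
The plan is to produce a single singular $2$-simplex $\sigma : \tri 2 \to X$ whose boundary is exactly the $1$-chain $f + g - f*g$; since a boundary obviously lies in $B_1(X)$, this shows at once that $f*g$ and $f+g$ represent the same class in $H_1(X)$. The idea is a variant of the construction in Lemma \ref{ch3,sec6,lm1}: rather than collapsing a square via a homotopy, we collapse the standard triangle onto a single path.

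First I would work with the affine model $\tri 2 = \ <e_0,e_1,e_2>$ and introduce the affine map $\pi : \tri 2 \to \I$ determined on the vertices by $e_0 \mapsto 0$, $e_1 \mapsto \tfrac12$, $e_2 \mapsto 1$. Because $f(1)=g(0)$, the product path $f*g : \I \to X$ is defined and continuous, hence $\sigma := (f*g)\circ \pi : \tri 2 \to X$ is a legitimate singular $2$-simplex in $X$.

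Next I would compute the three faces of $\sigma$ using the face maps $F_2^0, F_2^1, F_2^2 : \tri 1 \to \tri 2$, which send the ordered pair $(e_0,e_1)$ to $(e_1,e_2)$, to $(e_0,e_2)$, and to $(e_0,e_1)$ respectively. Reading off the piecewise definition of $f*g$ together with the linearity of $\pi$, one checks directly that the edge $\ <e_0,e_1>$ maps under $\pi$ onto $[0,\tfrac12]$, where $(f*g)(s)=f(2s)$, so $\sigma\circ F_2^2 = f$; that the edge $\ <e_1,e_2>$ maps onto $[\tfrac12,1]$, where $(f*g)(s)=g(2s-1)$, so $\sigma\circ F_2^0 = g$; and that the edge $\ <e_0,e_2>$ maps affinely onto all of $\I$, so $\sigma\circ F_2^1 = f*g$. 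Therefore, by the definition of the singular boundary operator, $\partial_2(\sigma) = \sigma\circ F_2^0 - \sigma\circ F_2^1 + \sigma\circ F_2^2 = g - (f*g) + f = f + g - f*g$, which exhibits $f + g - f*g$ as an element of $B_1(X)$ and completes the proof.

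The only point requiring genuine care — and the place where a reparametrization slip could occur — is verifying that the parametrizations of the three edges of $\tri 2$ induced by the face maps $F_2^i$ agree \emph{on the nose} with the standard parametrizations of $f$, $g$ and $f*g$, and not merely up to a monotone change of parameter; this is exactly why $e_1$ is sent to the midpoint $\tfrac12$ of $\I$. Everything else is routine, and path-connectedness of $X$ is not needed here.
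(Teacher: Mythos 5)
Your proposal is correct and is in fact the very same singular $2$-simplex the paper constructs: writing $\sigma = (f*g)\circ\pi$ with $\pi$ the affine map $e_0\mapsto 0,\ e_1\mapsto\tfrac12,\ e_2\mapsto 1$ (so $\pi(x,y)=\tfrac{x+2y}{2}$ in the standard coordinates) unravels precisely to the paper's piecewise formula $\sigma(x,y)=f(x+2y)$ for $x\le 1-2y$ and $\sigma(x,y)=g(x+2y-1)$ for $x\ge 1-2y$, and the face computations yield $\partial\sigma = g - f*g + f$ exactly as in the text. The only cosmetic improvement is that factoring through $\pi$ makes continuity of $\sigma$ automatic, whereas the paper invokes the pasting lemma.
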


\begin{proof}
Given $f,g : \I \to X$ such that $f(1)= g(0)$, define a singular 2-simplex $\sg : \tri 2 \to X$ by 
\begin{equation}\notag
\sg(x,y)=
\begin{cases}
f(x + 2y ), & x\le 1-2y\\
g(x + 2y -  1), & x \ge 1-2y
\end{cases}
\end{equation}

\begin{center}
\includegraphics[width=1\columnwidth]{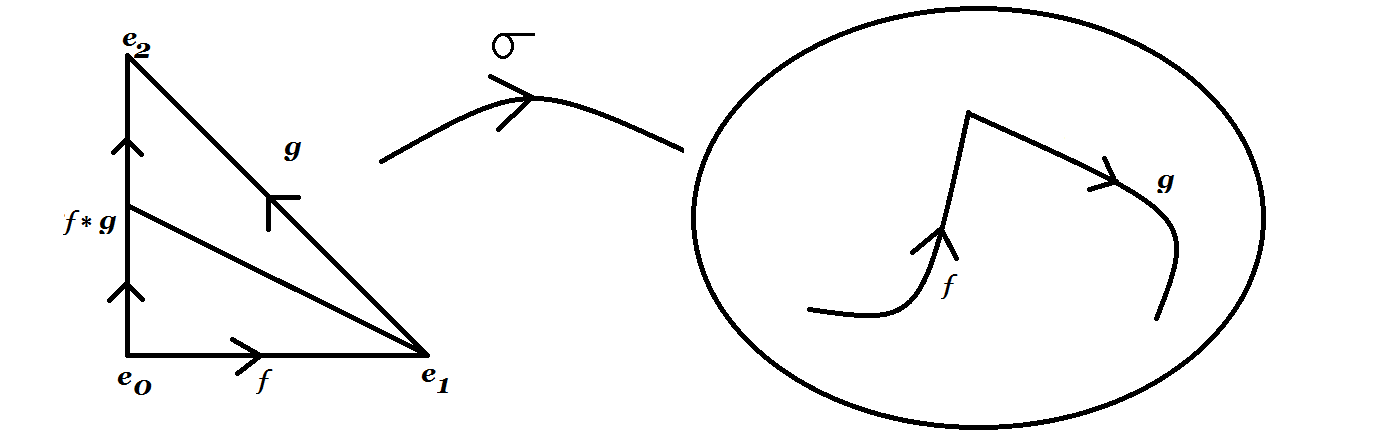}
\end{center}

\begin{center}
Figure 3.9
\end{center}

This is constant on lines parallel to the line joining $e_1$ with mid point of the edge $[e_0,e_2]$, and is continuous by the pasting lemma. Also
$$\partial (\sg )=g-  f*g+f.$$
\end{proof}

\begin{theorem}\label{ch3,sec6,th3}
Let X be a path connected space and $x_0 \in X$. Then $\Phi : \pi_1 (X,x_0) \to H_1(X)$ is surjective whose kernel is the commutator subgroup of $\pi_1(X,x_0)$. So $\Phi$ induces an isomorphism from the abelianization of $\pi_1(X,x_0)$ onto $H_1(X)$.
\end{theorem}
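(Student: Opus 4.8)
The plan is to establish the Hurewicz theorem in dimension one by analyzing the map $\Phi$ carefully. First I would note that since $X$ is path connected, $H_0(X)\cong\Z$, and every loop at $x_0$ is a $1$-cycle, so $\Phi$ is well-defined as a group homomorphism by the two preceding lemmas: Lemma \ref{ch3,sec6,lm1} shows $\Phi$ respects path-homotopy classes, and the second lemma shows $\Phi([f]*[g]) = [f*g]_H = [f+g]_H = \Phi([f]) + \Phi([g])$ (writing $H_1(X)$ additively). One also needs the routine observations that reversing a path $\bar f$ satisfies $[\bar f]_H = -[f]_H$ (since $f*\bar f$ is nullhomotopic, hence $f + \bar f$ is a boundary) and that a constant loop maps to $0$.

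\textbf{Surjectivity.} Given a $1$-cycle $z = \sum_\alpha n_\alpha \sigma_\alpha$ with $\partial z = 0$, I would fix for each point $x\in X$ a path $\lambda_x$ from $x_0$ to $x$ (with $\lambda_{x_0}$ constant), using path connectedness. For each singular $1$-simplex $\sigma_\alpha$ (a path from $a_\alpha$ to $b_\alpha$) form the loop $\ell_\alpha = \lambda_{a_\alpha} * \sigma_\alpha * \bar\lambda_{b_\alpha}$ based at $x_0$. Then $\ell_\alpha$ is homologous to $\lambda_{a_\alpha} + \sigma_\alpha - \lambda_{b_\alpha}$, so $\sum_\alpha n_\alpha \ell_\alpha$ is homologous to $z + \sum_\alpha n_\alpha(\lambda_{a_\alpha} - \lambda_{b_\alpha})$. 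The condition $\partial z = 0$ means exactly that in the free abelian group $C_0(X)$ the $0$-chain $\sum_\alpha n_\alpha(b_\alpha - a_\alpha)$ vanishes, which forces the correction term $\sum_\alpha n_\alpha(\lambda_{a_\alpha} - \lambda_{b_\alpha})$ to be a boundary (each $\lambda_{a_\alpha}-\lambda_{b_\alpha}$ already has the right endpoints; grouping by the vanishing $0$-chain relation, the sum of these paths is null-homologous). Hence $z$ is homologous to $\sum_\alpha n_\alpha \ell_\alpha$, and since $[\sum n_\alpha \ell_\alpha]_H = \Phi(\prod [\ell_\alpha]^{n_\alpha})$ (using that $\Phi$ is a homomorphism into an abelian group), $\Phi$ is onto.

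\textbf{The kernel.} Since $H_1(X)$ is abelian, the commutator subgroup $C = \pi_1(X,x_0)'$ is contained in $\ker\Phi$, so $\Phi$ factors through a surjection $\bar\Phi\colon \mathrm{Ab}(\pi_1(X,x_0)) \to H_1(X)$. It remains to show $\ker\Phi \subseteq C$, equivalently that $\bar\Phi$ is injective. The approach is to build an inverse. For each loop $\ell$ at $x_0$, the class $[\ell]_{\pi_1} \bmod C$ depends only on $[\ell]_H$; to see this, suppose a loop $f$ at $x_0$ has $f = \partial \left(\sum_i n_i \tau_i\right)$ for singular $2$-simplices $\tau_i$. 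Restricting each $\tau_i$ to the three edges of $\tri 2$ gives paths, and $\partial \tau_i$ is an alternating sum of these; conjugating each edge-path by the fixed paths $\lambda$ to its endpoints turns it into a loop at $x_0$, and the simplex $\tau_i$ itself provides a null-homotopy of the product of these three boundary loops — so in $\mathrm{Ab}(\pi_1)$ the contribution of each $\tau_i$ is trivial, whence $[f]_{\pi_1} \in C$. This defines a well-defined homomorphism $H_1(X) \to \mathrm{Ab}(\pi_1(X,x_0))$ inverse to $\bar\Phi$, completing the proof.

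\textbf{Main obstacle.} The routine direction (surjectivity) is bookkeeping with the fixed base paths $\lambda_x$. The genuine difficulty is the kernel computation: showing that if a loop bounds a singular $2$-chain then it is a product of commutators. The delicate point is passing from a $2$-simplex $\tau\colon\tri 2\to X$ — whose boundary edges are paths with possibly different endpoints — to an honest relation in $\pi_1(X,x_0)$, which requires carefully conjugating each edge into a based loop and checking that the three conjugated loops multiply (in some order) to the identity in $\mathrm{Ab}(\pi_1)$, the order-independence being exactly why one must abelianize. Keeping track of signs and the subdivision/degeneracy issues (e.g. degenerate simplices, constant edges) in this argument is where care is needed.
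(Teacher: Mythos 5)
Your approach mirrors the paper's proof: fix base paths $\lambda_x$ (the paper's $\alpha_x$) from $x_0$, conjugate each singular $1$-simplex $\sigma$ into a based loop $\tilde\sigma = \lambda_{\sigma(0)}*\sigma*\bar\lambda_{\sigma(1)}$, compute that for a cycle the correction terms cancel to get surjectivity, and show that boundaries of singular $2$-simplices die in $\mathrm{Ab}(\pi_1(X,x_0))$ for the kernel — which is precisely the paper's map $\beta : C_1(X) \to \mathrm{Ab}(\pi_1(X,x_0))$, $\sigma \mapsto [\tilde\sigma]$. One small sharpening to your surjectivity step: the correction term $\sum_\alpha n_\alpha(\lambda_{a_\alpha}-\lambda_{b_\alpha})$ is not merely ``a boundary'' but identically zero in $C_1(X)$, being the image of $-\partial z = 0$ under the homomorphism $C_0(X)\to C_1(X)$, $x\mapsto\lambda_x$, which is exactly the role of the paper's chain map $\alpha$.
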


\begin{proof}
Since $X$ is path connected space, for each $x \in X$ let $\alpha_x : \I \to X$ be a specific path from $x_0$ to $x$ and $\alpha _{x_0}$ is the constant path. Since each path $\alpha _x$ is a 1-chain the map $x$ goes to $\alpha_x$ extends uniquely to a group homomorphism $\alpha : C_0(X) \to C_1(X)$. For any path $\sg$ in $X$, define a loop $\tilde{\sg}$ based at $x_0$ by
$$\tilde{\sg}=\alpha_{\sg(0)}*\sg*\bar{\alpha}_{\sg(1)}.$$
where $\bar{\alpha}$ denote the inverse path of $\alpha$.

Then
$$\Phi([\tilde{\sg}]_{\pi_1})=[\alpha_{\sg(0)}*\sg*\bar{\alpha}_{\sg(1)}]_H$$
$$\qquad \qquad \qquad \ =[\alpha_{\sg(0)}]_H+[\sg]_H+[\bar{\alpha}_{\sg(1)}]_H$$
$$\qquad \qquad \qquad \ =[\alpha_{\sg(0)}]_H+[\sg]_H - [\alpha_{\sg(1)}]_H$$
$$\qquad \quad =[\sg]_H -  [\alpha(\partial \sg)]_H.\ $$

Suppose $c=\sum_{i=1}^{k}n_i\sg_i$ is an arbitrary 1-chain and let $f$ be the loop given by
$$f=(\tilde{\sg}_1)^{n_1}*\ldots *(\tilde{\sg}_k)^{n_k}.$$

Since $\Phi$ is a homomorphism, we have
$$\Phi([f]_{\pi_1})=\sum_{i=1}^k n_i([\sg_ i]_H - [\alpha(\partial \sg_i)]_H)$$
$$=[c]_H - [\alpha(\partial c)]_H . \ $$
Now if $c$ is a cycle, then $\Phi([f]_{\pi_1}) = [c]_H$, hence $\Phi$ is surjective.

Since $H_1(X)$ is abelian, $Ker \Phi$ will contain the commutator subgroup of  $\pi_1(X,x_0)$.

Let $G = Ab(\pi_1(X,x_0)$) be the abelianization of $\pi_1(X,x_0)$ and for any loop $f$ based at $x_0$, let $[f]_G$ denote the equivalence class of $[f]_{\pi_1}$ in $G$. Because the product in $G$ is induced by path multiplication, we will indicate it with a ` $*$ '. For any singular 1-simplex $\sg$, let $\beta(\sg)=[ \tilde{\sg}]_G \in G$. Because $G$ is abelian this extends uniquely to a homomorphism $\beta : C_1(X) \to G$. We will show that $\beta$ takes all 1-boundaries to the identity element of $G$.

 Since $B_1(X)$ is generated by all elements of type $\partial(\sg)$ where $\sg : \tri 2 \to X$ is a singular 2-simplex. Let $\sg$ be be an arbitrary singular 2-simplex and $v_i=\sg(e_i)$. Put $\sg ^{(i)}= \sg \circ F_2^i$, so that $\partial \sg =\sg^{(0)} - \sg^{(1)}+\sg^{(2)}$. Then

\begin{center}
\includegraphics[width=1\columnwidth]{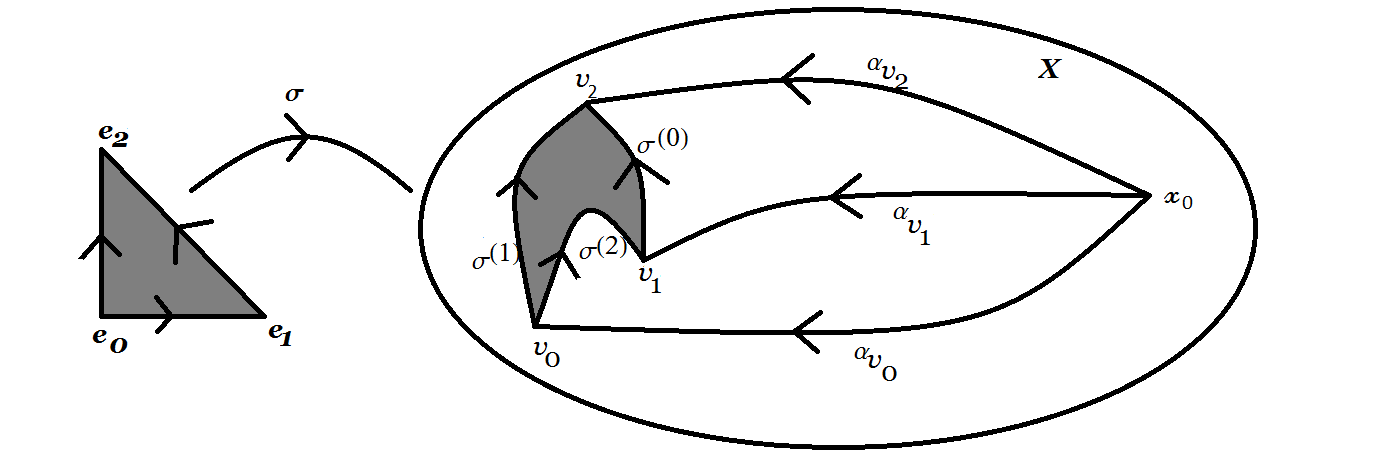}
\end{center}

\begin{center}
Figure 3.10
\end{center}

\begin{equation} \notag
\begin{split}
\beta (\partial \sg)& = [ \tilde{\sg}^{(0)}]_G * ([\tilde{\sg}^{(1)} ]_G)^{-1} * [\tilde{\sg}^{(2)} ]_G \\
&= [ \tilde{\sg}^{(0)}* (\tilde{\sg}^{(1)} )^{-1} * \tilde{\sg}^{(2)} ]_G \\
&=[\alpha_{v_1}*\sg^{(0)}*\bar{\alpha}_{v_2}*\alpha_{v_2}*\bar{\sg}^{(1)}*\bar{\alpha}_{v_0}*\alpha_{v_0}*\sg^{(2)}*\bar{\alpha}_{v_1}]_G \\
&=[\alpha_{v_1}*\sg^{(0)}*\bar{\sg}^{(1)}*\sg^{(2)}*\bar{\alpha}_{v_1} ]_G \\
&=[ \alpha_{v_1}*e_{v_1}*\bar{\alpha}_{v_1}]_G \\
&=[ e_{x_0}]_G \ \ . 
\end{split}
\end{equation}
Hence $B_1(X) \subset Ker \beta$.

Also for $[f]_{\pi_1} \in Ker \Phi ,\ [f]_H =0$ that is $f$ is a boundary. Since $\beta$ takes boundary to identity of $G$, it follows that $[f]_G=\beta(f)=1$. Thus $[f]_{\pi_1}$ is in the commutator subgroup.
\end{proof}


\chapter{Geometry of Surfaces}\label{ch4}

\section{Polygonal Presentation}
In this section we shall construct a number of compact surfaces as the quotient space obtained from a polygonal region in the plane by identifying its edges together. A subset $P$ of the plane is a polygonal region if it is a compact connected subset whose boundary is a finite 1-dimensional simplicial complex satisfying the following conditions: \\
(1) each point $q$ of an edge other than a vertex has a neighborhood $U$ in $\R^2$ such that $P \cap U$ is equal to the intersection with $U$ of some closed half plane.\\
(2) each vertex $v$ has a neighborhood $V$ in $\R^2$ such that $P \cap V$ is homeomorphic to the intersection of $V$ with 2 closed half planes whose boundaries intersect only at $v$.

Let $P$ be a $2n$-sided polygonal region. A labelling  of the edges of $P$ is a map from the set of edges of $P$ to a set $L$ of labels and giving each edge an arrow pointing  towards one of its  vertex in such a way that edges with the same label are to be identified with the arrows indicating which way the vertices match up. With each such labelling of a polygon we associate a sequence of symbols obtained by reading off the boundary labels counterclockwise from the top. Label $a_i$ in the sequence if the arrow points counterclockwise and $a_i^{-1}$ if it points clockwise. The quotient space obtained by pasting the edges of $P$ together according to the given orientation and labelling determines a connected topological space, being quotient of a single connected polygon.

\textbf{Example:}
\\
(1) Consider the polygon $P= \I \times \I$ with the \ori \ and labelling of the edges as specified in the Figure 4.1. The quotient space obtained by identifying the corresponding edges according to the equivalence relation  given by $ (s,0) \thicksim (s,1)$ and $(0,t) \thicksim (1,t)$ for all $s,t \in \I$. The resulting space is homeomorphic to $\s^1 \times \s^1$, the torus.
\begin{center}
\includegraphics[width=1\columnwidth]{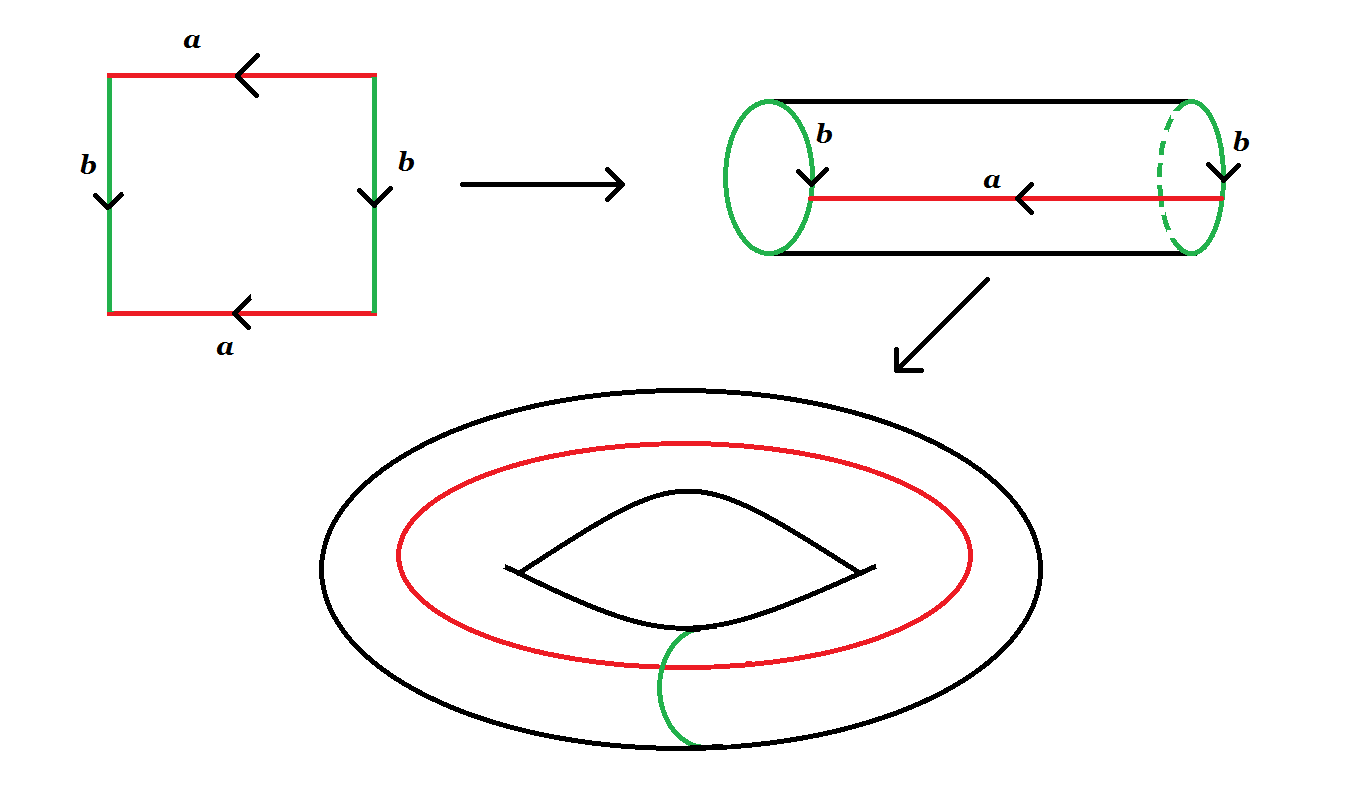}
\end{center}

\begin{center}
Figure 4.1\\
\end{center}

(2) Also the quotient space of a polygon $P= \I \times \I$ obtained by means of the  \ori \ and labelling $abab^{-1}$ as indicated  in the Figure 4.2 below is called a Klein bottle. This space can not be embedded in $\R^3$. However Figure 4.2 depicts an immersion of this space in $\R^3$ with self intersection.\\

\begin{center}

\includegraphics[width=1\columnwidth]{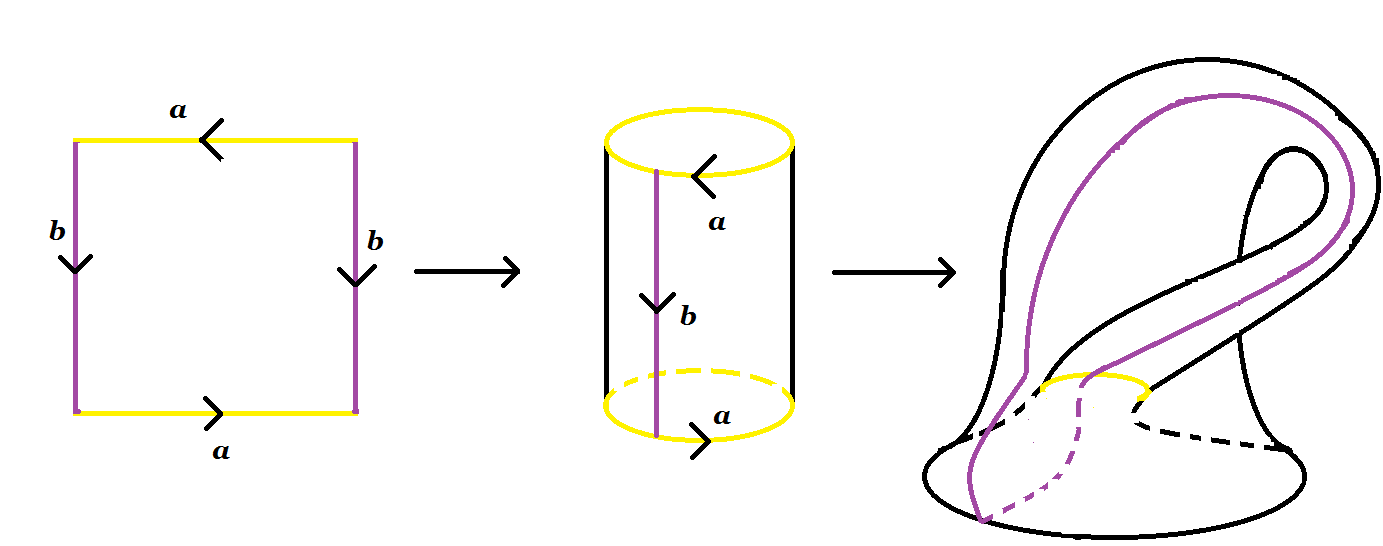}
\end{center}

\begin{center}
Figure 4.2
\end{center}

\section{Connected Sum}

Let $M_1$ and $ M_2$ be two surfaces. The connected sum of $M_1$ and $ M_2$  is formed  by cutting out a small open disk from each surface and then gluing the resulting spaces together along their  boundary. To be precise, let $B_1 \subset M_1$ and  $B_2\subset M_2$ are open disks and $\alpha \colon \partial B_1 \to \partial B_2$ be a homeomorphism (both being homeomorphic to $\s^1$). Let $M_i'=M_i \setminus B_i$ and define a quotient space of disjoint union of $M'_1$ and $ M'_2$ by identifying each $x\in \partial B_1$ with $\alpha(x)\in \partial B_2$. The resulting quotient space is called connected sum of $M_1$ and $M_2$ and is denoted by $M_1 \# M_2$. \\

\begin{center}
\includegraphics[width=1.2\columnwidth]{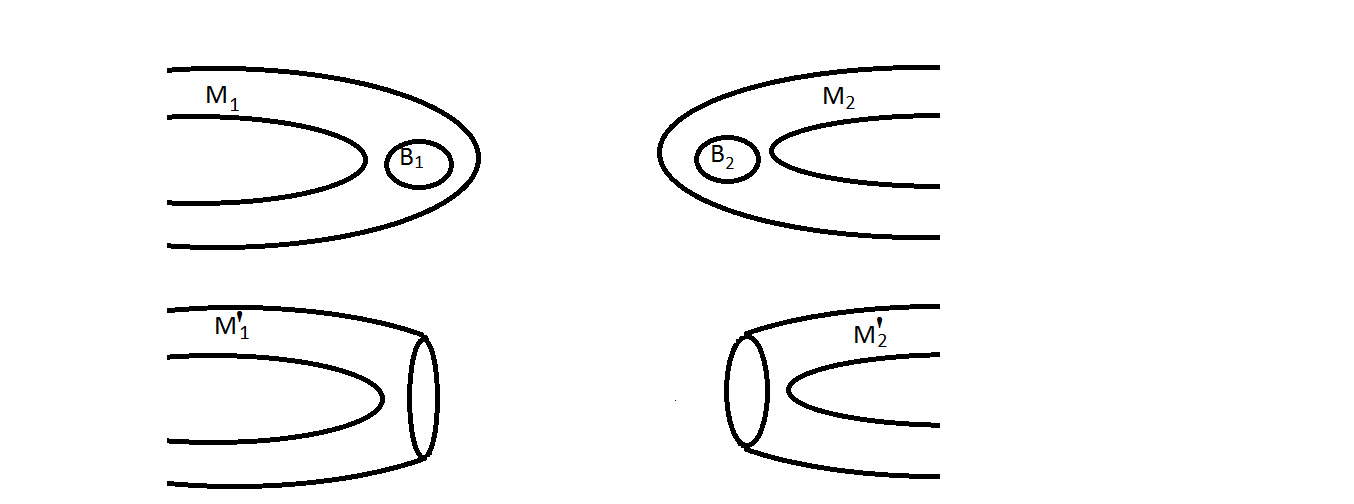}
\includegraphics[width=1.2\columnwidth]{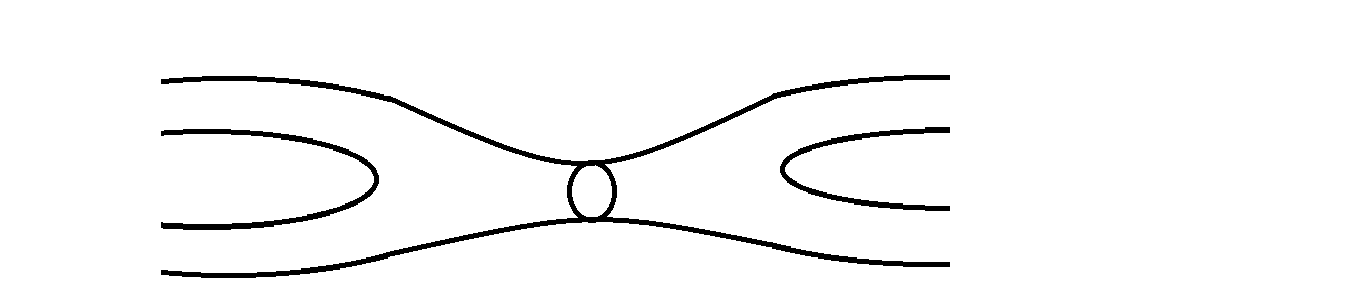}
\end{center}

\begin{center}
Figure 4.3
\end{center}

\begin{theorem}
Connected sum of two connected \emph{2}-manifolds is a \emph{2}-manifold.
\end{theorem}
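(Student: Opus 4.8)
The plan is to verify that $M := M_1 \# M_2$ satisfies the three conditions in the definition of a $2$-manifold — being Hausdorff, second countable, and locally homeomorphic to $\R^2$ — and, since the inputs are connected, to note in addition that $M$ is connected. Write $q\colon M_1' \sqcup M_2' \to M$ for the quotient map, where $M_i' = M_i \setminus B_i$, and let $C := q(\partial B_1) = q(\partial B_2)$ be the circle along which the gluing is performed. As a preliminary observation, each $M_i'$ is a $2$-manifold with boundary $\partial M_i' = \partial B_i \cong \s^1$ and interior $int(M_i') = M_i \setminus \overline{B_i}$: this is immediate from the way $B_i$ is cut out, since near $\partial B_i$ the space $M_i'$ is a closed disk with a concentric open disk deleted, hence is locally modeled on $\h^2$.

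The substantive step is the local Euclidean property, and the only real work occurs at a point $x \in C$. I would choose boundary charts $\psi_1\colon U_1 \to \h^2$ of $M_1'$ about $x$ and $\psi_2\colon U_2 \to \h^2$ of $M_2'$ about the point $\alpha(x)$ that is identified with $x$; after shrinking $U_1, U_2$ and, if necessary, re-choosing the charts, one arranges that $\alpha$ carries the boundary arc $\partial B_1 \cap U_1$ homeomorphically onto $\partial B_2 \cap U_2$, with each of these arcs corresponding under its chart to all of $\partial \h^2 = \R \times \{0\}$. Then $U_1 \sqcup U_2$ is saturated with respect to $q$, so $q(U_1 \sqcup U_2)$ is an open neighborhood of $x$ in $M$ carrying the quotient topology from $U_1 \sqcup U_2$ — that is, two copies of $\h^2$ glued along $\partial \h^2$ by a homeomorphism $h\colon \R \to \R$. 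The map that is the identity on the first copy and $(u,v) \mapsto (h^{-1}(u), -v)$ on the second is then a homeomorphism of this glued space onto $\R^2$. Points of $M$ off $C$ require no argument: such a point lies in $q(int(M_1'))$ or $q(int(M_2'))$, each an open subset of $M$ onto which $q$ restricts to a homeomorphism from the open subset $int(M_i')$ of the $2$-manifold $M_i$. The main obstacle I anticipate is exactly the bookkeeping in this paragraph: fitting two independently chosen boundary charts together with the identifying homeomorphism $\alpha$ into the single standard model $\h^2 \cup_h \h^2$, and checking the saturation that makes $q(U_1 \sqcup U_2)$ open in $M$.

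For the Hausdorff property I would separate two distinct points of $M$ by $q$-images of small saturated open sets: two points lying in a common $q(int(M_i'))$, or in one of the disk neighborhoods of $C$ constructed above, are separated inside it, while a point of $int(M_i')$ and any other point are separated by neighborhoods chosen to miss the circles $\partial B_1, \partial B_2$, along which the only identifications occur — this is the standard reason that gluing along the closed sets $\partial B_i$ keeps the space Hausdorff. For second countability I would use that $C \cong \s^1$ is compact, hence covered by finitely many disk neighborhoods of the above type; together with $q(int(M_1'))$ and $q(int(M_2'))$, which are second countable because they are homeomorphic to open subsets of the second countable spaces $M_i$, these finitely many open sets cover $M$, and a finite union of open second countable subspaces is second countable. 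Finally, $M$ is connected: each $M_i'$ is connected, since a separation of $M_i \setminus B_i$ into two nonempty open pieces would have the connected circle $\partial B_i$ lying in just one of them, and re-attaching $\overline{B_i}$ to that piece would produce a separation of the connected space $M_i$; and since $q(M_1')$ and $q(M_2')$ are then connected sets with union $M$ and with intersection containing the nonempty set $C$, the space $M$ is connected.
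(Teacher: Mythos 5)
Your proof is correct and follows the same outline as the paper's: verify locally Euclidean, Hausdorff, and second countable, with the real work concentrated at points on the gluing circle where two boundary charts of $M_1'$ and $M_2'$ are assembled into a single disk chart on the quotient. You are somewhat more careful on that local step than the paper: where the paper simply sets $\alpha=\alpha_i$ on $U_i$ and asserts continuity, you make the transition homeomorphism $h$ of $\partial\h^2$ (the composite of the two chart restrictions with the attaching map) explicit, and exhibit the homeomorphism of $\h^2\cup_h\h^2$ onto $\R^2$ via $(u,v)\mapsto(h^{-1}(u),-v)$ on the second copy; this is precisely what makes the chart descend to the quotient, a point the paper glosses over. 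Your second-countability argument also diverges: the paper lifts a cover of $M$ by Euclidean disks to $M'$, extracts a countable subcover from the second countable space $M'$, and pushes it back down, whereas you exploit compactness of the gluing circle to cover $M$ by \emph{finitely} many second countable open pieces, which is tidier. Finally you verify connectedness, which the paper omits; since its conclusion reads only ``2-manifold'' this is extra, but it is consistent with the hypothesis and costs nothing.
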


\begin{proof}
 Let $M_1$ and $ M_2$  be 2-manifolds and let $B_i \subset M_i,\  i=1,2$ be open disks. Then  $M_i '=M_i \setminus B_i$ are connected 2-manifolds with boundary. Let $\pi \colon M' \to M$ be the quotient map, where $M'=M_1'\sqcup M_2'$ and $M=M_1 \# M_2$, and let $ D = \partial M_1' \sqcup \partial M_2' $. Since $\pi$ is a quotient map therefore $ \pi \vert _{M' \setminus D}$ is a homeomorphism. Thus, for each  $p \in M$$\setminus$$\pi (D) $ there exist an open neighborhood homeomorphic to an open subset of $\R^2$.

Also  any $p \in \pi(D) $ has exactly two inverse images $ p_i \in \partial M_i$. Since each $M_i'$ is a  2-manifold with boundary, there exist disjoint neighborhoods $U_i$ of $p_i$ and homeomorphisms $\al _i $ taking $U_i$ to a half disk in the upper half plane and maps $p_i$ to the origin. But we can consider $\al _2(U_2)$ be a half disk in the lower half plane with $p_2$ goes to the origin and identifying the boundary of $\al _i(U_i)$. Define $\al \colon U_1 \cup U_2 \to \mathbb{B} ^2$ by setting $\al = \al _i$ on $U_i$. Now $\al$ is continuous and closed map. Shrinking $U_i$ if necessary, we can ensure that $U_1 \cup U_2=U$ is a saturated open set in $ M'$ . Thus we have the map $\tilde{\al}$, so that the following diagram commutes:
$$ \	 U $$
$$\qquad \pi \ \ \big\downarrow \	  \searrow\  \al$$
$$\qquad \tilde{\al} \colon\pi(U) \longrightarrow \mathbb{B} ^2$$

Since $U$ is a saturated open set in $M'$, $\pi(U)$ is open and $\tilde{\al}$ is the desired homeomorphism.

To show that $M$ is Hausdorff, we shall consider the following cases:\\
(1) For $p \in M\setminus \pi(D)$ and $q \in \pi(D)$, let  $\pi ^{-1}(q) = q_i \in M_i$ for $i=1,2$. Since each $M_i$ is Hausdorff, for $\pi ^{-1}(p) = p' \in M_1$ (or $M_2)$ there exist disjoint  neighborhoods $U_{p'}$ and  $U_{q_1}$ of $p'$  and $q_1$ respectively. Let $U_{q_2}$ be a neighborhood of $q_2$  in  $M_2$. Shrinking $U_{q_1}$ and $U_{q_2}$ if necessary, we  can assume $U_{q_1} \cup U_{q_2}$ is a saturated open set. Then $\pi (U_{p'})$  and $\pi (U_{q_1 }\cup U_{q_2})$ are the desired disjoint neighborhoods of $p$ and $q$.\\
(2) For $p,q \in \pi (D) $, let  $\pi ^{-1}(p) = p_i \in M_i$ and $\pi ^{-1}(q) = q_i \in M_i$ for $i=1,2$. As in the previous case, we will get disjoint saturated open sets $U_{p_1} \cup U_{p_2 }$ and $U_{q_1} \cup U_{q_2}$ so that $\pi (U_{p_1}\cup U_{p_2})$ and $\pi (U_{q_1}\cup U_{q_2})$ are the disjoint neighborhoods of $p$ and $q$.\\
(3) For $p,q\in M\setminus \pi (D)$, since $\pi \vert_{M'\setminus D}$ is a homeomorphism the result holds.

To show that $M$ is second countable, consider a covering $\mathfrak{U} $ of $M$ by open Euclidean disks. The collection $\{\pi^{-1}(U) \colon U \in \mathfrak{U}\}$ is an open cover of $M'$, which has a countable subcover. Let $\mathfrak{U}'$ be a countable subset of $\mathfrak{U}$ such that $\{\pi^{-1}(U) \colon U \in \mathfrak{U}'\}$ covers $M'$, then $\mathfrak{U}'$ is a countable cover of $M$ by Euclidean disks. Each such disk has a countable basis and the union of all these bases is a countable basis  for $M$.
\end{proof}

The notion easily gets extended to the case when manifolds are not connected, since only one component from each of them will be involved in the entire operation.

\section{Genus of a Surface}

A $handle$ $H$ is a space obtained from torus by deleting an open disk. A handle is shown in Figure 4.4 below. Given a surface $M$ we can attach a handle to $M$ by cutting a hole in the surface and gluing the $\partial H$ to the hole.
\begin{center}
\includegraphics[width=1\columnwidth]{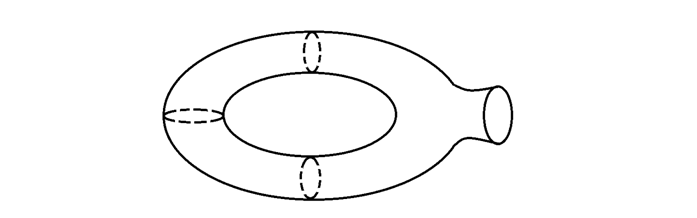}
\end{center}

\begin{center}
Figure 4.4
\end{center}

If we consider attaching a handle to a sphere, we could appeal to the Jordan curve theorem, which states that any homeomorphic image of a circle on the sphere separates it into two disjoint regions each homeomorphic to an open disk. We then remove one of  these disk and glue the torus with a hole along the boundary circle. A sphere with $k$ holes is a space obtained by deleting $k$ disjoint open disks from a sphere. If a handle is attached to the boundary of each of the holes, the resulting space is a sphere with $k$ handles. Evidently such a space will be homeomorphic to connected sum of $k$ tori. 
\begin{center}
\includegraphics[width=1\columnwidth]{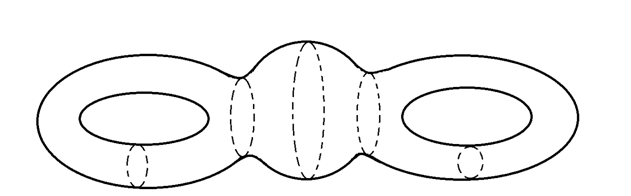}
\end{center}
\begin{center}
Figure 4.5
\end{center}

Attaching a M$\ddot{o}$bius band to the sphere is carried out by removing an open disk from the sphere and then gluing its boundary circle to the boundary circle of the  M$\ddot{o}$bius band. Attaching a M$\ddot{o}$bius band to the sphere results in the projective plane. One can also obtain this by removing a disk from the sphere and then identifying antipodal points on the boundary. The space obtained by attaching a M$\ddot{o}$bius band to each boundary component of a sphere with  $k$ holes  is  called a sphere with $k$ cross-caps. The resulting space will be homeomorphic to a connected sum of $k$ projective planes. In particular connected sum of two projective planes is homeomorphic to the Klein bottle.

\textbf{Definition:} The $genus$ of a surface is an integer representing the maximum number of cutting along non intersecting consecutive simple closed curves without rendering the resultant manifold disconnected.

For example, the genus of a sphere is 0. Since any closed circular cut separates the sphere into two components. A surface in which every Jordan curve seprates it, is said to be planar.  On the other hand, the genus of a torus is 1 as there is only one circular cut possible on the torus that will not separate it into two parts. Evidently any two consecutive circular cuts would disconnect the torus see Figure 4.6 below. Alternatively genus of an orientable surface is the number of handles on it. Also the genus of a nonorientable 

\begin{center}
\includegraphics[width=2.2\columnwidth]{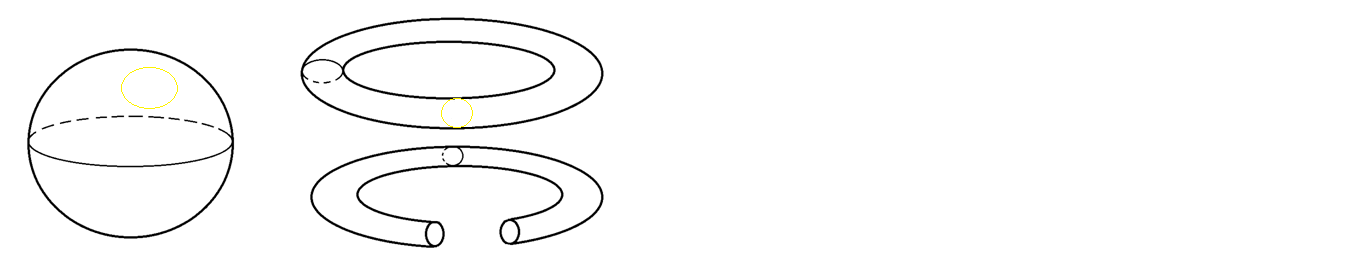}
\end{center}

\begin{center}
Figure 4.6
\end{center}

\begin{center}
\includegraphics[width=1\columnwidth]{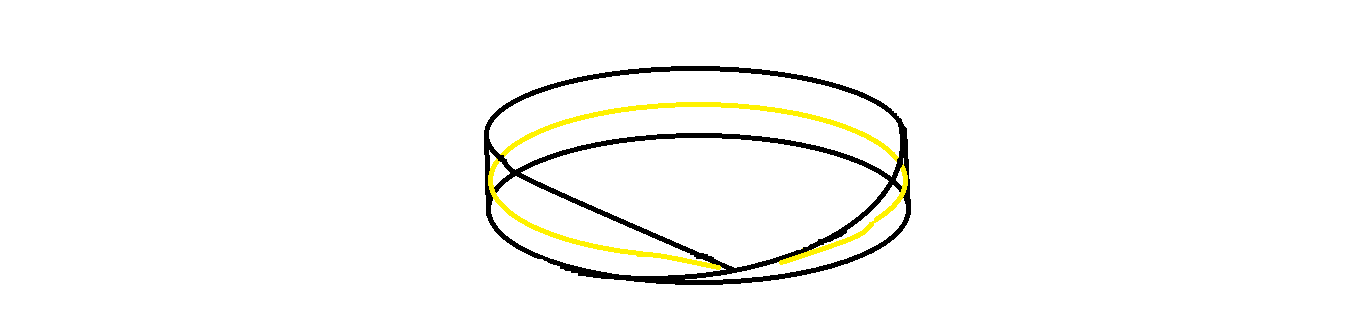}
\end{center}

\begin{center}
Figure 4.7
\end{center}
surface is the number of M$\ddot{o}$bius bands attached to a sphere. The genus of a surface with boundary is defined to be the genus of the  surface obtained by capping off a disk to each of the boundary component.

We denote the orientable surface obtained by taking the connected sum of $g$ copies of a torus by $\Sigma_g \  , \ g \ge0$, that is $\Sigma_g$ is a surface obtained by attaching $g$ handles to the sphere and $\Sigma_0$ is simply a sphere. Also we denote an orientable surface of genus $g$ with $k$ boundary components obtained by making $k$ holes in $\Sigma_g$ by $\Sigma_{g,k}$.

And we denote the nonorientable surface obtained by taking connected sum of $g$ copies of a projective plane by $U_g \ , \ g \ge 1$, that is  $U_g$ is a surface obtained by attaching $g$ M$\ddot{o}$bius bands to the sphere. Similarly we denote  a nonorientable surface of genus $g$ with $k$ boundary components by $U_{g,k}$.

\section{Euler Characteristic}

Let $K$ be a finite complex of dimension less than or equals to two. Then $\chi (K) = |V| - |E| - |F|$ is called the $Euler \  characteristic$  of $K$, where $|V|,\  |E|$  and $|F|$ are the number of vertices, edges and 2-faces of $K$ respectively. In general the Euler characteristic for a complex with finitely many simplexes is defined to be the sum of number of even dimensional simplexes minus the number of odd dimensional simplexes.

\textbf{Examples:}\\
The Euler characteristic of some of the surfaces is given below:\\
(1) A triangulation of a 2-sphere $\s^2$  is given in the Figure 4.8 below. It has 4 vertices, 6 edges and 4 faces. Hence the Euler characteristic of a sphere is given by
\begin{equation} \notag
\begin{split}
\chi&=4-6+4\\
&=2
\end{split}
\end{equation}

\begin{center}
\includegraphics[width=1.5\columnwidth]{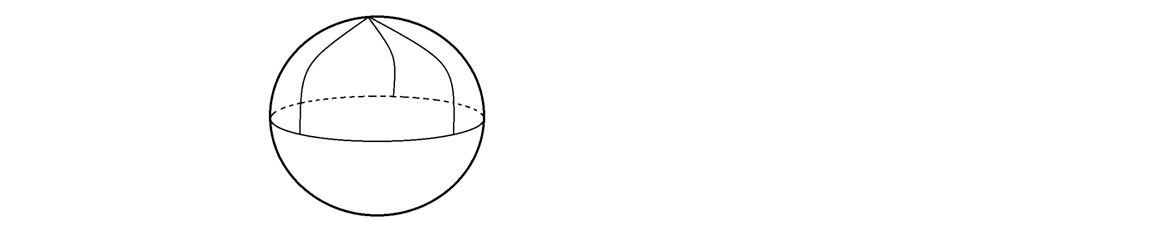}
\end{center}

\begin{center}
Figure 4.8
\end{center}
(2) Another example is of a torus. One of its triangulation given in Figure 4.9 has 9 vertices, 27 edges and 18 faces. And the Euler characteristic is
\begin{equation} \notag
\begin{split}
\chi&=9-27+18\\
&=0
\end{split}
\end{equation}

\begin{center}
\includegraphics[width=1.5\columnwidth]{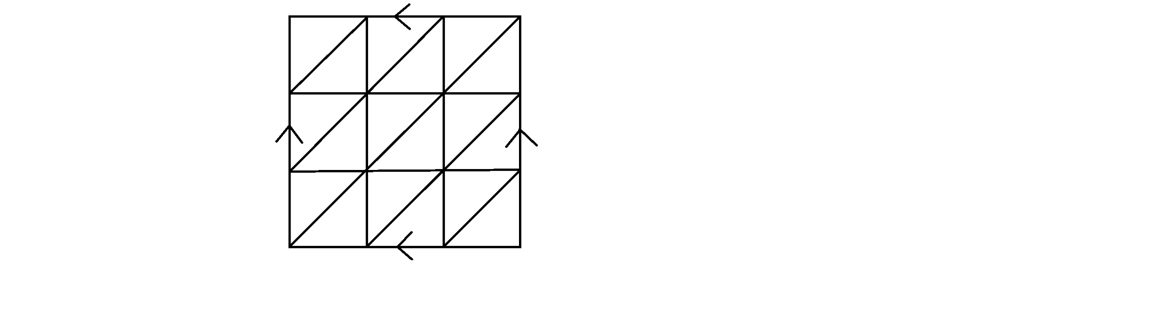}
\end{center}

\begin{center}
Figure 4.9
\end{center}
(3) A triangulation of a projective plane has 7 vertices, 18 edges and 12 faces, its Euler characteristic is 
\begin{equation} \notag
\begin{split}
\chi&=6-15+10\\
&=1
\end{split}
\end{equation}
\begin{center}
\includegraphics[width=1.1\columnwidth]{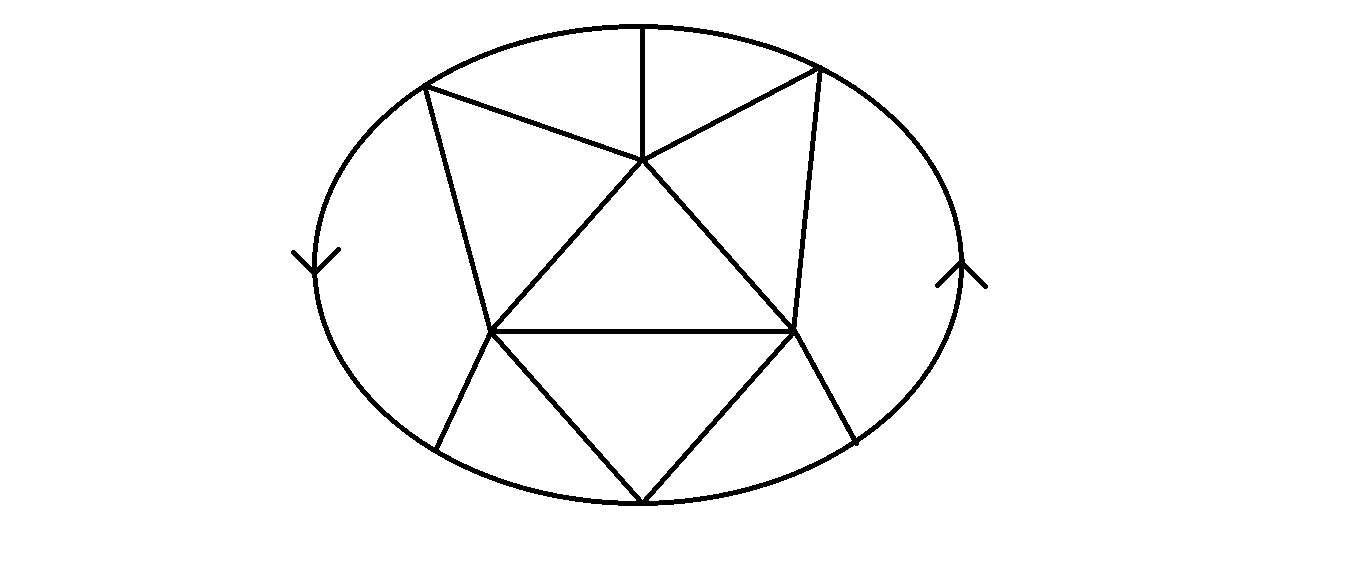}
\end{center}
\begin{center}
Figure 4.10
\end{center}

The Euler characteristic of a compact bordered surface can be obtained in the same way. Infact if the components of $\partial$M are $J_1,J_2,\ldots ,J_n$ and for each $i$  let $B_i$ be a disk with $\partial B_i= J_i$  such that
 $$ M'=M \cup (\bigcup_i^n B_i)$$
 is a surface. Then
\begin{equation}\label{ch4,sec4,eq1}
\chi(M)= \chi(M')-n .
\end{equation}

\begin{theorem}
Let $M_1$ and $M_2$  be two compact surfaces. The Euler characteristic of  $M_1 \# M_2$ is given by
\begin{equation}\label{ch4,sec4,eq2}
\chi(M_1 \# M_2) =\chi(M_1)+ \chi(M_2) - 2.
\end{equation}

\end{theorem}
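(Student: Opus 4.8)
The plan is to read off $\chi$ from an explicit triangulation of $M_1 \# M_2$ that is assembled from triangulations of $M_1$ and $M_2$. By the preceding theorem $M_1 \# M_2$ is again a compact connected $2$-manifold, hence triangulable by Theorem~\ref{ch2,sec3, thm7}; and since the connected sum is independent, up to homeomorphism, of the open disks $B_1,B_2$ that are removed and of the gluing homeomorphism $\alpha$, while $\chi$ is a topological invariant, it is enough to compute $|V|-|E|+|F|$ for one convenient triangulated model of $M_1\# M_2$.

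First I would fix triangulations $K_1$ of $M_1$ and $K_2$ of $M_2$, with $|V_i|$ vertices, $|E_i|$ edges and $|F_i|$ two-simplexes, so that $\chi(M_i)=|V_i|-|E_i|+|F_i|$. Choose a $2$-simplex $\sigma_1=\langle a_0,a_1,a_2\rangle\in K_1$ and $\sigma_2=\langle b_0,b_1,b_2\rangle\in K_2$, and put $K_i'=K_i\setminus\{\sigma_i\}$. Then $|K_i'|=M_i\setminus int(\sigma_i)$ is a bordered surface realizing $M_i\setminus B_i$, and by Theorem~\ref{ch5,th2} its boundary is exactly the triangle $\partial\sigma_i$, consisting of three vertices and three edges. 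Deleting the open top cell removes one $2$-simplex and nothing else, so $\chi(|K_i'|)=|V_i|-|E_i|+(|F_i|-1)=\chi(M_i)-1$ (this is also the special case of (\ref{ch4,sec4,eq1}) in which a disk is capped back along $\s^1$, with $\chi(\s^1)=0$).

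Next I would realize $M_1\# M_2$ by gluing $|K_1'|$ to $|K_2'|$ along the simplicial isomorphism $\partial\sigma_1\to\partial\sigma_2$, $a_i\mapsto b_i$, and let $K$ be the resulting complex, with $T$ the image of $\partial\sigma_1$. One checks that $K$ is a genuine simplicial complex: two simplexes coming from the two sides can meet only inside $T$, where their intersection is a common face of $T$ and hence a common face of both; to sidestep this bookkeeping one may instead barycentrically subdivide $K_1$ and $K_2$ first so the removed triangles are disjoint from all the identifications. Then $K$ triangulates $M_1\# M_2$, and counting incidences: the $2$-simplexes of $K$ are the $(|F_1|-1)+(|F_2|-1)$ triangles of $K_1'$ and $K_2'$ (none identified); its edges number $|E_1|+|E_2|-3$ (three pairs identified along $\partial\sigma_1\cong\partial\sigma_2$); its vertices number $|V_1|+|V_2|-3$. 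Hence
$$\chi(M_1\# M_2)=(|V_1|+|V_2|-3)-(|E_1|+|E_2|-3)+(|F_1|+|F_2|-2)$$
$$=\chi(M_1)+\chi(M_2)-2.$$

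The genuine obstacle is not the incidence count but the two foundational inputs it presupposes: that $\chi$ does not depend on the chosen triangulation (so the number computed above really is $\chi(M_1\# M_2)$), which is the topological invariance of the Euler characteristic that the text defers to homology; and that the connected sum is well defined independently of $B_1,B_2$ and $\alpha$ (so that the particular model is legitimate), a disk-uniqueness statement for surfaces of Schoenflies type. Granting these, the theorem reduces to the elementary count carried out above; the same answer also drops out of the gluing formula $\chi(X\cup_C Y)=\chi(X)+\chi(Y)-\chi(C)$ applied to $X=M_1\setminus B_1$, $Y=M_2\setminus B_2$ and $C\cong\s^1$.
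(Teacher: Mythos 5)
Your proof is correct and follows the same route the paper gestures at: remove a disk from each $M_i$ (so $\chi$ drops by one on each side, the $n=1$ case of (\ref{ch4,sec4,eq1})) and then glue along a circle, which contributes nothing to $\chi$. The paper simply asserts "the result follows by the formula," whereas you carry out the actual incidence count on an explicit simplicial model and correctly flag the two facts being used implicitly (topological invariance of $\chi$ and well-definedness of $\#$); this is a fuller version of the same argument, not a different one.
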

\begin{proof}
Let $M_1$ and $M_2$ be two compact surfaces. Then $M_1 \# M_2$ is formed by removing from each surface the interior of a disk and then identifying the boundaries of the removed disks. And the result follows by the formula.
\end{proof}

From Equation \ref{ch4,sec4,eq1} and Equation \ref{ch4,sec4,eq2} we can express the Euler characteristic of a 2-manifold with or without boundary of  genus $g$ with $k$  boundary components as follows
$$\chi(\Sigma _{g,k})=2-2g-k $$
and
$$\chi(U_{g,k})=2-g-k.$$
We shall discuss an alternative proof of this later in next chapter.

Recall that, if dim$K$ = 2, then there are no $n$-simplexes for $n > 2$. There are only finite number of $n$-simplexes in $K$ and consequently  $C_n(K)$ is a finitely generated free abelian group of rank equal to the number of $n$-simplexes in $K$. Because $Z_n(K)$ and $B_n(K)$ are subgroups of $C_n(K)$ and any subgroup of a free abelian group is free abelian of rank less than or equal to that of the group. The factor group $H_n(K) = Z_n(K) / B_n(K)$ must be finitely generated abelian group which may or may not be free. Also for any finite complex $K$, the simplicial homology group $H_n(K) $ is isomorphic to the singular homology group $H_n(|K|)$ for all $n$. Then from the following theorem we can conclude that the Euler characteristic is  a topological invariant of $|K|$.

\begin{theorem}
The Euler characteristic of a finite complex K of dimension \emph{2}  is given by 
$$\chi(K) = rank H_0(|K|) - rank H_1(|K|) + rank H_2(|K|).$$
\end{theorem}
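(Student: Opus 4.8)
The plan is to run the standard Euler--Poincar\'e counting argument on the chain complex of $K$ and then transport the result to $|K|$ via the isomorphism $H_n(K)\cong H_n(|K|)$ recalled just above. Write $c_n$ for the number of $n$-simplexes of $K$. Since $\dim K=2$ we have $c_n=0$ for $n<0$ and $n>2$, while $c_0=|V|$, $c_1=|E|$, $c_2=|F|$, so that $\chi(K)=\sum_n(-1)^n c_n$. Each chain group $C_n(K)$ is free abelian of rank $c_n$, and the groups $Z_n(K)=\ker\partial_n$ and $B_n(K)=\operatorname{im}\partial_{n+1}$, being subgroups of finitely generated free abelian groups, are themselves free abelian of finite rank.

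First I would write down the two short exact sequences carried by the complex: the map $\partial_n\colon C_n(K)\to C_{n-1}(K)$ has kernel $Z_n(K)$ and image $B_{n-1}(K)$, so $0\to Z_n(K)\to C_n(K)\to B_{n-1}(K)\to 0$ is exact (and even splits, since $B_{n-1}(K)$ is free, though this is not needed); and the very definition $H_n(K)=Z_n(K)/B_n(K)$ gives $0\to B_n(K)\to Z_n(K)\to H_n(K)\to 0$. The key algebraic input is that rank is additive along short exact sequences of finitely generated abelian groups; this is cleanest after applying $-\otimes_{\Z}\mathbb{Q}$, which is exact and sends each group $A$ to a $\mathbb{Q}$-vector space of dimension $\operatorname{rank}A$. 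This yields $\operatorname{rank}C_n(K)=\operatorname{rank}Z_n(K)+\operatorname{rank}B_{n-1}(K)$ and $\operatorname{rank}Z_n(K)=\operatorname{rank}B_n(K)+\operatorname{rank}H_n(K)$, with the convention $B_{-1}(K)=0$.

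Next I would substitute and telescope. Combining the two identities gives $\operatorname{rank}C_n(K)=\operatorname{rank}H_n(K)+\operatorname{rank}B_n(K)+\operatorname{rank}B_{n-1}(K)$ for every $n$. Multiplying by $(-1)^n$ and summing over $n$ --- a finite sum, since every group in sight vanishes outside $0\le n\le 2$ --- the $B$-terms cancel in pairs, because $\operatorname{rank}B_n(K)$ occurs with sign $(-1)^n$ from the $n$-th identity and with sign $(-1)^{n+1}$ from the $(n+1)$-st. What remains is $\chi(K)=\sum_n(-1)^n\operatorname{rank}C_n(K)=\sum_n(-1)^n\operatorname{rank}H_n(K)=\operatorname{rank}H_0(K)-\operatorname{rank}H_1(K)+\operatorname{rank}H_2(K)$. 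Invoking $H_n(K)\cong H_n(|K|)$ for each $n$ replaces each term by the corresponding rank of the singular homology of $|K|$, which is the desired formula.

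This argument has no real obstacle; the care needed is purely bookkeeping. One must set up the alternating sum with the correct index shift so the boundary ranks actually cancel, and one must be sure that rank behaves additively across the two short exact sequences --- which is clear once one uses that subgroups and quotients of finitely generated free abelian groups are controlled by the structure theorem, or simply passes to rational coefficients. The hypothesis $\dim K=2$ is used only to make the sums finite and to identify $c_0,c_1,c_2$ with $|V|,|E|,|F|$; the same proof gives $\chi(K)=\sum_n(-1)^n\operatorname{rank}H_n(|K|)$ for any finite complex.
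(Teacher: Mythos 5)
Your proposal is correct and follows essentially the same route as the paper: it uses the two exact sequences $0\to Z_n\to C_n\to B_{n-1}\to 0$ and $0\to B_n\to Z_n\to H_n\to 0$, the rank additivity they yield, the vanishing of $B_{-1}$ and $B_2$, the telescoping of boundary ranks in the alternating sum, and finally the identification $H_n(K)\cong H_n(|K|)$. The only differences are cosmetic: you justify rank additivity explicitly by tensoring with $\mathbb{Q}$, and you phrase the cancellation as a general telescoping sum rather than spelling out indices $0,1,2$ as the paper does.
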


\begin{proof}
Let $c_n$ denote the number of $n$-simplexes in $K$, then
$$\chi(K) = c_0 - c_1 + c_2.\qquad$$

Also rank of $C_n(K)=c_n$. Since $B_{n-1}(K)$ is isomorphic to $C_n(K) / Z_n(K)$ we have
$$rank B_{n-1}(K)=rank C_n(K) - rank Z_n(K)$$
also
$$\ \ \ rank H_n(K)= rank Z_n(K) - rank B_n(K)$$
we get 
$$\qquad\qquad \qquad \qquad \ \ \ rank C_n(K) = rank B_{n-1}(K) + rank H_n(K) + rank B_n(K).$$

Since $B_2(K)=0= B_{-1}(K)$. Therefore
$$\chi(K) = c_0 -c_1 +c_2 \qquad\ $$
$$\qquad \qquad \qquad \qquad \qquad \qquad  \ \ =rank C_0(K) -rank C_1(K) + rank C_2(K)$$
$$\qquad \qquad \qquad \qquad \qquad \qquad\ \ = rank H_0(K) - rank H_1(K) + rank H_2(K) \ .$$
Because $rank H_n(|K|) = rank H_n(K)$, this completes the proof.
\end{proof}

\section{Orientation}

We now compare a M$\ddot{o}$bius band which is a famous bordered surface where we can not distinguish the two faces, with a cylinder where we can distinguish them. The  M$\ddot{o}$bius band is the topological space obtained by identifying two edges of the square $\I\times \I$ according to the relation $(0,t)\thicksim(1,1-t)$. It is a manifold with boundary. It has the curious property that it is impossible to consistently pick out which is the front side and which is the back side.

\begin{center}
\includegraphics[width=1\columnwidth]{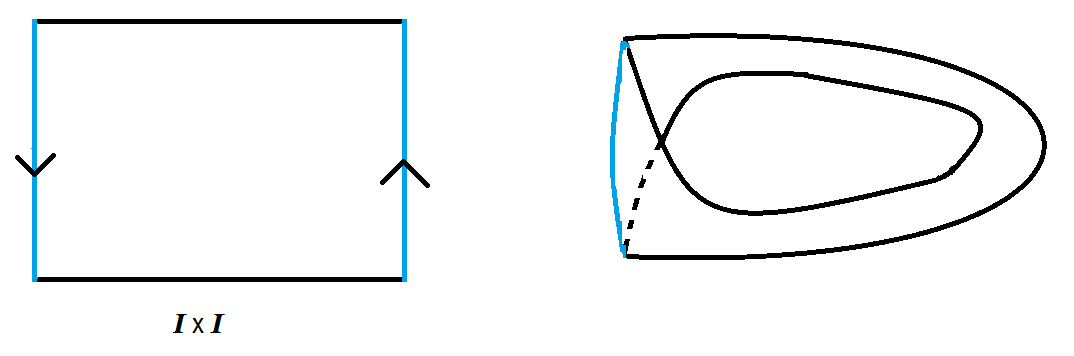}
\end{center}

\begin{center}
Figure 4.11
\end{center}

The essential point is whether, when a surface is constructed by gluing, we can distinguish the two faces. A surface on which this can be done is called an orientable surface. We will make this notion precise and define \ori \ at each point on the surface and there are exactly two. We shall present them by two kinds of arrows clockwise and anticlockwise, we call them opposite orientations. When an orientation is specified at a point, the same \ori \ is specified at an arbitrary point in a neighborhood of the point. This is called the $coherent\   \ori$. We specify an \ori \ at a point on the surface and choose the coherent \ori \ at each point on a curve starting from the point. If the curve goes back to the starting point then the original \ori \ at that point may or may not coincide with the \ori \ propagated along the curve. A surface is called orientable if the \ori \  propagated along any curve always comes back to the starting \ori . In this case we can assign an \ori \ to all points on the surface in such a way that near points have mutually coherent \ori s.

However we define the M$\ddot{o}$bius band, the center line of the rectangular strip becomes a circle after the gluing. Suppose that $P$ is a point on it. From a purely local point of view there is a corresponding point $P'$ on the  surface. But since M$\ddot{o}$bius  band is a one sided surface, it is possible to draw a continuous path from $P$ to $P'$ without crossing the boundary curve. Such a path is depicted in the Figure 4.12  below. If the small oriented closed curve drawn around $P$ is now slide along the path $PP'$ and when it eventually arrives at $P'$ its \ori \   reversed.

\begin{center}
\includegraphics[width=1\columnwidth]{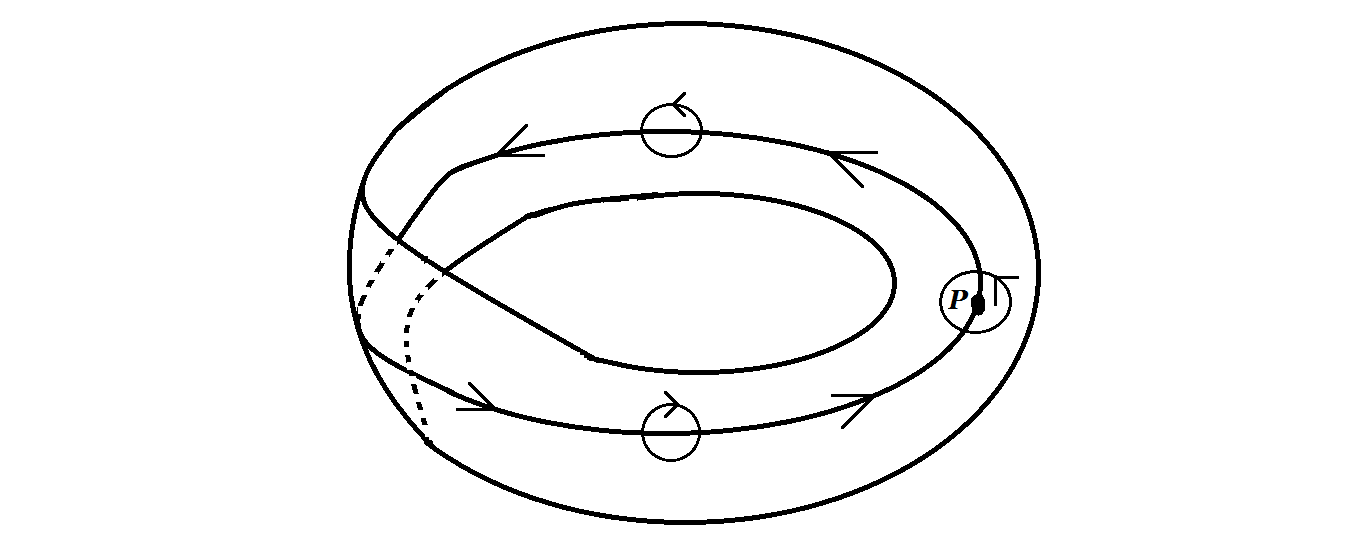}
\end{center}

\begin{center}
Figure 4.12
\end{center}

Now the projective plane is easily seen to be nonorientable, infact it contains a subset homeomorphic to a M$\ddot{o}$bius band. Projective plane is obtained from the closed upper hemisphere of $\s^2$ as quotient space obtained by identifying diametrically opposite points on the boundary. Consider a thin strip `$S$' made up of open segments of meridian whose center lay on half an equator. Under identification of antipodal points, clearly `$S$'  becomes an open M$\ddot{o}$bius band in projective plane. Thus projective plane is nonorientable.\\

\begin{center}
\includegraphics[width=1\columnwidth]{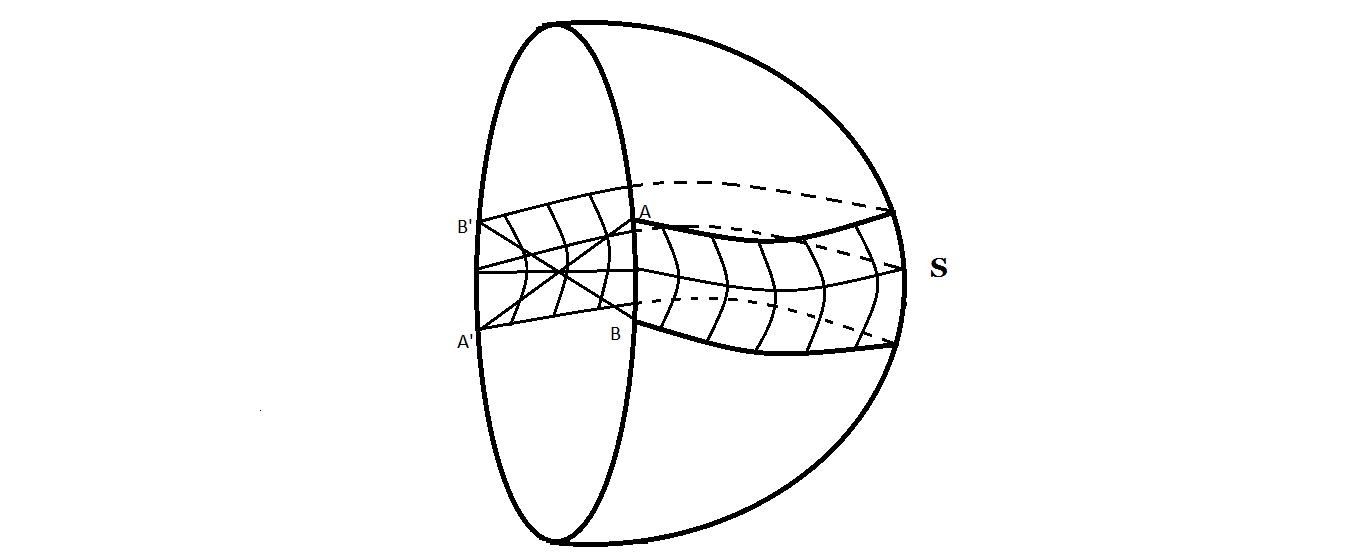}
\end{center}
\begin{center}
Figure 4.13
\end{center}

By a similar argument it can be shown that the Klein bottle is also nonorientable. 

Moreover, as every surface is triangulable, we say that a surface $S$ with a triangulation $K$ is orientable  if it is possible to orient all the triangles of $K$ in a compatible manner, that is in such a way that any two adjacent triangles always induce opposite \ori \ on their common edge as depicted in Figure 4.14 below. In the next chapter we shall show that the condition of orientability is independent of the choice of triangulation. That is, if $S$ is a surface triangulated by $K$ in compatible manner, then any other triangulation of $S$ must also be compatible.\\

\begin{center}
\includegraphics[width=1\columnwidth]{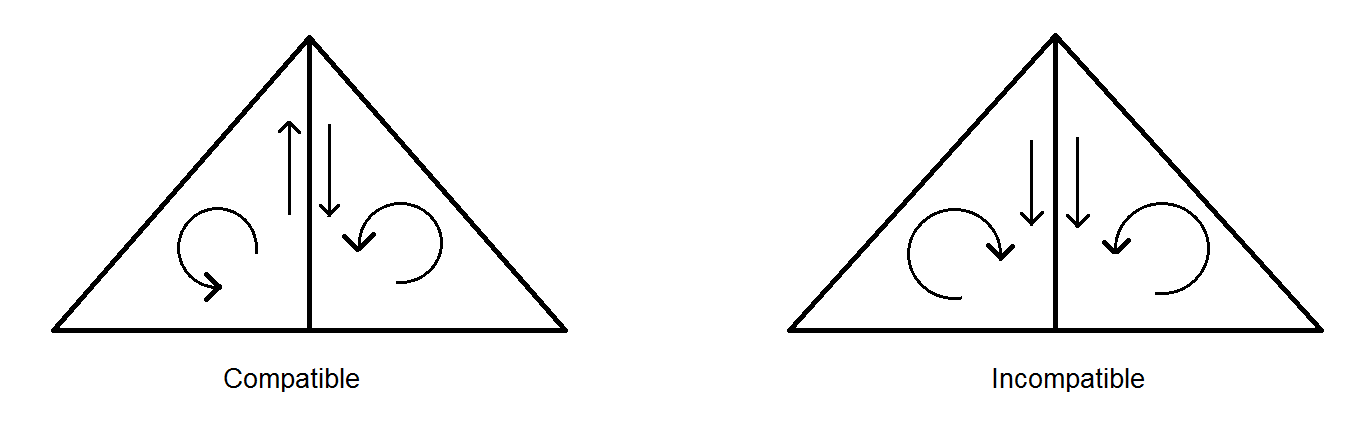}
\end{center}

\begin{center}
Figure 4.14
\end{center}


\chapter{Classification of Compact 2-Manifolds}\label{ch5}

\section{Decomposition of Surfaces}

Two manifolds are homeomorphic if and only if there exist a one to one correspondence between their components such that the corresponding components are homeomorphic. Thus, for classification of 2-manifolds it suffices to classify only  surfaces. In this chapter we shall prove the following classification 
\begin{theorem}
 Let $ M_1$ and $M_2 $ be compact surfaces with boundary. Then $ M_1$ and $M_2 $ are homeomorphic if and only if\\
\emph{(1)} they are both orientable or nonorientable; \\
\emph{(2)} they have same Euler characteristic or genus; \\
\emph{(3)} they have the same number of boundary components.
\end{theorem}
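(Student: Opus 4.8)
The plan is to prove the ``only if'' and ``if'' directions separately, dispatching the first by invariance of the three data and putting the real work into the second. For necessity I would check that orientability, Euler characteristic and the number of boundary circles are all preserved by homeomorphisms. The number of boundary components is well defined and invariant because $\partial M$ is intrinsic (a point is a boundary point exactly when it has no Euclidean neighbourhood), and by Theorem~\ref{ch1,sec2,th1} together with the finiteness theorem for boundaries of compact manifolds, $\partial M$ is a finite disjoint union of circles onto which any homeomorphism $M_1\to M_2$ restricts. The Euler characteristic is invariant because, as established above, it equals $\operatorname{rank}H_0(|K|)-\operatorname{rank}H_1(|K|)+\operatorname{rank}H_2(|K|)$, an alternating sum of ranks of singular homology groups, which is a homeomorphism invariant (equivalently, any two triangulations of a surface are combinatorially equivalent). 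Orientability is invariant: this is precisely the assertion flagged after Figure 4.14 that compatible orientability of a triangulation does not depend on the triangulation, which I would reduce to homology by showing that the closed surface obtained by capping the boundary circles is orientable exactly when its top homology group is $\Z$ rather than $0$. Finally, once orientability, $\chi$ and $k$ are pinned down, the genus is determined through $\chi(\Sigma_{g,k})=2-2g-k$ and $\chi(U_{g,k})=2-g-k$, so ``same Euler characteristic'' and ``same genus'' are interchangeable.

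For sufficiency the strategy is the classical combinatorial normalisation. By Theorem~\ref{ch2,sec3, thm7} each $M_i$ admits a triangulation $K_i$, and by Theorem~\ref{ch5,th2} its $2$-simplices meet edge to edge, every interior edge lies in exactly two triangles, every boundary edge in exactly one, and the triangles can be listed so that consecutive ones share an edge. Cutting $K_i$ open along all edges outside a chosen spanning tree of the triangle-adjacency graph unfolds it, one triangle at a time, into a single polygonal region $P_i$ in the plane; the edges of $P_i$ are identified in pairs (the interior edges of $K_i$) or left free (the boundary edges of $K_i$), so that $M_i$ is recovered from $P_i$ by an edge identification exactly as in the polygonal presentations of Chapter~\ref{ch4}, with edge word $w_i$ in which each identified label occurs twice and each free label occurs once.

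Next I would normalise $w_i$ by the standard repertoire of cut-and-reglue moves: cancel adjacent inverse pairs $aa^{-1}$; push the free (boundary) letters towards the end; convert a repeated label pattern $\cdots a\cdots a\cdots$ into a crosscap $\cdots aa\cdots$; reorganise a handle pattern into the block $aba^{-1}b^{-1}$; and, whenever at least one crosscap is already present, apply the exchange move that turns a handle adjacent to a crosscap into two further crosscaps (the relation that a handle plus a crosscap equals three crosscaps). Each move is realised by cutting along an arc and regluing, hence is a homeomorphism of the quotient space, and the process terminates in finitely many steps, bringing $w_i$ to a standard form built from $a_1 b_1 a_1^{-1} b_1^{-1}\cdots a_g b_g a_g^{-1} b_g^{-1}$ in the orientable case, or $a_1 a_1\cdots a_g a_g$ in the nonorientable case, with $k$ free edges and their auxiliary identifications appended for the boundary circles; these present $\Sigma_{g,k}$ and $U_{g,k}$ respectively, the sphere and the disk occurring as the degenerate cases $g=0$. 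In the resulting normal form the number of free labels equals the number of boundary components of $M_i$, the presence of an unmatched pair $aa$ signals that $M_i$ is nonorientable, and reading off the cell structure of the normal-form polygon recovers $\chi(M_i)=2-2g-k$ or $2-g-k$. Hence the hypotheses that $M_1$ and $M_2$ share orientability type, Euler characteristic and number of boundary circles force them to realise the same model surface $\Sigma_{g,k}$ or $U_{g,k}$, and therefore $M_1\cong M_2$.

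The main obstacle is the word-normalisation argument itself: one must verify that every move is induced by an honest cut-and-paste homeomorphism even in the presence of free boundary edges — so that the free letters can be collected and then ignored while the handles and crosscaps of the closed part are put in order — that the procedure really terminates, and that distinct normal forms genuinely describe non-homeomorphic surfaces (which is where the invariance statements of the first paragraph are used). A secondary point to nail down is the deferred claim that orientability is independent of the triangulation, which I would dispose of via the homology of the capped-off closed surface as above. An alternative that sidesteps the free-edge bookkeeping is to cap each boundary circle of $M_i$ with a disk to form a closed surface $\widehat{M_i}$, classify $\widehat{M_i}$ by the closed-surface word reduction, and then invoke homogeneity of surfaces (any two $k$-tuples of disjoint closed disks are carried onto each other by a self-homeomorphism) to recover $M_i$, up to homeomorphism, as $\widehat{M_i}$ with $k$ open disks deleted; Equation~\ref{ch4,sec4,eq1} then gives $\chi(M_i)=\chi(\widehat{M_i})-k$ and the invariants match up exactly as before.
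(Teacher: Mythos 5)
Your proposal is correct in outline, but it follows a genuinely different route from the paper. The paper's sufficiency argument never cuts the triangulation open into a single polygon with an identified edge word. Instead it uses a disk-with-strips decomposition: from a triangulation $K$ of $M$ it extracts a spanning tree $T$ of the edge graph $G=\partial F$, forms the second-barycentric neighborhood $D=N(T)$ (a disk), and represents $M$ as $D'\cup D\cup\bigcup_i S_i$, where $D'$ is a complementary disk and the $S_i=N(b_{e_i})$ are rectangular strips attached to $\partial D$. Normalization then proceeds not by rewriting an edge word but by the slide operations $\alpha$ and $\beta$: intertwined annular strips are grouped into handles, a handle adjacent to a twisted strip is converted to three twisted strips by sliding along the boundary of a M\"obius band, and extra annular strips account for the boundary circles. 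This produces $\Sigma_{g,k}$ or $U_{g,k}$ directly, with $\chi=2-(\text{number of strips})$ read off from the decomposition rather than from a polygon's cell structure. For the ``only if'' direction, the paper also differs from you in emphasis: it first computes $\pi_1$ of the normal forms via a cell decomposition and the Seifert--van Kampen attachment theorem, then abelianizes to $H_1$, and reads orientability off the presence of $2$-torsion in $H_1$ (rather than appealing to top homology of a capped-off closed surface as you do). Your word-reduction route is the classical Brahana/Massey argument and is perfectly sound, and your alternative of capping boundary circles and invoking homogeneity of surfaces is a clean way to dispose of the free-edge bookkeeping that the word approach would otherwise require; but neither the cutting-open step, the word moves, nor homogeneity appear in the paper, so if you follow your plan you are proving a parallel theorem rather than reconstructing this proof. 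One point worth tightening in your write-up regardless of route: the claim that orientability is independent of the triangulation is left as a deferred remark both by you and by the paper, and a complete treatment should either prove it (your $H_2$ argument works for closed surfaces, but you must justify passing to the capped-off surface) or cite a reference.
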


Let $K$ be a triangulation of a compact surface $M$. Since $M$ is compact and connected by Theorem \ref{ch5,th2} there is an open set $F$ given by 
$$F= \bigcup_{i=1}^n int (\sigma _i) \  \cup \ \bigcup_{i=1}^{n-1} int (e_i) $$
where $  M= \bigcup_{i=1}^n  \sigma _i$ and  $e_i$ is  a  common  edge  of $\sigma _i$ and $\sigma_{i+1} $. Put 
$$D' = \bigcup_{i=1}^n N(b_{\sigma_i}) \cup \bigcup_{i=1}^{n-1} N(b_{e_i})$$ 
which is homeomorphic to a disk.

Let $G$ be the 1-dimensional  subcomplex of $K$ such that $\vert G \vert = \partial F$. Since $\partial F  $ is connected, $|G|$ is a connected simple graph. A graph which does not contain any loops is called a tree. If $|G|$ is not a tree then it contains an edge whose removal  produces a graph $|G'|$  which is still connected. Continue like this since $|G|$ is finite the process terminates after removing finitely many edges $e_1,e_2,\ldots ,e_s, \ s\ge 0$, resulting a tree $|T|$ that contains every vertex of $G$ and has fewer edges than $G$. 

\begin{theorem}
Let T be a tree which is a subcomplex of K. Then SB-neighborhood N(T) of T is homeomorphic to a closed disk .
\end{theorem}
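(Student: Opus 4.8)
The plan is to induct on the number of edges of $T$. For the base case, $T$ consists of a single vertex $v$ (a tree with no edges). Then $N(v)$ is the union of all simplexes of $b^2K$ meeting $v$, which by the second-countability and manifold hypotheses is exactly the SB-neighborhood of a vertex; using Theorem \ref{ch5,th2} (every simplex is a face of a 2-simplex, every edge is a face of at most two 2-simplexes, and the link-type connectedness in part (3)), the star of $v$ is a finite fan of triangles around $v$, and the barycentric construction shows $N(v)$ is a disk (if $v \in \partial M$ it is a half-disk-shaped region, still homeomorphic to a closed disk; if $v \in \operatorname{int} M$ it is a disk). This is essentially the decomposition remark preceding Theorem \ref{ch5,th2}: $\sigma = N(v_0)\cup N(v_1)\cup N(v_2)\cup N(b_{e_0})\cup\cdots$, each piece a closed disk with disjoint interiors.

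For the inductive step, suppose the result holds for all trees with fewer than $k$ edges, and let $T$ have $k \ge 1$ edges. Since $T$ is a finite tree it has a leaf: a vertex $w$ of valence one, joined to the rest of $T$ by a single edge $e = \langle w, w'\rangle$. Let $T' = T \setminus \{w, e\}$, which is again a tree (still connected, still a subcomplex) with $k-1$ edges, so by the inductive hypothesis $N(T')$ is homeomorphic to a closed disk. The idea is then to write $N(T) = N(T') \cup N(e)$ (more precisely, $N(T)$ is obtained from $N(T')$ by adjoining the second barycentric neighborhood of the pendant edge $e$ together with the neighborhood of $w$), and to check that $N(T') \cap N(e)$ is homeomorphic to a single arc in the boundary of each piece. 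Since $N(e)$ is itself homeomorphic to a closed disk (it is a "thickened edge", a union of two triangle-neighborhoods $N(b_\sigma)$ flanking $e$ plus $N(b_e)$, $N(w)$, $N(w')$), and gluing two closed disks along a closed arc in each boundary yields a closed disk, the induction closes. The disjointness of interiors in the barycentric decomposition guarantees that the only overlap between $N(T')$ and the newly adjoined piece occurs along the barycentric faces associated to the vertex $w'$, which form a single arc because $w'$ lies on exactly one new edge $e$.

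The main obstacle is the gluing lemma in the last sentence: verifying that $N(T') \cap N(e)$ really is a single arc lying in the boundary circle (or boundary arc) of both $N(T')$ and $N(e)$, rather than a disconnected set or an arc meeting the interior. This requires a careful look at which simplexes of $b^2K$ are shared, using that $w'$ is a vertex of $T'$ and that $e$ is the unique edge of $T$ incident to $w$ but that $w'$ may be incident to several edges of $T'$; one must confirm the shared portion is precisely the part of $\partial N(T')$ sitting "over" the half of $\operatorname{St}(w')$ pointing toward $w$. Once this local picture is pinned down, the topological conclusion — a closed disk union a closed disk along a boundary arc is a closed disk — is standard (e.g. via the Jordan–Schönflies theorem already invoked in the excerpt, or directly by an explicit homeomorphism). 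I would also remark that the boundary cases ($T$ meeting $\partial M$) cause no extra trouble: a half-disk is still homeomorphic to a closed disk, and all the gluings above respect this.
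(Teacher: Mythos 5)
Your proposal follows the same inductive strategy as the paper: remove a leaf vertex and its pendant edge $e$ from $T$ to obtain a smaller tree $T'$, apply the inductive hypothesis to $N(T')$, and reattach via the decomposition $N(T)=N(T')\cup N(b_e)\cup N(v)$, so the two arguments are essentially identical (the paper leaves the gluing verification implicit, which you rightly identify as the substantive step). One small inaccuracy: your description of $N(e)$ as containing the pieces $N(b_\sigma)$ for 2-simplexes $\sigma$ flanking $e$ is incorrect, since $N(b_\sigma)$ does not meet the edge $e$ — the correct decomposition is just $N(v)\cup N(b_e)\cup N(w')$ — but this side remark does not affect the main argument.
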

\begin{proof}
If $T$ has a single vertex with no edge, then the result is clear. Suppose that every tree in $K$ with $k$ vertices has a SB-neighborhood in $K$ homeomorphic to a disk. Let a tree $T$ in $K$ has $k+1$ vertices and let $v$ be a leaf and  $w$ be the parent of $v$. Removing from $T$ the vertex $v$ and the edge $e$ connecting $v$ to $w$, to produce a tree $T'$ in $K$ with $k$ vertices. Then by induction hypothesis $N(T')$ is homeomorphic to a disk. And
$$  N(T)=N(T')  \cup N(b_e) \cup N(v). $$

Thus $N(T)$ is homeomorphic to a disk.
\end{proof}

Let  $D= N(T)$ so that $D$ is homeomorphic to a closed disk and let  $S_i=N(b_{e_i})$. Then 
$$ M'=D \cup \bigcup_{i=1}^s S_i \  = N(G) $$ 
is a bordered surface  having one boundary component. The sets $S_i$ will be called strips and $M'$ will be called a disk with strips attached. Thus 
\begin{equation}\notag
\begin{split}
M &= D' \cup N(G) \\
& =D' \cup D \cup \bigcup_{i=1}^s S_i 
\end{split}
\end{equation}
with disjoint interiors such that \\
(1) $S_i \cap D$ and $S_i \cap D' $ is the union of 2 disjoint arcs for each $ i=1,2,\ldots ,s.$ \\
(2) $ D \cap D' $ is the union of $2s$ disjoint arcs.

The above discussion gives a new decomposition $K'$ of $M$ such that $K'$ has exactly one face $F$ and has fewer edges than $K$. Suppose $K'$ has $n$ vertices, infact these are the vertices of the tree $T$ so that $K'$ has ($n-$1) edges of the tree plus the edges $  e_1,e_2,\ldots ,e_s, \ s \ge 0  $ which were removed from $G$. Hence the total number of edges are ($n-1+s$). Thus 
\begin{equation}\begin{split}\notag
\chi(M) &= \chi(K') \\
& =\vert V \vert -\vert E \vert + \vert F \vert \\
& =n-(n-1+s)+1  \\
&    =2-s
\end{split}
\end{equation}
\begin{equation}
\begin{split}\label{ch5,sec0,eq1}
\qquad \qquad  \quad  \quad= 2 - {\rm{no.\  of \  strips \  attached. }} \	
\end{split}
\end{equation}

Thus we have the following
\begin{theorem}
The Euler characteristic of a compact surface  M is not greater than $2$. If $ \chi(M)$ =$2$, then M is homeomorphic to $\s^2$.
\end{theorem}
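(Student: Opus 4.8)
The plan is to read the statement off directly from the decomposition just obtained, namely the identity $\chi(M) = 2 - s$ in \eqref{ch5,sec0,eq1}, where $s \ge 0$ is the number of strips $S_i$ attached (equivalently, the number of edges deleted from the connected graph $G$ to produce the spanning tree $T$). Since $s$ is a nonnegative integer, the inequality $\chi(M) \le 2$ is immediate, and equality holds precisely when $s = 0$.

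For the equality case, suppose $\chi(M) = 2$, so $s = 0$. Then no edge had to be removed from $G$, i.e. $G$ is already a tree, whence $M' = N(G) = N(T)$. By the preceding theorem the SB-neighborhood of a tree is homeomorphic to a closed disk, so $M'$ is a closed $2$-disk. Recalling $M = D' \cup M' \cup \bigcup_{i=1}^{s} S_i$ with disjoint interiors and $s = 0$, we get $M = D' \cup M'$, the union of two closed $2$-disks with disjoint interiors covering the closed surface $M$ (conditions (1) and (2) on the $S_i$ and on $D \cap D'$ being vacuous here).

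It remains to conclude that such a space is homeomorphic to $\s^2$. First $D' \cap M' \ne \phi$, since otherwise $M = D' \sqcup M'$ would be disconnected. Next, a local analysis identifies $D' \cap M'$ with the common boundary: a point $x \in D' \cap M'$ cannot lie in $int(D')$ nor in $int(M')$, for a Euclidean neighbourhood of such a point would meet only one of the two pieces, contradicting $x \in D' \cap M'$ together with the disjointness of the interiors and the fact that $M$ is a $2$-manifold at $x$; hence $D' \cap M' \subseteq \partial D' \cap \partial M'$, and conversely each boundary circle must be entirely covered, as any uncovered boundary point would be a boundary point of $M$. Thus $M$ is obtained from two closed disks by identifying their boundary circles along a homeomorphism (the identification being a homeomorphism because $M$ is a manifold along $D' \cap M'$), which is exactly the standard presentation of $\s^2$ as the union of its two closed hemispheres; alternatively one may invoke the Jordan--Sch$\ddot{o}$nflies theorem to extend the boundary identification to a homeomorphism between the disks. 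Either way $M \cong \s^2$.

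The main obstacle is this last step: converting the combinatorial conclusion ``$M$ is two disks with disjoint interiors'' into the genuine topological statement ``$M$ is two disks glued along a common boundary circle, hence $\s^2$''. This requires the point-set argument above identifying $D' \cap M'$ with the common boundary and checking that the gluing is a homeomorphism; everything preceding it is a one-line consequence of \eqref{ch5,sec0,eq1} and the previous theorem.
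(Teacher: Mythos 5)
Your proposal is correct and follows the same route the paper intends: read $\chi(M)\le 2$ directly off the identity $\chi(M)=2-s$, and in the equality case $s=0$ observe that $M'=N(G)=N(T)$ is a closed disk, so $M=D'\cup M'$ is a union of two closed disks with disjoint interiors. The paper stops there, presenting the theorem as an immediate consequence of the preceding decomposition without a written proof; your final paragraph, identifying $D'\cap M'$ with a common boundary circle and arguing the gluing is a homeomorphism (or invoking Jordan--Sch\"onflies to extend it), supplies exactly the point-set detail the paper leaves implicit, and you correctly flag that this is the only genuinely nontrivial step.
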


\section{Representation of Surfaces}

In the earlier chapter we have presented the polygonal representation of surfaces. Now from the above discussion, we have got another representation of a compact  surface $M$. By removing a disk $ D'$ from $M$  it reduces to $ M' $ a surface with a boundary component that can be expressed as a union 
$$ M'=D \cup \bigcup_{i=1}^s S_i  .$$ 

Since each strip $S_i$ is homeomorphic to a disk, there exists an ordinary rectangular strip $R_i$ in $\R^3$ and a homeomorphism of $S_i$ onto $R_i$. We can assume that the strips $R_1,R_2,\ldots ,R_s$ are pairwise disjoint otherwise we can translate some of them. Also we have a disk $\widetilde{D}$  in $ \R^3$ homeomorphic to $D$.

Since $D \cap S_i$ is the union of 2 disjoint arcs, we will form a quotient space of $\widetilde{D} \cup R_i $ by identifying the corresponding  pair of opposite edges of $R_i$ with two disjoint arcs of $\widetilde{D}$. This can be done in two ways depending upon the orientability of surface:

(1) \textbf{Attaching oriented strip:} If $M$ is orientable then the 2-simplexes in the triangulation of $M$ having common face contained in $D \cap S_i$ should have compatible  orientation. 
\begin{center}
\includegraphics[width=1.2\columnwidth]{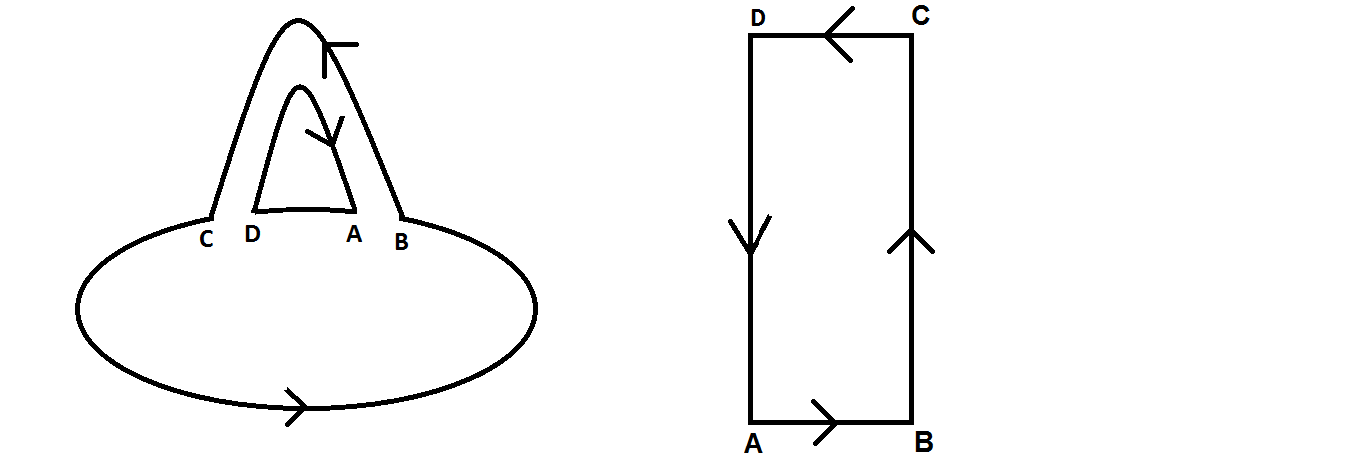}
\end{center}

\begin{center}
Figure 5.1 
\end{center}

\begin{center}
\includegraphics[width=1\columnwidth]{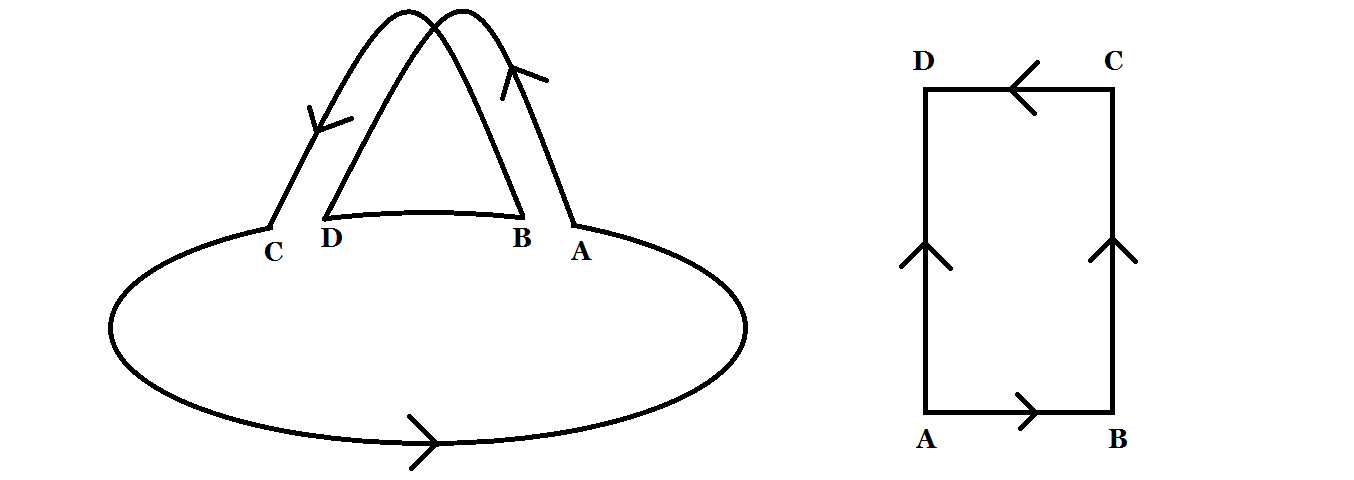}
\end{center}
\begin{center}
Figure 5.2
\end{center}
Attach the rectangle $R_i$ to $\widetilde{D}$ so that  opposite edges are identified with disjoint segments of $\widetilde{D}$ both in orientation reversing way. In that case $S_i \cup D$ or $ R_i \cup \widetilde{D}$ is an annulus and $S_i$ will be called annular strip see Figure 5.1.

(2) \textbf{Attaching unoriented strip:} If $M$ is nonorientable then there are 2-simplexes in the triangulation of $M$ that do not have compatible orientation and their common face lie in one of the component of $D \cap S_j$. Attach the rectangle $R_j \ to \ \widetilde{D}$ so that one pair of opposite edges are identified with disjoint segments of $\widetilde{D}$, one in orientation reversing way and the other in orientation preserving way. In that case $S_j \cup D$ or  $R_j \cup \widetilde{D}$ is a M$\ddot{o}$bius  band and $S_j$ will be called twisted strip or a cross-cap see Figure 5.2.

\textbf{Note:}

(1) Since boundary of $M'$ is connected and homeomorphic to $\s^1$, if there is an annular strip $S_i$, then there must be another annular strip (or M$\ddot{o}$bius  band) $S_j$, which is intertwined with $S_i$ as in Figure 5.3 below. One can easily see that, in Figure 5.3 (a) 
\begin{center}
\includegraphics[width=1\columnwidth]{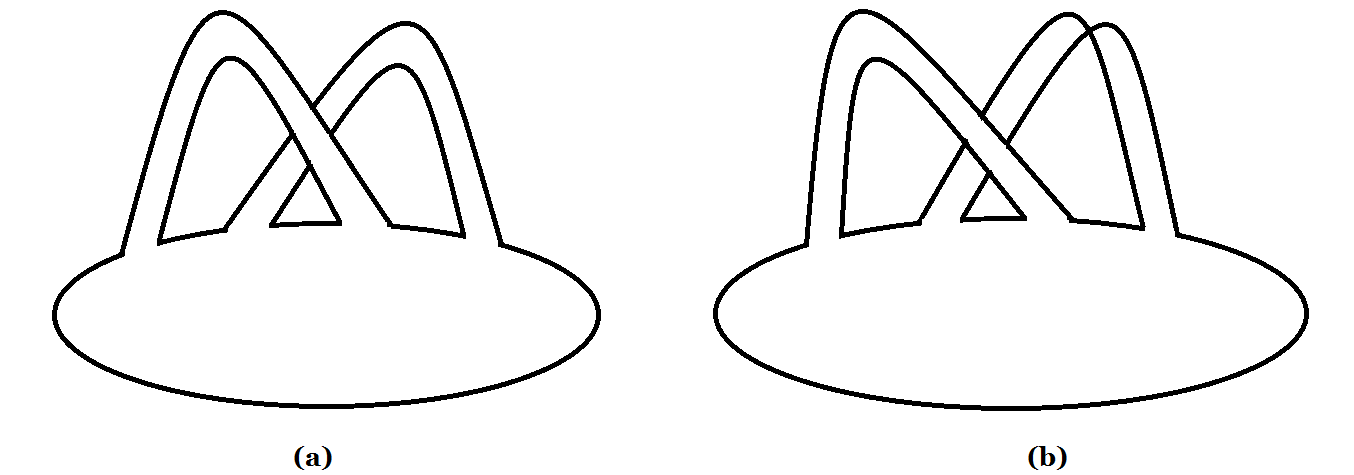}
\end{center}
\begin{center}
Figure 5.3
\end{center}
$D \cup S_i \cup S_j $ is  homeomorphic to a torus with a  disk removed and will be called a handle (or a Klein bottle with a disk removed in (b)).

(2) Also $M$ is orientable if and only if no cross-cap is attached  to $\widetilde{D}$ in the above representation of $M'$. 

Thus such a $M'$ can always be embedded in  $\R^3$ and can be represented as in the Figure 5.4 below.

\begin{center}
\includegraphics[width=1\columnwidth]{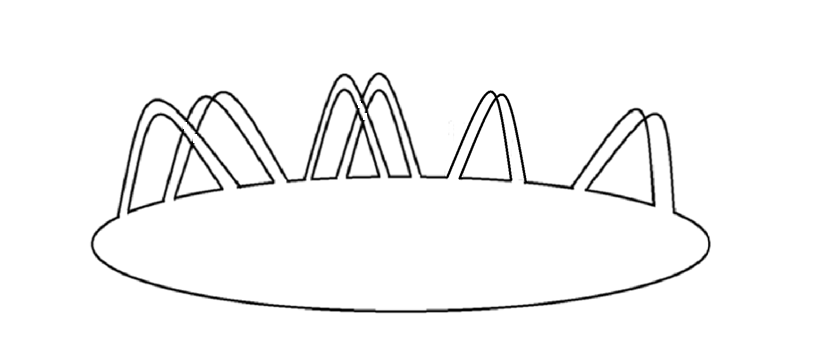}
\end{center}

\begin{center}
Figure 5.4
\end{center}

\textbf{Example:}

(1) A representation for torus can be seen in the Figure 5.5 below, where a pair of intertwined annular strips are attached to a disk.\\

\begin{center}
\includegraphics[width=1 \columnwidth]{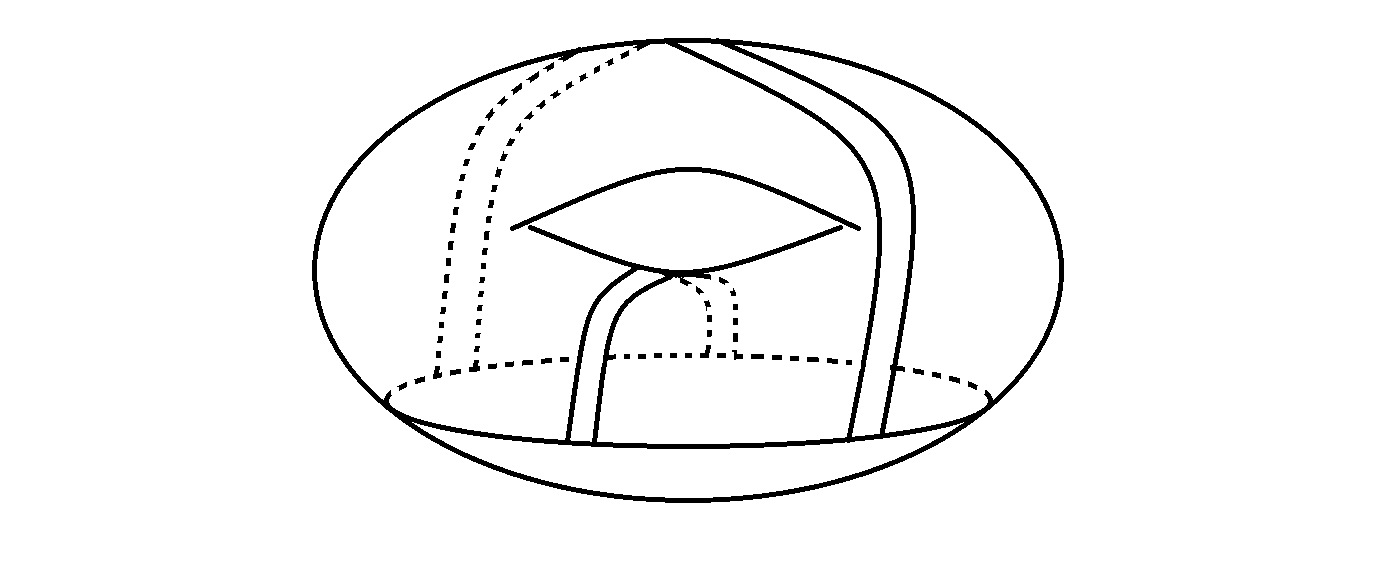}
\end{center}
\begin{center}
Figure 5.5 
\end{center}
\begin{center}
\includegraphics[width=1\columnwidth]{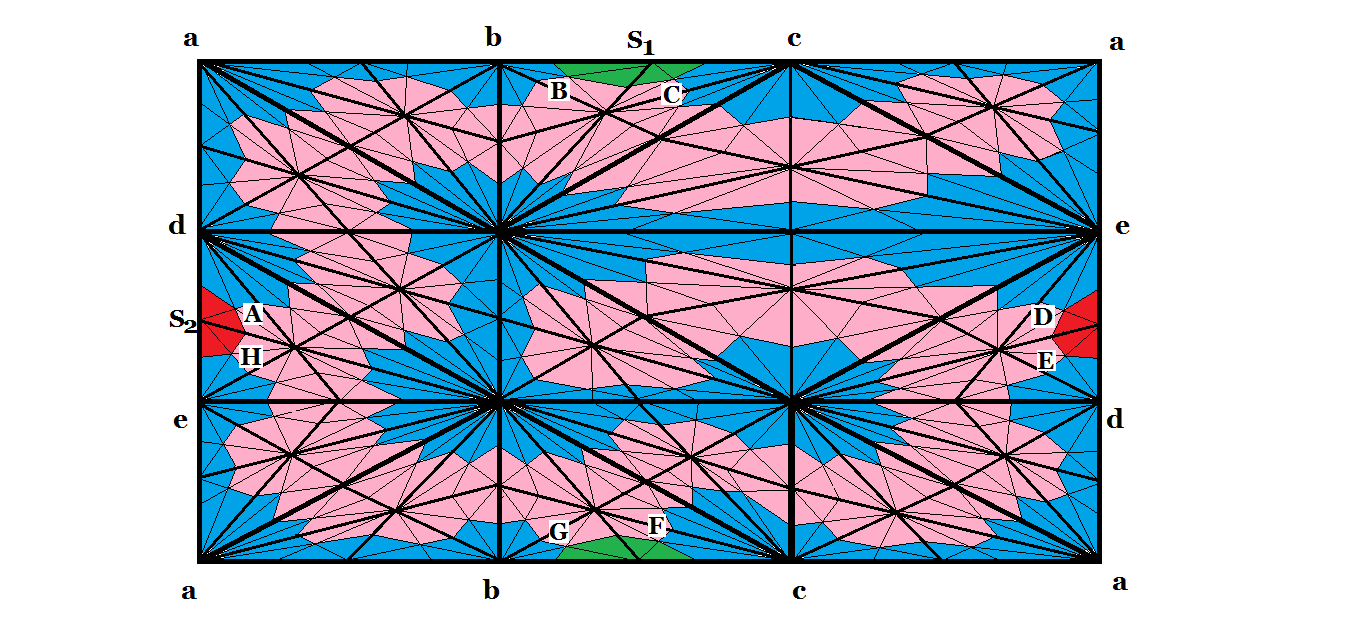}
\end{center}
\begin{center}
Figure 5.6 
\end{center}

(2) Let $M$ be the Klein bottle obtained by identifying the edges of the square $\I \times \I$, as discussed earlier. Consider  a triangulation of $M$ given in Figure 5.6. We shall express $M$  as a union of disjoint disk as discussed above. For this we take the second barycentric subdivision of the triangulation of $M$ and follow the procedure described above.
\begin{center}
\includegraphics[width=1\columnwidth]{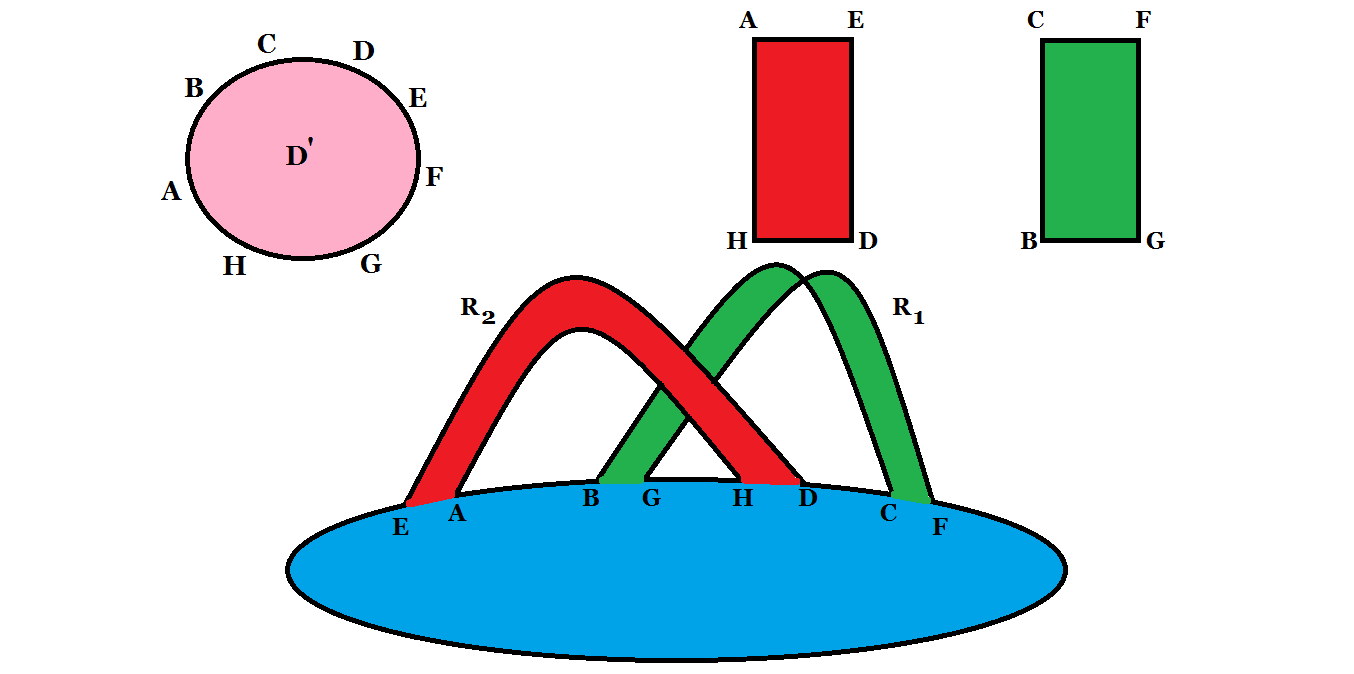}
\end{center}
\begin{center}
Figure 5.7
\end{center}

Now we simplify the above representation homeomorphically to show that $M$ is homeomorphic to $P^2  \#   P^2 $. Slide the strip $R_2$ along the boundary of $D\cup R_1$. Then $R_2$ will be twisted as shown in Figure 5.8. Now consider a simple closed curve OPQ and a corresponding curve OP$'$Q in $D'$ as shown in Figure 5.9. Then $D \cup R_1 \cup R_2 $ reduces to 2 disjoint  M$\ddot{o}$bius  bands and $D'$ is a union of 2 disks $D_1$  and $D_2$. Identifying  the corresponding segments of each M$\ddot{o}$bius band with the segments of $D_1$ and $D_2$  we get  two disjoint M$\ddot{o}$bius  band immersed in $\R^3$ with self intersection and a boundary circle OPQP$'$O. Identifying the boundary circles in two Mobius band we can conclude that a Klein bottle is homeomorphic to the connected sum of two projective planes.

\begin{center}
\includegraphics[width=1\columnwidth]{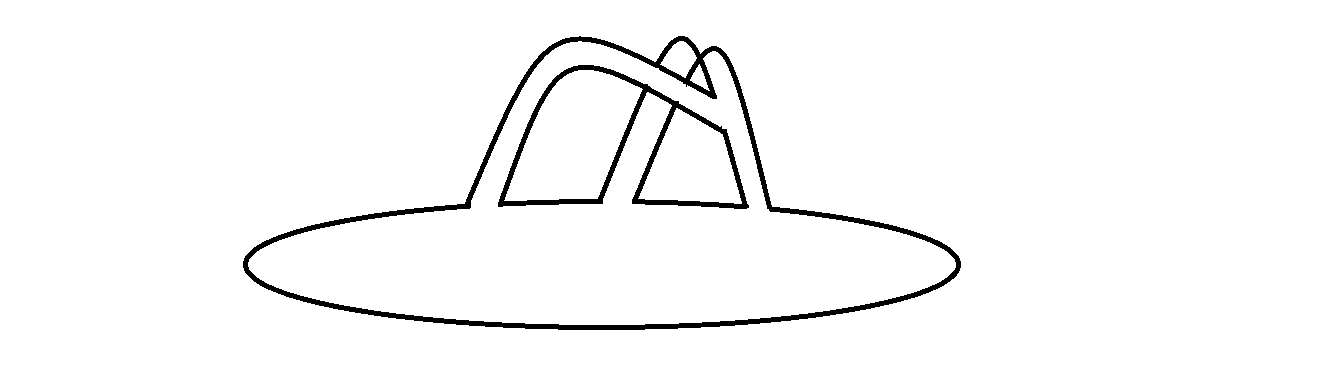}
\includegraphics[width=1\columnwidth]{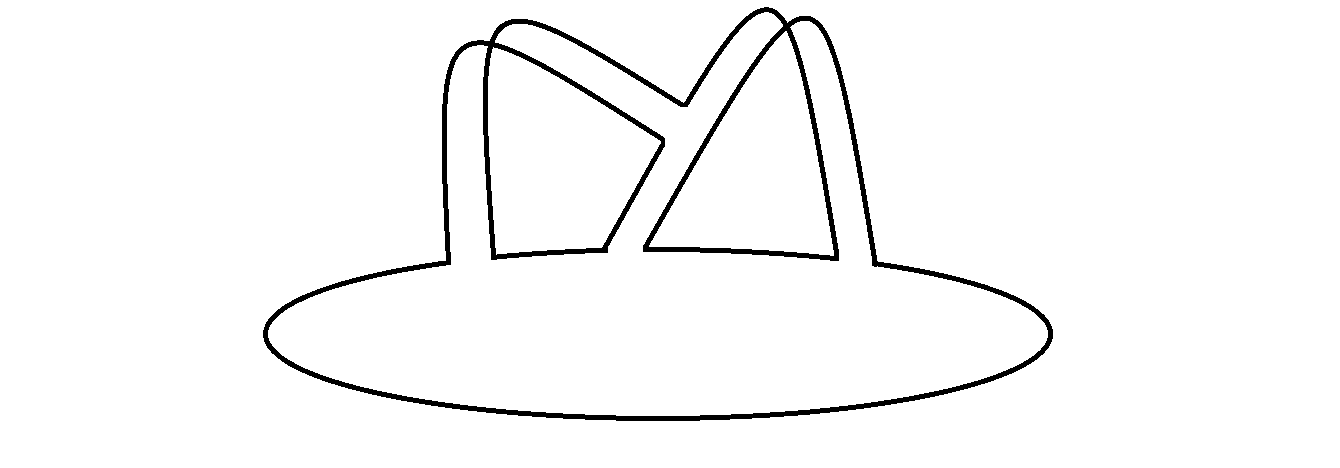}
\end{center}
\begin{center}
\includegraphics[width=1\columnwidth]{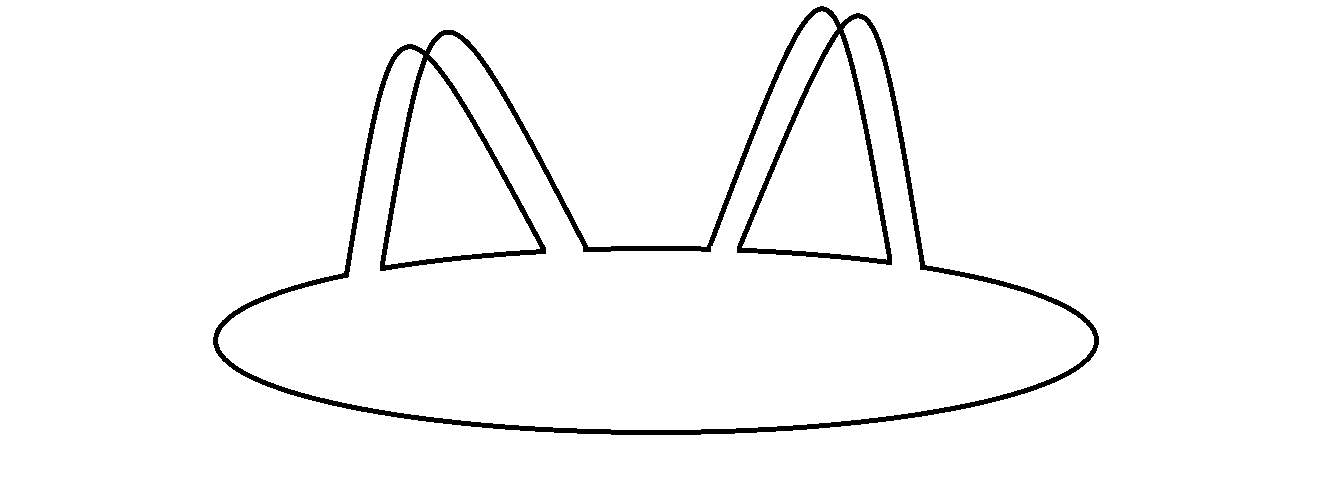}
\end{center}
\begin{center}
Figure 5.8
\end{center}

\begin{center}
\includegraphics[width=1\columnwidth]{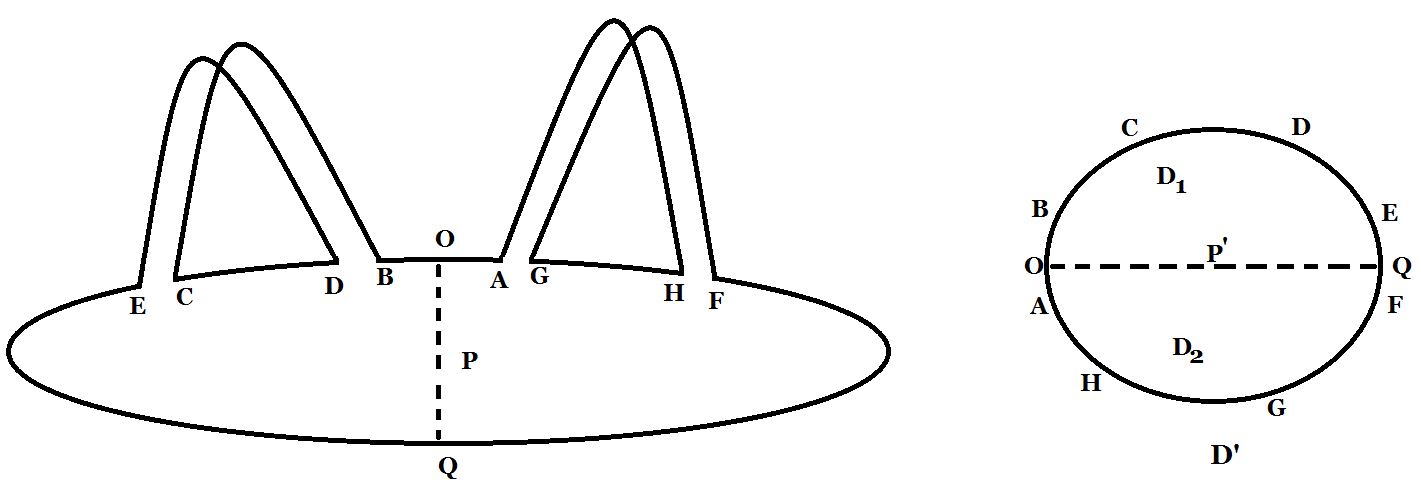}
\end{center}

\begin{center}
Figure 5.9
\end{center}

We shall now simplify the representation of $M$ in Figure 5.4 with the help of following operations:

(1) \textbf{Operation $ \alpha$:} Suppose that $R_i$  and $R_j$ are intertwined annular strips so that $D \cup S_i \cup S_j$ is a handle and boundary of $D \cup S_i \cup S_j$ is a circle.
\begin{center}
\includegraphics[width=1.5\columnwidth]{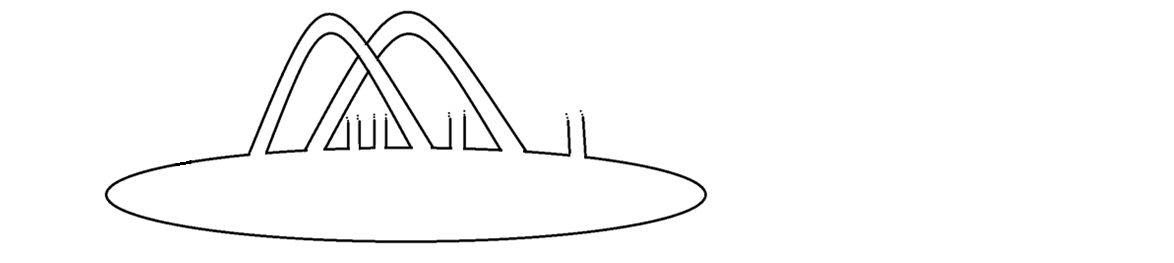}
\end{center}

\begin{center}
Figure 5.10
\end{center}

As in the above example, we can replace the old representation by the new homeomorphically, by sliding the strips $R_d,(d \ne i,j)$ along the boundary so as to get a situation in which $ \widetilde{D}\cap (R_i \cup Rj)$ lies in a segment in $\partial \widetilde{D}$ which intersect no $R_d,(d \ne i,j)$. We do this for each such handle. If we have no twisted strips, that is $M$ is orientable then the new representation will be  as in the Figure 5.11 below. Now cutting along the dotted curve and identifying the corresponding segments of $D'$,  we have the following: 
\begin{center}
\includegraphics[width=1\columnwidth]{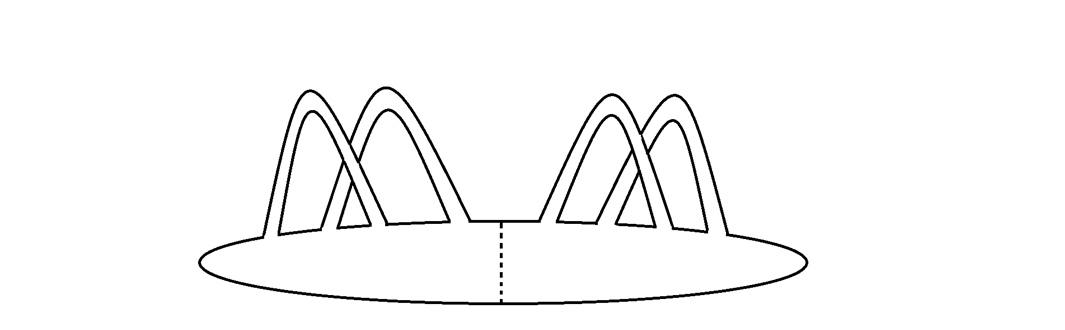}
\end{center}

\begin{center}
Figure 5.11
\end{center}

\begin{theorem}
A compact orientable surface without boundary is homeomorphic to $\Sigma_g$ for some $g \ge0$.
\end{theorem}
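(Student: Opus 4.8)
The plan is to run an induction on the number $s$ of strips in the decomposition $M = D' \cup D \cup \bigcup_{i=1}^{s} S_i$ obtained in Section 5.1, using Operation $\alpha$ as the basic move. The crucial simplification is that $M$ is orientable: by Note (2) above no twisted strip (cross-cap) can occur, so \emph{every} $S_i$ is an annular strip and we only ever manipulate intertwined pairs of annular strips, each such pair $D\cup S_i\cup S_j$ being a handle whose boundary is a single circle. The base case $s=0$ is immediate: then $M'=D$ is a disk and $M=D\cup D'$ is the union of two disks glued along their common boundary circle, hence homeomorphic to $\s^2=\Sigma_0$ (consistently, $\chi(M)=2-0=2$, matching the earlier theorem).

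For the inductive step, assume the statement for all compact orientable surfaces whose strip decomposition uses fewer than $s$ strips, and let $M$ have $s>0$ strips. Choose any strip $S_i$. Since $\partial M'$ is a single circle, Note (1) supplies another strip $S_j$ intertwined with $S_i$ (it must be annular, $M$ being orientable), so $D\cup S_i\cup S_j$ is a handle. Apply Operation $\alpha$: slide the remaining strips $S_d$ ($d\ne i,j$) along $\partial(D\cup S_i\cup S_j)$ by a homeomorphism of $M$ until the attaching arcs of $S_i$ and $S_j$ all lie in one sub-arc $\beta\subset\partial\widetilde D$ meeting no other $S_d$ (Figures 5.10, 5.11). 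Cutting $M$ along the separating Jordan curve that isolates this handle and capping each side with a disk then exhibits $M$ as a connected sum $M\cong M_0\#\mathbb{T}^2$, where $M_0$ is again compact and orientable but has only $s-2$ strips in its decomposition. By the induction hypothesis $M_0\cong\Sigma_{g-1}$ for some $g\ge 1$, and since $\Sigma_{g-1}\#\mathbb{T}^2\cong\Sigma_g$ we conclude $M\cong\Sigma_g$. In particular $s=2g$ is even, in agreement with $\chi(M)=2-s=2-2g=\chi(\Sigma_g)$.

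The main obstacle is making Operation $\alpha$ rigorous, i.e. verifying that "sliding a strip's attaching region along the boundary circle of $D\cup S_i\cup S_j$" is genuinely realized by a homeomorphism of $M$ (and, likewise, that the cut-and-cap step is well defined). The picture is intuitively clear, but a careful argument needs a collar neighbourhood of $\partial\widetilde D$ inside $\widetilde D$ (or an appeal to the Jordan--Sch\"onflies theorem) to construct an isotopy of the attaching arcs of the $S_d$ past those of $S_i,S_j$ along $\partial\widetilde D$, followed by an extension to an ambient homeomorphism of $M$. An equivalent route, avoiding some of this bookkeeping, is to transcribe the whole induction into the language of the polygonal presentation of Section 4.1: the disentangling of a handle becomes the standard reduction producing a block $aba^{-1}b^{-1}$ in the edge word, and orientability is exactly the statement that the edge word can be chosen so that each label occurs once as $a$ and once as $a^{-1}$, so no block $aa$ ever appears.
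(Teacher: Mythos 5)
Your proof is correct and rests on the same machinery as the paper (the strip decomposition $M = D' \cup D \cup \bigcup S_i$, Note (1), and Operation $\alpha$), but it organizes the argument as an explicit induction on the number $s$ of strips, peeling off one handle at a time via a cut-and-cap step $M \cong M_0 \# \mathbb{T}^2$. The paper instead applies Operation $\alpha$ to every intertwined pair simultaneously, arrives at the normal form of Figure 5.11 with $g$ consecutive handle-blocks, and reads off $M \cong \Sigma_g$ directly. Your inductive formulation buys a cleaner bookkeeping: you never have to argue that the strips globally partition into disjoint intertwined pairs, only that one such pair exists (Note (1)), with the Euler-characteristic formula $\chi = 2 - s$ guaranteeing that the residual surface $M_0$ has $s-2$ strips so the induction closes. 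You also correctly identify the place where both arguments are informal — the claim that the slide of Operation $\alpha$ and the cut-and-cap are realized by genuine homeomorphisms — and your suggested repairs (collar/Jordan--Sch\"onflies ambient isotopy, or passage to the polygonal-presentation word reduction) are exactly what a fully rigorous version would need; the paper leaves this at the pictorial level.
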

Otherwise if $M$ happens to be nonorientable then consider 

(2) \textbf{Operation  $\beta$:} Suppose $R_i$  and  $R_j $ are two intertwined strips, one annular say $R_i$ and another twisted say $R_j$. In that case $D \cup R_i \cup R_j$ is a Klein bottle.
\begin{center}
\includegraphics[width=1.5\columnwidth]{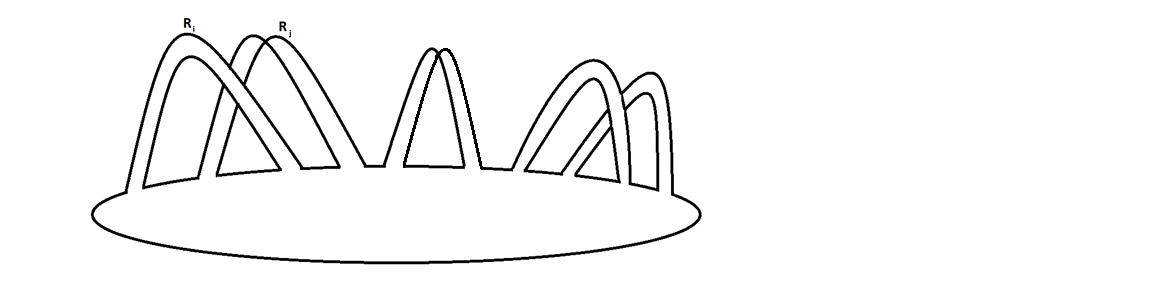}
\end{center}

\begin{center}
Figure 5.12
\end{center}

Then proceed as in above example, replacing the old representation by new homeomorphically by sliding that strip $R_i$ along the boundary of $\widetilde{D} \cup R_j$. Then $R_i$ will be no longer annular and we get a new twisted strip. We replace each such pair of intertwined strips and the new representation will have more twisted strips and fewer annular strips. The new representation will be  as in the Figure 5.13 below.
\begin{center}
\includegraphics[width=1.5\columnwidth]{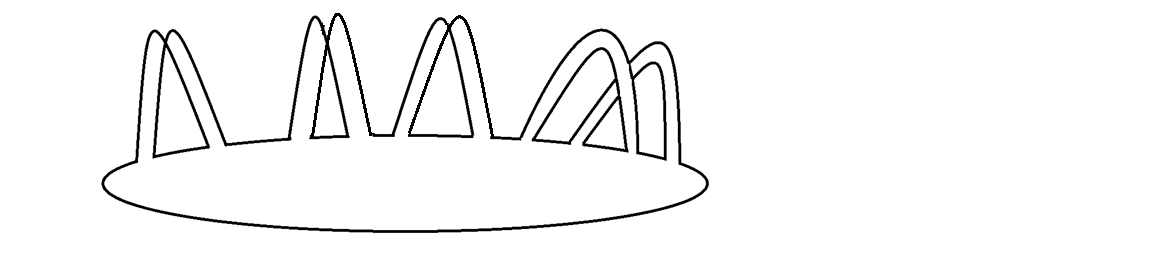}
\end{center}

\begin{center}
Figure 5.13
\end{center}

Consider a pair of intertwined annular strips $R_1$  and $R_2$ and a twisted strip $R_3$. Since $\widetilde{D}$ with a pair of intertwined annular strip is homeomorphic to a torus with a disk removed. So where the twisted strip $R_3$  is attached to the boundary component doesn't matter, we can slide the twisted strip along the boundary and make it intertwined with an oriented strip. Then we have the following representation.
\begin{center}
\includegraphics[width=1.5\columnwidth]{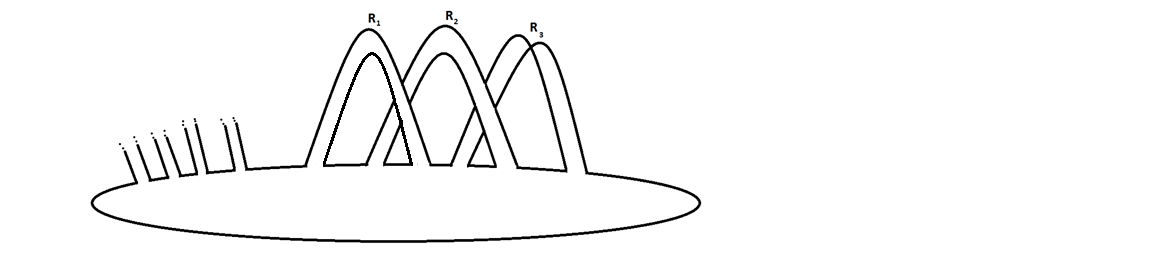}
\end{center}

\begin{center}
Figure 5.14
\end{center}

By an operation of type $\beta$ we slide oriented strips along the boundary of $\widetilde{D} \cup R_3$ one by one so that $R_1$ and  $R_2$ will be twisted and the representation reduces as in Figure 5.15. Thus we have the following 
\begin{theorem}\label{ch5,sec1,th2}
 The connected sum of a torus and a projective plane is homeomorphic to the connected sum of three projective planes.
\end{theorem}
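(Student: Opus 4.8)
The plan is to run the whole argument inside the ``disk with strips'' picture developed in Sections~\ref{ch5,sec0,eq1} and thereabouts, using two facts already established there. The first is that a disk with one pair of intertwined annular strips attached is homeomorphic to a torus with an open disk removed and has a single boundary circle (the Note following Figure~5.4). The second is Operation~$\beta$: when an annular strip $R_i$ is intertwined with a twisted strip $R_j$, so that $\widetilde D\cup R_i\cup R_j$ is a Klein bottle with a disk removed, one can slide $R_i$ along the boundary of the M$\ddot{o}$bius band $\widetilde D\cup R_j$ and thereby replace $R_i$ by a twisted strip. Since every move will be a homeomorphism preserving a connected boundary circle, the capping disk $D'$ can be glued back at the very end.

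First I would fix the starting representation. The surface $\mathbb{T}^2\#\p2$ is a sphere with one handle (the torus summand) and one cross-cap (the projective-plane summand) attached, so removing an open disk $D'$ presents $(\mathbb{T}^2\#\p2)\setminus D'$, via the constructions of the previous section, as a disk $\widetilde D$ with three strips attached: a pair of intertwined annular strips $R_1,R_2$ forming the handle, so that $\widetilde D\cup R_1\cup R_2$ is a torus with a disk removed, and a single twisted strip $R_3$ playing the cross-cap; capping the single boundary circle with $D'$ recovers $\mathbb{T}^2\#\p2$. As a numerical check, this representation has $\chi = 2-3 = -1$ by equation~\ref{ch5,sec0,eq1}, agreeing with $\chi(\mathbb{T}^2\#\p2)=\chi(\mathbb{T}^2)+\chi(\p2)-2=0+1-2=-1$ from equation~\ref{ch4,sec4,eq2}; and the target $\p2\#\p2\#\p2 = U_3$ also has $\chi=2-3=-1$ and is nonorientable, so the two surfaces are at least not obviously distinguished, but the point is to exhibit an actual homeomorphism.

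Next I would perform two reductions. Because $\widetilde D\cup R_1\cup R_2$ is a torus with a disk removed with connected boundary, the position of the attaching arcs of $R_3$ along that boundary circle is immaterial up to homeomorphism, so I slide them until $R_3$ is intertwined with $R_1$, reaching the configuration of Figure~5.14. Now $R_1$ (annular) and $R_3$ (twisted) form an intertwined pair, so Operation~$\beta$ applies: sliding $R_1$ along the boundary of the M$\ddot{o}$bius band $\widetilde D\cup R_3$ turns $R_1$ into a twisted strip. Treating $\widetilde D\cup R_3\cup R_1$ (now a disk with two twisted strips) as the new base and bringing $R_2$ into intertwined position with one of its twisted strips, a second application of Operation~$\beta$ turns $R_2$ into a twisted strip as well (Figure~5.15). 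The outcome is $\widetilde D$ with three twisted strips and a single boundary circle; capping with $D'$, this is exactly the sphere with three cross-caps, i.e.\ $\p2\#\p2\#\p2$. Composing the homeomorphisms gives $\mathbb{T}^2\#\p2\cong\p2\#\p2\#\p2$.

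The step I expect to be the real obstacle is making the sliding moves rigorous: one must show that ``where the cross-cap is attached does not matter'' is realized by an ambient homeomorphism of the disk-with-strips, and one must keep careful track of the cyclic pattern of attaching arcs on $\partial\widetilde D$, so as to verify that after the first $\beta$-move the strip $R_2$ can indeed be repositioned to be intertwined with one of the new twisted strips and that no extra strips or boundary components are created along the way. A cleaner but less self-contained alternative would be to pass to a polygonal presentation and rewrite the edge-word $a\,b\,a^{-1}b^{-1}c\,c$ of $\mathbb{T}^2\#\p2$ into $a_1a_1a_2a_2a_3a_3$ by elementary cut-and-paste moves on the polygon; since the strip machinery is already in hand, I would push the geometric version through.
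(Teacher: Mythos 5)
Your proposal is correct and follows essentially the same route as the paper: represent $\mathbb{T}^2\#\p2$ as a disk with two intertwined annular strips and one twisted strip, slide the twisted strip into intertwined position (Figure~5.14), then use Operation~$\beta$ to convert the annular strips into twisted ones, landing on the three-cross-cap representation of Figure~5.15. The only cosmetic difference is that the paper slides both annular strips over $\widetilde D\cup R_3$ ``one by one,'' whereas you interleave by updating the base Möbius band after the first $\beta$-move; and your closing caveats about making the slides ambient and tracking the cyclic edge pattern are fair but apply equally to the paper's own informal, figure-driven argument.
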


\begin{center}
\includegraphics[width=1.5\columnwidth]{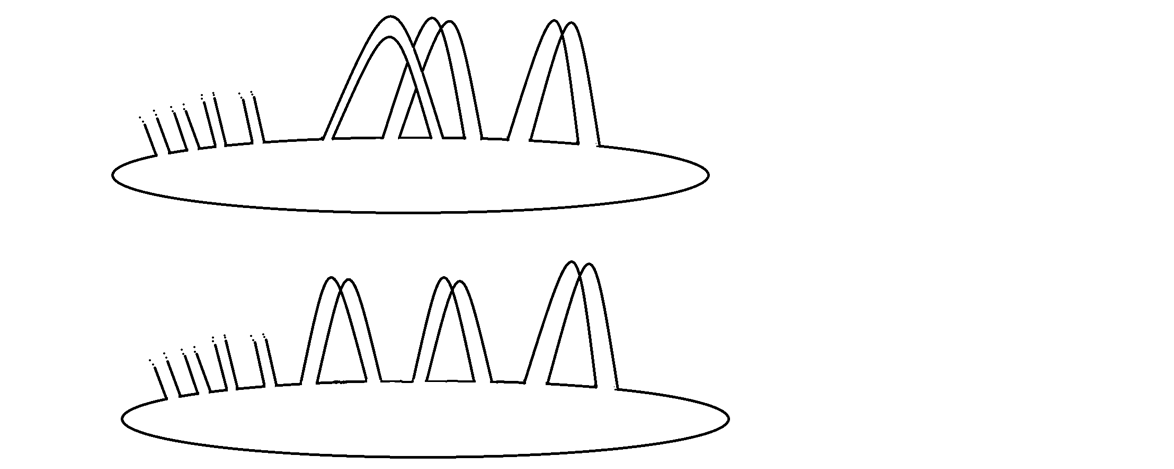}
\end{center}

\begin{center}
Figure 5.15
\end{center}

At the final stage we have the following representation 

\begin{center}
\includegraphics[width=1.5\columnwidth]{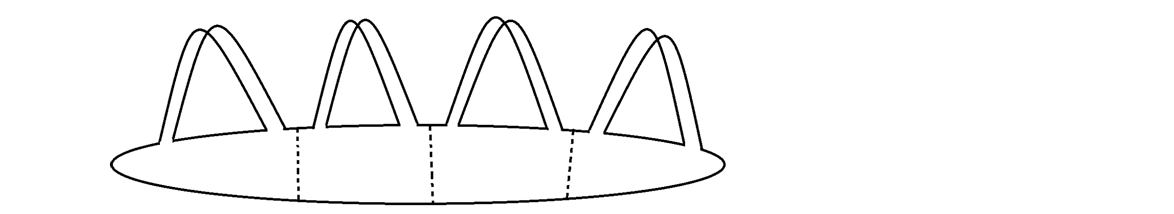}
\end{center}

\begin{center}
Figure 5.16
\end{center}

Again cutting along the dotted curve and identifying the corresponding segments of $D'$, we have the following 
\begin{theorem}
A compact nonorientable surface without boundary is homeomorphic to $U_g$ for some $g \ge 1$.
\end{theorem}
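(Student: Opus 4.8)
The plan is to run the normalization procedure of this section on an arbitrary compact nonorientable surface $M$ without boundary and show it terminates in the model $U_g$. First I would triangulate $M$, which is legitimate by Theorem \ref{ch2,sec3, thm7}, and apply the decomposition developed at the start of this chapter (using Theorem \ref{ch5,th2}) to write $M = D' \cup M'$, where $D'$ is a disk and $M' = \widetilde D \cup \bigcup_{i=1}^s S_i$ is a disk with $s$ strips attached, each strip $S_i$ being either an annular (oriented) strip or a twisted strip, i.e. a cross-cap. By Note (2) following the representation of $M'$, nonorientability of $M$ forces at least one twisted strip to appear, so $s \ge 1$ and some $S_i$ is a cross-cap.

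Next I would simplify the disk-with-strips $M'$ by repeated use of Operations $\alpha$ and $\beta$. Operation $\alpha$ collects each intertwined pair of annular strips into a handle whose intersection with $\partial\widetilde D$ may be isolated in an arc meeting no other strip; Operation $\beta$ performs the analogous normalization of an intertwined annular--twisted pair, converting the annular strip into an additional twisted strip. Performing these operations finitely many times brings $M'$ to a representation consisting of $h$ handles, $c$ cross-caps, and nothing else, with $c \ge 1$. Then I would invoke Theorem \ref{ch5,sec1,th2}: a handle together with a cross-cap can be replaced, up to homeomorphism, by three cross-caps (this is exactly the manoeuvre of Figures 5.14--5.15). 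Since $c \ge 1$, I can trade each of the $h$ handles in turn for two extra cross-caps, ending with $M'$ homeomorphic to $\widetilde D$ carrying exactly $g = c + 2h \ge 1$ twisted strips attached along a common boundary arc, as in Figure 5.16.

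Finally I would reattach the disk $D'$: cutting along the dotted curve and identifying the corresponding segments of $D'$, precisely as in the constructions leading to Figure 5.16, exhibits each capped twisted strip as a projective-plane summand, so that $M$ is homeomorphic to the connected sum of $g$ projective planes, that is, $M \cong U_g$ with $g \ge 1$.

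The step I expect to be the main obstacle is the bookkeeping in the reduction: one must check that the strip-slidings underlying Operations $\alpha$ and $\beta$ are genuine homeomorphisms of $M'$, that after each step the boundary of the disk-with-strips remains a single circle so that the hypotheses of Theorem \ref{ch5,sec1,th2} stay in force, and that the procedure terminates --- i.e. that trading handles against cross-caps does not generate an endless cascade of new annular strips. Once this is handled, the existence of the decomposition, the identification of $\widetilde D \cup R_i \cup R_j$ with a handle or a Klein-bottle-minus-disk, and the final capping are all already available from earlier in the chapter.
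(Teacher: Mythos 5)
Your proposal is correct and takes essentially the same route as the paper: decompose $M$ into the disk $D'$ and a disk-with-strips $M'$, normalize the strips via Operations $\alpha$ and $\beta$, trade each handle for two extra cross-caps using Theorem \ref{ch5,sec1,th2}, and cap with $D'$ to exhibit $M$ as a connected sum of $g=c+2h$ projective planes. The obstacles you flag at the end (that the slidings are genuine homeomorphisms, that the boundary of the disk-with-strips remains a single circle, and that the trading process terminates) are the correct points to scrutinize; the paper handles them pictorially rather than in writing.
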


 Also any compact surface with boundary can be represented by attaching as many disjoint oriented strips on the boundary segments of $\widetilde{D}$  as there are boundary components see Figure 5.17 and Figure 5.18 below.
\begin{center}
\includegraphics[width=1\columnwidth]{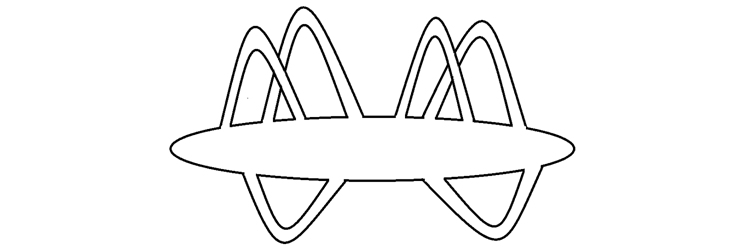}
\end{center}

\begin{center}
Figure 5.17
\end{center}

\begin{center}
\includegraphics[width=1\columnwidth]{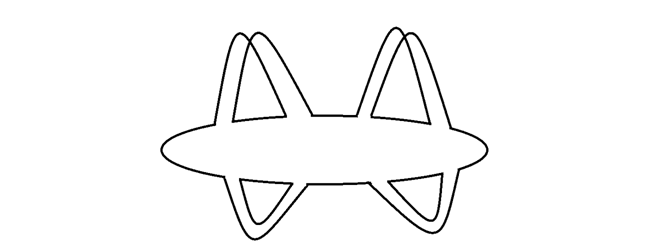}
\end{center}

\begin{center}
Figure 5.18
\end{center}

\begin{theorem}
 $\chi(\Sigma _{g,k})=2-2g-k $ and $\chi(U_{g,k})=2-g-k$.
\end{theorem}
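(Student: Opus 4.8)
The plan is to deduce both formulas from the two identities already in hand: Equation \ref{ch4,sec4,eq2}, $\chi(M_1 \# M_2) = \chi(M_1) + \chi(M_2) - 2$, and Equation \ref{ch4,sec4,eq1}, $\chi(M) = \chi(M') - n$, where $M'$ is the closed surface obtained by gluing a disk onto each of the $n$ boundary circles of $M$. The base data are the values computed earlier from explicit triangulations, $\chi(\s^2) = 2$, $\chi(\mathbb{T}^2) = 0$, and $\chi(\p2) = 1$; these are legitimate invariants of the underlying surfaces because $\chi$ was identified with the alternating sum of the ranks of the homology groups.

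First I would settle the closed case. Since $\Sigma_g$ is by definition the connected sum of $g$ tori, an induction on $g$ via Equation \ref{ch4,sec4,eq2} gives, from $\chi(\Sigma_0) = \chi(\s^2) = 2$ and $\chi(\Sigma_g) = \chi(\Sigma_{g-1}) + \chi(\mathbb{T}^2) - 2 = \chi(\Sigma_{g-1}) - 2$, the value $\chi(\Sigma_g) = 2 - 2g$. In the same way, from $\chi(U_1) = \chi(\p2) = 1$ and $\chi(U_g) = \chi(U_{g-1}) + \chi(\p2) - 2 = \chi(U_{g-1}) - 1$ for $g \ge 2$, one obtains $\chi(U_g) = 2 - g$. (Alternatively these two values fall straight out of the disk-with-strips decomposition together with Equation \ref{ch5,sec0,eq1}, $\chi(M) = 2 - (\text{number of strips attached})$, since $\Sigma_g$ carries $2g$ strips — namely $g$ handles — while $U_g$ carries $g$ twisted strips.)

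Then I would pass to the bordered case. The surface $\Sigma_{g,k}$ is obtained from $\Sigma_g$ by removing $k$ disjoint open disks, so capping off those $k$ holes recovers $\Sigma_g$; Equation \ref{ch4,sec4,eq1} then yields $\chi(\Sigma_{g,k}) = \chi(\Sigma_g) - k = 2 - 2g - k$, and identically $\chi(U_{g,k}) = \chi(U_g) - k = 2 - g - k$, which completes the proof.

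There is no serious obstacle: once Equations \ref{ch4,sec4,eq1} and \ref{ch4,sec4,eq2} are available, the argument is a one-line induction followed by a single application of the capping formula. The only points deserving a word of care are that $\Sigma_g$ and $U_g$ really are compact surfaces without boundary — immediate from their construction as iterated connected sums of closed surfaces — so that Equation \ref{ch4,sec4,eq2} applies at each inductive step, and that the base values are genuine topological invariants, which was established earlier. As a consistency check, Theorem \ref{ch5,sec1,th2} forces $\chi(\mathbb{T}^2 \# \p2) = \chi(\p2 \# \p2 \# \p2)$, and indeed both sides equal $-1$ under the formulas above.
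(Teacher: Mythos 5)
Your argument is correct, and it differs from the paper's proof in its main route. The paper applies Equation \ref{ch5,sec0,eq1} (the disk-with-strips count $\chi = 2 - \text{(number of strips)}$) directly to the bordered surface, observing that $\Sigma_{g,k}$ is realized with $2g$ annular strips from the $g$ handles plus $k$ further annular strips, one per boundary component, and similarly $U_{g,k}$ with $g$ twisted strips plus $k$ annular strips; the formula drops out in one line with no reduction to the closed case. You instead first settle $\chi(\Sigma_g)$ and $\chi(U_g)$ by induction via the connected-sum formula \ref{ch4,sec4,eq2} (or, as you note in the aside, by the strips count for the closed surface), and then pass to the bordered case using the capping formula \ref{ch4,sec4,eq1}. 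What your route buys is independence from the specific disk-with-strips representation of the bordered surface: it needs only the closed-surface invariants and the two elementary identities on $\chi$, making the argument a bit more modular and easier to adapt. What the paper's route buys is that the whole computation is a single application of the decomposition already set up in Chapter \ref{ch5}, with no separate treatment of the closed versus bordered cases. Both arguments are sound, your induction bases $\chi(\s^2)=2$, $\chi(\mathbb{T}^2)=0$, $\chi(\p2)=1$ are the ones established from triangulations in Chapter \ref{ch4}, and your consistency check against Theorem \ref{ch5,sec1,th2} is correct.
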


\begin{proof}
We know that $\chi(\Sigma_0)=2$. Also $\chi(\Sigma_{g,k})$ is the surface homeomorphic to connected sum of $g$ copies of torus and $k$ boundary components. And corresponding to each torus in the above representation, we have a pair of intertwined annular strips and for each boundary component we have an annular strip attached to it. Thus from Equation \ref{ch5,sec0,eq1} we have
\begin{equation}\begin{split}\notag
\chi(\Sigma_{g,k})&=2-  {\rm{no. \ of \ strips \ attached}}\\
&= 2-2g-k.
\end{split}
\end{equation}
Similarly we have $\chi(U_{g,k})=2-g-k .$
\end{proof}

\section{The Fundamental Group of Compact Surfaces}

With the help of above representation we can compute the fundamental group of compact surfaces with or without boundaries. We define an open cell decomposition of a surface $M$. An $n$-$cell$ is a space homeomorphic to an $n$-simplex. We choose a point $p$ in the interior of $D$ and define a collection $ \{ C_i \} $ of 1-cell for each strip such that each of them initiates from $p$ through the corresponding strip and then terminates at $p$, and each of them has a single point $p$ in common. Finally a 2-cell is given by $ M \setminus \{  \cup C_i \} $. 

In case, if $M$ is a bordered surface with $k$ boundary components we define a cell decomposition of $M$ as follows. Consider the 0-cell and 1-cells defined above. In addition, we take vertices $\{p_i\}_{i=1,k}$ on each  boundary component  and insert a collection $\{H_i\}_{i=1,k}$  of 1-cell corresponding to each  boundary component that join $p_i$ to $p$. And a collection $\{B_i\}_{i=1,k}$ of 1-cells that initiates and terminates at $p_i$ as in Figure 5.19 and in Figure 5.20 below. Also a 2-cell is given by $M \setminus [\cup C_i \bigcup \cup B_i \bigcup \cup H_i]$.
\begin{center}
\includegraphics[width=1\columnwidth]{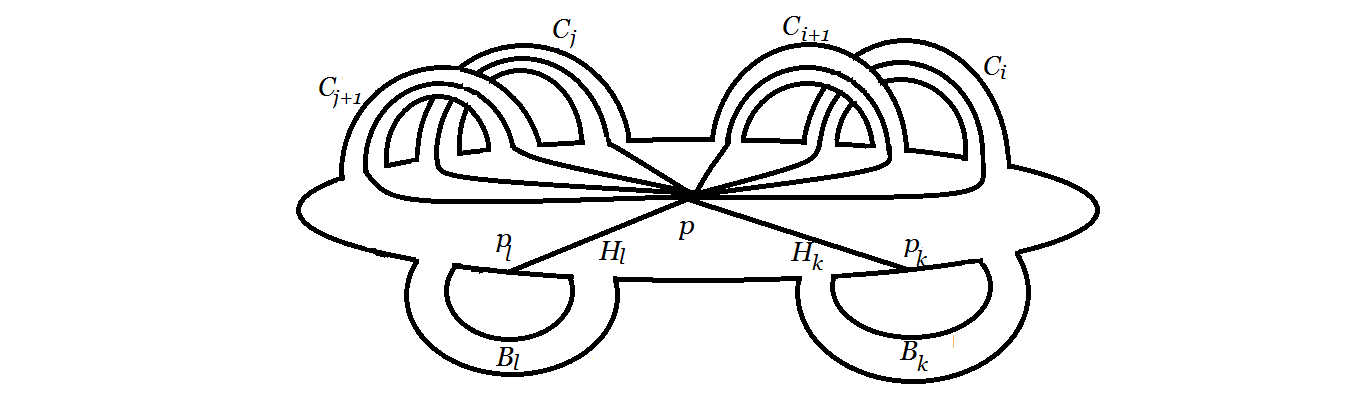}
\end{center}
\begin{center}
Figure 5.19
\end{center}

\begin{center}
\includegraphics[width=1\columnwidth]{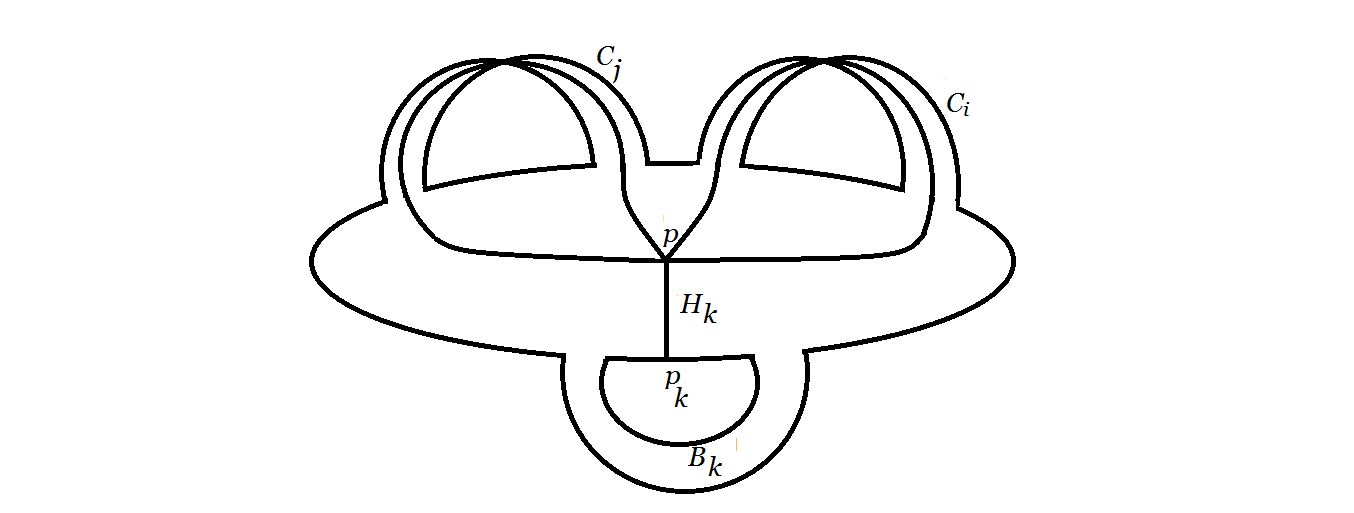}
\end{center}
\begin{center}
Figure 5.20
\end{center}

With the help of the following theorem  and the above cell decomposition of $M$  we shall determine $\pi_1(M)$, the fundamental group of $M$.
\begin{theorem}\label{ch5,sec2,th1}
$($see ~\cite{singh}.$)$  Let M be the space obtained by attaching a disk $\D ^2$ to a path connected Hausdorff space X by a continuous map			 $ f \colon \s ^1 \to X$. Let $j \colon X \hookrightarrow M $ be the canonical embedding. If $z_0 \in \s ^1\  and \ x_0 = f(z_0)$. Then $\pi_1(M,j(x_0))$ is isomorphic to the quotient group $\pi_1(X,x_0)/ Ker(j_ \#)$ and $ Ker (j_\#)$ is the smallest normal subgroup containing the image of $f_\# \colon \pi_1(\s^1,z_0) \to \pi_1(X,x_0)$.
\end{theorem}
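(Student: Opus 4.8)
The plan is to realize this as a special case of the Seifert--Van Kampen theorem already available in the excerpt. Write $q\colon \D^2 \sqcup X \to M$ for the quotient map, identify $X$ with $j(X)\subseteq M$, and let $p_0 = q(0)$ be the image of the centre of the attached disk. First I would set $U = M \setminus \{p_0\}$ and $V = q(\mathrm{int}(\D^2))$, the image of the open disk. Both are open in $M$, $U \cup V = M$, and $U \cap V = q(\mathrm{int}(\D^2)\setminus\{0\})$ is homeomorphic to a punctured open disk. Thus $V$ is contractible, $U \cap V$ is path connected with $\pi_1(U\cap V)\cong \Z$, and---the one genuinely geometric point---$U$ deformation retracts onto $X$ by pushing each point $q(s\theta)$ of the punctured disk radially outward to $q(\theta) = f(\theta)\in X$ while fixing $X$; continuity of this homotopy follows because $q\times \mathrm{id}_{\I}$ is again a quotient map ($\I$ being compact Hausdorff). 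Hence the inclusion $X\hookrightarrow U$ is a homotopy equivalence.

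Next I would choose the common base point $w_0 \in U \cap V$ on the radial segment running out to the boundary point representing $z_0$, so that the retraction $r\colon U\to X$ sends $w_0$ to $f(z_0) = x_0$. Applying Seifert--Van Kampen to the two-set cover $\{U,V\}$ (the triple-intersection condition is automatic for a two-set cover): since $\pi_1(V,w_0)$ is trivial, $\Phi$ is surjective and its kernel is the smallest normal subgroup of $\pi_1(U,w_0)$ containing the image of $\pi_1(U\cap V,w_0)\to\pi_1(U,w_0)$. A generator of $\pi_1(U\cap V,w_0)$ is the loop $\delta$ winding once around $p_0$ at the radius of $w_0$; under $r$ this loop becomes $u \mapsto f(e^{2\pi i u}z_0)$, that is, $r_\#[\delta] = f_\#(\text{generator of }\pi_1(\s^1,z_0))$. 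Transporting along the isomorphism $r_\#\colon \pi_1(U,w_0)\to\pi_1(X,x_0)$ induced by the homotopy equivalence, we obtain $\pi_1(M,w_0)\cong \pi_1(X,x_0)/N$, where $N$ is the smallest normal subgroup containing $f_\#(\pi_1(\s^1,z_0))$.

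It remains to move the base point to $j(x_0)$ and to identify the quotient map with $j_\#$. Since $M$ is path connected, a path in $M$ from $w_0$ to $j(x_0)$ (for instance the image of the radial segment) gives an isomorphism $\pi_1(M,w_0)\cong \pi_1(M,j(x_0))$; and because $j$ factors as $X\hookrightarrow U\hookrightarrow M$ with the first map a homotopy equivalence and the second inducing exactly the Seifert--Van Kampen quotient (as $\pi_1(V)$ is trivial), the resulting isomorphism intertwines $j_\#$ with the quotient $\pi_1(X,x_0)\to\pi_1(X,x_0)/N$. Therefore $j_\#$ is surjective---so $\pi_1(M,j(x_0))\cong \pi_1(X,x_0)/Ker(j_\#)$ by the first isomorphism theorem---and $Ker(j_\#) = N$. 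The containment $N\subseteq Ker(j_\#)$ can also be checked directly: for any loop $\gamma$ in $\s^1$, the loop $j\circ f\circ\gamma$ in $M$ is the restriction of the nullhomotopic map $q|_{\D^2}$, so $f_\#(\pi_1(\s^1,z_0))\subseteq Ker(j_\#)$, and kernels are normal.

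I expect the main obstacle to be bookkeeping rather than substance: the base point $x_0$ named in the statement lies in $X\subseteq U$ but not in the contractible piece $V$, so Seifert--Van Kampen cannot be applied with $x_0$ directly; one must run the argument at an auxiliary base point in $U\cap V$ and then verify that the base-point-change isomorphism carries the distinguished cyclic subgroup and the homomorphism $j_\#$ onto the objects appearing in the theorem. (Even a stray conjugation here is harmless, since passing to the normal closure absorbs it.) The only place the precise form of the attaching enters is in confirming that $U$ deformation retracts onto $X$, and that step is routine.
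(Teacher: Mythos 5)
The paper states this theorem without proof, as a cited result from Singh's \emph{Elements of Topology}, so there is no in-paper argument to compare yours against. Your proposal is essentially the standard Seifert--Van Kampen proof, and it is correct. The cover $U = M\setminus\{p_0\}$, $V = q(\mathrm{int}\,\D^2)$ is the usual one; the key geometric input is that $U$ deformation retracts onto $j(X)$ by pushing radially outward, and you correctly secure continuity of the retraction by observing that $q\times\mathrm{id}_{\I}$ is again a quotient map since $\I$ is compact (hence locally compact) Hausdorff. You are also right that the only real care needed is bookkeeping: Van Kampen must be run at an auxiliary base point $w_0\in U\cap V$, and the resulting presentation transported to $j(x_0)$ along the radial path $\alpha$. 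One helpful observation that makes that last step painless, and which you might make explicit, is that $r\circ\alpha$ is the \emph{constant} path at $x_0$; consequently $r_\#\colon\pi_1(U,w_0)\to\pi_1(X,x_0)$ and the base-point-change isomorphism $\beta_\alpha$ fit into a strictly commuting square with $j_\#$, so no conjugation actually appears and $\ker j_\#$ is exactly (not merely up to conjugacy) the normal closure of $f_\#\pi_1(\s^1,z_0)$. The verification that the cover satisfies the hypotheses of the Seifert--Van Kampen theorem as stated in Chapter~3 (path connectedness of $U$, $V$, $U\cap V$, and the vacuity of the triple-intersection condition for a two-set cover) is also handled correctly.
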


Since $\Sigma_0$ is simply connected $\pi_1(\Sigma_0)$ is trivial. Now if $M= \Sigma_{g,k}$, a compact orientable surface with genus $g$ and $k$ boundary components. This has a cell decomposition with $(1+k)$ 0-cells, $(2g+2k)$ 1-cells and one 2-cell. As in above theorem take $X$ to be a wedge of $2g$ circles and $k$ copies of space $Q$ which is a one point union  of a circle and a line segment see Figure 5.21 below. And for attaching a disk $\D^2$ to $X$, we shall ignore an open disk $B$ from the interior of $\D^2$  and attach the remaining annulus $A=\D^2 \setminus B$ onto $X$ via the map $f \colon \partial \D^2 \to X $ so that $X\cup A$ can be embedded in $\R^3$ and M can be realize by adding $B$ onto the boundary of $X \cup A$.\\

\begin{center}
\includegraphics[width=1\columnwidth]{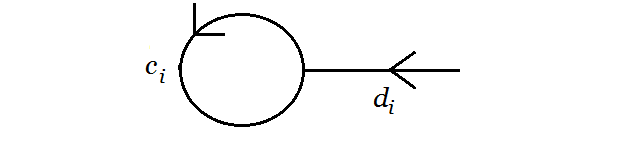}
\end{center}
\begin{center}
Figure 5.21
\end{center}

Since $\pi_1(\s^1) = \Z$, let $\{ a_i,b_i \}$ denotes the generator for each g pairs of  circles. Also $\pi_1 (Q)= \{c_i,d_i \vert d_i \} $, let $\{c_i ,d_i\}$ denotes the generator for each $k$ copies of $Q$ space  in $X$ with the specified orientation as indicated in the Figure 5.22 below.

\begin{center}
\includegraphics[width=1\columnwidth]{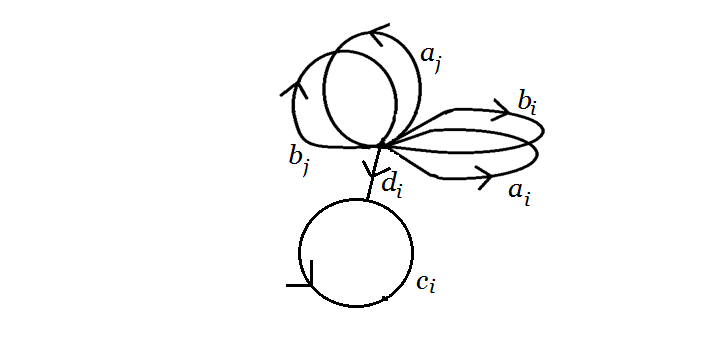}
\end{center}

\begin{center}
Figure 5.22
\end{center}

Then $A$ is attached onto $X$ mapping $\s^1$ along $X$ as described in the Figure 5.23 below. Now if $\alpha$ represents a generator of $\pi_1(\s^1)$ then
$$f_{\#}\circ \alpha = a_1b_1a_1^{-1}b_1^{-1}\ldots a_gb_ga_g^{-1}b_g^{-1}d_1c_1d_1^{-1}\ldots d_kc_kd_k^{-1} .$$

Thus by Theorem \ref{ch5,sec2,th1} $\pi_1(M)$ is the quotient of the free group on the $2g+k$ generators $a_1,b_1,\ldots ,a_g,b_g,c_1,\ldots ,c_k$ by the normal subgroup generated by  the element 
$ a_1b_1a_1^{-1}b_1^{-1}\ \ldots $ $a_g b_g a_g^{-1}b_g^{-1}d_1c_1d_1^{-1}\ldots d_kc_k d_k^{-1}$,  that is,

\begin{center}
\includegraphics[width=1\columnwidth]{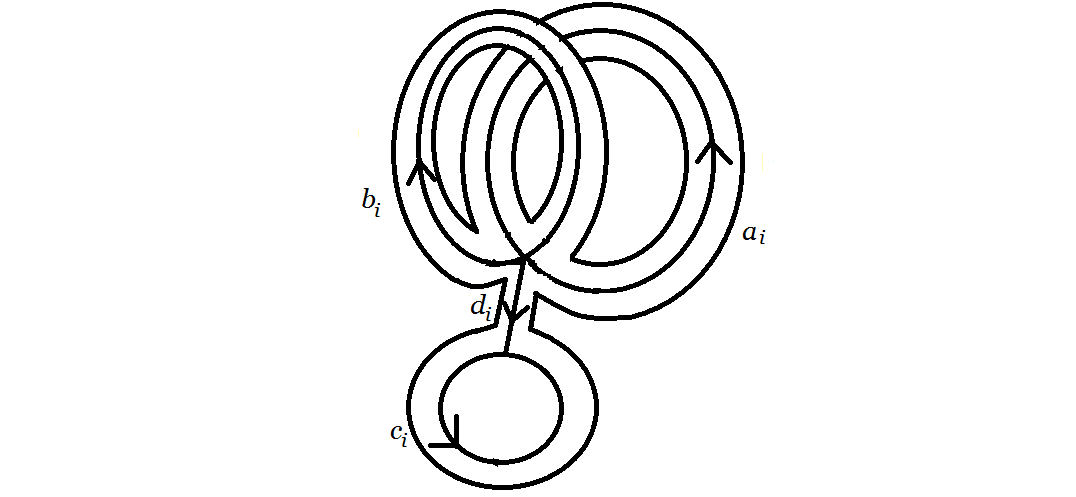}
\end{center}

\begin{center}
Figure 5.23
\end{center}

\begin{equation}\notag
\begin{split}
\pi_1(M)= &<a_1,b_1,\ldots ,a_g,b_g,c_1,d_1,\ldots ,c_k,d_k \  \vert \\
&\quad  a_1b_1a_1^{-1}b_1^{-1}\ldots a_gb_ga_g^{-1}b_g^{-1}d_1c_1d_1^{-1}\ldots d_kc_kd_k^{-1},d_1,d_2,\ldots ,d_k>
\end{split}
\end{equation}
or
$$ \pi_1(M)=\ <a_1,b_1,\ldots ,a_g,b_g,c_1,\ldots ,c_k \ | \  a_1b_1a_1^{-1}b_1^{-1}\ldots a_gb_ga_g^{-1}b_g^{-1}c_1\ldots c_k>$$

Thus $\pi_1(M)$ is a group with $2g+k-1$ generators. 

Similarly if $M=U_{g,k}$, a compact nonorientable surface with genus $g$ and $k$ boundary components. This has a cell decomposition with $(1+k)$ 0-cells, $(g+2k)$ 1-cells and one 2-cell. As done above taking $X$ to be wedge of $g$ circles and $k$ copies of $Q$ space, we attach the annulus $A= \D^2 \setminus B$ onto $X$ so that $X\cup A$ can be embedded in $\R^3$. 

Let $a_i$ denotes the generator for each $g$ circles and $\{c_i,d_i\}$ denotes generator for each $k$ copies of $Q$ space in $X$ with specified orientation as in the Figure 5.24 below.

\begin{center}
\includegraphics[width=1\columnwidth]{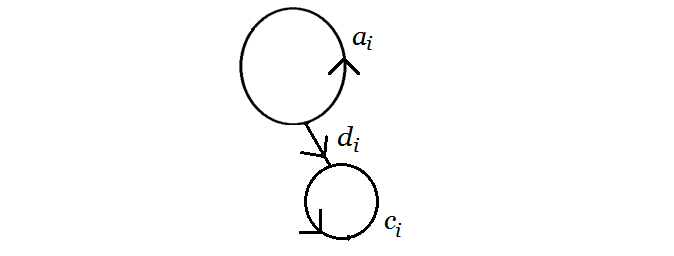}
\end{center}

\begin{center}
Figure 5.24
\end{center}

We attach $A$ onto $X$ as indicated in the Figure 5.25. Again if $\alpha$  represents a generator of $\pi_1(\s^1)$ then
$$f_{\#}\circ \alpha = a_1^2a_2^2\ldots a_g^2d_1c_1d_1^{-1}\ldots d_kc_kd_k^{-1}.$$

Thus by Theorem \ref{ch5,sec2,th1} $\pi_1(M)$ is the quotient of the free group on the $g+k$ generators $a_1,\ldots ,a_g,c_1,\ldots ,c_k$ by the least normal subgroup generated by the element $ a_1^2a_2^2...a_g^2d_1c_1d_1^{-1}$ $\ldots d_kc_kd_k^{-1}$, that is,
\begin{center}
\includegraphics[width=1\columnwidth]{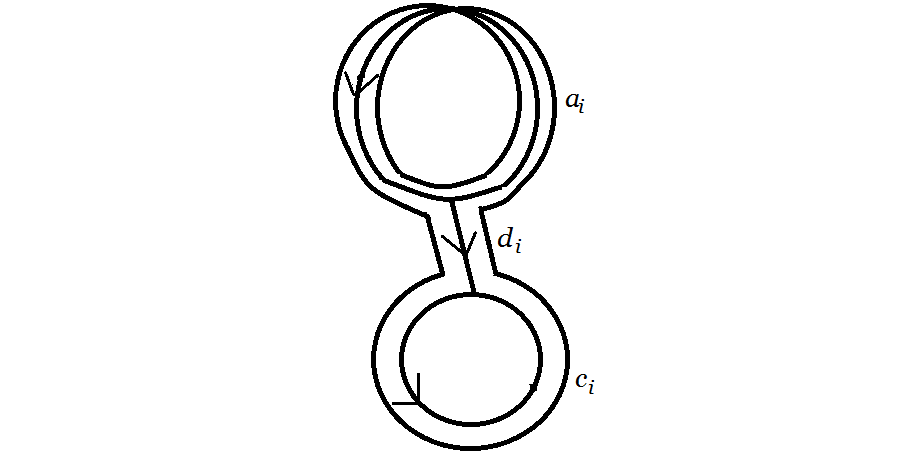}
\end{center}

\begin{center}
Figure 5.25
\end{center}

\begin{equation}\notag
\begin{split}
\pi_1(M)= &<a_1,\ldots ,a_g,c_1,d_1,\ldots ,c_k,d_k \ | \\
& \quad  a_1^2a_2^2\ldots a_g^2d_1c_1d_1^{-1}\ldots d_kc_kd_k^{-1},d_1,d_2,\ldots ,d_k>
\end{split}
\end{equation}
or
$$\pi_1(M)=\ <a_1, \ldots ,a_g,c_1, \ldots ,c_k \ | \   a_1^2a_2^2 \ldots a_g^2c_1 \ldots c_k> \quad$$

Thus $\pi_1(M)$ is a group with $g+k-1$  generators.

Having determined the fundamental group of a compact surface we can write its first homology group by making the fundamental group abelian and writing it additively using  Theorem \ref{ch3,sec6,th3}. 

If $M=\Sigma_{g.k}$ the homology group is thus a  abelian group with $2g+k-1$ generators, the generators will now satisfy the relation $c_1+ \ldots +c_k=0$. If $k=0$ the homology group is a free abelian group with $2g$ generators and is  isomorphic to $\Z^{2g}$.

In case $M=U_{g,k}$ the homology group is a abelian group with $g+k-1$ generators, that satisfy the relation $2a_1+ \ldots +2a_g+c_1+ \ldots +c_k$. If $k=0$ then the relation $2(a_1+ \ldots +a_g)=0$ shows that $M$ has a torsion element of order 2, since there is no other  relation the homology group is isomorphic to $\Z_2 \oplus \Z \oplus \ldots \oplus \Z \  (g-$1 copies of $\Z$) or $\Z_2 \oplus\Z^{g-1}$. 

First note that if $M=U_g$, then its first homology group contains a torsion element of order two, whereas the first homology group of $\Sigma_g$ is torsion free. Therefore, no $U_g$ can be homeomorphic to a $\Sigma_g$. Thus if a surface admits an oriented presentation then it is homeomorphic to a sphere or a connected sum of tori.

Next if  $M=\Sigma_g$, then its first homology group is $\Z^{2g}$. Thus if $\Sigma_g$ is homeomorphic to $\Sigma_k$ then
$$2g=2k$$ 
that is $$  \quad g=k .\ $$

Similarly, if $U_g$ is homeomorphic to $U_k$ then again we have $g=k$.

Thus orientability, genus and Euler characteristic of a compact surface is topological invariant. Since the number of connected components of the boundary is topological invariant, we have the following
\begin{theorem}
Let M be a compact surface witk k, (k $ \ge  0$)  boundary components. Then M is homeomorphic to precisely one of the following:\\
$(1)\ \Sigma_{g,k} \	 \	  g,k \ge 0$  \\
$(2)\ U_{g,k} \	\	g\ge 1 ,k\ge 0 $
\end{theorem}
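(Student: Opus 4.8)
The plan is to split the statement into an \emph{existence} part --- every compact surface $M$ with $k$ boundary components is homeomorphic to some $\Sigma_{g,k}$ or some $U_{g,k}$ --- and a \emph{uniqueness} part --- no two surfaces on the list are homeomorphic, so the homeomorphism type on the list is unique. The existence half rests on the decomposition and the reduction operations developed in this chapter; the uniqueness half rests on the invariants (orientability, genus, Euler characteristic, number of boundary components) computed in the previous sections.

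For existence I would start from the fact that $M$ is triangulable (Theorem \ref{ch2,sec3, thm7}) and invoke the decomposition $M = D' \cup D \cup \bigcup_{i=1}^{s} S_i$, i.e.\ $M$ presented as a disk $\widetilde D$ with $s$ rectangular strips attached, together with the complementary disk $D'$, each strip being either \emph{annular} (orientation reversing on both identified edges) or a \emph{twisted} strip / cross-cap. When $\partial M \neq \emptyset$, the earlier discussion lets us arrange that $k$ of the strips are plain annular strips attached along pairwise disjoint boundary segments, one per boundary component, contributing only the boundary circles. It then remains to normalise the remaining strips: Operation $\alpha$ pushes every intertwined pair of annular strips into an isolated handle; Operation $\beta$ pushes every intertwined annular--twisted pair into twisted strips; and the identity $T \# P^2 \cong P^2 \# P^2 \# P^2$ (Theorem \ref{ch5,sec1,th2}) lets us trade any remaining handle for three cross-caps whenever at least one cross-cap is present. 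This process terminates, since each $\beta$-move and each application of the torus-to-three-crosscaps identity strictly decreases the number of annular strips while $s$ stays bounded. The result is a representation in which either every non-boundary strip is a handle --- cutting along the dotted curve and re-identifying $D'$ gives $\Sigma_{g,k}$ exactly as in the closed-surface argument --- or every non-boundary strip is a cross-cap, giving $U_{g,k}$.

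For uniqueness I would quote the invariants already in hand. The number $k$ of connected components of $\partial M$ is a topological invariant. Orientability is a topological invariant: by the abelianisation formula $H_1 \cong \mathrm{Ab}(\pi_1)$ (Theorem \ref{ch3,sec6,th3}) applied to the fundamental-group presentations of the preceding section, $H_1(U_g)$ carries a $2$-torsion element while $H_1(\Sigma_g) \cong \mathbb Z^{2g}$ is torsion free, so no $U_g$ is homeomorphic to any $\Sigma_g$; capping off the $k$ boundary disks reduces the bordered case to the closed one without changing the orientability type or the genus appearing in the presentation. Within the orientable family, $\Sigma_{g,k} \cong \Sigma_{g',k'}$ forces $k=k'$ and then $\mathbb Z^{2g} \cong \mathbb Z^{2g'}$, hence $g=g'$; within the nonorientable family, $U_{g,k} \cong U_{g',k'}$ likewise forces $k=k'$ and $g=g'$ by comparing the free rank of $H_1$ --- equivalently one compares the Euler characteristics $\chi(\Sigma_{g,k}) = 2-2g-k$ and $\chi(U_{g,k}) = 2-g-k$, which are topological invariants. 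Combining orientability type, genus, and boundary count shows the list has no repetitions and no coincidences across the two families, so $M$ is homeomorphic to \emph{precisely one} surface on it.

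The main obstacle I expect is the bookkeeping in the existence step: one must check that Operations $\alpha$, $\beta$ and the $T \# P^2 \cong 3P^2$ substitution can always be carried out compatibly with the $k$ boundary strips --- that is, that sliding a strip along $\partial(\widetilde D \cup \cdots)$ never obstructs a subsequent move --- and that the reduction genuinely terminates in one of the two canonical forms rather than stalling in a mixed handle/cross-cap configuration. Turning the informal ``slide a strip along the boundary'' moves into honest homeomorphisms, and exhibiting a termination measure (such as the number of annular strips, which $\beta$ and the torus identity strictly decrease), is where the real work lies; once the normal form is reached, the uniqueness half is essentially a matter of invoking the invariants already established.
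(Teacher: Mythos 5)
Your proposal is correct and follows essentially the same route as the paper: existence via the disk-with-strips decomposition and Operations $\alpha$, $\beta$ (together with the $T\#P^2\cong 3P^2$ reduction) to normalise to a pure-handle or pure-crosscap form, and uniqueness via the topological invariance of the boundary count, orientability (detected by $2$-torsion in $H_1\cong\mathrm{Ab}(\pi_1)$), and genus (read off from $\mathrm{rank}\,H_1$ or from $\chi$). The paper in fact states this final theorem without a formal proof, regarding it as the summary of exactly these preceding developments, so your reconstruction is precisely what was intended.
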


.

\chapter{Classification of  Noncompact 2-Manifolds}\label{ch6}

\section{Introduction and Basic Definitions}

In this chapter we shall discuss a topological classification of noncompact surfaces. We begin by defining the ideal boudary of a surface.

\textbf{Definition:}
Let $S$ be a noncompact surface then $S$ can be written as  an increasing sequence of its compact subspaces
$$C_1 \subset C_2 \subset \ldots \subset S$$
with $C_i \subset int(C_{i+1})$. Each connected component $Q$ of $C_n^c$, complement of $C_n$ in $S$,  is contained in some component of complement of $C_{n-1}$ in $S$. A decreasing sequence $q=Q_1\supset Q_2 \supset Q_3 \supset \ldots $ of connected components of $C_1^c\supset C_2^c\supset C_3^c\supset  \ldots $ respectively, whose closure  in $S$ is not compact but has a compact boundary,  is called an end of $S$.

The above definition is independent of the choice of sequence of compact sets $C_n$. Infact if there is  another sequence $\{ D_k \}$ of compact subsets then there is a unique sequence $p=P_1\supset P_2 \supset P_3 \supset \ldots $ with respect to $D_n$ such that for any  $p$  there exists a $q$  such that $P_q \subset Q_p$ and vice versa. And we say that the two ends are equivalent.  Let $q^*$ denotes the equivalence class of end containing $q$.

Another equivalent definition is given by Ahlfors as follows

\textbf{Definition:}
An end  is a non empty collection $q$ of non empty regions $Q$, connected open sets, in $S$, whose closure in $S$ is not compact but has a compact boundary, which satisfies the following conditions\\
(1) If $Q_0 \in q$ and $Q \supset Q_0$ , then $Q \in q$. \\
(2) If $Q_1,Q_2 \in q$, there exists  a $ Q_3 \subset Q_1 \cap Q_2$ such that $Q_3 \in q$. \\
(3) The intersection of all closures $\overline{Q}, \  Q \in q$ is empty.

It is evident from the above definitions that for any compact subset $A$ of $S$, there exists  some $Q_i$ such that $Q_i\cap A = \phi$. In other words, the sequence of regions does not have any common point.

\textbf{Definition:}
The  ideal boundary $B(S)$ of a surface $S$ is the topological space having the equivalence class of ends of $S$ as elements.

 For any region $Q \subset S$ whose boundary in $S$ is compact, we define $B(Q)$ to be the set of all  equivalence classes  of ends $q^* \in B(S)$ such that $Q_n \subset Q$ for some $Q_n \in q$. Also if $Q$  is contained in a compact set of $S$ then $B(Q)=\phi$. For $M=S \cup B(S)$, a basis for the topology of $M$ consists of all open sets of $S$ and all sets of the form $Q \cup B(Q)$.

\textbf{Example:}
Examples of open surfaces include: the Euclidean plane, a sphere punctured at a point, a sphere with infinite handles attached, the Euclidean plane with one cross cap, etc.

We can construct every open surface from the following five bordered surfaces, three sphere with one, two and three holes and a torus and projective plane, each with two holes.
\begin{center}
\includegraphics[width=2\columnwidth]{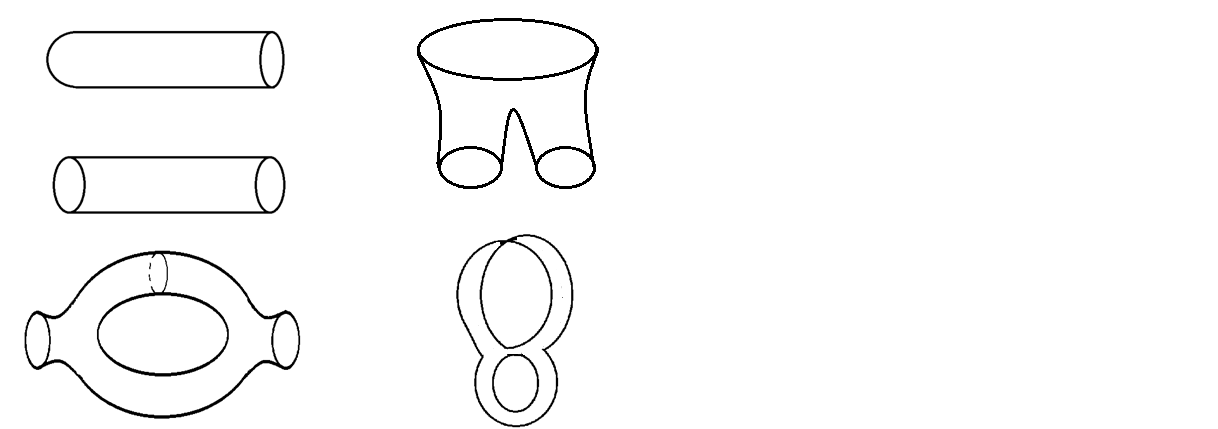}
\end{center}

\begin{center}
Figure 6.1 
\end{center}

\begin{center}
\includegraphics[width=1\columnwidth]{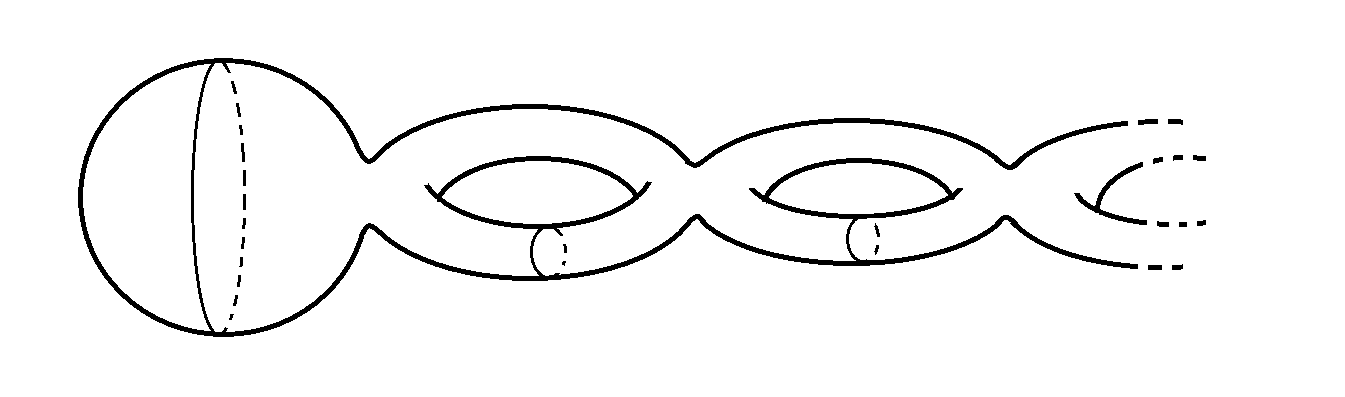}
\end{center}

\begin{center}
Figure 6.2 
\end{center}

\textbf{Definition:}
A $q^* \in B(S)$ represented by $q$ is called planar if the sets $Q_n \in q$ are planar for all except finitely many $n$. And $q^*$ is called orientable if the sets $Q_n \in q$ are orientable for all  except finitely many $n$.

\textbf{Definition:}
A surface $S$ is of infinite genus if there is no compact subset $A$ of $S$ such that $S\setminus A$ is of genus zero. In contrary, if $S\setminus A$ is of genus zero, then the genus of $M$ is defined to be the genus of $A$.

\textbf{Definition:}
We define four orientability classes of a surface 
$S$ as follows:\\
(1) Orientable: If $S$ is orientable.\\
(2) Infinitely nonorientable: If there is no compact subset $A$ of $S$ such that $S\setminus A$ is orientable.\\
(3) Odd nonorientability: If $ S \setminus A$ is orientable for  some $A \subset S$, and the number of cross-caps in any compact subsurface containing $A$ is always odd.\\
(4) Even nonorientability: If $S \setminus A$ is orientable for  some $A \subset S$, and the number of cross-caps in any compact subsurface containing $A$ is always even.\\

\section{Some Preliminary Lemmas}

\textbf{Definition:}
A compact bordered subsurface $S'$ of $S$ is called a canonical subsurface if it has the following properties:\\
(1) The closure of each component $U$ of $S \setminus S'$ is noncompact and meets $S'$ in exactly one simple closed curve $ \partial(U)$. \\
(2) Every component of $S\setminus S'$ is either planar or of infinite genus and is either orientable or infinitely nonorientable.

From (2) it follows that, if $S$ has finite genus, then $S'$ has the same genus and is orientable if and only if $S$ is. Whenever, $S$ is of odd or even nonorientability class then $S'$ will contain a projective plane or a Klein bottle respectively, with boundary.

\begin{theorem}
Every open surface has a canonical exhaustion that is a collection $S_1,S_2,S_3, \ldots $ of canonical subsurfaces of S such that $S_1 \subset S_2 \subset \ldots  \subset S$, each contained in the interior of the one following it.
\end{theorem}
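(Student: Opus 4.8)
The plan is to produce the canonical exhaustion in two stages: first extract from a triangulation of $S$ a crude exhaustion by compact bordered subsurfaces, and then repair each term into a canonical subsurface by a few elementary enlargements, choosing the terms far enough apart in the crude exhaustion that nestedness comes for free. By triangulability (Theorem~\ref{ch2,sec3, thm7}) fix a triangulation $K$ of $S$; as $S$ is second countable, $K$ has countably many simplices, hence is exhausted by finite connected subcomplexes $L_1\subset L_2\subset\cdots$. Taking closed regular neighborhoods of the $L_n$ (say second barycentric neighborhoods) and passing to a cofinal subsequence in the standard way, we obtain an exhaustion $R_1\subset R_2\subset\cdots$ of $S$ by connected compact bordered subsurfaces with $R_n\subset\operatorname{int}R_{n+1}$ and $\bigcup_nR_n=S$. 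The defect is that a given $R$ among these may have complementary components with compact closure, or with disconnected boundary, or of finite nonzero genus, or of finite nonorientability.

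So fix a connected compact bordered subsurface $R\subseteq S$: its boundary is a finite disjoint union of circles and $S\setminus R$ has finitely many components $U_1,\dots,U_r$, and I would enlarge $R$ by three moves, each of which adjoins a compact connected piece (hence keeps $R$ compact and connected). \emph{(a) Filling}: if some $\overline{U_j}$ is compact, replace $R$ by $R\cup\overline{U_j}$; this strictly decreases the number of complementary components. \emph{(b) Tubing}: if some $U_j$ meets $\partial R$ in two or more circles, choose disjoint arcs in $\overline{U_j}$ joining up these circles, thicken them to disjoint bands, and adjoin the bands; then $U_j$ with the bands removed is again connected and noncompact, now meets $R$ in a single circle, and the total number of boundary circles of $R$ strictly decreases. \emph{(c) Coring}: if some $U_j$ fails the alternative ``planar, or of infinite genus with orientability class orientable or infinitely nonorientable'', adjoin a compact connected subsurface $W\subseteq U_j$ that contains a collar of $\partial U_j$ and carries all the finite topology of $U_j$, so chosen that every component of $U_j\setminus W$ is either planar --- hence orientable --- or of infinite genus with orientability class orientable or infinitely nonorientable.

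The repair is then: apply (a) until every complementary component is noncompact; apply (c) at every offending $U_j$ at once (legitimate since the $U_j$, and hence the cores, are pairwise disjoint); apply (a) once more to swallow any discs just created; finally apply (b) until every complementary component meets the subsurface in a single circle. Each (a) strictly lowers the number of complementary components, each (b) strictly lowers the number of boundary circles, and (c) is used once, so the procedure halts; moreover, removing a band from a surface changes neither compactness of the closure, nor planarity, nor orientability, nor having infinite genus, so the final surface $\widehat R\supseteq R$ is a canonical subsurface. To assemble the exhaustion set $S_0=\emptyset$ and inductively, given the compact canonical subsurface $S_{n-1}$, pick $k(n)>k(n-1)$ with $S_{n-1}\subseteq\operatorname{int}R_{k(n)}$ and put $S_n:=\widehat{R_{k(n)}}$. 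Then $S_{n-1}\subseteq\operatorname{int}R_{k(n)}\subseteq\operatorname{int}S_n$, each $S_n$ is a canonical subsurface, and $\bigcup_nS_n\supseteq\bigcup_nR_{k(n)}=S$, so $\{S_n\}$ is a canonical exhaustion.

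The real obstacle is the coring move (c): one must show that a complementary component $U$ of ``finite'' type contains a single compact core absorbing all of its genus \emph{and} all of its nonorientability, which moreover can be taken to include a collar of the one boundary curve $\partial U$ so that swallowing it does not undo the boundary-connectedness already arranged. This uses the very definitions of ``infinite genus'' and of the (in)finite nonorientability classes to extract compact sets whose complements in $U$ are respectively planar and orientable, and then the classification identity $\Sigma\#\p2\cong\p2\#\p2\#\p2$ (Theorem~\ref{ch5,sec1,th2}) to see that, once the nonorientability has been pushed outward, the remaining genus can be collected into one compact piece without altering the orientability class of what lies outside it. Everything else --- filling, tubing, and the spacing argument --- is routine point-set and combinatorial surface topology.
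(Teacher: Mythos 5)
Your proof is correct and travels essentially the same road as the paper: both start from a triangulation $K$, exhaust $S$ by compact bordered polyhedral subsurfaces built from barycentric neighborhoods of finite subcomplexes, repair each piece so that conditions (1) and (2) of a canonical subsurface hold, and finally space the repaired pieces out along the crude exhaustion to get nestedness. The substantive difference is how condition (1) is arranged. The paper (Lemma~\ref{ch6,sec2,lm3}) uses an extremal argument: among all polyhedral compact bordered subsurfaces of any $b^nK$ containing a fixed $b^nA$, pick one with the \emph{minimum} number of boundary circles, and then show that a compact complementary component or a complementary component meeting the subsurface in two circles would allow a subsurface with strictly fewer boundary circles, contradicting minimality. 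You instead give a constructive reduction --- filling compact complementary components, tubing together boundary circles that lie in the same complementary component, with an explicit termination argument (number of complementary components, then number of boundary circles, strictly decrease). These are two versions of the same idea; yours makes the descent explicit, the paper's packages it as a minimization.

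On condition (2) you are actually slightly more careful than the text. The paper dispatches (2) with a one-line remark preceding the lemmas (``if a component $U$ of $S\setminus S_n$ were of finite but non-zero genus and/or finitely nonorientable we could add some compact portion of $U$ to $S_n$''), without noting that this absorption can disconnect $U\setminus W$ and hence re-break condition (1). You anticipate this: coring (c) is done before tubing (b), and (b) is run again afterwards precisely so that any multiplicity in $\partial$ created by (c) gets re-tubed, and you require the core $W$ to contain a collar of $\partial U$. One small imprecision in your write-up: after coring, you ``apply (a) once more''; strictly, (a) must again be iterated to exhaustion, since $U\setminus W$ can have several compact components, not one. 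And, like the paper, you leave the heart of (c) --- that the finite genus and the finite nonorientability of $U$ can both be swept into a single compact connected subsurface carrying a collar of $\partial U$, with the identity $\Sigma\#\p2\cong\p2\#\p2\#\p2$ used to normalize what remains outside --- at the level of a sketch; that is the same granularity the paper works at.
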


We shall construct a sequence of compact subsurfaces of $S$ satisfying the condition (1) above. And condition (2) can be satisfied by arbitrary $S_n$, because if a component $U$ of $S \setminus S_n$ were of finite but non zero genus and/or finitely nonorientable we could add some compact portion of $U$ to $S_n$.

\begin{lemma}\label{ch6,sec2,lm2}
Let $S$ be an open surface and $A$ be a finite connected subcomplex of a triangulation $K$ of $S$. Then there is a compact bordered subsurface of $S$ containing $|A|$.
\end{lemma}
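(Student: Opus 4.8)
The plan is to take the required bordered subsurface to be the second barycentric neighborhood $N(A)$ of $A$ itself, as introduced in Chapter \ref{ch2}. First I would record two easy preliminaries. Since $A$ is a subcomplex of $K$, its polytope $|A|$ is a subcomplex of $K$ in the strong sense that $|A|\cap\sigma$ is a face of $\sigma$ for every simplex $\sigma$ of $K$; consequently $|A|$ is also (the polytope of) a subcomplex of $bK$ and of $b^2K$, and in fact a \emph{full} subcomplex of $b^2K$ — tracing through the barycentric construction, if every vertex of a simplex $\tau$ of $b^2K$ lies in $|A|$, then $\tau\subset|A|$. Secondly, because $A$ is finite and $K$ satisfies the local finiteness axiom, only finitely many simplexes of $b^2K$ meet $A$; hence $|N(A)|$ is the polytope of a finite subcomplex of $b^2K$, so it is compact, and it visibly contains $|A|$.

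The main work is to show that $|N(A)|$ is a $2$-manifold with boundary. Here I would use the disk decomposition recorded in Chapter \ref{ch2}: writing $A^{(0)},A^{(1)},A^{(2)}$ for the vertices, edges and $2$-simplexes of $A$,
$$|N(A)|=\bigcup_{v\in A^{(0)}}N(v)\;\cup\;\bigcup_{e\in A^{(1)}}N(b_e)\;\cup\;\bigcup_{\sigma\in A^{(2)}}N(b_\sigma),$$
a union with pairwise disjoint interiors. Each piece is a closed disk: this uses that $S=|K|$ is a surface \emph{without} boundary, so by the same local arguments as in Theorem \ref{ch5,th2} every $1$-simplex of $K$ is a face of exactly two $2$-simplexes and the link of every vertex of $K$ is a circle; thus $N(v)$ is a cone on a circle, $N(b_e)$ is the union of two triangular chunks meeting along $e$, and $N(b_\sigma)$ lies in the interior of $\sigma$ — all disks. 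Since $|A|$ is connected and the chunks coming from incident simplexes of $A$ share boundary arcs, $|N(A)|$ is connected.

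It then remains to verify the local model at each point $x\in|N(A)|$. If $x$ lies in the interior of $|N(A)|$ relative to $S$, then $x$ already has a Euclidean neighborhood because $S$ is a surface. Otherwise I would examine the link of $x$ in $b^2K$: the $2$-simplexes of $b^2K$ through $x$ are arranged cyclically around $x$ (since $S$ has no boundary), and the sub-collection of those lying in $N(A)$ — equivalently, since $|A|$ is full in $b^2K$, those having a vertex in $|A|$ — forms, I claim, a single nonempty arc of this cycle which is not the whole cycle. Granting this, $x$ has a half-disk neighborhood, so $|N(A)|$ is a $2$-manifold with boundary; its boundary is nonempty because $|N(A)|$ is compact while $S$ is not (a compact $2$-manifold without boundary embedded in the connected surface $S$ would be both open and closed, hence all of $S$). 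By Theorem \ref{ch1,sec2,th1} this boundary is a $1$-manifold, so $|N(A)|$ is the desired compact bordered subsurface containing $|A|$.

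The step I expect to be the real obstacle is the link computation in the last paragraph — showing that $N(A)$ never pinches (two disks meeting only at a point or in a disconnected set) and that around each of its boundary points only a connected, proper fan of triangles of $b^2K$ is used. This is the simplicial incarnation of the regular neighborhood theorem; the passage to $b^2K$, which makes $|A|$ a full subcomplex, together with the explicit disk decomposition above, are precisely what reduce it to a finite, checkable case analysis (according to whether $x$ lies in the interior of some $N(b_\sigma)$, on an interface $N(v)\cap N(b_e)$ with $e\in A^{(1)}$, or on a free face of one of the chunks), but that analysis still has to be carried out with care.
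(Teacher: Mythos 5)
Your approach is correct in outline but takes a genuinely different route from the paper's. The paper works one barycentric subdivision down: it sets $P=\bigcup_{v\in A^0}b(St(v))$, the union of the barycentric stars in $bK$ of only the \emph{vertices} of $A$, observes that $P$ is a finite connected subcomplex of $bK$ containing $bA$, and then simply asserts that $|P|$ is the desired compact bordered subsurface. You instead take the SB-neighborhood $N(A)$ of the whole subcomplex $A$ in $b^2K$; this is the tool the paper itself introduces (and uses, e.g., to show that $N(T)$ is a disk when $T$ is a tree), and your disk decomposition $|N(A)|=\bigcup_{\tau\in A}N(b_\tau)$ with pairwise disjoint interiors is the correct analogue of the decomposition of a single $2$-simplex given in Chapter~\ref{ch2}. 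Both constructions are regular neighborhoods; the paper's $P$ is one subdivision cheaper and works with just the $0$-skeleton of $A$, while your $N(A)$ is more systematic and exploits fullness of $b^2A$ in $b^2K$, which is exactly what makes the local analysis tractable. The step you flag as not fully carried out — that each boundary point has a half-disk neighborhood, i.e.\ that only a connected proper fan of $b^2K$-triangles is used around it — is the same verification the paper omits entirely: the paper's last sentence, ``Thus the polyhedron $|P|$ is the desired subsurface of $S$,'' is not argued either. In fact you supply strictly more supporting structure than the paper does (compactness from local finiteness, containment of $|A|$, the explicit disk decomposition, and the nonemptiness of the boundary via the open-and-closed argument on the connected noncompact $S$, which correctly invokes Theorem~\ref{ch1,sec2,th1}), so there is no gap here that the paper fills and you miss.
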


\begin{proof}
Let $P$ be the union of the $b(St(v))$, barycentric star of $v$,  of the vertices of $A$. Then $P$ is compact connected subcomplex of  $bK$ containing $bA$. Thus the polyhedron $|P|$ is the desired subsurface of $S$.
\end{proof}

\begin{lemma}\label{ch6,sec2,lm3}
Let S and A be as above then there exist a compact bordered subsurface L of S satisfying the condition (1) in the above definition.
\end{lemma}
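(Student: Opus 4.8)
The plan is to start from the compact bordered subsurface supplied by Lemma~\ref{ch6,sec2,lm2} and to enlarge it in two stages: first by absorbing the complementary pieces that happen to have compact closure, and then by attaching a few bands so as to amalgamate all the boundary curves facing one and the same noncompact complementary piece.

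First I would invoke Lemma~\ref{ch6,sec2,lm2} to obtain a compact bordered subsurface $L_0 \supseteq |A|$; from its construction $L_0=|P|$ is the polyhedron of a finite connected subcomplex of $bK$, so $L_0$ is connected and $\partial L_0$ is a compact $1$-manifold, i.e.\ a finite disjoint union of simple closed curves $c_1,\dots,c_r$. Since each $c_j$ has a collar in $S$ on the side away from $L_0$, it lies on the frontier of exactly one component of $S\setminus L_0$, and as $S$ is connected every component has nonempty frontier; hence $S\setminus L_0$ has at most $r$ components $U_1,\dots,U_m$, each closure $\overline{U_i}$ being a bordered subsurface (its frontier is a union of the $1$-submanifolds $c_j$). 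I then set $L_1=L_0\cup\bigcup\{\,\overline{U_i}:\overline{U_i}\text{ compact}\,\}$. Adjoining each such $\overline{U_i}$ --- connected and attached to $L_0$ along circles of $\partial L_0$ --- preserves compactness, connectedness and the manifold-with-boundary structure, and does not disturb the other $U_j$; so $L_1$ is a compact connected bordered subsurface with $|A|\subseteq L_1$, and every component of $S\setminus L_1$ now has noncompact closure.

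Second, fix a component $U$ of $S\setminus L_1$; it meets $L_1$ in some of the curves, say $c_{j_1},\dots,c_{j_k}$, all lying on the connected bordered surface $\overline{U}$. If $k\ge 2$ I would choose inside $\overline{U}$ properly embedded arcs $\gamma_1,\dots,\gamma_{k-1}$ with pairwise disjoint simplicial regular neighbourhoods $B_1,\dots,B_{k-1}$ realizing a spanning tree on the ``vertices'' $c_{j_1},\dots,c_{j_k}$ (each $\gamma_t$ joining two of them). Doing this independently in every such $U$ and letting $L$ be $L_1$ together with all these bands (smoothing corners, i.e.\ passing to a simplicial regular neighbourhood), $L$ is a compact connected bordered subsurface of $S$ containing $|A|$. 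The verification of condition~(1) rests on two elementary surgery facts: attaching a band to a surface along two arcs lying on two \emph{distinct} boundary circles amalgamates those circles into one; and, dually, cutting a surface along a properly embedded arc with endpoints on two distinct boundary circles leaves it connected while merging those circles. Applying the first fact to $L_1$ one edge of the tree at a time, the $k$ curves facing $U$ merge into a single simple closed curve, which is precisely $\overline{U'}\cap L$ for the resulting component $U'$ of $S\setminus L$; applying the second fact repeatedly to $\overline{U}$ (at each stage the arc in play joins two distinct current boundary circles, since a tree has no multi-edges) shows $U'$ is connected. Finally $\overline{U'}$ is noncompact, for $\overline{U}=\overline{U'}\cup B_1\cup\dots\cup B_{k-1}$ with the $B_t$ compact, so $\overline{U'}$ compact would force $\overline{U}$ compact, contrary to the previous step.

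The main obstacle is the band step: one must ensure that arcs with the required spanning-tree incidence pattern exist inside each $\overline{U}$ (this uses connectedness of $\overline{U}$ and that the relevant boundary components of $\overline{U}$ are exactly the $c_{j_i}$), that the attachments can be carried out simplicially so that $L$ is a genuine bordered surface, and that cutting $\overline{U}$ along the whole tree of arcs preserves connectedness --- which is where the ``two distinct boundary circles at every stage'' bookkeeping is essential. By comparison the finiteness statements of the first stage are routine once one notes that $L_0$ is a finite subcomplex of $bK$ with bicollared boundary.
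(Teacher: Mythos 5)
Your proof is correct in outline but takes a genuinely different route from the paper's. The paper argues by an extremal principle: it considers the collection of all compact bordered polyhedral subsurfaces of $S$ that are subcomplexes of some $b^nK$ and contain $b^nA$, picks one, $|P|$, with the minimum number of boundary components, and then derives a contradiction if $|P|$ violates condition~(1) --- a compact complementary piece could be absorbed, or two boundary circles facing the same noncompact complementary piece could be joined by a single simple polygonal arc $\sigma$, whose SB-neighbourhood $N(\sigma)$ attaches as one strip and merges those two circles, either move giving a competitor $P'\in Q$ with strictly fewer boundary circles. You instead construct the subsurface directly from Lemma~\ref{ch6,sec2,lm2}: absorb all compact complementary pieces to get $L_1$, then in every remaining complementary piece attach a full spanning tree of disjoint bands so as to amalgamate the facing boundary circles into one. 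The trade-off is clear: the extremal argument only ever needs to exhibit one corrective move, so virtually no bookkeeping is required (in particular no care is needed about pairwise disjointness of several bands, or about iterating the surgery simplicially); your direct construction must carry out all the band attachments simultaneously and therefore has to verify the existence of a disjoint, simplicially realizable spanning tree of arcs inside each $\overline{U}$ and that cutting $\overline{U}$ along the whole tree keeps it connected --- you flag these correctly but do not fully discharge them. Both approaches rest on the same two local surgery facts (a band along arcs in two distinct boundary circles merges the circles; cutting along such an arc keeps the surface connected), and your absorbing step mirrors the paper's observation that compact complementary components can be swallowed. So the proposal is sound, just heavier in bookkeeping than the paper's minimality argument, which quietly sidesteps exactly the details you list as the main obstacle.
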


\begin{proof}
For every $n$ consider the set $Q$ of all polyhedrons $|P_{\al}|$ such that each $P_{\al}$ is a subcomplex of $b^nK$ and contains $b^nA$. By above lemma $Q$ is non empty. Take $|P| \in Q$ be the polyhedron with the minimum  number of boundary components. Let $P$  be the subcomplex of $b^iK$ containing $b^iA$ for some $i$. Also all the components of $S\setminus |P|$ are noncompact otherwise a  compact component could be added to $|P|$ reducing the number of boundary components.

Now if two boundary components $B_1  $ and $ B_2$ of $P$ belong to the same complementary component $U$ then we can  choose vertices $b_1$ and $b_2$ on $B_1$ and $B_2$ respectively and join them by a simple polygon $\sigma$, where $\sigma \setminus \{b_1,b_2 \}\subset U $. Consider $L=B_1 \cup \sigma \cup B_2$ as a 1-dimensional subcomplex of $b^iK$.

As  in Lemma \ref{ch6,sec2,lm2} taking $bP \cup bL$ as a subcomplex of $b^{i+1}K$, there is a subsurface of $S$. Let $P'$ be the corresponding subcomplex of $b^{i+2}K$. Writing $K$ for $b^iK$ and A for $P\cup L$ we can see that $ P'=N(A)$, second barycentric neighborhood of $A$ in $K$. Then the boundary of $N(L)$ in $U$  has a single component.

Thus $P'$ has fewer boundary components than $P$ which is a contradiction as $P$ has the minimal number of boundary components. Hence $|P|$ satisfies condition (1).
\end{proof}

Proof of the theorem

\begin{proof}
For the construction of sequence $\{S_i \}$ of canonical subsurfaces, we first find a sequence of triples $\{A_i,n_i,P_i \}$, where $A_i$ is a finite connected subcomplex of $K$, and $n_i\in \N$, satisfying the following properties:\\
(a) $|P_i|$ satisfies condition  (1)\\
(b) $b^{n_i}A_i \subset P_i$ \\
(c) $P_i \subset b^{n_i }A_{i+1}$ \\
(d) $|P_i|$ is contained in interior of $|P_{i+1}|$

We begin by choosing $A_1$ as a single vertex. Then by Lemma \ref{ch6,sec2,lm3}, given $A_i$ there exists a compact bordered subsurface $S_i$ of $S$ satisfying condition (1) with the polyhedron $|P_i|$ and $P_i$  contains $b^{n_i}A_i$. Also we define $A_{i+1}= \{ \sigma^2 \in K |  b^{n_i}\sigma^2 \cap P_i \ne \phi\}$. Thus $P_i \subset b^{n_i}A_{i+1}$. And from (b) and (c) we have (d).
\end{proof}

We now show that the space $M=S \cup B(S)$ is compact. It follows that  $B(S)$  is compact, being a closed subset of $M$.

\begin{theorem}
The space M is compact.
\end{theorem}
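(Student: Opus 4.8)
The plan is to prove that every open cover of $M$ has a finite subcover, by a descent argument through the ideal boundary. I would first reduce to the case of a cover $\mathcal{U}$ by basic open sets, i.e.\ by open subsets of $S$ and by sets of the form $Q\cup B(Q)$ with $Q$ a region whose boundary in $S$ is compact; the key point enabling the reduction is that a point $q^{*}\in B(S)$ lies in no open subset of $S$, so every member of $\mathcal{U}$ containing $q^{*}$ must have the form $Q\cup B(Q)$ with $q^{*}\in B(Q)$. Next I would fix a canonical exhaustion $S_{1}\subset S_{2}\subset\cdots$ of $S$ (which exists by the theorem on canonical exhaustions above): each $S_{n}$ is a compact bordered subsurface, $S_{n}\subset\operatorname{int}(S_{n+1})$, $\bigcup_{n}S_{n}=S$, and each component of $S\setminus S_{n}$ has noncompact closure and meets $S_{n}$ in a single simple closed curve; since $\partial S_{n}$ is a finite disjoint union of circles there are only finitely many such components, say $U_{n,1},\dots,U_{n,k_{n}}$.

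The first substantive step is the structural identity $M=S_{n}\cup\bigcup_{j=1}^{k_{n}}\bigl(U_{n,j}\cup B(U_{n,j})\bigr)$ for every $n$. Beyond the obvious $S=S_{n}\cup\bigcup_{j}U_{n,j}$, this rests on the fact that every end escapes every compact set: if $q^{*}\in B(S)$ has a representative $Q_{1}\supset Q_{2}\supset\cdots$, then $\{S_{n}\cap\overline{Q_{m}}\}_{m}$ is a decreasing family of compact sets with empty intersection, so some $\overline{Q_{m}}$ is disjoint from $S_{n}$; then the connected set $Q_{m}$ lies in a single $U_{n,j}$, whence $q^{*}\in B(U_{n,j})$. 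A variant of this argument gives the descent inclusion I will need: for any component $V$ of $S\setminus S_{n}$,
$$V\cup B(V)\ \subset\ (\overline{V}\cap S_{n+1})\ \cup\ \bigcup_{j\in J}\bigl(U_{n+1,j}\cup B(U_{n+1,j})\bigr),$$
where $J$ is the finite set of indices with $U_{n+1,j}\subset V$; here one passes to the closure of $V$ precisely in order to swallow the finitely many boundary curves $\partial U_{n+1,j}\subset S_{n+1}$, and $\overline{V}\cap S_{n+1}$ is compact.

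Now I would argue by contradiction: suppose $\mathcal{U}$ has no finite subcover. Applying the structural identity with $n=1$ and noting that the compact set $S_{1}$ is covered by finitely many members of $\mathcal{U}$, some $V_{1}:=U_{1,j_{1}}$ must have $V_{1}\cup B(V_{1})$ not coverable by finitely many members of $\mathcal{U}$. Inductively, given such a $V_{n}$, apply the descent inclusion: since $\overline{V_{n}}\cap S_{n+1}$ is compact (hence finitely coverable) and $J$ is finite, some $V_{n+1}:=U_{n+1,j}$ with $V_{n+1}\subset V_{n}$ again has $V_{n+1}\cup B(V_{n+1})$ not finitely coverable. The resulting chain $V_{1}\supset V_{2}\supset\cdots$ is a decreasing sequence of connected complementary components with noncompact closures, compact boundaries, and $\bigcap_{n}\overline{V_{n}}=\emptyset$ (because $\overline{V_{n}}\subset S\setminus S_{n-1}$ for $n\geq2$), so it defines an end $q^{*}\in B(S)$. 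Pick $W\in\mathcal{U}$ with $q^{*}\in W$; then $W=Q\cup B(Q)$ and $q^{*}\in B(Q)$, so by equivalence of representatives some $V_{n}\subset Q$, and therefore $V_{n}\cup B(V_{n})\subset Q\cup B(Q)=W$ --- contradicting that $V_{n}\cup B(V_{n})$ is not finitely coverable. Hence $M$ is compact; as a corollary $B(S)$, being closed in $M$, is compact as well.

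I expect the main obstacle to be the two structural inclusions of the second paragraph rather than the descent itself: one must argue carefully that every end eventually enters a unique complementary component (via the finite-intersection property), and one must get the boundary bookkeeping exactly right in the descent inclusion --- recognizing that $V$ itself need not be the union of the smaller components (its frontier circles live in $S_{n+1}$), but $\overline{V}\cap S_{n+1}$ absorbs exactly that defect and is compact. Once those are established, the K\"onig-type descent and the final one-step contradiction are routine.
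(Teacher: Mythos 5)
Your proposal is correct and takes essentially the same approach as the paper: a K\"{o}nig-type descent through the complementary components of a canonical exhaustion, producing an end $q^{*}$ whose chain witnesses non-finite-coverability, contradicted by a basic neighborhood $Q\cup B(Q)$ of $q^{*}$. The paper states the inductive step (``similarly we can find a component $Q_{2}\subset Q_{1}$ of $S_{2}^{c}$ with the same property'') without justification; your reduction to basic covers, the structural identity for $M$, and the explicit descent inclusion with the compact piece $\overline{V}\cap S_{n+1}$ are exactly the bookkeeping needed to make that step and the end-verification rigorous, so your write-up is a correct and more careful version of the paper's argument rather than a different one.
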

\begin{proof}
Suppose $M$ has an open covering by the sets $\{ U_{\alpha} \}$, and $\{ S_i \}$ be a canonical exhaustion of $S$. If $M$ were not compact there is a component $Q_1$ of $S_1^c$ which does not have a finite subcovering. Similarly we can find a component, $Q_2 \subset Q_1$, of $S_2^c$ with the same property. In this way we obtain a sequence $Q_1\supset Q_2\supset Q_3 \supset \ldots$ of components that have no finite subcovering. This defines an end $q$ (say).

But $q \in U_{\alpha}$ for some $\alpha$. This implies $q \in Q \cup B(Q) \subset U_{\alpha}$ (some basic neighborhood of $q$). By definition some $Q_n\in q$ is contained in $Q \subset U_{\alpha}$, a contradiction to construction of $Q_n$.
\end{proof}

\begin{lemma}\label{ch6,sec2,lm5}
Let U and V be subsets of a surface S whose boundaries in S are compact. Then $B(U\cup V)=B(U)\cup B(V)$ and $B(U \cap V)= B(U)\cap B(V)$.
\end{lemma}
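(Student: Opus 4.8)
The plan is to unwind the definition of the ideal boundary via ends, using the key characterization that an equivalence class $q^* \in B(S)$ lies in $B(W)$ (for $W$ a region with compact boundary in $S$) precisely when some representative $Q_n \in q$ satisfies $Q_n \subset W$. So everything reduces to bookkeeping about which ends eventually ``funnel into'' $U$, into $V$, into $U \cup V$, and into $U \cap V$. Throughout I fix a compact exhaustion $C_1 \subset C_2 \subset \cdots$ of $S$ with $C_i \subset \mathrm{int}(C_{i+1})$, so that every end is a nested sequence $q = Q_1 \supset Q_2 \supset \cdots$ of components of the $C_n^c$.

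First I would prove $B(U \cup V) = B(U) \cup B(V)$. The inclusion $\supseteq$ is immediate: if $q^* \in B(U)$ then some $Q_n \subset U \subset U \cup V$, so $q^* \in B(U \cup V)$, and symmetrically for $B(V)$. For $\subseteq$, suppose $q^* \in B(U \cup V)$, so some $Q_n \subset U \cup V$. Since $Q_n$ is connected, one would like to conclude $Q_n \subset U$ or $Q_n \subset V$, but that is false in general — so instead I pass further down the sequence. The boundaries $\partial U$ and $\partial V$ in $S$ are compact, hence contained in some $C_m$; choosing $k \ge \max(n,m)$, every component $Q_k$ of $C_k^c$ that is contained in $Q_n \subset U \cup V$ is disjoint from $\partial U$ and $\partial V$, and being connected it therefore lies entirely inside $U$ or entirely inside the complement of $\overline{U}$; in the latter case, since $Q_k \subset U \cup V$ and $Q_k \cap \overline U = \emptyset$, we get $Q_k \subset V$. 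Either way some representative of $q^*$ sits inside $U$ or inside $V$, giving $q^* \in B(U) \cup B(V)$.

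Next, $B(U \cap V) = B(U) \cap B(V)$. Again $\subseteq$ is trivial since $U \cap V \subset U$ and $U \cap V \subset V$. For $\supseteq$, let $q^* \in B(U) \cap B(V)$: there are representatives with $Q_n \subset U$ and $Q_{n'} \subset V$; taking $\ell = \max(n,n')$ and using that the $Q_i$ are nested and each $Q_i$ is contained in a component of $C_{i-1}^c$, I get $Q_\ell \subset Q_n \cap Q_{n'} \subset U \cap V$, so $q^* \in B(U \cap V)$. The point to be careful about is that $Q_\ell$ really is simultaneously inside $Q_n$ and $Q_{n'}$ — this is exactly the nesting built into the definition of an end, together with the fact that a component of $C_\ell^c$ lies in a unique component of $C_i^c$ for each $i < \ell$.

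The main obstacle is the $\subseteq$ direction of the union formula, i.e.\ the step where connectedness of a single $Q_n \subset U \cup V$ does not by itself force it into one of the two pieces; the resolution is the ``shrink past the boundary'' trick, using compactness of $\partial U$ and $\partial V$ to find a level $k$ beyond which the tail components avoid both boundaries and hence are confined to one side. Once that lemma-within-the-proof is in hand, the rest is the routine unwinding described above. One should also remark that all four sets $B(U)$, $B(V)$, $B(U \cup V)$, $B(U \cap V)$ are well defined because $U \cup V$ and $U \cap V$ have compact boundary in $S$ (their boundaries being contained in $\partial U \cup \partial V$), which is needed for $B(\cdot)$ to make sense at all.
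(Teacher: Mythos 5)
Your proof is correct and follows the same strategy as the paper's: the ``shrink past the compact boundary'' step for $B(U\cup V)\subseteq B(U)\cup B(V)$ is exactly the idea used there, and the remaining inclusions are the same routine unwinding of the definition. If anything, your writeup is a bit more careful than the paper's: you make explicit that a connected $Q_k$ disjoint from $\partial U$ must lie in $\mathrm{int}(U)$ or in $S\setminus\overline U$, and you actually prove $B(U)\cap B(V)\subseteq B(U\cap V)$ via nesting of the $Q_i$, whereas the paper's paragraph for the intersection identity is garbled and effectively only addresses the trivial direction.
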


\begin{proof}
Let $q^* \in B(U\cup V)$ then for some $Q_n \in q$, $Q_n \subset U\cup V$. Since boundary of $U$ is compact there is a  $Q_i \subset Q_n$ and $Q_i \in q$ which does not intersects with boundary of $U$. Thus $Q_i$ is contained in $U$ or $V$, and $q^* \in B(U) \cup B(V)$. Also if $q^* \in B(U)\cup B(V)$  then some $Q_n \in q$ is contained in $U$ or $V$, that is, $Q_n \subset U\cup V$, so that $q^*\in B(U\cup V)$.

Also $q^* \in B(U\cap V)$ if and only if some $Q_n \subset U\cap V$ for some $Q_n \in q$, that is, $Q_n \subset U$ and $Q_n \subset V$ this implies $q^* \in B(U)$ and $q^* \in B(V)$. Thus we have $q^* \in B(U \cap V)$.
\end{proof}

\begin{lemma}\label{ch6,sec2,lm6}
If $S'$ is a subsurface of an open surface S, Then B($S'$) contains non planar and/or nonorientable ends if and only if $S'$ is  of infinite genus and/or infinitely nonorientable.
\end{lemma}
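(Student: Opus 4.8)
The plan is to split the ``and/or'' statement into two parallel equivalences and prove each: (i) $B(S')$ contains a non-planar end if and only if $S'$ is of infinite genus, and (ii) $B(S')$ contains a nonorientable end if and only if $S'$ is infinitely nonorientable. If $S'$ is compact then $B(S')=\phi$, $S'$ has finite genus, and $S'$ is not infinitely nonorientable, so everything is vacuous; hence assume $S'$ is an open surface and fix a canonical exhaustion $S_1' \subset S_2' \subset \ldots$ of $S'$ with $S_n' \subset \mathrm{int}(S_{n+1}')$, as furnished by the theorem on canonical exhaustions proved above. The key structural facts I will lean on are the defining properties of a canonical subsurface: each component of $S' \setminus S_n'$ is noncompact, meets $S_n'$ in a single simple closed curve, and is \emph{either planar or of infinite genus} and \emph{either orientable or infinitely nonorientable}.

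For the forward direction of (i): if $S'$ has infinite genus, then for every $n$ at least one component of $S' \setminus S_n'$ must be of infinite genus, since otherwise $S'$ would be the compact bordered surface $S_n'$ with planar pieces attached along boundary circles and would have finite genus. Moreover the infinite-genus components can be chosen nested: given an infinite-genus component $Q_n$ of $S'\setminus S_n'$, the components of $S'\setminus S_{n+1}'$ contained in $Q_n$ cover $Q_n$ outside the compact set $Q_n \cap S_{n+1}'$, so if each of them were planar then $Q_n$ would again have finite genus; pick $Q_{n+1}\subset Q_n$ of infinite genus. Then $Q_1 \supset Q_2 \supset \ldots$ have noncompact closures with compact boundaries, and their closures have empty intersection because the $S_n'$ exhaust $S'$; hence $(Q_n)$ represents an end $q$ with $q^\ast \in B(S')$, and since every $Q_n$ is non-planar, $q^\ast$ is non-planar by definition. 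The forward direction of (ii) is proved by the identical argument, reading ``orientable / infinitely nonorientable'' in place of ``planar / infinite genus'' and invoking the second clause of the canonical-subsurface definition.

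For the converse, suppose $q^\ast \in B(S')$ is non-planar, represented by $q = (Q_1 \supset Q_2 \supset \ldots)$, so that infinitely many $Q_n$ are non-planar. If $S'$ had finite genus there would be a compact $A \subset S'$ with $S'\setminus A$ of genus zero. By the remark that for any compact set some member of a nested end-defining sequence is disjoint from it, choose $i$ with $Q_i \cap A = \phi$; then $Q_i \subset S'\setminus A$ is planar, and by enlarging $i$ we may assume $Q_i$ is one of the infinitely many non-planar terms, a contradiction. Replacing ``planar / genus zero'' by ``orientable'' throughout gives the converse of (ii): a nonorientable end forces $S'$ to be infinitely nonorientable.

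The main obstacle is the genus-additivity step used repeatedly in the forward direction, namely that a noncompact surface with compact boundary which is a compact bordered subsurface with only planar complementary pieces attached along boundary circles has \emph{finite} genus (and likewise finitely many cross-caps). This is where the classification and invariance results of Chapter~\ref{ch5} are needed: each compact bordered subsurface has a well-defined finite genus and cross-cap count, and these are additive when gluing compact pieces along boundary circles, with planar pieces contributing nothing. Once this additivity is in hand, both the nested-selection argument and the contradiction in the converse are routine, and the orientability statements follow by the same bookkeeping applied to cross-caps.
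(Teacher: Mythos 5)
Your proposal is correct, and in fact it is more complete than the paper's own proof. The paper proves only one direction of the ``if and only if'': it takes a non-planar (respectively nonorientable) end $q^*$, notes that every compact $A\subset S'$ misses some $Q_n\in q$, so $Q_n\subset S'\setminus A$ is non-planar and hence $S'\setminus A$ is not of genus zero, giving infinite genus; and similarly for nonorientability. That is exactly your converse direction, and your argument matches the paper's line by line (your added care about ``infinitely many $Q_n$ are non-planar'' versus ``all $Q_n$'' is harmless, since non-planarity is inherited upward along the nested chain, so the two formulations coincide).

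What the paper does \emph{not} do is prove the forward implication: that infinite genus (respectively infinite nonorientability) of $S'$ forces the existence of a non-planar (respectively nonorientable) end. You supply this by running a nested-selection argument through a canonical exhaustion of $S'$, using the dichotomy in the definition of a canonical subsurface (each complementary component is planar or of infinite genus, and orientable or infinitely nonorientable) to choose, at each stage, an infinite-genus component $Q_{n+1}\subset Q_n$; the resulting nested sequence is an end and is non-planar by construction. This direction is genuinely needed: in Claim~1 of the proof of Theorem~\ref{ch6,sec3,th1} the paper invokes the lemma in both directions (``$U'$ is infinitely nonorientable, then by Lemma~\ref{ch6,sec2,lm6}, $B(U')$ contains nonorientable component''), so your proposal fills a real gap in the paper's exposition. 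Your caveat about genus additivity is fair, but note that the forward step can be phrased almost entirely in terms of the definition of finite genus (existence of a compact $A$ with $S'\setminus A$ planar) rather than requiring a full additivity theorem; if every complementary component of $S_n'$ were planar, their union $S'\setminus S_n'$ would be planar and $S'$ would have finite genus by definition.
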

\begin{proof}
Suppose $B(S')$ contains some non planar end $q^*$  represented by $q=Q_1\supset Q_2\supset Q_3\supset \ldots$ . Then $Q_n$ are non planar for all $n$. Then there does not exist any compact set $A\subset S'$ such that $S'\setminus A$ is of genus zero. Thus $S'$ is of infinite genus.

Similarly if $B(S')$ contains some nonorientable end $q^*$. Then $Q_n$ will be nonorientable for all $n$ and $S'$ will be infinitely nonorientable.
\end{proof}

\textbf{Definition:} Let $K$ be a triangulation of an orientable surface $S$, orient all its simplexes in a compatible manner. Suppose $|f| : |K| \to |K|$ be a simplicial homeomorphism and a simplex $\sg^2$ is  oriented by the ordering $<a_0,a_1,a_2>$, whose image $|f|(\sg^2)$ occurs with the orientation $<f(a_0),f(a_1),f(a_2)>$ induced by $f$, then the same hold for any other simplex of $K$, and we call $|f|$ orientation preserving. And for any homeomorphism $g : |K| \to |K|$, which need not be induced from a simplicial map, we call $g$ to be orientation preserving if $g$ is homotopic to some orientation preserving simplicial map.

\textbf{Definition:} Let $f,g : X \to Y$ be two homeomorphism. An isotopy between $f$ and $g$ is a homotopy $H : X \times \I \to Y$ such that for each $t\in \I$ the map $H_t : X \to Y$, defined by $H_t(x)=H(x,t)$  is a homeomorphism.

\begin{theorem}
{\rm{\textbf{(Isotopy Extension Theorem)} (see~\cite{kirby}, ~\cite{hirsch}, ~\cite{shastri}).}} Let $H: N\times \I \to M$ be an isotopy of a compact manifold $N$ into a manifold M. If either $H(N\times \I) \subset \partial M$ or $H(N\times \I) \subset  M \setminus \partial M$, then $H$ can be extended to an isotopy of M, that is there is an isotopy $G: M\times \I \to M$ such that $G_0=i_M$ and $H_t=G_t H_0$, and $G_t$ is the identity map outside a compact subset of $M$.
\end{theorem}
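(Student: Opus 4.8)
The plan is to obtain the ambient isotopy $G$ as the flow of a suitable time--dependent vector field on $M$; this is the classical proof of the isotopy extension theorem by integration of vector fields (see the references cited in the statement), carried out in the smooth category --- equivalently, for surfaces, the PL one, which is all that is needed in what follows. The guiding principle is that an isotopy of $M$ which is the identity outside a fixed compact set is precisely the flow of a time--dependent vector field $V_t$ on $M$ whose support stays, for every $t\in\I$, inside one compact set; so it is enough to build such a $V_t$ that agrees, along the track of $H$, with the velocity of $H$.

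First I would form the \emph{track} $\widehat H\colon N\times\I\to M\times\I$, $\widehat H(x,t)=(H(x,t),t)$. Since each $H_t$ is an embedding and $\widehat H$ preserves the $\I$--coordinate, $\widehat H$ is an embedding with image a compact submanifold $W\subset M\times\I$; the hypothesis that $H(N\times\I)$ lies entirely in $\partial M$, or entirely in $M\setminus\partial M$, is exactly what makes $W$ a \emph{neat} submanifold (it sits inside $\partial M\times\I$ in the first case and inside the interior in the second), so that $W$ has a tubular neighbourhood in $M\times\I$ compatible with the projection $\pi\colon M\times\I\to\I$. Along $W$ there is the canonical vector field $\xi=d\widehat H(\partial/\partial t)$, which in coordinates reads $\xi(H(x,t),t)=(\partial_t H(x,t),\,1)$ and which projects to $\partial/\partial t$ under $\pi$. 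Because $\pi|_W$ is a submersion, $W$ has a vertical normal bundle in $M\times\I$; using a tubular neighbourhood $U$ of $W$ modelled on it I would extend $\xi$ to a vector field $\Xi$ on $U$ that still projects to $\partial/\partial t$. Finally, since $N$ is compact, $W$ is compact, so I can choose a smooth cut--off $\lambda\colon M\times\I\to[0,1]$ equal to $1$ near $W$ and to $0$ off a compact neighbourhood of $W$ inside $U$, and set $\Xi'=\lambda\,\Xi+(1-\lambda)\,\partial/\partial t$. Then $\Xi'$ still projects to $\partial/\partial t$, agrees with $\xi$ on $W$, and equals $\partial/\partial t$ outside a compact set; writing $\Xi'=\partial/\partial t+V$ exhibits $V$ as a time--dependent vector field on $M$ supported, for every $t\in\I$, in one fixed compact set $C\subset M$.

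Next I would integrate $\Xi'$. Outside $C\times\I$ it equals $\partial/\partial t$, which moves no $M$--coordinate, so integral curves cannot run off to infinity and the flow $\Psi_s$ is defined for all $s\in\I$; it has the form $\Psi_s(x,0)=(G_s(x),s)$ with each $G_s\colon M\to M$ a diffeomorphism, $G_0=i_M$, and $G_s$ the identity off $C$. Setting $G(x,s)=G_s(x)$ gives the candidate ambient isotopy. To see $H_t=G_t\circ H_0$, observe that $s\mapsto\widehat H(x,s)$ has velocity $\xi(\widehat H(x,s))$, hence is an integral curve of $\Xi'$ (which equals $\xi$ on all of $W$) starting at $\widehat H(x,0)=(H_0(x),0)$; by uniqueness of integral curves it coincides with $\Psi_s(H_0(x),0)=(G_s(H_0(x)),s)$, and comparing $M$--coordinates yields $H_s(x)=G_s(H_0(x))$ for all $x\in N$ and $s\in\I$.

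I expect the main obstacle to be the tubular--neighbourhood step: one must know that the compact submanifold $W\subset M\times\I$ is neat and carries a tubular neighbourhood respecting the projection to $\I$, and this is exactly what fails if $H$ is allowed to drag boundary points of $M$ into the interior --- which is why the hypothesis splits into the two cases. A secondary caveat is that this argument establishes the smooth (or PL) isotopy extension theorem; for purely topological manifolds the analogous statement is the much deeper theorem of Edwards and Kirby, but since surfaces carry essentially unique smooth structures, the version above suffices for the classification of noncompact surfaces that follows.
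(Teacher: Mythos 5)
The paper does not prove this theorem at all: it is stated as a cited result (Edwards--Kirby, Hirsch, Shastri) and invoked as a black box in the proof of Lemma \ref{ch6,sec2,lm8}, so there is no in-paper argument to compare yours against. What you have written is the standard flow-of-a-time-dependent-vector-field proof in the smooth category (Hirsch, Chapter 8; Palais), and as a sketch it is sound: forming the track $\widehat H$, observing that the hypothesis that $H$ stays entirely in $\partial M$ or entirely in $\operatorname{int} M$ is exactly what makes the image a neat compact submanifold with a tubular neighbourhood compatible with projection to $\I$, extending the canonical lift of $\partial/\partial t$, cutting off with a bump function supported near the compact track, and integrating, is precisely the right sequence of ideas, and your verification $H_s = G_s\circ H_0$ via uniqueness of integral curves is correct. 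Your closing caveats are also well placed: the topological version is genuinely Edwards--Kirby and much harder, and it is the low-dimensional equivalence of TOP, PL and DIFF (Rad\'o, Moise) that lets the smooth argument suffice in a dissertation about surfaces.

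One technical point worth flagging if you were to write this out in full: $M\times\I$ is a manifold with corners (the corner set being $\partial M\times\{0,1\}$), not merely a manifold with boundary, so the tubular-neighbourhood and flow-completeness steps should either be carried out on $M\times\R$ after extending $H$ slightly past $[0,1]$, or one should work directly with the time-dependent vector field $V_t$ on $M$ rather than the autonomous field $\Xi'$ on $M\times\I$; as written, "the flow of $\Xi'$ is defined for all $s\in\I$" quietly assumes integral curves do not exit through $M\times\{1\}$ before time $1$, which is true because $\Xi'$ has $\partial/\partial t$-component identically $1$, but this deserves one explicit sentence. None of this affects the correctness of the outline.
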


\begin{lemma}\label{ch6,sec2,lm7}
If $f: \s^1 \to \s^1$ is an orientation preserving homeomorphism then f is isotopic to the identity of $\s^1$
\end{lemma}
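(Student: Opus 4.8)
The plan is to pass to the universal cover $p\colon\R\to\s^1$, $p(t)=e^{2\pi i t}$, lift $f$ to a homeomorphism $F\colon\R\to\R$, and then deform $F$ to the identity by the straight-line homotopy, checking at every stage that the deformation descends to $\s^1$. The whole argument rests on the relation $F(x+1)=F(x)+1$, so the real work is in producing $F$ with this property.

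First I would build the lift. Writing $\R=\bigcup_{n\ge 1}[-n,n]$, each $[-n,n]$ is a convex compact subspace, so the earlier lifting theorem (applied after a translation placing the relevant base point at $0$) yields continuous $F_n\colon[-n,n]\to\R$ with $p\circ F_n=f\circ p$ on $[-n,n]$; uniqueness of lifts forces $F_{n+1}|_{[-n,n]}=F_n$, so these patch to a continuous $F\colon\R\to\R$ with $p\circ F=f\circ p$. Since $p(F(x+1))=f(p(x+1))=f(p(x))=p(F(x))$, the function $x\mapsto F(x+1)-F(x)$ is continuous and $\Z$-valued, hence a constant $d\in\Z$. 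Lifting $f^{-1}$ the same way and composing shows $F$ is a homeomorphism of $\R$, hence strictly monotone; an elementary application of the intermediate value theorem together with injectivity of $f$ rules out $|d|\ge 2$, so $d\in\{+1,-1\}$, with $d=+1$ precisely when $F$ is increasing.

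The key point — and, I expect, the main obstacle — is to see that the orientation-preserving hypothesis forces $d=+1$, since the paper's definition of ``orientation preserving'' is simplicial and must be reconciled with the analytic lift. By that definition $f$ is homotopic to an orientation-preserving simplicial self-homeomorphism $\rho$ of some triangulation of $\s^1$; realizing $\s^1$ as a polygon, such a $\rho$ is a cyclic rotation of the vertices, i.e. a genuine rotation $z\mapsto e^{2\pi i j/n}z$, whose lift $x\mapsto x+j/n$ has increment $+1$. On the other hand the increment $d$ is a homotopy invariant: given a homotopy $\Phi\colon\s^1\times\I\to\s^1$, one lifts $\Phi\circ(p\times\mathrm{id})$ over each $[-n,n]\times\I$ and patches to $\widetilde{\Phi}\colon\R\times\I\to\R$, and then $t\mapsto\widetilde{\Phi}(x+1,t)-\widetilde{\Phi}(x,t)$ is a continuous $\Z$-valued function of $t$, hence constant. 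Applying this to a homotopy between $f$ and $\rho$ gives $d=+1$, so $F$ is increasing.

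Finally, set $H\colon\R\times\I\to\R$, $H(x,t)=(1-t)F(x)+tx$. For each $t\in\I$, $H_t$ is a convex combination of the strictly increasing homeomorphisms $F$ and $\mathrm{id}_\R$, hence strictly increasing, and $H(x+1,t)=(1-t)(F(x)+1)+t(x+1)=H(x,t)+1$, so $H_t(x)\to\pm\infty$ as $x\to\pm\infty$ and $H_t$ is a homeomorphism of $\R$ commuting with the deck transformation $x\mapsto x+1$. Hence $H$ descends to a well-defined $\overline{H}\colon\s^1\times\I\to\s^1$, $\overline{H}(p(x),t)=p(H(x,t))$, which is continuous because $p\times\mathrm{id}_\I$ is an open surjection and thus a quotient map; each $\overline{H}_t$ is a homeomorphism of $\s^1$, with $\overline{H}_0=f$ and $\overline{H}_1=\mathrm{id}_{\s^1}$. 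Thus $\overline{H}$ is the desired isotopy. Apart from the translation-number argument of the third paragraph, every step is routine bookkeeping once the lift is in hand.
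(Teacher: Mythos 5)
Your proof is correct, and while it shares the paper's central idea — lift to the universal cover $p\colon\R\to\s^1$ and use the straight-line homotopy there — the route is genuinely different. The paper first composes $f$ with a rotation $g$ to create a fixed point at $(1,0)$, lifts the composite to a self-homeomorphism $\hat f$ of $[0,1]$ fixing the endpoints, applies the linear homotopy on the interval, and then must splice together two isotopies (from $f$ to $g^{-1}$, and from $g^{-1}$ to the identity) via transitivity. You instead lift $f$ globally to $F\colon\R\to\R$, establish the commutation relation $F(x+1)=F(x)+1$, and run a single straight-line homotopy that visibly commutes with the deck transformation at every $t$ and hence descends in one step. This is more direct and avoids the arbitrary rotation detour. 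The price you pay — and the place where you do substantially more work than the paper — is the third paragraph: the paper simply asserts that, $g\circ f$ being orientation preserving, the lift is order preserving with $\hat f(1)=1$, whereas you actually reconcile the analytic fact $d=+1$ with the paper's simplicial definition of orientation preserving (homotopy to an orientation-preserving simplicial map) via homotopy invariance of the translation number. That is an honest gap in the paper's own argument which your version fills. One small imprecision worth noting: the geometric realization of an orientation-preserving simplicial self-map of a triangulated circle is piecewise linear rather than literally a rotation $z\mapsto e^{2\pi ij/n}z$, but since all that is used is that its lift has increment $+1$, this does not affect the argument.
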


\begin{proof}
If $f$ has no fixed point then we can take a rotation map of $\s^1$, $g: \s^1 \to \s^1$ such that $g(f(1,0)) = (1,0)$.

Then $(g \circ f):\s^1 \to \s^1$ is an orientation preserving homeomorphism with a fixed point (1,0). Now $g \circ f$ can be lifted to a map $\hat{f}: [0,1] \to \R$ such that $\hat{f}(0) = 0$, making the diagram commute
$$\hat{f}$$
$$\I \	\longrightarrow \	\R $$
$$p \	\big\downarrow\	\	\qquad \	\big\downarrow \	 p$$
$$\s^1 \	\longrightarrow \	 \s^1$$
$$g \circ f$$
where $p: \R \to \s^1$ is the exponential map defined as $p(t)=(cos2\pi t,sin2\pi t)$.

Since $g \circ f$ is orientation preserving homeomorphism $\hat{f}$ is order preserving and we get $\hat{f}(1) = 1$. Then $\hat{f}:\I \to \I$ is a homeomorphism. Consider the function $H : \I\times \I \to \I$ defined as 
$$ H(x,t)= (1-t)\hat{f}((x) +tx .$$
This is an isotopy between $\hat{f}$ and $i_{\I}$ (identity of $\I$). It defines  an isotopy $G:\s^1 \times \I \to \s^1$ between $g \circ f$ and $i_{\s^1}$ (identity of $\s^1$) making the diagram commutative
$$\qquad H$$
$$\I\times \I \	\longrightarrow \	\I $$
$$p \times (i_{\I}) \	\big\downarrow\	\	\qquad \quad \	\big\downarrow \	 p \qquad$$
$$\s^1 \times \I\	\longrightarrow \	 \s^1$$
$$\qquad G$$
Now the map $G_1: \s^1 \times \I \to \s^1$ defined by $G_1(x,t)=g^{-1} \circ G(x,t)$ is an isotopy between $f$ and $g^{-1}$. Since $g$ is a rotation of $\s^1$ so is $g^{-1}$, that is, $g^{-1}(cos\theta ,sin\theta) = (cos(\theta +s), sin(\theta +s))$ for some $s$. Define $G_2: \s^1 \times \I \to \s^1$ as 
$$G_2[(cos\theta ,sin\theta),t] =[cos(\theta +st), sin(\theta +st)] .$$

Here, for each $t$, $G_2$ is a rotation of $\s^1$. Hence $G_2$ is an isotopy between $g^{-1}$ and $i_{\s^1}$. Since being isotopic to is an equivalence relation, we have, by transitivity, $f$ is isotopic to $i_{\s^1}$.
\end{proof}

\begin{lemma}\label{ch6,sec2,lm8}
Let S be compact bordered surface, C is a simple closed curve in $\partial S$ and $f: C\to C$ is any homeomorphism. Then $f$ can be extended to a homeomorphism of S onto itself so that every curve in $\partial S$ is invariant.
\end{lemma}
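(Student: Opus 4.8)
The plan is to split according to whether $f$ preserves or reverses the orientation of the circle $C$, and to do all the work inside a collar of $C$. Since $C$ is a boundary component of $S$, it has a collar neighbourhood: a closed neighbourhood $V\subset S$ homeomorphic to $\s^1\times[0,1]$ with $\s^1\times\{0\}$ identified with $C$, chosen small enough to miss the other components of $\partial S$ (one may take $V$ to be an annular SB-neighbourhood $N(C)$ of the boundary subcomplex $C$ of a triangulation of $S$, cf.\ Theorem \ref{ch5,th2}(4)). Here ``extend $f$'' will mean: produce a homeomorphism of $S$ onto itself that restricts to $f$ on $C$ and is the identity outside $V$; such a map automatically leaves every boundary component invariant (it carries $C$ to $C$ and fixes the others pointwise).

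First I would treat the case where $f$ is orientation preserving. By Lemma \ref{ch6,sec2,lm7}, $f$ is isotopic to $\mathrm{id}_{\s^1}$; fix an isotopy $H\colon \s^1\times\I\to\s^1$ with $H_0=\mathrm{id}$ and $H_1=f$. Define $\widetilde f$ to be the identity on $S\setminus V$ and $\widetilde f(x,s)=(H_{1-s}(x),s)$ on $V\cong\s^1\times[0,1]$; the formulas agree on $\s^1\times\{1\}$, each is a homeomorphism onto its image, and $\widetilde f(x,0)=(f(x),0)$, so $\widetilde f$ is the desired extension. The same construction yields a reduction for the general case: if $f$ is isotopic to some homeomorphism $f'$ of $C$ that extends to $S$ (leaving boundary components invariant), then $f$ does too, by re-inserting the isotopy from $f'$ to $f$ inside $V$ as above. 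Now if $f$ reverses orientation, pick any fixed reflection $r\colon\s^1\to\s^1$ (an orientation-reversing involution with two fixed points); then $f\circ r$ is orientation preserving, hence isotopic to $\mathrm{id}$ by Lemma \ref{ch6,sec2,lm7}, so $f=(f\circ r)\circ r$ is isotopic to $r$. By the reduction it suffices to extend the single homeomorphism $r$.

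It remains to extend the reflection $r$, and this is the crux. Here I would invoke the classification of compact bordered surfaces from Chapter \ref{ch5}: $S$ is homeomorphic to some $\Sigma_{g,k}$ or $U_{g,k}$ with $k\ge 1$, and in a suitable model one writes down a self-homeomorphism restricting to a reflection on a prescribed boundary circle. For $\Sigma_{g,k}$ one embeds it in $\R^3$ symmetrically about a plane $\Pi$, with the chosen hole arranged so its boundary circle is invariant under reflection in $\Pi$ (and all other holes likewise invariant); the ambient reflection restricts to a homeomorphism of $S$ that is a reflection on every boundary circle — in particular every boundary component is invariant, which is all the lemma asks, since we need not fix the other components pointwise. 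For $U_{g,k}$, where there is no global orientation to exploit, I would instead cut $S$ along disjoint arcs and simple closed curves into standard pieces (disks, annuli, pairs of pants, Möbius bands), extend a reflection across each piece — a planar reflection on the planar pieces, and on a Möbius-band piece realised as $\I\times\I$ with $(0,t)\thicksim(1,1-t)$ the descended map $(s,t)\mapsto(1-s,t)$, which acts as a reflection on its boundary circle — and choose the cut system and the reflection axes on the pieces coherently so the maps agree along the cutting curves. This gives a homeomorphism of $S$ restricting to $r$ on $C$; composing with the collar construction extends $f=(f\circ r)\circ r$, completing the proof.

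The main obstacle I expect is exactly this last step: actually producing the reflection-realising homeomorphism of $S$ in the non-orientable case, where one must pass to an explicit model or a cut-system decomposition and arrange the pieces so the prescribed reflections match up along the cuts, with no orientation available to organise the bookkeeping. By contrast the orientation-preserving case is essentially immediate once a collar and Lemma \ref{ch6,sec2,lm7} are available, and the reduction of the orientation-reversing case to the single reflection $r$ is a formal consequence of the collar trick.
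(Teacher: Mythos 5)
Your proof for the orientation-preserving case is essentially the same as the paper's: both use Lemma~\ref{ch6,sec2,lm7} to obtain an isotopy from $f$ to the identity of $C$, and both then push that isotopy into $S$. The paper invokes the Isotopy Extension Theorem to get a global homeomorphism supported near $C$; you instead build the extension by hand inside a collar $V\cong\s^1\times\I$. These are interchangeable — the collar construction is precisely a proof of the special case of isotopy extension that is needed here, and is arguably more self-contained. The observation that the resulting map automatically preserves every boundary component (since it is the identity off $V$) is correct and matches what the paper obtains from the extension theorem.

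Where you genuinely diverge from the paper is the orientation-reversing case. The paper introduces a "copy" $S'$ of $S$ together with a homeomorphism $\Phi\colon S\to S'$ that preserves boundary components and restricts to an orientation-reversing map $C\to C'$, and a second homeomorphism $\Psi$ with $\Psi|_C$ orientation preserving; then $\Psi f\Phi^{-1}$ is orientation preserving and the previous case applies. But this reduction quietly assumes the existence of $\Phi$, and, identifying $S'$ with $S$, such a $\Phi$ is exactly an extension of \emph{some} orientation-reversing homeomorphism of $C$ to a self-homeomorphism of $S$ — which is precisely the content one is trying to establish. The paper neither constructs $\Phi$ nor cites a result producing it. Your approach makes this gap visible and tries to close it: you isolate the crux as the problem of extending one fixed reflection $r$ of $C$, and you outline an explicit construction (symmetric embedding in $\R^3$ for $\Sigma_{g,k}$; decomposition into planar pieces and Möbius bands with matching reflections for $U_{g,k}$). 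Your Möbius band reflection $(s,t)\mapsto(1-s,t)$ on $\I\times\I/(0,t)\thicksim(1,1-t)$ is correctly checked to descend. The remaining weak spot in your version is the one you yourself flag: showing that the cut system and reflection axes can actually be chosen coherently in the nonorientable case. That is real work, but it is exactly the work the paper's $\Phi$ silently presupposes, so your account is, if anything, more honest about what must be proved; a clean way to finish would be to use the explicit "disk with strips" normal form of Chapter~\ref{ch5} and write down a reflection of that model directly.
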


\begin{proof}

If $f: C\to C$ is an orientation preserving homeomorphism. Let $h: C \to \s^1$ be a homeomorphism. Then $h^{-1}: \s^1 \to S$ is an embedding.

Since $hfh^{-1}: \s^1 \to \s^1$ is an orientation preserving homeomorphism, by Lemma \ref{ch6,sec2,lm7}, there is an isotopy $H:\s^1 \times \I \to \s^1 $ such that $H(x,1) = hfh^{-1}(x)$ and $H(x,0) = x$.

Define $H^1:\s^1 \times \I \to S $ as $H^1(x,t) = h^{-1}(H(x,t))$.

Then by `$isotopy \ extension \ theorem$'  there is an isotopy
$$G: S \times \I \to S $$
such that $G(x,0)=x ,\  \forall x\in S$ and $H^1_t=G_tH^1_0$. Furthermore, the isotopy $G_t$ is identity map on the complement of a compact neighborhood of $C$.

Thus $G_1 : S \to S$ is the required homeomorphism, where $G_1=G|_{S \times \{1\}}$ .

If the map $f: C\to C$ is orientation reversing, we will take a copy $S'$ of S  and a homeomorphism
$$\Phi : S \to S'$$
such that, each boundary component of $S$ is preserved, and for $\Phi(C)=C'$ in $\partial S'$, the map $\Phi|_C$ is orientation reversing. Also consider a homeomorphism 
$$\Psi : S \to S'$$
such that,  each boundary component of $S$ is preserved as above, and for $\Psi(C)=C'$ in $\partial S'$, the map $\Psi|_C$ is orientation preserving.  Then the map
$$\Psi  f \Phi^{-1} : C' \to C'$$
is an orientation preserving homeomorphism and we proceed as above.
\end{proof}

\begin{lemma}\label{ch6,sec2,lm9}
Let S be a compact bordered surface of genus $g$ and let $\Gamma _1, \Gamma _2, \ldots ,\Gamma _p$ be a partition of  $\partial S$. Then for any $k < g$ there exist $p-1$ non intersecting simple closed curves $C_1,C_2, \ldots ,C_{p-1}$ in S which divide S into $p$ components $U_1,U_2, \ldots ,U_p$ so that $\Gamma _i \subset U_i$ for each $i$. Also $U_1$ is of genus $k$ and $U_i$ is of genus zero for $1<i<p$. If S is nonorientable and $k$ is even, then $U_1$ can be made either orientable of genus $k/2$.
\end{lemma}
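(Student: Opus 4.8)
The plan is to make the statement concrete via the classification of compact bordered surfaces established in Chapter~5, and then to read the curves off a conveniently arranged model. Write $b$ for the number of components of $\partial S$, so $b=\sum_{i=1}^{p}|\Gamma_i|$ (the case $p=1$ does not arise, since the conclusion would force $\mathrm{genus}(S)=k<g$); by the classification $S\cong\Sigma_{g,b}$ when $S$ is orientable and $S\cong U_{g,b}$ when it is not. Since a homeomorphism transports a family of disjoint simple closed curves together with all of its separation data, it suffices to exhibit such curves in some model $\widehat S\cong S$ whose $b$ boundary circles are distributed among $p$ prescribed groups of sizes $|\Gamma_1|,\dots,|\Gamma_p|$, and then to arrange that the homeomorphism $\widehat S\to S$ matches the $i$-th group with $\Gamma_i$.

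For the model I would take a \emph{chain}: blocks $V_1,\dots,V_p$ glued successively along $p-1$ circles $\widehat C_1,\dots,\widehat C_{p-1}$, with $\widehat C_i=V_i\cap V_{i+1}$, where each $V_i$ is a sphere with holes carrying $|\Gamma_i|$ of the boundary circles, into which a chosen amount of genus is inserted: $k$ handles into $V_1$ in the orientable case, $k$ cross-caps into $V_1$ in the nonorientable case (or, in the variant, $k/2$ handles when $k$ is even and $U_1$ is to be orientable), all the remaining genus into $V_p$, and no genus at all into $V_2,\dots,V_{p-1}$. In $\widehat S$ the curves $\widehat C_i$ visibly split it into $V_1,\dots,V_p$, with $V_1$ of genus $k$ (or orientable of genus $k/2$), $V_i$ planar for $1<i<p$, and the $i$-th group of holes lying in $V_i$. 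A short computation with $\chi$, using $\chi(\Sigma_{g,k})=2-2g-k$ and $\chi(U_{g,k})=2-g-k$ and the fact (Theorem~\ref{ch5,sec1,th2}, iterated) that a handle plus a cross-cap is three cross-caps, confirms that in each of the three cases $\widehat S$ has exactly $b$ boundary components and the same genus and orientability class as $S$; hence $\widehat S\cong S$.

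It remains to correct the labeling. A homeomorphism $\psi\colon\widehat S\to S$ furnished by the classification carries the $|\Gamma_i|$ holes of $V_i$ to some $|\Gamma_i|$ boundary circles of $S$, but perhaps not onto $\Gamma_i$; the collections $\psi^{-1}(\Gamma_1),\dots,\psi^{-1}(\Gamma_p)$ and the hole-sets of $V_1,\dots,V_p$ are two partitions of the $b$ boundary circles of $\widehat S$ into blocks of matching sizes, so there is a permutation of these circles taking one to the other. Such a permutation is realized by a self-homeomorphism $h$ of $\widehat S$: join the circles by disjoint arcs in the interior and push circles along regular neighborhoods of the arcs, invoking the Isotopy Extension Theorem to extend the moves to all of $\widehat S$. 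Then $\psi\circ h^{-1}\colon\widehat S\to S$ sends the holes of each $V_i$ onto $\Gamma_i$, and putting $C_i=(\psi\circ h^{-1})(\widehat C_i)$ and $U_i=(\psi\circ h^{-1})(V_i)$ yields the required curves and regions.

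The substantive point is the genus-and-orientability bookkeeping of the second paragraph: one must verify in all three cases that distributing the genus between $V_1$ and $V_p$ as described reproduces the invariants of $S$. In the orientable case the handle counts simply add under gluing along circles; in the nonorientable case one uses that gluing any nonorientable piece to anything along a circle stays nonorientable, together with $\Sigma_h\#U_m=U_{2h+m}$ (which follows by iterating Theorem~\ref{ch5,sec1,th2}), to see that ($V_1$ nonorientable of genus $k$, or orientable of genus $k/2$) together with ($V_p$ nonorientable of genus $g-k$) has nonorientable genus $g$; here $g-k\ge 1$, which is exactly the hypothesis $k<g$, is what allows $V_p$ to absorb the nonorientability. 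The labeling step is routine given the tools already developed in this chapter.
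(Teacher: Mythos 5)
Your argument is correct, but it reaches the conclusion by a different route than the paper. The paper works entirely inside the Chapter~\ref{ch5} model of $S$ as a disk $\widetilde{D}$ with annular and twisted strips attached (and a removed disk $D'$): it simply \emph{draws} each $C_i$ as an arc in $\widetilde{D}$ enclosing the appropriate strips ($k$ twisted strips together with the $|\Gamma_1|$ boundary strips for $C_1$; the $|\Gamma_i|$ boundary strips for $C_i$, $1<i<p$), closed up by the complementary arc across $D'$; the component $U_p$ is what is left over, and the ``orientable of genus $k/2$'' variant is obtained by applying Theorem~\ref{ch5,sec1,th2} inside the picture to trade pairs of twisted strips for intertwined annular ones. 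You instead \emph{synthesize} a chain-of-blocks model $\widehat{S}$ whose dividing circles are built in, and then prove $\widehat{S}\cong S$ by matching Euler characteristic, genus, and orientability class using $\chi(\Sigma_{g,k})=2-2g-k$, $\chi(U_{g,k})=2-g-k$, and $\Sigma_h\#U_m\cong U_{2h+m}$; finally you relabel boundary circles via a self-homeomorphism supported near regular neighborhoods of arcs, which is exactly what Lemmas~\ref{ch6,sec2,lm7} and \ref{ch6,sec2,lm8} together with the Isotopy Extension Theorem make rigorous. The paper's approach is shorter and more pictorial since the curves are visible at once in the canonical strip model, while your approach makes the invariant bookkeeping (in particular why $k<g$ is needed, so that $V_p$ retains at least one cross-cap) fully explicit and also gives a clean mechanism for matching the prescribed partition $\Gamma_1,\dots,\Gamma_p$ to the pieces, a step the paper leaves implicit.
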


\begin{proof}
Consider the representation of surfaces as discussed in Chapter \ref{ch5}.

\begin{center}
\includegraphics[width=1\columnwidth]{cpt17.png}
\end{center}
\begin{center}
Figure 6.3
\end{center}

We shall work out the proof for the nonorientable case, and the other case can be done on the same line. Let cardinality of  $\Gamma _i = l_i$. If $k=0$ then $C_1$ is an arc joining $l_1$ boundary curves of $\Gamma_1$ as shown in Figure 6.4 below and the complete closed curve $C_1$ is this arc with the corresponding arc on $D'$ (the disk removed from $S$, described in Chapter \ref{ch5}). Similarly all other closed curves will be completed like this and here we shall draw arcs only.

\begin{center}
\includegraphics[width=1\columnwidth]{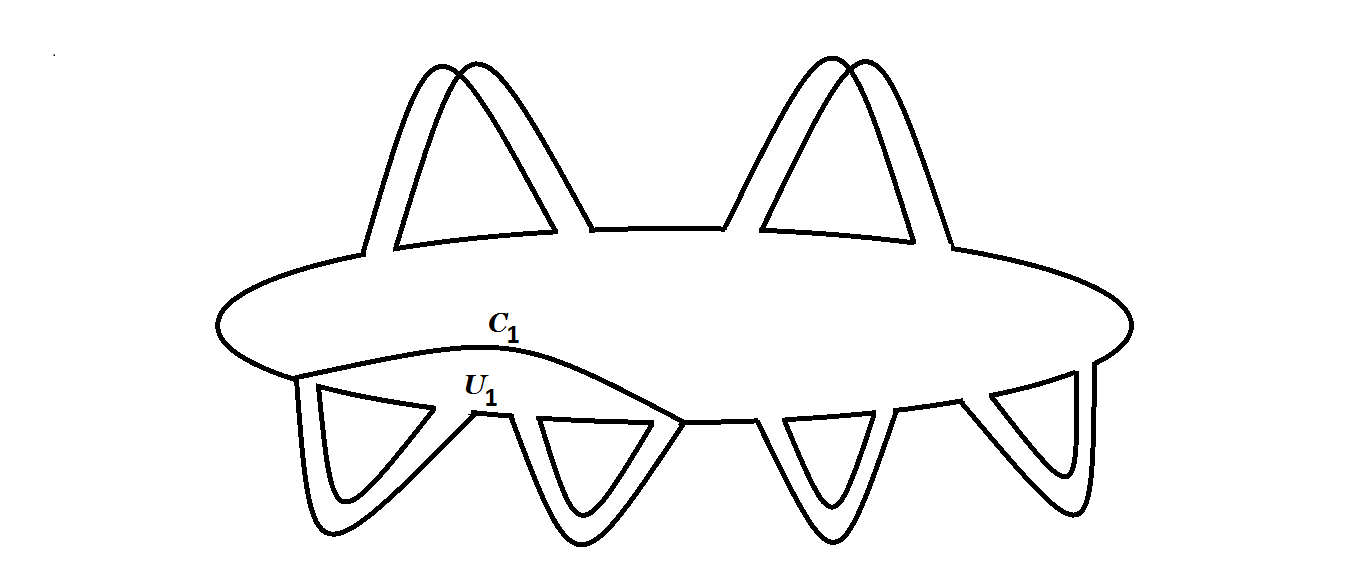}
\end{center}
\begin{center}
Figure 6.4
\end{center}

\begin{center}
\includegraphics[width=1\columnwidth]{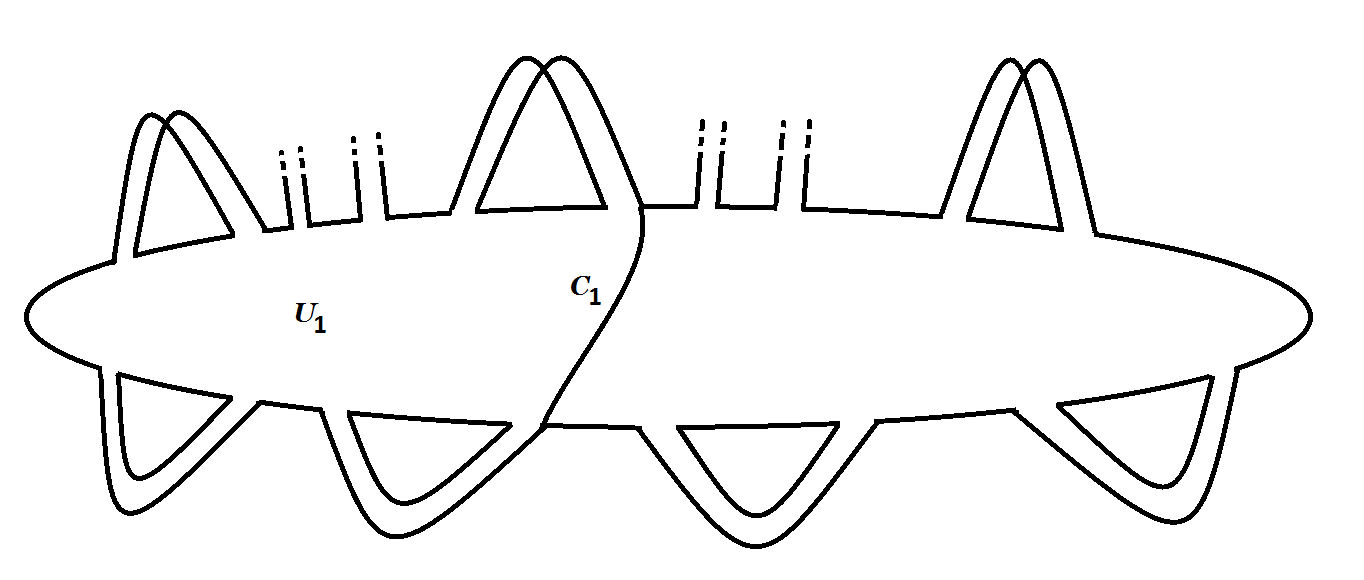}
\end{center}

\begin{center}
Figure 6.5
\end{center}

Now if $k>0$ then $C_1$ will join $k$ twisted strips with $l_1$ boundary curves of $\Gamma_1$ and corresponding component will be $U_1$

Also for each $1<i<p$, arc $C_i$ will join only $l_i$ boundary curves of $\Gamma_i$ and corresponding component will be called $U_i$.

And the set $$U_p \ = \ S \setminus\{ \bigcup_{i=1}^{p-1} U_i\} \ .$$ 

Now if $k$ is even and as $k<g$ by Theorem \ref{ch5,sec1,th2} we can simplify the representation of 

\begin{center}
\includegraphics[width=1\columnwidth]{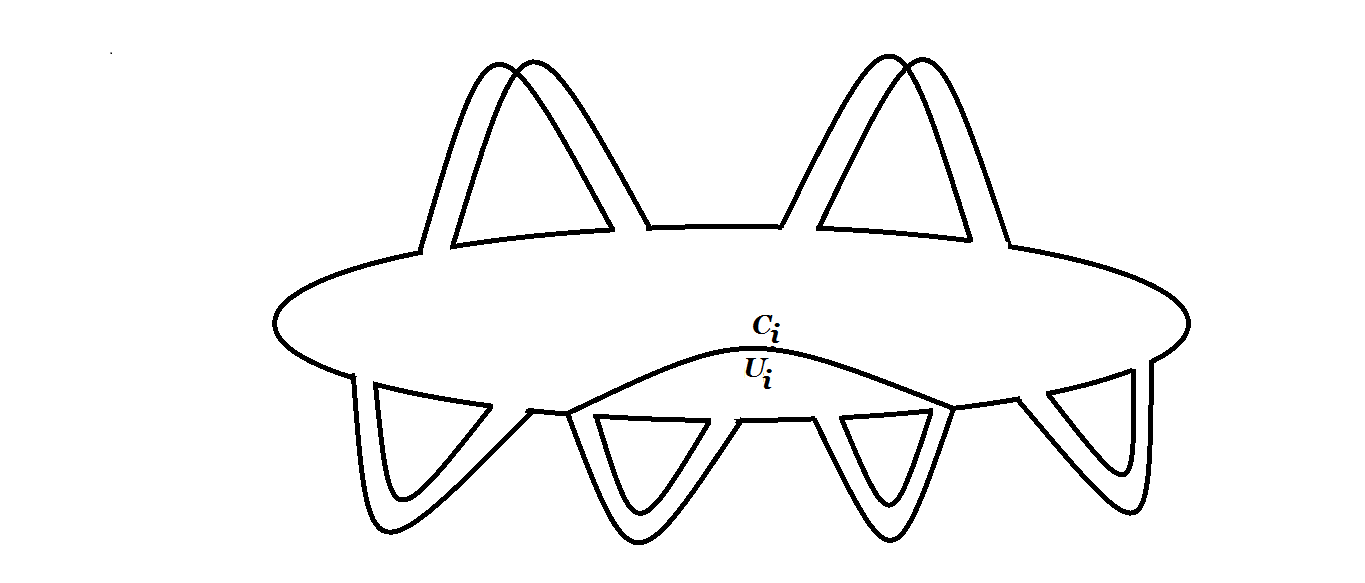}
\end{center}

\begin{center}
Figure 6.6
\end{center}

\begin{center}
\includegraphics[width=1\columnwidth]{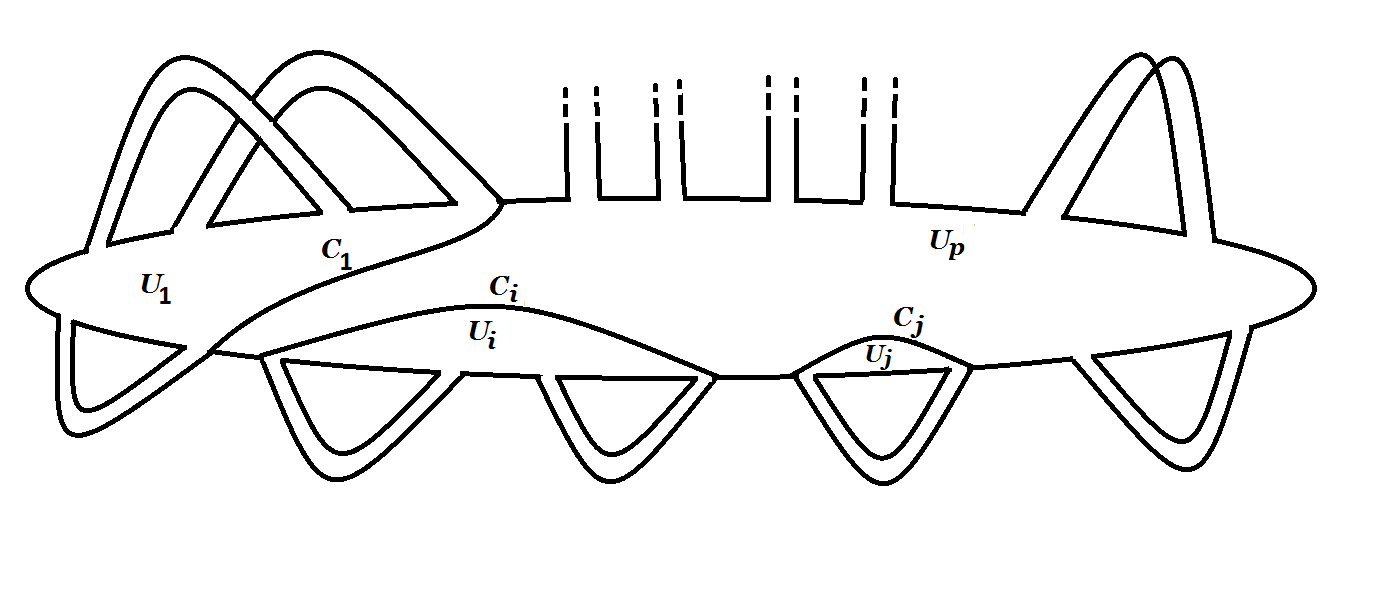}
\end{center}

\begin{center}
Figure 6.7
\end{center}
$U_1$ by turning each pair of twisted strips into a pair of intertwined annular strips making $U_1$ orientable and reducing its genus to $k/2$.
\end{proof}

\section{Ker\text{$\acute{\text{e}}$}kj\text{$\acute{\text{a}}$}rt\text{$\acute{\text{o}}$}'s  Theorem}

Now we shall discuss the classification theorem for noncompact surfaces given by B. V. Ker$\acute{e}$kj$\acute{a}$rt$\acute{o}$ (see ~\cite{kerekjarto}).

\begin{theorem} \label{ch6,sec3,th1}
Let S and $S'$ be two open surfaces of the same genus and orientability class. Then S and $S'$ are homeomorphic if and only if their ideal boundaries are homeomorphic, and the sets of planar and orientable ends are homeomorphic too.
\end{theorem}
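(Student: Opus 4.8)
The plan is to prove the two implications separately, with essentially all the work in the ``if'' direction.

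For the ``only if'' direction, suppose $\psi\colon S\to S'$ is a homeomorphism. Since the ideal boundary is built entirely from the lattice of complementary components of compact subsets, $\psi$ carries ends of $S$ to ends of $S'$ and descends to a homeomorphism $\bar\psi\colon M=S\cup B(S)\to M'=S'\cup B(S')$ that restricts to a homeomorphism $B(S)\to B(S')$. Because $\psi$ sends planar (resp. orientable) subsurfaces to planar (resp. orientable) subsurfaces — planarity and orientability of a surface being topological by Chapters \ref{ch4} and \ref{ch5} — $\bar\psi$ sends planar ends to planar ends and orientable ends to orientable ends; applying the same remark to a compact exhaustion shows $S$ and $S'$ have the same genus and orientability class. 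So this direction is immediate from the definitions.

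For the ``if'' direction, fix a homeomorphism $\varphi\colon B(S)\to B(S')$ preserving the distinguished subsets of planar and orientable ends. First I would note that, by the theorem that $M$ is compact, $B(S)$ is a compact, separable, totally disconnected metric space, hence homeomorphic to a closed subset of the Cantor set; the clopen sets $B(Q)$, with $Q$ a component of $S_n^c$ for a canonical exhaustion $\{S_n\}$, form a basis and behave well under unions and intersections by Lemma \ref{ch6,sec2,lm5}, while Lemma \ref{ch6,sec2,lm6} ties non-planar and non-orientable ends to infinite genus and infinite non-orientability of the relevant regions. The core of the argument is a back-and-forth refinement: starting from canonical exhaustions $\{S_n\}$ of $S$ and $\{S_n'\}$ of $S'$, I would interleave subsequences and enlarge their members so that for every $n$ the partition $B(S)=\bigsqcup_Q B(Q)$ induced by the components $Q$ of $S_n^c$ is carried by $\varphi$ onto the partition $B(S')=\bigsqcup_{Q'}B(Q')$ induced by the components of $(S_n')^c$, so that matched regions $Q\leftrightarrow Q'$ agree in being planar or not and orientable or not, and so that $S_n$ and $S_n'$ have the same genus (possible since $S$ and $S'$ have the same, possibly infinite, genus) and the same orientability class. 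Then $S_n$ and $S_n'$ are compact bordered surfaces with the same orientability class, the same genus, and the same number of boundary curves (one per complementary component, matched by $\varphi$), so the classification theorem of Chapter \ref{ch5} provides an abstract homeomorphism between them.

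The homeomorphism $S\to S'$ is then built inductively. Choose $h_1\colon S_1\to S_1'$ respecting the labelling of boundary curves by complementary components; given $h_n\colon S_n\to S_n'$, extend it to $h_{n+1}\colon S_{n+1}\to S_{n+1}'$ with $h_{n+1}|_{S_n}=h_n$ as follows. The closure of $S_{n+1}\setminus S_n$ is a compact bordered surface whose boundary splits into an inner part $\partial S_n$, on which $h_n$ prescribes a homeomorphism, and an outer part $\partial S_{n+1}$, on which we are free; by Lemma \ref{ch6,sec2,lm9} this piece and its counterpart on the $S'$ side can be cut along non-intersecting simple closed curves into sub-pieces realizing the prescribed genus distribution and respecting the component structure of the ideal boundary, and by Lemma \ref{ch6,sec2,lm8} any homeomorphism between boundary curves extends over such a piece, so $h_n$ extends over each sub-piece compatibly and the pieces glue. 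Finally $h=\bigcup_n h_n$ is a well-defined bijection $S\to S'$, and it is a homeomorphism because $\{S_n\}$ and $\{S_n'\}$ are exhaustions with $S_n$ contained in the interior of $S_{n+1}$, so $h$ and $h^{-1}$ agree locally with the continuous maps $h_n$ and $h_n^{-1}$. The main obstacle is the back-and-forth refinement step: arranging exhaustions on the two sides whose induced finite partitions of the ideal boundary are matched by $\varphi$ at every stage, while simultaneously controlling, stage by stage, the genus, the orientability class, and the planar/orientable type of each complementary region, and doing this consistently with the inductive gluing so that the extension of $h_n$ to $h_{n+1}$ never gets stuck; the delicate bookkeeping includes the cases $g=\infty$ and the infinitely-non-orientable class, where each $S_n$ must be allowed to absorb ever more genus or ever more cross-caps, whereas Lemmas \ref{ch6,sec2,lm8} and \ref{ch6,sec2,lm9} together with the compact classification are precisely what make each individual extension possible.
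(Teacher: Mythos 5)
Your proposal is correct and follows essentially the same route as the paper: canonical exhaustions, a back-and-forth interleaving of the two sides so that the induced finite clopen partitions of the ideal boundary are matched by $\varphi$ at every stage, Lemma \ref{ch6,sec2,lm9} to redistribute genus and control orientability across the annular pieces, Lemma \ref{ch6,sec2,lm8} to extend boundary homeomorphisms over each compact piece, and the compact classification theorem to match the pieces, with the direct limit yielding the homeomorphism $S\to S'$. The only material addition is your explicit treatment of the ``only if'' direction, which the paper leaves implicit, but the substance and key lemmas of the ``if'' direction are identical.
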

\begin{proof}
Suppose that $S$ and $S'$ are of same genus and orientability class and let $h \colon B(S) \to B(S')$ be the homeomorphism between their ideal boundaries.

Let $\{ K_i \}$ and $\{ K_i' \}$ be canonical exhaustions of $S$ and $S'$. In particular take $K_0$ (and  $K_0'$)  having only one boundary component. If $S$ has infinite genus and $S$ is orientable or infinitely nonorientable take $K_0$ (and respectively $K_0'$) to be a disk. If $S$ is of odd or even nonorientability class, take $K_0$ (and respectively $K_0'$) to be a projective plane or a Klein bottle respectively with an open disk removed. Then by the classification theorem for compact bordered surface we have a homeomorphism $f_0 \colon K_0 \to K_0'$  and $f_0^* \colon \partial K_0 \to \partial K_0'$ where $f_0^* =f_0|_{\partial K_0}$.

Since $K_0$ and $K_0'$ are compact we have $B(S)= B(K_0^c)$ and $B(S')= B(K_0'^c)$, we have the homeomorphism $h \colon B(K_0^c) \to B(K_0'^c)$.

For the construction of the homeomorphism $F \colon S \to S'$ we replace the exhaustion $\{ K_i \}$ and $\{ K_i' \}$ by the canonical exhaustions $ \{ L_i \}$ and $\{ L_i' \}$ of $S$ and $S'$ respectively. Also for each $n \in \N$ construct a homeomorphism $g_n \colon L_n \to L_n'$ extending $g_{n-1}$, satisfying the condition that if $U$ and $U'$ are the corresponding components of $S \setminus L_n$ and $S' \setminus L_n'$ such that $g_n(\partial (U)) =\partial (U')$ then $B(U)$  will be homeomorphic to $B(U')$.

For the construction of desired exhaustion we proceed by induction. Let $ L_0=K_0$ and $L_0'=K_0'$ and $g_0=f_0$. Furthermore if $n$ is even, we make $L_n$ large enough so that $K_n \subset L_n$ and if $n$ is odd we alternate the process and make $K_n' \subset L_n'$.

Let $n$ is even and suppose that we have $g_n \colon L_n \to L_n'$ be a homeomorphism of canonical subsurfaces. For the construction of $L_{n+1}'$ , pick a $K_p'$ containing both $L_n'$ and $K_{n+1}'$ in its interior and let $L_{n+1}' = K_p'$.

$Claim \ 1$: For sufficiently large $M$, their is a $K_M$ containing $L_n$ in its interior such that if $U$ and $U'$ are corresponding components of  $S \setminus L_n$ and $S' \setminus L_n'$  then \\
(i) if $U' \cap (L_{n+1}' \setminus L_n')$ is nonorientable then $U \cap (K_M \setminus L_n)$  is nonorientable. \\
(ii) if genus of $U' \cap (L_{n+1}' \setminus L_n')$ is some $m \in \N$ then genus of $U \cap (K_M \setminus L_n)$ is more than 2$m$.

If $U' \cap (L_{n+1}' \setminus L_n')$ is nonorientable then $U'$ is infinitely nonorientable, then by Lemma \ref{ch6,sec2,lm6},  $B(U')$ contains nonorientable component. Since $B(U)$ is homeomorphic to $B(U')$, it  also contains nonorientable component and hence again by Lemma \ref{ch6,sec2,lm6}, $U$ is infinitely nonorientable. Thus there is a canonical subsurface $L$ containing $L_n$ in its interior such that $U \cap (L \setminus L_n)$ is nonorientable.

If $U' \cap (L_{n+1}' \setminus L_n')$ has positive genus then $U'$ has infinite genus and by Lemma \ref{ch6,sec2,lm6}, $B(U')$ contains non planar component and  so is $B(U)$. Again Lemma \ref{ch6,sec2,lm6} implies that $U$ has infinite genus. Thus there is a canonical subsurface $N$ containing $L_n$ in its interior such that genus of $U \cap (N \setminus L_n)$ is more than twice of the genus of $U' \cap (L_{n+1}' \setminus L_n')$.

Take $K_M$  to be large enough containing $L_n$ and all $L's$ and $N's$ (one for each component $U$) in its interior.

$Claim \ 2$: For $M$ sufficiently large, there is a partition $\mathcal{P}=\{ P\}$ of the set of components of $S\setminus K_M$ such that members of  $\mathcal{P}$ are in 1-1 correspondence with the components of $S' \setminus L_{n+1}'$ as follows: if $P \in \mathcal{P}$ corresponds to the component $U' \subset S' \setminus L_{n+1}'$ then
$$h^{-1}(B(U'))= \{ B(U)| \ U \in P\} .$$

Also if  $P$ corresponds to $U' \subset V' \subset S' \setminus L_n'$ and $V$  is a component of $S\setminus L_n$ corresponding to $V'$ ( that is, $g_n(\partial (V)) = \partial (V')$) then every $U \in P$ is contained in $V$.

For each $m\in \N $ let $\{ U_i^m \}$ be the components of $S \setminus K_m$. Then open sets $\{ B(U_i^m)\}$ form a basis for $B(S)$. Also let $\{U_j' \}$ be the components of $S'\setminus L_{n+1}'$. Since $L_{n+1}'$ is compact $B(L_{n+1}')=\phi$ and the collection $\{ B(U_j')\}$ covers $B(S')$. Thus $\{ h^{-1}(B(U_j'))\}$ covers $B(S)$ and each $h^{-1}(B(U_j'))$ is union of elements from the  collection $\{ B(U_i^m)\}$. Since $B(S)$  is compact a finite collection $Q$ of open sets from $\{B(U_i^m)\}$ which in turn lies in some $h^{-1}(B(U_j'))$ covers $B(S)$. Let 
$$M=max\{m|\  B(U_i^m)\in Q\} .$$

And corresponding to each component $U'$ of $S' \setminus L_{n+1}'$ take
$$P=\{U |\  U \ is \ a \ component \ of \ S \setminus K_M, B(U) \subset h^{-1}(B(U')) \} .$$

Also $U \in P$ implies $B(U) \subset h^{-1}(B(U')) \subset h^{-1}(B(V'))= B(V)$. Thus we have $U \subset V$.

Let $M$ to be large enough so that both of the above claim holds. Then we shall remove certain part of each component of $K_M \setminus L_n$ to get the desired set $L_{n+1}$.

By claim 1 each component of $K_M \setminus L_n$ is of sufficiently large genus and right orientability so that, using Lemma \ref{ch6,sec2,lm9}, we can diminish $K_M$ to a bordered subsurface $L_{n+1}$ containing $L_n$ so that each component of  $L_{n+1} \setminus L_n$ has the same genus as the corresponding component of $L_{n+1}' \setminus L_n'$  and is orientable if and only if the latter is. Also several components of $S \setminus K_M$ corresponds to one component of $S' \setminus L_{n+1}'$. Because of claim 2 we can assume, by diminishing$K_M \setminus L_n$, that each element of  $\mathcal{P}$ contains exactly one component of  $S \setminus L_{n+1}$, that is for every component $U'$ of $S' \setminus L_{n+1}'$, the corresponding  $P \in \mathcal{P}$ will consist of single component $U$ of $S \setminus L_{n+1}$ and $h^{-1}(B(U'))= B(U)$.

Now each component $U_i$ of $L_{n+1}\setminus L_n$ has the same genus and number of boundary components as the corresponding component of $L_{n+1}' \setminus L_n'$ and is orientable if and only if the latter is. Hence by the classification theorem of compact bordered surface there are  homeomorphisms $\theta _i \colon \overline{U}_i \to \overline{U}_i'$, for each component. We can take the homeomorphisms such that $\theta_i(\overline{U}_i\cap L_n)= \overline{U}_i'\cap L_n'$ and $\theta (\partial(U))=\partial(U')$ where $U \subset S\setminus L_{n+1}$ and $h(B(U))= B(U')$. Then $\theta \colon (\overline{L_{n+1}\setminus L_n}) \to (\overline{L_{n+1}'\setminus L_n'})$ is a homeomorphism where $\theta|_{\overline{U}_i}=\theta_i$. Also we may assume that each curve in the boundary $\partial(L_n)$ is invariant under the map $\theta^{-1}g_n \colon \partial(L_n) \to \partial(L_n)$. Let $\phi = \theta^{-1}g_n$ on $\partial(L_n)$ and extend $\phi$ to $(\overline{L_{n+1}\setminus L_n})$ by Lemma \ref{ch6,sec2,lm8}. Define $g_{n+1}: L_{n+1} \to L_{n+1}'$ as follows

\begin{equation}\notag
g_{n+1}(x)=
\begin{cases}

\theta \phi (x), & x\in L_{n+1}\setminus L_n\\
g_n (x), & x\in L_n .
\end{cases}
\end{equation}
\\

Thus we have exhaustions $\{ L_n \}$ and $\{ L_n'\}$ and homeomorphisms $g_{n} \colon L_n \to L_n'$ extending $g_{n-1}$. Hence the required homeomorphism $F \colon S \to S'$ is given by $F|_{L_n}=g_n$.
\end{proof}

\section{Noncompact Surfaces with Boundary}

The components of boundary of noncompact surfaces includes circles (compact) and open intervals (noncompact). In the case of circles, they can be contractible to a point or glued up by a disk. By gluing them by disks we get a noncompact surface, provided the boundary consists of circles only. Thus classification of noncompact surfaces with boundary reduces to a classification of noncompact surfaces without boundary, and two surfaces are homeomorphic if and only if the corresponding set of compact boundary components are homeomorphic too.

While dealing with open intervals in the boundary, we represent these components as  circles which are punctured at finite or infinite number of points. The ends corresponding to punctured points are called boundary ends or ends of boundary component. Two boundary ends are adjacent if there exists a boundary component for which they are the ends of it.

We can subdivide the noncompact boundary components and boundary ends into finite or infinite cyclic sequence in which any two consecutive element make a boundary component and its corresponding boundary end. Such a sequence is called a boundary cycle which is same as circle with punctured point(s).

Let $C(S)$ denotes the set of boundary ends of $S$. Define an equivalence relation `$\thicksim$' on the set $C(S)$  as follows: any two boundary ends are in same equivalence class if they are in same boundary cycle. Thus, $D(S) = C(S)/ \thicksim\ $ is a set of boundary cycles.

\begin{theorem}
Let $S$ and $S'$ be two noncompact surfaces with boundary, having same genus and orientability class. Then $S$ and $S'$ are homeomorphic if and only if their ideal boundaries, set of compact boundary components, C(S) with C($S'$) and D(S) with D($S'$) are homeomorphic and the set of planar and orientable ends and boundary ends are homeomorphic too.
\end{theorem}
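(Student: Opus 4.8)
The plan is to reduce the statement to Ker\'ekj\'art\'o's theorem (Theorem \ref{ch6,sec3,th1}) for surfaces without boundary. The ``only if'' direction is routine: a homeomorphism $F\colon S\to S'$ carries compact boundary components to compact ones and noncompact ones to noncompact ones, induces a homeomorphism of ideal boundaries taking planar ends to planar ends and orientable ends to orientable ends (exactly as in the proof of Theorem \ref{ch6,sec3,th1}), and, since it respects the cyclic order in which boundary arcs and boundary ends alternate along each noncompact boundary component, it induces compatible bijections $C(S)\to C(S')$ and $D(S)\to D(S')$; genus and orientability class are already known to be invariant. So the content lies in the ``if'' direction, i.e.\ in constructing $F$.

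First I would dispose of the compact boundary components. Each such component is a circle; cap it off with a closed $2$-disk, and do the same on $S'$, pairing the capping disks by means of the given homeomorphism between the sets of compact boundary components. The resulting surfaces $S_1$, $S_1'$ are noncompact, have the same genus and orientability class, have only noncompact (open-arc) boundary components, and still have homeomorphic ideal boundaries and homeomorphic $C(\cdot)$, $D(\cdot)$, planar-end and orientable-end data, since attaching disks along compact curves affects none of these. Any homeomorphism $S_1\to S_1'$ carrying capping disks to capping disks restricts to the required $F\colon S\to S'$, so from now on I may assume $\partial S$, $\partial S'$ consist only of open arcs. (If they are already empty, Theorem \ref{ch6,sec3,th1} applies directly and we are done.)

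Next, form the doubles $\widehat S=S\cup_{\partial S}S$ and $\widehat{S'}=S'\cup_{\partial S'}S'$ along the boundaries, with their reflection involutions $\tau$, $\tau'$ fixing $\partial S$, $\partial S'$ pointwise; these are surfaces without boundary. One checks that all invariants of $\widehat S$ appearing in Theorem \ref{ch6,sec3,th1} are determined by the hypotheses on $S$: the genus of $\widehat S$ is determined by that of $S$; $\widehat S$ is orientable if and only if $S$ is, and in the nonorientable case its parity class is determined by that of $S$ (for instance, the double of the M\"obius band is the Klein bottle); and the ideal boundary $B(\widehat S)$ is obtained from two copies of $B(S)$ glued along the boundary ends, the gluing pattern being precisely the cyclic structure recorded by $C(S)$ and $D(S)$, with the planar and orientable ends of $\widehat S$ determined in the evident way by those of $S$ together with the planarity and orientability of its boundary ends. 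Hence $\widehat S$ and $\widehat{S'}$ agree in ideal boundary (with matching planar and orientable ends), genus and orientability class, so Theorem \ref{ch6,sec3,th1} furnishes a homeomorphism $\widehat S\to\widehat{S'}$.

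The point that Theorem \ref{ch6,sec3,th1} does not give for free, and which I expect to be the main obstacle, is that one needs an \emph{equivariant} homeomorphism $\widehat F\colon\widehat S\to\widehat{S'}$, i.e.\ $\widehat F\circ\tau=\tau'\circ\widehat F$; only such an $\widehat F$ restricts to a homeomorphism $S\to S'$. I would obtain it not by equivariantizing an arbitrary homeomorphism but by re-running the proof of Theorem \ref{ch6,sec3,th1} equivariantly: choose the canonical exhaustions $\{L_n\}$ of $\widehat S$ and $\{L_n'\}$ of $\widehat{S'}$ to be $\tau$- (resp.\ $\tau'$-) invariant, so that $L_n\cap S$ is a compact bordered subsurface of $S$ meeting $\partial S$ in a union of arcs and $L_n=(L_n\cap S)\cup\tau(L_n\cap S)$, and build $g_n\colon L_n\to L_n'$ extending $g_{n-1}$ and commuting with the involutions. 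At each stage the construction reduces, just as in the original proof, to the classification of compact bordered surfaces together with Lemma \ref{ch6,sec2,lm8} (extending a prescribed boundary homeomorphism over a compact bordered surface) and Lemma \ref{ch6,sec2,lm9} (splitting off a prescribed amount of genus), now applied to the halves $L_n\cap S$, i.e.\ to compact bordered surfaces equipped with a distinguished family of boundary arcs (the trace of $\partial S$) whose cyclic alternation with the ideal-boundary circles is exactly the data carried by $C$ and $D$. The bookkeeping of complementary components and of the partition $\mathcal{P}$ in the proof of Theorem \ref{ch6,sec3,th1} must be refined to record, for each complementary component, which boundary cycles its closure meets, and it is precisely the compatibility of the given homeomorphism $B(S)\to B(S')$ with the bijections $C(S)\to C(S')$ and $D(S)\to D(S')$ that makes this refined matching possible. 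Passing to the limit, $F=\bigcup_n g_n|_S\colon S\to S'$ is the desired homeomorphism. The two places needing genuine care are the verification that the doubles have exactly the claimed invariants and the carrying of the boundary-cycle bookkeeping through the equivariant induction; everything else is the machinery already developed for Theorem \ref{ch6,sec3,th1}.
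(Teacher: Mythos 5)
Your route is genuinely different from the paper's. After capping off the compact boundary circles (which you both do in the same way), the paper does not pass to the double. Instead it keeps $S$ with its open-arc boundary, runs Kerékjártó's construction (Theorem \ref{ch6,sec3,th1}) to get canonical exhaustions $\{L_n\}$, $\{L_n'\}$ and compatible homeomorphisms $g_n\colon L_n\to L_n'$ of the \emph{interiors}, and then enlarges each $L_n$ to a bordered piece $\widetilde{L}_n$ by attaching, along a carefully chosen arc $\gamma_k$, a closed planar strip around a compact segment $\beta^k_k$ of each noncompact boundary component $\alpha_k$. The homeomorphisms $g_n$ are then extended step by step to $\widetilde{g}_n\colon\widetilde{L}_n\to\widetilde{L}_n'$, and $F=\bigcup_n\widetilde{g}_n$. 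In effect the paper treats the noncompact boundary arcs as extra data to be swallowed, stage by stage, into the same inductive exhaustion machinery; the bookkeeping involving $C(S)$ and $D(S)$ enters when choosing how the strips around $\beta^i_k$ nest and how the boundary ends of the $\alpha_k$ pair with those of the $\alpha_k'$.

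Your proposal instead doubles $S$ along its boundary and aims to produce a $\tau$–$\tau'$-equivariant homeomorphism $\widehat F\colon\widehat S\to\widehat{S'}$, restricting to $F$ on $S$. This is a clean conceptual reduction, but the load-bearing claim --- that Theorem \ref{ch6,sec3,th1} can be ``re-run equivariantly'' --- is not something the paper establishes, and it is not a formality: one must show that canonical $\tau$-invariant exhaustions exist (so that Lemma \ref{ch6,sec2,lm3}'s minimal-boundary argument can be carried out symmetrically), that Lemma \ref{ch6,sec2,lm9} can split off genus while respecting the involution, and that Lemma \ref{ch6,sec2,lm8} can be upgraded to extend a prescribed \emph{equivariant} boundary homeomorphism equivariantly over the piece $L_{n+1}\setminus L_n$ (which, being $\tau$-invariant, is a double of a bordered piece of $S$, so this amounts to the paper's Lemma \ref{ch6,sec2,lm8} applied to the half, but you would need to say so). You also need to check, and not just assert, that the hypotheses of the theorem determine the Kerékjártó invariants of $\widehat S$: the correspondence between ends of $\widehat S$ and the data $(B(S),\,C(S),\,D(S),\,$planarity/orientability of ends and boundary ends$)$ has to be worked out, including the fact that a non-boundary end of $S$ contributes two ends of $\widehat S$ while a boundary end contributes a single $\tau$-fixed end, and that the genus/orientability class of $\widehat S$ is a function of the genus/orientability class of $S$ alone (true, but this uses that the doubled boundary consists of open arcs, not circles, so no genus is created). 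So: your approach is viable and buys conceptual economy --- everything reduces to a single ``equivariant Kerékjártó'' statement --- at the price of establishing that statement, which is roughly as much work as the paper's more hands-on extension of the exhaustion, and which the paper deliberately avoids.
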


\begin{proof}
Given $S$ and $S'$ are of same genus and orientability class, also the set of compact boundary components are homeomorphic. Then by gluing the boundary circles by disk, the noncompact surfaces $int(S)$ and $int(S')$ are homeomorphic. Consider the canonical exhaustions $\{L_n\}$ and $\{L_n'\}$ of $int(S)$ and $int(S')$ respectively, as constructed in Theorem \ref{ch6,sec3,th1} and homeomorphisms $g_n:L_n \to L_n'$ extending $g_{n-1}$ for each $n$.

We shall extend the exhaustion $\{L_n\}$  to $\{\widetilde{L_n}\}$ such that each $L_n$ is homeomorphic to $\widetilde{L_n}$ and $\{\widetilde{L_n}\}$ covers $S$. The same can be done for $S'$. Then for each $n$ there is a homeomorphism             $\widetilde{g_n}: \widetilde{L_n} \to \widetilde{L_n'}$ extending $\widetilde{g}_{n-1}$. This defines a homeomorphism $F:S \to S'$ such that $F|_{\widetilde{L_n}}=\widetilde{g_n}$.

Let $\{\al_k\}$ be the noncompact boundary components of $S$. Then we can arrange them in circles with  punctured points. Each $\al_k$ can be represented as an increasing sequence of closed segments $\{\beta^i_k\}_i$, each contained in the interior of the one following it. We continuously connect  $\beta^i_k$ with compact bordered surfaces $L_i$,  for each $i\ge k$, by a path $\gamma_k$ \\
\begin{center}
\includegraphics[width=1\columnwidth]{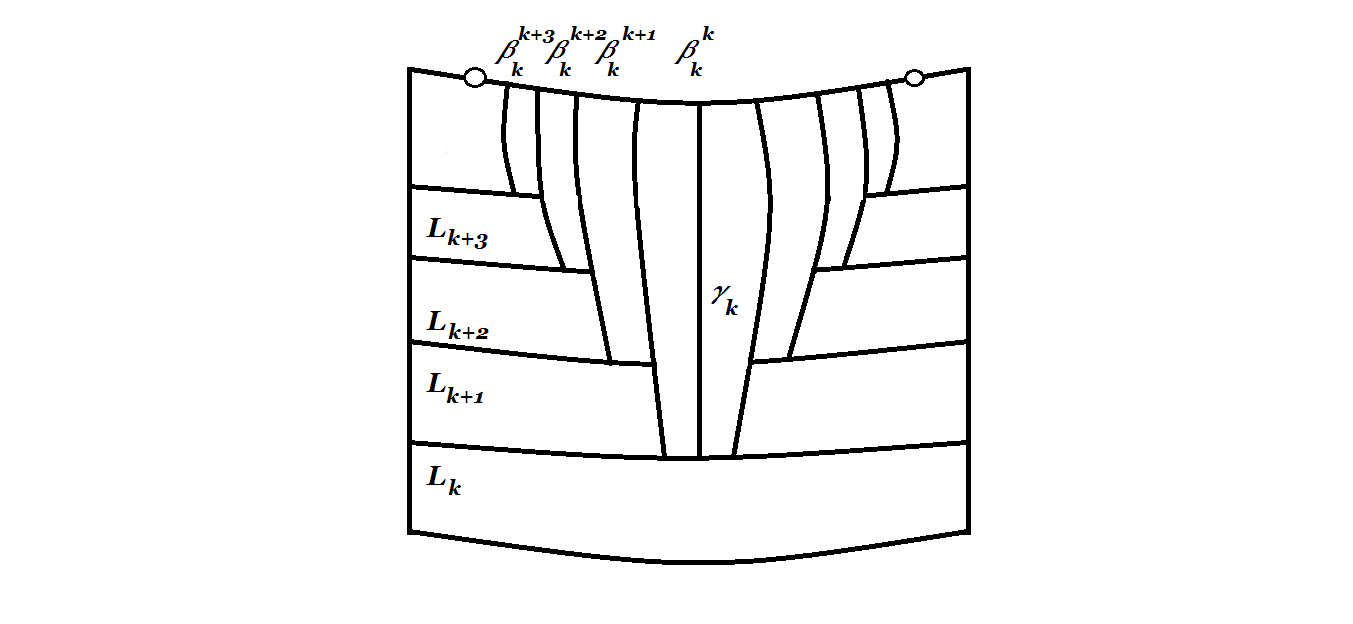}
\end{center}

\begin{center}
Figure 6.8
\end{center}
joining $\beta^k_k$ with $L_k$. We require that crossing at boundary of every $L_i$ be transversal in one point only. Also if a path goes out from the boundary of any $L_k$, then it crosses the boundary of $L_n$ for every $n>k$. To get a surface with boundary, we extend this path to a closed (planar) neighborhood containing $\beta^i_k$. This can be done in such a way that a neighborhood of $\gamma_k$ in each $L_i$ would contain the previous one, see Figure 6.8. At every step of gluing of the strips we have only finite number of segments. Let $\widetilde{L_i}$ be the compact bordered surface obtained by attaching the closed strips to $L_i$.

By a similar construction on $S'$ we can construct $\{\widetilde{L_i'}\}$. For this, we first rearrange the noncompact boundary components $\{ \al_k' \}$ of $S'$ such that the adjacent boundary ends of each $\al_k$ corresponds to adjacent boundary ends of $\al_k'$ and proceed as above.
\end{proof}


\backmatter


\begin{thebibliography}{00}

\vfill

\bibitem{ahlfors} Ahlfors, L. V. and Sario, L.,\emph{ Riemann Surfaces}, Princeton University Press, Princeton, N.J., (1960).

\bibitem{armstrong} Armstrong,  M. A., \emph{Basic Topology}, Springer Verlag, (1983).

\bibitem{brahana} Brahana, H. R.,\emph{ System of circuits on two-dimensional manifolds}, Annals of Math., \textbf{23} (1921) 144 - 168.

\bibitem{bredon} Bredon, G. E., \emph{Topology and Geometry}, Springer Verlag, (1993).

\bibitem{carmo} Carmo, M. P. do, \emph{Differential Geometry of Curves and Surfaces}, Prentice Hall, New Jersey, (1976).

\bibitem{deo} Deo, Satya, \emph{Algebraic Topology A Primer}, Hindustan Book Agency, (2006).

\bibitem{dugundji} Dugundji, J., \emph{Topology}, Universal Book Stall, (1989).

\bibitem{kirby} Edwards, Robert D. and Kirby, Robion C., \emph{Deformations of spaces of imbeddings}, Annals of Mathematics, \textbf{93} (1971) 63 - 88.

\bibitem{goldman} Goldman, M. E., \emph{An algebraic classification of noncompact 2-manifolds }, Trans. AMS, \textbf{156} (1971), 241 - 258.

\bibitem{hartman} Hartman, E., \emph{A marching method for the triangulation of surfaces}, The Visual Computer, Springer-Verlag, \textbf{14} (1998) 95 - 108.

\bibitem{hatcher} Hatcher, A., \emph{Algebraic Topology}, Cambridge University Press, (2002).

\bibitem{hirsch} Hirsch, Morris W., \emph{Differential Topology}, Springer-Verlag, New York, (1976).

\bibitem{hocking} Hocking, J. G. and Young, G. S., \emph{Topology}, Addision-Wesely Pub. Co., (1961).

\bibitem{hungerford} Hungerford, Thomas W., \emph{Algebra}, Springer-Verlag, New York, (1974).

\bibitem{kerekjarto} Ker$\acute{e}$kj$\acute{a}$rt$\acute{o}$, B., \emph{Vorlesungen $\ddot{u}$ber Topologie. I}, Springer, Berlin, (1923).

\bibitem{konya} K$\acute{o}$nya, E., \emph{On the fundamental theorem of compact and noncompact surfaces}, Annales Mathematicae et Informaticae, \textbf{32} (2005), 211 - 224.

\bibitem{lee} Lee, John M., \emph{Introduction to Topological Manifolds}, Springer-Verlag, New York, (2000).

\bibitem{massey} Massey, W. S., \emph{Algebraic Topology : An Introduction}, Springer-Verlag, (1967).

\bibitem{misk} Mischenko, K.I. and Prishlyak, A. O.,  \emph{Classification of noncompact surfaces with boundary}, Methods of Functional Analysis and Topology, \textbf{13} (2007) 62 - 66.

\bibitem{moise} Moise,  Edwin E., \emph{Geometric Topology in Dimension 2 and 3}, Springer-Verlag, New York, (1977).

\bibitem{munkres} Munkres, J., \emph{Topology}, (second edition), Pearson Prentice Hall,  (2000).

\bibitem{prisk}Prishlyak, A. O. and  Mischenko, K. I. , \emph{Classification of noncompact surfaces with the finite number of the boundary components}, Visnyk KNU, Mathematics and Mechanics, Kyiv, \textbf{11 - 12} (2004) 57 - 59.

\bibitem{rado} Rad$\acute{o}$, T., \emph{$\ddot{U}$ber den Begriff der Riemannschen Fl$\acute{a}$che}, Acta Litt. Sci. Szeged,  \textbf{2} (1925), 101 - 121.

\bibitem{rich} Richards, I., \emph{On the classification of noncompact surfaces}, Trans. AMS, \textbf{106} (1963) 259 - 269.

\bibitem{rosen} Rosen, K. H., \emph{Discrete Mathematics and its Applications}, McGraw-Hill,  (2007) .

\bibitem{shastri} Shastri, Anant R., \emph{Elements of Differential Topology}, CRC Press, (2011).

\bibitem{singh} Singh, T. B., \emph{Elements of  Topology}, CRC Press, (2013).

\bibitem{viro} Viro, O. Ya, \emph{Elementary Topology Problem Textbook}, AMS, (2008).

\end{thebibliography}
\end{document}